\providecommand{\vol}{\operatorname{vol}}
\providecommand{\rk}{\operatorname{rk}}
\providecommand{\Gr}{\operatorname{Gr}}
\providecommand{\Spec}{\operatorname{Spec}}
\providecommand{\cone}{\operatorname{cone}}
\def\and{\textrm{ and }}
\theoremstyle{plain}
\newtheorem{theorem}{Theorem}[section]
\newtheorem*{definition-intro}{Definition}
\newtheorem*{cor-intro}{Corollary}
\newtheorem{proposition}[theorem]{Proposition}		
\newtheorem{corollary}[theorem]{Corollary}
\newtheorem{lemma}[theorem]{Lemma}
\newtheorem{conjecture}[theorem]{Conjecture}
\newtheorem{definition}[theorem]{Definition}
\theoremstyle{remark}
\newtheorem{remark}[theorem]{Remark}
\newcommand{\taubcov}[1]{\tau_{BCOV}{( #1})}
\newcommand{\Hdprim}[2]{H^{{ #1}, { #2}}_{\operatorname{prim}}}
\newcommand{\Hprim}[1]{H^{{ #1}}_{\operatorname{{}prim}}}
\newcommand\bb[1]{\mathbb{#1}}
\newcounter{tmp}
\newcommand{\ov}{\overline}
\newcommand{\ebold}{{\bf e}}
\newcommand{\CBbb}{\mathbb C}
\newcommand{\DBbb}{\mathbb D}
\newcommand{\HBbb}{\mathbb H}
\newcommand{\PBbb}{\mathbb P}
\newcommand{\QBbb}{\mathbb Q}
\newcommand{\RBbb}{\mathbb R}
\newcommand{\ZBbb}{\mathbb Z}
\newcommand{\Ccal}{\mathcal C}
\newcommand{\Fcal}{\mathcal F}
\newcommand{\Ocal}{\mathcal O}
\newcommand{\Tcal}{\mathcal T}
\newcommand{\Ucal}{\mathcal U}
\newcommand{\Vcal}{\mathcal V}
\DeclareMathOperator{\End}{End}
\DeclareMathOperator{\Hom}{Hom}
\DeclareMathOperator{\id}{id}
\DeclareMathOperator{\tr}{tr}
\DeclareMathOperator{\Real}{Re}
\DeclareMathOperator{\Res}{Res}
\DeclareMathOperator{\nt}{nt}
\newcommand{\sing}{{\rm sing}}
\newcommand{\prim}{\rm prim}
\numberwithin{equation}{section}
\begin{document}

\title[BCOV invariants]{BCOV invariants of Calabi--Yau manifolds \\ and degenerations of Hodge structures}
\author[Eriksson]{Dennis Eriksson}
\author[Freixas i Montplet]{Gerard Freixas i Montplet}
\author[Mourougane]{Christophe Mourougane}
\address{Dennis Eriksson \\ Department of Mathematics \\ Chalmers University of Technology and  University of Gothenburg}
\email{dener@chalmers.se}

\address{Gerard Freixas i Montplet \\ C.N.R.S. -- Institut de Math\'ematiques de Jussieu - Paris Rive Gauche}
\email{gerard.freixas@imj-prg.fr}

\address{Christophe Mourougane\\Institut de Recherche Math\'ematique de Rennes (IRMAR)}
\email{christophe.mourougane@univ-rennes1.fr}

\begin{abstract}
Calabi--Yau manifolds have risen to prominence in algebraic geometry, in part because of  mirror symmetry and enumerative geometry. After Bershadsky--Cecotti--Ooguri--Vafa (BCOV), it is expected that genus 1 curve counting on a Calabi--Yau manifold is related to a conjectured invariant, only depending on the complex structure of the mirror, and built from Ray--Singer holomorphic analytic torsions. To this end, extending work of Fang--Lu--Yoshikawa in dimension~3, we introduce and study the BCOV invariant of Calabi--Yau manifolds of arbitrary dimension. To determine it, knowledge of its behaviour at the boundary of moduli spaces is imperative. To address this problem, we prove general results on degenerations of $L^2$ metrics on Hodge bundles and their determinants, refining the work of Schmid. We express the singularities of these metrics in terms of limiting Hodge structures, and derive consequences for the dominant and subdominant singular terms of the BCOV invariant. 

\end{abstract}

\subjclass[2010]{Primary: 14J32, 32G20, 58K55, 58J52, Secondary: 58K65}
\maketitle
\setcounter{tocdepth}{3}
\tableofcontents

\section{Introduction}
\begingroup
\setcounter{tmp}{\value{theorem}}
\setcounter{theorem}{0}
\renewcommand\thetheorem{\Alph{theorem}}

In this article we introduce and study a real valued invariant for Calabi--Yau manifolds, depending only on the complex structure, expected to be birationally invariant and to encode genus 1 curve counting on a mirror Calabi--Yau manifold. The invariant has as its origin the remarkable theoretical physics article of Bershadsky--Cecotti--Ooguri--Vafa \cite{bcov}. It is a genus~1 counterpart of the Yukawa coupling studied by Candelas--de la Ossa--Green--Parkes \cite{COGP} in their work on mirror symmetry and genus 0 Gromov--Witten invariants. The physics theory of \cite{bcov} received a mathematical treatment by Fang--Lu--Yoshikawa \cite{FLY}, where they defined what is nowadays called the BCOV invariant for three dimensional Calabi--Yau manifolds, in the strict sense. They confirmed the predictions in \cite{bcov} for the mirror of the quintic Calabi--Yau $3$-fold. An important and open question, already raised in \cite[Sec. 5.8]{bcov}, is to extend these constructions and results to general dimensions, and this is the purpose of our study.

\subsection{The BCOV invariant of a Calabi--Yau manifold}
Let $Z$ be a Calabi--Yau manifold of dimension $n$, meaning a compact K\"ahler manifold with trivial canonical bundle. Given a choice of Ricci flat K\"ahler metric $\omega$ on $Z$, the invariant $T(Z,\omega)$ proposed by \cite{bcov} is a combination of Ray--Singer holomorphic torsions, that can be written as follows:
\begin{equation}\label{TXOMEGA}
    T(Z,\omega)=  \prod_{p,q = 1}^{n} (\det \Delta_{\overline{\partial}}^{(p,q)})^{(-1)^{p+q} pq }
\end{equation}
where $\det  \Delta_{\overline{\partial}}^{(p,q)}$ refers to the zeta regularized determinant of the $\overline{\partial}$-Laplacian acting on $(p,q)$-forms on $Z$.  This depends on the K\"ahler form, as opposed to the mirror symmetry principle that it should be an invariant of the complex structure in the $B$-model.  An intrinsic definition in dimension 3, when $h^{0,1}=h^{0,2}=0$, was provided by \cite{FLY}, and was accomplished by multiplying by normalizing factors. A similar renormalization was noted in \cite{PesWitt}. The correction term seems to be specific to dimension 3 and does not offer an obvious extension to higher dimensions.  To our surprise, a general normalization was suggested by Kato's formalism of heights of motives \cite{Katoheight}. 

In our approach, the BCOV invariant  is in fact realized as the quotient of two natural metrics on the so-called BCOV line bundle. This bundle can be thought of as a weighted product of determinants of Hodge bundles, defined for a single compact complex manifold $Z$ as
\begin{equation*}\label{eq:bcovlinebundle}
    \lambda_{BCOV}(Z) = \bigotimes_{p,q} \det H^q(Z, \Omega_Z^p)^{(-1)^{p+q} p}.
\end{equation*}
On the one hand, $\lambda_{BCOV}(Z)$ can be equipped with the Quillen-BCOV metric $h_{Q,BCOV}$ introduced in \cite{FLY}, and independent of any choice of K\"ahler structure. On the other hand, in the current article we exhibit a renormalized $L^2$ metric on the BCOV line bundle, that we call the $L^2$-BCOV metric. Precisely, for a single Calabi--Yau manifold $Z$ with K\"ahler form $\omega$, we define the $L^{2}$-BCOV norm of an element $\sigma$ of the BCOV line by
\begin{displaymath}
    h_{L^{2},BCOV}(\sigma,\sigma)=h_{L^{2}}(\sigma,\sigma)\prod_{k=1}^{2n}\vol_{L^{2}}(H^{k}(Z,\ZBbb),\omega)^{(-1)^{k+1}k/2}.
\end{displaymath}
Here $h_{L^{2}}$ is the product of $L^{2}$ metrics on the BCOV line provided by Hodge theory (harmonic representatives with respect to $\omega$) and $\vol_{L^{2}}(H^{k}(Z,\ZBbb),\omega)$ is the covolume of the lattice $H^{k}(Z,\ZBbb)_{\nt}\subset H^{k}(Z,\RBbb)$ with respect to the $L^{2}$ scalar product, where $H^{k}(Z,\ZBbb)_{\nt}$ is the maximal torsion free quotient of $H^{k}(Z,\ZBbb)$. We adopt the convention that the volume equals 1 for those factors with $H^{k}(Z,\RBbb)=0$. In Proposition \ref{prop:L2-independent} we show that the $L^{2}$-BCOV metric thus defined is also independent of the choice of K\"ahler metric. The BCOV invariant of the Calabi--Yau manifold $Z$ is then defined as the proportionality factor comparing the Quillen-BCOV and $L^{2}$-BCOV metrics.

\begin{definition-intro}
Let $Z$ be a  Calabi--Yau manifold. Then we let
\begin{displaymath}
    \tau_{BCOV}(Z)=h_{Q,BCOV}/h_{L^{2},BCOV} \in \RBbb_{>0}.
\end{displaymath}
We refer to $\taubcov{Z}$ as the BCOV invariant of $Z$.
\end{definition-intro}
Note that since the two metrics are independent of the choice of K\"ahler structure, the invariant only depends on the complex structure of $Z$. It generalizes the construction for strict Calabi--Yau $3$-folds in \cite{FLY} to  general Calabi--Yau manifolds of arbitrary dimension. The writing of the invariant as a quotient of metrics lends itself to computing the second variation of $\tau_{BCOV}(Z)$ as the complex structure is deformed.  If $f\colon X\to S$ is a K\"ahler morphism of connected complex manifolds whose fibers are Calabi--Yau $n$-folds, we prove in Proposition \ref{prop:ddc-log-B} that the function $s\mapsto\log\tau_{BCOV}(X_{s})$ satisfies the differential equation
    \begin{equation}\label{eq:ddc-log-BB}
        dd^{c}\log\tau_{BCOV}=\sum_{k=0}^{2n}(-1)^{k}\omega_{H^{k}}-\frac{\chi}{12}\omega_{WP}.
    \end{equation}
    Here, $\chi$ is the topological Euler characteristic of a general fiber, and $\omega_{WP}$ and $\omega_{H^{k}}$ are the Weil--Petersson and Hodge type forms for the family $f\colon X\to S$. The equation \eqref{eq:ddc-log-BB} is a higher dimensional version of the holomorphic anomaly equation at genus 1 in the mirror symmetry literature.
    
    In general, the BCOV invariant is a non-trivial function. In some cases, such as abelian varieties of dimension at least two or hyperk\"ahler varieties, it is constant in complex moduli (cf. Proposition \ref{prop:ken-ichi}). In the case of moduli of polarized Calabi--Yau manifolds, it is equal to the BCOV torsion in  \eqref{TXOMEGA}, up to a constant which depends on the polarization. 


\subsection{Asymptotic behaviour of the BCOV invariant}
The only known strategy to approach the BCOV predictions in mirror symmetry consists in seeing the BCOV invariant as a function on a moduli space of Calabi--Yau varieties, then exploiting the holomorphic anomaly equation \eqref{eq:ddc-log-BB}. We refer the reader to e.g. \cite{Klemmboundary} for a discussion in the mathematical physics literature.

Since moduli spaces are in general non-compact, the differential equation determines $\log\tau_{BCOV}$ at most up to a generally non-constant pluriharmonic function. To fix this indeterminacy, known as the holomorphic ambiguity, it is essential to know the limiting behaviour of the BCOV invariant as one approaches the boundary of the moduli space. We provide a general answer to this question, by identifying an explicit topological expression for such boundary conditions for one-parameter normal crossings degenerations (see Theorems \ref{thmB}, \ref{thmC} below). 

Suppose $f: X \to \DBbb$ is a projective morphism of complex manifolds, with Calabi--Yau $n$-fold fibers outside the origin.\footnote{Actually a slightly stronger technical condition is required, that $f$ extends to a projective morphism $\widetilde{X}\to S$ of compact complex manifolds, with $\dim S=1$. See \textsection \ref{subsec:notations}.}
In Theorem \ref{thm:general-bcov} and Proposition \ref{prop:loglogbcov} we prove that the BCOV invariant behaves as
\begin{equation}\label{eq:BCOVasymptoticintro}
  \log\tau_{BCOV}(X_{t})=\kappa_{f}\log|t|^{2} + \varrho_f \log  \log|t|^{-1} + \text{continuous},
\end{equation}
for constants $\kappa_f, \varrho_f  \in\QBbb$, as $t \to 0$. The rationality of $\kappa_f$ was established in the three-dimensional case by Yoshikawa \cite{yoshikawa5}. Under further assumptions on the degeneration, we obtain general expressions for $\kappa_{f}$ and $\varrho_{f}$, in terms of the geometry of the special fiber and the limiting mixed Hodge structures. In this introduction, we focus on some relevant situations, when $\kappa_f$ and $\varrho_f$ take a particularly pleasant form.


 To state the first such result, let $f \colon X \to \DBbb$ be a projective morphism of complex manifolds with Calabi--Yau $n$-fold fibers outside the origin, and such that the special fiber $X_0=\sum_{i}D_{i}$ is a simple normal crossings divisor. Introduce the notation $D(k) = \bigsqcup_{I} \bigcap_{i \in I} D_{i}$ where the union is over index subsets $I$ of cardinality $k$.  Denote by $\chi(D(k))$ the topological Euler characteristic of $D(k)$.

\begin{theorem}\label{thmB} In the above situation, suppose also that $f$ is Kulikov, \emph{i.e.} $K_{X} \simeq \Ocal_{X}$. Then
\begin{displaymath}
        \kappa_{f}=\sum_{k=1}^{n+1}(-1)^{k}\frac{k(k-1)}{24}\chi(D(k)).
\end{displaymath}
In the case of strict Calabi--Yau $3$-folds (\emph{i.e.} $h^{0,1} = h^{0,2} = 0$),  this expression can be further simplified to
\begin{displaymath}
    \kappa_f = \frac{\chi(D(2)) - 4 Q}{12}
\end{displaymath}
where $Q = \# D(4)$ is the number of quadruple points.
\end{theorem}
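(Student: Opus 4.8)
Since $\taubcov{X_{t}}=h_{Q,BCOV}/h_{L^{2},BCOV}$, I would fix a holomorphic frame $\sigma$ of Deligne's canonical extension of the BCOV line over the disk and write $\log\taubcov{X_{t}}=\log h_{Q,BCOV}(\sigma,\sigma)-\log h_{L^{2},BCOV}(\sigma,\sigma)$. The first step is to show that only the first term carries a $\log|t|^{2}$ singularity: using the refinement of Schmid's norm estimates established earlier in the paper — goodness, in Mumford's sense, of the $L^{2}$ metric on the canonical extension of each Hodge bundle and of its determinant, together with the $\log\log|t|^{-1}$ growth of the covolumes $\vol_{L^{2}}(H^{k}(X_{t},\ZBbb),\omega)$ (here $\tr N_{k}=0$ for the nilpotent logarithms of the monodromies makes the lattice and canonical extensions of the determinant lines agree) — one gets $\log h_{L^{2},BCOV}(\sigma,\sigma)=O(\log\log|t|^{-1})$. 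Granting from Theorem \ref{thm:general-bcov} that $\kappa_{f}\in\QBbb$ exists, it therefore equals the coefficient of $\log|t|^{2}$ in $\log h_{Q,BCOV}(\sigma,\sigma)$.

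Next I would compute that coefficient by combining a Grothendieck--Riemann--Roch / Deligne-pairing description of $\lambda_{BCOV}$ endowed with $h_{Q,BCOV}$ — in terms of $K_{X/S}$ and $\Omega^{1}_{X/S}(\log X_{0})$, the holomorphic torsion contribution being handled as in \cite{FLY} — with the theory of the asymptotics of Quillen metrics on singular fibres (Bismut--Bost, Yoshikawa) adapted to the weighted combination $\bigotimes_{p,q}\det H^{q}(\Omega^{p})^{(-1)^{p+q}p}$. The upshot I aim for is that the $\log|t|^{2}$ singularity localises on $X_{0}$, the local model $x_{1}\cdots x_{k}=t$ along the open stratum $D(k)^{\circ}$ contributing a universal rational multiple $c_{k}$ of a characteristic number $\int_{D(k)}P_{k}\bigl(c_{\bullet}(\Omega^{1}_{X}(\log X_{0})),c_{1}(K_{X/S})\bigr)$.

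The Kulikov hypothesis then enters decisively. Since $K_{X}\simeq\Ocal_{X}$, the bundle $K_{X/S}$ is trivial up to a pullback from $S$, so the terms in $c_{1}(K_{X/S})$ disappear and $P_{k}$ becomes a polynomial in the Chern classes of $\Omega^{1}_{X}(\log X_{0})$ alone. By the residue exact sequences for a simple normal crossings divisor, $c(\Omega^{1}_{X}(\log X_{0}))|_{D_{I}}=c(\Omega^{1}_{D_{I}})$ on every stratum (the residues $dz_{i}/z_{i}$, $i\in I$, contributing trivial factors), so the characteristic number attached to $D(k)$ is precisely $\chi(D(k))$. It remains to pin down the constant: summing the contribution of the rank-$(k-1)$ nilpotent $A_{k-1}$ model over the weights $(-1)^{p+q}p$ — where the analytic torsion of the Milnor fibre $(\CBbb^{*})^{k-1}$ and the degree-two part of the Todd genus ($\tfrac{1}{12}=|\zeta(-1)|$) produce a shape proportional to $\sum_{j=1}^{k-1}j=\tfrac{k(k-1)}{2}$ — should give $c_{k}=(-1)^{k}\tfrac{k(k-1)}{24}$, whence $\kappa_{f}=\sum_{k=1}^{n+1}(-1)^{k}\tfrac{k(k-1)}{24}\chi(D(k))$.

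For strict Calabi--Yau $3$-folds I would then specialise $n=3$: the $k=1$ term vanishes since $k(k-1)=0$, and $D(4)$ is a finite set of quadruple points, so $\chi(D(4))=Q$. The triple-point formula — a consequence of $K_{X}\simeq\Ocal_{X}$ and of $X_{0}=f^{*}[0]$ being principal, obtained by applying adjunction on a triple curve $C\subset D(3)$ to get $\deg K_{C}=-\#\bigl(D(4)\cap C\bigr)$ and summing over components — relates $\chi(D(3))$ to $Q$; combined with the vanishings $h^{0,1}=h^{0,2}=0$, which through the Clemens--Schmid and weight spectral sequences remove the residual contributions of the lower-dimensional strata, this collapses the four-term sum to $\kappa_{f}=\bigl(\chi(D(2))-4Q\bigr)/12$. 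I expect the main obstacle to be the heart of the Quillen computation — proving the precise singularity formula for $h_{Q,BCOV}$ under a semistable degeneration, i.e.\ that the weighted torsion contributes exactly the constants $(-1)^{k}\tfrac{k(k-1)}{24}$ — for which the singular-fibre theory of Quillen metrics must be pushed through for the BCOV combination and married carefully to the residue sequences turning Chern integrals into the inclusion--exclusion pattern of the $\chi(D(k))$; a secondary, more combinatorial difficulty is the final bookkeeping in the three-dimensional case.
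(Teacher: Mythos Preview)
Your strategy has the right skeleton — split into Quillen and $L^{2}$ parts, kill the $\log|t|^{2}$ term in the $L^{2}$ part using unipotence, then identify $\kappa_{f}$ with the Quillen singularity — but from that point on you take a substantially harder road than the paper and leave its most difficult stretch unfinished.

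The paper does \emph{not} recompute the Quillen singularity from local models. It starts from the already-proved general expression for $\kappa_{f}$ (Theorem~\ref{thm:general-bcov}, equation~\eqref{eq:expression-kappa}), which packages all of the Bismut--Bost/Yoshikawa analysis from \cite{cdg} together with the K\"ahler-vs-logarithmic comparison $\mu_{BCOV}$. In the semi-stable Kulikov setting, $B=0$ and all $\alpha,\alpha^{p,q}$ vanish, and one replaces $\chi(X_{\infty})-\chi(X_{0})$ by $\sum_{k}(-1)^{k+1}(k-1)\chi(D(k))$ (from the stratified Euler-characteristic identity). This collapses \eqref{eq:expression-kappa} to the intermediate form
\[
\kappa_{f}=\sum_{k=1}^{n+1}(-1)^{k}\tfrac{k(k-1)}{8}\chi(D(k))\;-\;\tfrac{(-1)^{n}}{12}\sum_{k=1}^{n}\int_{D(k)}c_{1}(\Omega_{D(k)})c_{n-k}(\Omega_{D(k)}).
\]
The second sum does \emph{not} reduce to Euler characteristics via a residue identity as you assert; your claim that $c(\Omega^{1}_{X}(\log X_{0}))|_{D_{I}}=c(\Omega^{1}_{D_{I}})$ would make those integrals zero (since $c_{1}$ of the log cotangent vanishes under Kulikov), which is false in general. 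What the paper does instead is set up a recursion in $k$: the Kulikov hypothesis forces each pair $(D_{I},B_{I})$ to be log-Calabi--Yau, so $c_{1}(\Omega_{D(k)})=-[B_{D(k)}]$, and intersecting against $c_{n-k}(\Omega_{D(k)})$ and using the conormal sequence pushes the integral down to $D(k+1)$, with a remainder proportional to $\chi(D(k+1))$. Iterating the recursion converts the coefficient $\tfrac{k(k-1)}{8}$ into $\tfrac{k(k-1)}{24}$. Your proposed derivation of the constants via ``analytic torsion of the Milnor fibre $(\CBbb^{*})^{k-1}$'' is an attractive heuristic, but it is not what is happening and you acknowledge you have not carried it through.

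For the three-dimensional refinement, your adjunction idea on triple curves is exactly right: since $(D_{ijk},B_{ijk})$ is log-Calabi--Yau, $\deg K_{D_{ijk}}=-\#(D(4)\cap D_{ijk})$, hence $\chi(D(3))=4Q$; plugging this and $\chi(D(4))=Q$ into the $n=3$ instance of the formula gives the result directly. The invocation of Clemens--Schmid and of $h^{0,1}=h^{0,2}=0$ to ``remove residual contributions of the lower-dimensional strata'' is misplaced: the first displayed simplification in the $3$-fold case is pure adjunction combinatorics and does not use the strict Calabi--Yau hypothesis at all (that hypothesis enters only for the alternative expression via $\chi(D(1),\Ocal_{D(1)})$).
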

The case of dimension three in the theorem confirms a conjecture of Liu--Xia \cite[Conj. 0.5]{LiuXia} and is found in Corollary \ref{prop:LiuXia:special}. 
The theorem, which follows from Proposition \ref{cor:liuxiahigherdim}, can thus be seen as a far-reaching generalization thereof. 

Another important example of generic singularities for several Calabi--Yau families is the case of ordinary double points,  often called conifold singularities. For these singularities we prove in Theorem~\ref{thm:bcov-odp-dim-n}:
\begin{theorem}\label{thmC}
Let $f\colon X\to\DBbb$ be a projective morphism of complex manifolds, with smooth Calabi--Yau $n$-fold fibers outside the origin. If the special fiber $X_{0}$ has at most ordinary double point singularities, then
    \begin{itemize}
        \item if $n$ is odd,
            \begin{displaymath}
                 \kappa_{f}=\frac{n+1}{24}\#\sing(X_{0})\quad\text{and}\quad\varrho_f = \#\sing(X_{0}).
            \end{displaymath}
        \item if $n$ is even,
            \begin{displaymath}
                  \kappa_{f}=-\frac{n-2}{24}\#\sing(X_{0})\quad\text{and}\quad\varrho_f =0. 
            \end{displaymath}
    \end{itemize}
    Here $\#\sing(X_{0})$ denotes the number of singular points in the fiber $X_{0}$.
\end{theorem}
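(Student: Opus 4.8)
The plan is to reduce the computation of $\kappa_f$ and $\varrho_f$ to the general asymptotic formula (Theorem \ref{thm:general-bcov}) by performing an explicit small resolution or standard blow-up of the total space, and then to feed in the known limiting mixed Hodge structure of an ordinary double point degeneration. First I would localize: since an ordinary double point is an isolated hypersurface singularity analytically isomorphic to the affine cone $\sum_{i=0}^{n} z_i^2 = 0$, and since $\kappa_f,\varrho_f$ are defined through the singular terms of $\log\tau_{BCOV}(X_t)$, the contribution of each node is independent and additive. Hence it suffices to treat a single node and multiply by $\#\sing(X_0)$. The natural way to apply the general theory is to replace $f$ by a semistable (or at least normal-crossings) model: blow up the total space $X$ at the singular point of $X_0$, producing $\widetilde{X} \to X$ with exceptional divisor $E \cong Q^n$, a smooth $n$-dimensional quadric, and special fiber $\widehat{X}_0 + 2E$ (or $\widehat{X}_0 + E$ depending on parity and the precise blow-up) with normal crossings support; here $\widehat{X}_0$ is the strict transform, itself smooth. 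One then has the data $D(k)$ and the multiplicities required as input to Proposition \ref{cor:liuxiahigherdim} and to the $\varrho_f$-formula, except that this model is not Kulikov, so I cannot directly quote Theorem \ref{thmB}; I must use the more general Theorem \ref{thm:general-bcov}, which allows for a discrepancy/log-canonical correction term.

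The key steps, in order: (1) compute the canonical bundle discrepancy $K_{\widetilde X} = \pi^* K_X + a E$ for the blow-up; since $X$ has trivial relative canonical bundle near the node and the node has multiplicity $2$ in a smooth ambient, $a$ is the standard blow-up discrepancy for a cone singularity, giving the weight with which $E$ enters the correction to $\kappa_f$. (2) Compute the relevant Euler characteristics: $\chi(Q^n) = n+1$ for $n$ even and $n$ for $n$ odd (from the fact that a smooth quadric hypersurface in $\PBbb^{n+1}$ has the cohomology of $\PBbb^n$ plus an extra middle class when $n$ is even), together with $\chi$ of the double locus $\widehat{X}_0 \cap E$, which is a smooth quadric of one dimension lower, and note that the strict transform $\widehat{X}_0$ is homeomorphic to a smoothing of the node, differing from $X_0$ by the vanishing-cycle sphere $S^n$. (3) Identify the limiting mixed Hodge structure: for a single ODP in dimension $n$, the local monodromy is trivial when $n$ is even (the vanishing cycle is not primitive — it dies in cohomology, the degeneration is of "finite order" on the relevant Hodge piece) and unipotent of a single Jordan block when $n$ is odd (the Picard–Lefschetz formula gives $T - \mathrm{id}$ of rank one). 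This dichotomy is exactly what produces $\varrho_f = \#\sing(X_0)$ when $n$ is odd and $\varrho_f = 0$ when $n$ is even: $\varrho_f$ is read off from the ranks of the logarithms of monodromy on the Hodge bundles, which appear in Proposition \ref{prop:loglogbcov}, and only the odd case has a nonzero $N$. (4) Assemble $\kappa_f$: plug the discrepancy weight, the Euler characteristics of the strata, and the $N$-depending correction terms into the master formula of Theorem \ref{thm:general-bcov}; the arithmetic should collapse to $\frac{n+1}{24}$ in the odd case and $-\frac{n-2}{24}$ in the even case after the vanishing-cycle contributions cancel against the $\chi(D(k))$ terms.

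The main obstacle I expect is step (3)–(4): correctly bookkeeping the limiting mixed Hodge structure contribution to $\kappa_f$, because the general formula in Theorem \ref{thm:general-bcov} involves not just monodromy but the Hodge-theoretic invariants (e.g. dimensions of graded pieces $\Gr^W_\bullet$ of the limit, or the $\delta$-invariant type quantities measuring the defect between $L^2$ and the naive metric) of the degeneration, and for the even-dimensional ODP one must verify carefully that although monodromy is trivial, the variation of Hodge structure still degenerates — the limit Hodge filtration jumps — so there is a nonzero $\kappa_f$ even with $\varrho_f = 0$. Concretely one needs the precise local model of the period map near an even ODP (the vanishing cycle contributes a $(-1)$-eigenspace under the local monodromy around a \emph{different} branch, or contributes via the Clemens–Schmid sequence to $H^{n+1}$ of the singular fiber), and one must check that this matches the Hodge bundle curvature/degeneration input. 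A secondary, more technical obstacle is ensuring that the blow-up model genuinely satisfies the running hypotheses of \textsection\ref{subsec:notations} (projectivity, extension to a compact base), and that the additivity over the several nodes is legitimate — this should follow from the locality of the torsion asymptotics but deserves a careful statement, perhaps by arranging all nodes to degenerate simultaneously and invoking the product formula for the relevant determinant-of-cohomology metrics.
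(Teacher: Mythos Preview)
Your overall strategy---blow up the nodes in the smooth total space $X$, apply Theorem \ref{thm:general-bcov} to the resulting normal-crossings model, and feed in the Picard--Lefschetz monodromy---is exactly what the paper does. But two concrete errors would derail your computation.

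First, the exceptional divisor is not a quadric. You are blowing up smooth points of the $(n{+}1)$-fold $X$, so each $E_i \simeq \PBbb^n_{\CBbb}$. The quadric appears only as the intersection $W_i = Z \cap E_i$, the projectivised tangent cone of $X_0$ at the node, sitting inside $E_i \simeq \PBbb^n$ as a smooth quadric of dimension $n-1$. The special fibre is $\widetilde{X}_0 = Z + 2\sum E_i$ with $D(2)=W$, and the discrepancy divisor is $B = nE$ since $K_{\widetilde X} = \nu^* K_X + nE$. With this corrected stratification the Euler-characteristic and $\int_B c_n(\Omega_{\widetilde X})$ terms in \eqref{eq:expression-kappa} are straightforward (the paper isolates two auxiliary lemmas computing $\int_W c_1(\Ocal(1))c_{n-2}(\Omega_W)$ and $\int_Z c_1(\Omega_Z)c_{n-1}(\Omega_Z)$).

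Second, and more seriously, your description of the monodromy in even dimension is wrong, and this is precisely the term that makes $\kappa_f$ come out nonzero when $n$ is even. For an ordinary double point, Picard--Lefschetz gives: $T$ unipotent (a transvection) when $n$ is odd, hence $T_s=\id$ and all $\alpha^{p,q}=0$; but $T$ a \emph{reflection} of order two when $n$ is even, hence $T=T_s$ with a single $-1$ eigenvalue on the vanishing cycle. The vanishing cycle sits in $\Gr^{n/2}_{F_\infty} H^n(X_\infty)$ (pure weight $n$, middle Hodge type), so by Corollary \ref{cor:eigenvalues} the sole nonzero elementary exponent is $\alpha^{n/2,n/2}=\tfrac{1}{2}\#\sing(X_0)$; this is Proposition \ref{prop:ODP}. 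This contributes $-(n/2)\cdot\tfrac{1}{2}\#\sing(X_0)$ through the last sum in \eqref{eq:expression-kappa} and is essential to obtaining $-\tfrac{n-2}{24}$. Your suspicion that ``the limit Hodge filtration jumps'' despite trivial monodromy is misplaced: the mechanism is the nontrivial $T_s$, not a jump. Your reasoning for $\varrho_f$ happens to give the right answer ($N=0$ when $n$ even, rank-one $N$ when $n$ odd) because $\varrho_f$ depends only on the weight filtration, i.e.\ on $T_u$; but you should state the even case as $T_u=\id$, not $T=\id$.
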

 For $\kappa_f$, the case $n=3$ of the statement was already established by Fang--Lu--Yoshikawa \cite{FLY}, and it was a key point to their proof of the BCOV conjecture for the mirror quintic family. For $n=4$ the theorem corroborates an expectation suggested by work of Klemm--Pandharipande in \cite[Sec. 4 \& 6]{Klemm-Pandharipande}. The case of general $n$ confirms a conjecture of Fang--Lu--Yoshikawa from 2004 (private communication). The refined information for $\varrho_f$ had not been considered before, and is one of the applications of our approach.  

In dimensions three and four we have more general formulas for $\kappa_f$ only assuming a smooth total space (cf. Theorem \ref{thm:LiuXia-general} and Theorem \ref{thm:LiuXia-general4}). These rely on a careful study of Serre duality on K\"ahler extensions due to T. Saito \cite{SaitoTakeshiBored}. In these cases $\kappa_f$ involves in particular the total dimension of the vanishing cycles of the family.

\subsection{Asymptotic behaviour of $L^2$ metrics and monodromy eigenvalues} \label{section:asymptoticalL2}
In order to generally study the asymptotics of the BCOV invariant (cf. \eqref{eq:BCOVasymptoticintro}) one needs a precise control on the asymptotics of Quillen and $L^{2}$ metrics. The singularities of the Quillen-BCOV metric were already dealt with in our previous paper \cite{cdg}, itself relying on results of Yoshikawa \cite{yoshikawa}. As for $L^{2}$ metrics on Hodge bundles, we elaborate on the work of Schmid \cite{schmid} and Peters \cite{Peters-flatness}, and obtain explicit asymptotics in terms of limiting Hodge structures.

The framework of our analysis is a projective morphism $f\colon X\to\DBbb$ of complex manifolds, with normal crossings special fiber. Denote by $\Omega_{X/\DBbb}^{p}(\log)$ the sheaf of relative differential $p$-forms with logarithmic poles along the special fiber. Then the higher direct image sheaves $R^q f_\ast \Omega_{X/\DBbb}^{p}(\log)$ are locally free. Given a choice of rational K\"ahler structure, the vector bundles $R^q f_\ast \Omega_{X/\DBbb}^{p}(\log)$ carry corresponding $L^{2}$ metrics, possibly singular at the origin. In Theorem \ref{thm:expansion-hodge} we determine the singularities of the induced metrics on $\det R^q f_\ast \Omega_{X/\DBbb}^{p}(\log)$. The statement involves the limiting Hodge structure on $H^{p+q}(X_\infty)$, the cohomology of a general fiber. Recall that this is a mixed Hodge structure with a Hodge type filtration $F^\bullet_\infty$ and a weight filtration $W_\bullet$. The semi-simple part of the monodromy, denoted $T_s$, acts upon the whole mixed Hodge structure and admits a logarithm $^{\ell}\log T_s$ with eigenvalues in $2\pi i\cdot \QBbb \cap (-1, 0] $. 

\begin{theorem}\label{ThmMetric}
Let $\sigma$ be a holomorphic trivialization of $\det R^{q}f_{\ast}\Omega_{X/\DBbb}^{p}(\log)$. Then we have a real analytic asymptotic expansion for its $L^{2}$ norm
\begin{displaymath}
        \log h_{L^2}(\sigma,\sigma) = \alpha^{p,q} \log|t|^2 + \beta^{p,q} \log \log|t|^{-1}+ C + O\left(\frac{1}{\log|t|}\right),
\end{displaymath}
with
\begin{displaymath}
    \alpha^{p,q}=-\frac{1}{2\pi i}\tr\left(^{\ell}\log T_{s}\mid \Gr^{p}_{F_{\infty}} H^{k}(X_{\infty})\right)\quad (k=p+q)
\end{displaymath}
and
\begin{displaymath}
    \beta^{p,q}=\sum_{r=-k}^{k}r\dim\Gr_{F_\infty}^{p}\Gr^{W}_{k+r}H^{k}(X_{\infty})
\end{displaymath}
and a constant $C \in \RBbb.$
\end{theorem}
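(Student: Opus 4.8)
The plan is to reduce the statement to the classical norm estimates for a single flat bundle with unipotent-type monodromy, via Schmid's nilpotent orbit theorem and the theory of variations of Hodge structure with logarithmic poles. First I would pass to a finite base change $t \mapsto t^m$ so that the monodromy $T$ becomes unipotent; since the asymptotic expansion in the statement is compatible with pull-back (the coefficients $\alpha^{p,q}$, $\beta^{p,q}$ scale linearly in $m$, matching $\log|t|^2 \mapsto m \log|t|^2$ and the invariance of $\log\log|t|^{-1}$ up to bounded error), it suffices to treat the unipotent case, where $^{\ell}\log T_s = 0$ and so $\alpha^{p,q}$ should turn out to be zero. Wait — in the unipotent case $\Gr^p_{F_\infty}$ still carries the full weight filtration, so $\beta^{p,q}$ is genuinely present, while $\alpha^{p,q}=0$; the general semisimple part is then recovered by tracking how the eigenvalue decomposition of $T_s$ twists the Deligne canonical extension, contributing exactly the $|t|^{2a}$-factors with $a \in [0,1)$ whose logarithms sum to $\alpha^{p,q}\log|t|^2$.

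Next, the core analytic input: on the punctured disc, $R^q f_* \Omega^p_{X/\DBbb}(\log)$ together with the Hodge filtration on $R^{p+q}f_*\Omega^\bullet_{X/\DBbb}(\log)$ gives the Deligne extension of the Hodge bundle $\Fcal^p \subset \Hcal = R^{k}$, and I would invoke Schmid's $\SL_2$-orbit / nilpotent orbit theorem to compare the actual Hodge metric on $\Fcal^p$ with the model metric of the nilpotent orbit, the error being $O(1/\log|t|)$ on the level of the metric (hence on $\log h_{L^2}$ after taking determinants and logarithms — one must check the error stays $O(1/\log|t|)$ and does not degrade to merely $o(1)$, which is where Schmid's precise estimates, as refined by Peters, are used). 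For the nilpotent orbit model, the Hodge norm of a multivalued flat section $v$ lying in $W_{k+r}$ but not $W_{k+r-1}$ behaves like $(\log|t|^{-1})^{r}$ up to bounded multiplicative factors, by the $\SL_2$-representation-theoretic description of the limiting mixed Hodge structure (the weight grading is the eigenspace decomposition of the semisimple element of the $\sl_2$-triple built from $N = \log T$). Choosing a flat frame adapted to both $W_\bullet$ and, after the twist, to $F^\bullet_\infty$, the determinant of the Gram matrix is, up to a bounded factor and the $|t|$-twists from the eigenvalues of $T_s$, a product $\prod |t|^{2a_j} (\log|t|^{-1})^{r_j}$; taking $-\log$ and summing over a basis of $\Gr^p_{F_\infty}H^k$ yields exactly $\alpha^{p,q}\log|t|^2 + \beta^{p,q}\log\log|t|^{-1}$, with $r_j$ the weight shift $k+r \mapsto r$, explaining the sum $\sum_r r \dim \Gr^p_{F_\infty}\Gr^W_{k+r}$.

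The remaining point is that $\sigma$ is an arbitrary holomorphic trivialization of the determinant of the logarithmic Deligne extension, not necessarily the one coming from a flat frame; but any two such differ by a nowhere-vanishing holomorphic function on the disc, whose $\log|\cdot|^2$ is real-analytic and bounded, so it is absorbed into the constant $C$ and does not affect $\alpha^{p,q}$ or $\beta^{p,q}$. Assembling: choose the Deligne frame, compute the model Gram determinant, invoke Schmid–Peters for the $O(1/\log|t|)$ comparison, and read off the coefficients from the limiting MHS. The main obstacle I anticipate is not the identification of the leading coefficients — that is essentially bookkeeping with the $\SL_2$-orbit — but rather pinning down the \emph{real-analyticity} of the full expansion and the sharp $O(1/\log|t|)$ error term for the determinant metric: Schmid's estimates are stated for individual Hodge norms and one must control cross terms / off-diagonal Gram entries carefully (and handle the non-unipotent twist uniformly) to ensure that after passing to $\det$ the error does not accumulate. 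This is presumably where the refinement of Schmid alluded to in the introduction, and the careful use of Peters' flatness results, does the real work.
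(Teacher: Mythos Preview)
Your outline is broadly on track, but it diverges from the paper's argument at the two points you yourself flag as delicate, and in one of them there is a genuine gap.

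For the reduction to the unipotent case: you say the $\alpha^{p,q}\log|t|^2$ term arises from how the $T_s$-eigenvalues twist the Deligne extension under base change, but this is not mere bookkeeping. The precise statement --- that the elementary exponents of the $(p,q)$ Hodge bundle (the powers of $t$ measuring the gap between $\rho^*R^q f_*\Omega^p_{X/\DBbb}(\log)$ and its semi-stable counterpart) coincide with the numbers $a\in[0,1)$ for which $e^{-2\pi i a}$ is an eigenvalue of $T_s$ acting on $\Gr^p_{F_\infty}H^k(X_\infty)$ --- is the paper's Theorem~\ref{thmD} (Corollary~\ref{cor:eigenvalues}), proven separately by an inductive construction of a basis of $\Fcal^p_{\log}$ simultaneously adapted to the Hodge filtration and the $T_s$-eigenspace decomposition. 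That $T_s$ even preserves $F^p_\infty$, and that the \emph{graded} eigenvalues on $\Gr^p_{F_\infty}$ (rather than some other subset of the spectrum on $H^k$) are the ones that appear, is the actual content; without it you cannot identify the $\log|t|^2$ coefficient with the stated trace.

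For $\beta^{p,q}$ in the unipotent case, the paper does \emph{not} attempt to control off-diagonal Gram entries. It proves only the one-sided estimate $\beta \le \beta^{p,q}$: lift a weight-adapted basis of $\Gr^p_{F_\infty}$ to sections $\tilde\sigma_j\in\Fcal^p_{\log}$, apply Schmid's norm estimate (in Zucker's formulation) to get $\|\tilde\sigma_j\|^2 = O((\log|t|)^{\ell_j-k})$, and bound the determinant of the quotient metric by the product $\prod_j\|\tilde\sigma_j\|^2$. The matching lower bound comes from Serre duality: since $\det\Omega_{X/\DBbb}(\log)=\omega_{X/\DBbb}$ in the semi-stable case, there is an $L^2$-isometry $\det R^qf_*\Omega^p(\log)\simeq(\det R^{n-q}f_*\Omega^{n-p}(\log))^\vee$, so the actual exponents satisfy $\beta(p,q)=-\beta(n-p,n-q)$; combined with the identity $\beta^{p,q}=-\beta^{n-p,n-q}$ for the target expression (Lemma~\ref{lemma:beta-p-q}, from Hodge symmetry on $\Gr^W$ and the $N^r$-isomorphisms), the two upper bounds force equality. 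This sidesteps entirely the off-diagonal analysis you anticipate. Your direct route via the full $\SL_2$-orbit expansion is plausible in principle but would require finer control than the paper uses. As for real-analyticity and the sharp $O(1/\log|t|)$ remainder, the paper does not reprove this: it is quoted from Peters.
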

 This result refines \cite[Prop. 2.2.1]{Peters-flatness}. Notice that he could only show that $\alpha^{p,q}$ is a rational number extracted from the monodromy in a non-precise manner, and similarly for $\beta^{p,q}$. Our contribution thus clarifies the exact relationship to the limiting mixed Hodge structure.

In the semi-stable case (hence unipotent monodromies) the coefficient $\alpha^{p,q}$ is known to be zero. The determination of the coefficient $\alpha^{p,q}$ thus reduces to the following purely algebraic geometric consideration. To compare to the semi-stable case we exhibit a diagram
\begin{displaymath}
    \xymatrix{
        Y\ar[d]_{g}\ar[r]  &X\ar[d]^{f}\\
        \DBbb\ar[r]^{\rho}  &\DBbb,
    }
\end{displaymath}
where $\rho(t)=t^{\ell}$ is some ramified covering and $Y \to \DBbb$ is semi-stable. The diagram is Cartesian over $\DBbb^{\times}$. There is a natural inclusion of vector bundles
\begin{displaymath}
    \rho^{\ast}  R^q f_\ast \Omega_{X/\DBbb}^{p}(\log)\subseteq R^q g_\ast \Omega_{Y/\DBbb}^{p}(\log).
\end{displaymath}
The quotient is a torsion sheaf supported on the origin, and can hence be written as

\begin{displaymath}
    \bigoplus_{j=1}^{h^{p,q}}\frac{\Ocal_{\DBbb,0}}{t^{a_{j}}\Ocal_{\DBbb,0}},
\end{displaymath}
for some integers $a_{j}\geq 0$. The rational numbers $\alpha_{j}^{p,q} = \frac{a_j}{\ell}\in [0,1)$ are independent of the choice of semi-stable reduction, and we call them the elementary exponents of the $(p,q)$ Hodge bundle. Their sum is equal to the sought for $\alpha^{p,q}$ in Theorem \ref{ThmMetric}. We prove the following fundamental theorem in the theory of degenerations of Hodge structures, of independent interest:

\begin{theorem} \label{thmD}
The elementary exponents $\alpha^{p,q}_{j}$ of the $(p,q)$ Hodge bundle are such that  $\exp(-2\pi i \alpha^{p,q}_{j})$ constitute the eigenvalues of $T_{s}$, the semi-simple part of the monodromy acting on $\Gr_{F_{\infty}}^{p}H^{p+q}(X_{\infty})$ (with multiplicities).

\end{theorem}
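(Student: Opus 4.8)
The plan is to reduce the statement to a comparison of two filtrations, one coming from the Hodge theory of the semi-stable model $Y\to\DBbb$ and the induced (Deligne) canonical extension, and one coming from the $t^{a_j}$-exponents of the inclusion $\rho^*R^qf_*\Omega^p_{X/\DBbb}(\log)\subseteq R^qg_*\Omega^p_{Y/\DBbb}(\log)$. The key point is that the local system $\rho^*(R^{p+q}f_*\QBbb)$ on $\DBbb^\times$ is canonically identified with $R^{p+q}g_*\QBbb$, so after pulling back along $\rho$ the monodromy becomes $T^\ell$, which decomposes as $T_s^\ell\cdot T_u^\ell=T_u^\ell$ on each generalized eigenspace; the unipotent part survives but the semi-simple part $T_s$ contributes only through the multiplicity of its eigenvalue $e^{-2\pi i\alpha}$ on the $\alpha$-eigenspace, and it is exactly this eigenvalue data that the $a_j/\ell$ must recover.

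First I would set up the Deligne extension in this two-sided form. Over $\DBbb^\times$ choose a multivalued flat frame of $R^{p+q}f_*\CBbb\otimes\Ocal$ adapted to the generalized eigenspace decomposition of $T$; on the $e^{-2\pi i\gamma}$-eigenspace ($\gamma\in[0,1)$) the Deligne extension with eigenvalues of the residue in $[0,1)$ is generated by $t^{\gamma}\exp\!\big(\tfrac{\log t}{2\pi i}N\big)v$. The Hodge bundle $R^qf_*\Omega^p_{X/\DBbb}(\log)$ is the $p$-th graded piece $\Gr^p_F$ of (a subsheaf of) this canonical extension, by the theory of log de Rham complexes for normal-crossings degenerations (Steenbrink/Illusie), and the analogous description holds for $Y\to\DBbb$ with eigenvalues taken in $[0,1)$ after the substitution $t\mapsto t^{1/\ell}$, i.e. the $Y$-extension is generated by the $t^{\gamma/\ell}$-twisted frames. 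Comparing the two extensions fibrewise at $t=0$: on the $\gamma$-eigenspace the $X$-generator $t^\gamma(\cdots)$ equals $t^{\gamma-\lceil\gamma\ell\rceil/\ell\cdot\ell/\ell}$... — more precisely, writing $\gamma\ell = q_\gamma$ (an integer when $\gamma\in\frac1\ell\ZBbb$, which it is since $T_s^\ell$ is unipotent forces the $T$-eigenvalues to be $\ell$-th roots of unity), the $X$-extension generator $t^\gamma v$ has, relative to the $Y$-extension generator which is regular and nonvanishing, a zero of order exactly $\gamma\ell/\ell\cdot$… I would make this bookkeeping precise: the elementary divisor $a_j$ attached to a frame vector lying in the $e^{-2\pi i\gamma}$-eigenspace is $\gamma\ell$, hence $\alpha^{p,q}_j=a_j/\ell=\gamma$. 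Passing to $\Gr^p_F$ and counting dimensions then gives: the multiset $\{\alpha^{p,q}_j\}_{j=1}^{h^{p,q}}$ equals the multiset of numbers $\gamma\in[0,1)$ with multiplicity $\dim\Gr^p_{F_\infty}(\ker(T_s-e^{-2\pi i\gamma}))$ on $H^{p+q}(X_\infty)$, which is precisely the assertion that $\{e^{-2\pi i\alpha^{p,q}_j}\}$ are the eigenvalues of $T_s$ on $\Gr^p_{F_\infty}H^{p+q}(X_\infty)$ with multiplicity.

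The two subtleties I would treat carefully are: (i) that the $p$-th Hodge filtration step of the $X$-canonical extension is genuinely $R^qf_*\Omega^p_{X/\DBbb}(\log)$ and commutes with the eigenspace decomposition — this is where one invokes that $F^\bullet_\infty$ is a filtration by sub-mixed-Hodge-structures and that $T_s$ is a morphism of MHS, so $\Gr^p_{F_\infty}$ of an eigenspace is the eigenspace of $\Gr^p_{F_\infty}$; and (ii) independence of the chosen semi-stable reduction, already asserted in the excerpt, which also follows since the $\gamma$'s are intrinsic to $(H^{p+q}(X_\infty),F_\infty,T_s)$. The main obstacle I anticipate is bookkeeping the precise order of vanishing in step one — i.e. proving that the elementary divisor of the inclusion on a $\gamma$-eigenvector is \emph{exactly} $\gamma\ell$ and not merely $\le\gamma\ell$ or $\lceil\gamma\ell\rceil$ — which requires knowing that the $Y$-extension restricted to the $\gamma$-eigenspace is generated \emph{freely} by the $t^{\gamma/\ell}$-twist with no further twist by powers of $t$, i.e. that the minimal-eigenvalue normalization of the Deligne extension is compatible with taking $\Gr^p_F$. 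This is the technical heart and is where one must use that both $f$ and $g$ have reduced normal-crossings (resp. semi-stable) special fibre so that the log de Rham complex computes the canonical extension on the nose, together with the fact that $g$ being semi-stable forces $\alpha^{p,q}_j\cdot\ell\in\ZBbb_{\ge0}$ so that the $X$-eigenvalues are indeed $\ell$-th roots of unity.
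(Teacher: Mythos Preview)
Your outline is in the right spirit---comparing the lower Deligne extension for $f$ with the canonical extension for the semi-stable $g$ via an eigenframe for $T_s$---and this is also how the paper begins (its Lemma on a single section $\sigma$ with $\sigma(0)\neq 0$ is exactly your ``order of vanishing equals $\gamma\ell$'' computation on one eigenspace). But the step you yourself flag as ``the main obstacle'' is a genuine gap, and your proposed fix does not close it.

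The problem is this. The flat eigenspace decomposition $\Ucal=\bigoplus_\gamma\Ucal_\gamma$ lives on the \emph{whole} disc, and for the full inclusion $\rho^\ast\Vcal_{\log}\subset\Ucal$ one does get $\rho^\ast\Vcal_{\log}=\bigoplus_\gamma t^{\gamma\ell}\Ucal_\gamma$, so the elementary divisors of the ambient inclusion are transparent. However the Hodge subbundle ${}^\rho\Fcal^p\subset\Ucal$ is \emph{not flat}, so there is no reason for it to split as $\bigoplus_\gamma({}^\rho\Fcal^p\cap\Ucal_\gamma)$ over $\DBbb$. Your appeal in~(i) to ``$T_s$ is a morphism of MHS, so $\Gr^p_{F_\infty}$ of an eigenspace is the eigenspace of $\Gr^p_{F_\infty}$'' only yields the splitting \emph{at the fibre} $t=0$; it says nothing about the bundle. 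And knowing the elementary divisors of $A\subset B$ together with a compatible filtration $A^\bullet\subset B^\bullet$ does not in general determine the elementary divisors of $\Gr^p A\subset\Gr^p B$. Concretely: a frame of $\Fcal^p_{\log}$ written in the eigenframe $\tilde{\ebold}_j$ will have holomorphic coefficients $f_j(q)$ mixing several eigenspaces, and the order of vanishing of $\rho^\ast\sigma$ in $\Ucal$ is governed by the \emph{smallest} $\gamma$ with $f_j(0)\neq 0$---so different basis vectors can ``collide'' on the same eigenvalue and their $t^{-a}\rho^\ast\sigma$'s can become linearly dependent at $t=0$.

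The paper resolves this not by a splitting argument but by an inductive \emph{basis-modification} procedure. Starting from an arbitrary basis $\sigma_1,\ldots,\sigma_h$ of $\Fcal^p_{\log}$, it builds a new basis $\theta_1,\ldots,\theta_h$ with the same flag (so in particular compatible with the step $\Fcal^{p+1}_{\log}\subset\Fcal^p_{\log}$), such that each ${}^\rho\theta_j:=t^{-a_j}\rho^\ast\theta_j$ lands in ${}^\rho\Fcal^p$ with ${}^\rho\theta_j(0)$ a $T_s$-eigenvector, and the ${}^\rho\theta_j(0)$ are linearly independent. The induction step is: apply the single-section lemma to $\sigma_{m+1}$; if the resulting eigenvector at $0$ is independent of the previous ones, done; if not, subtract off the dependent part (which lives in the same eigenspace, hence has the same $a$), obtaining a new section whose leading eigenvalue is \emph{strictly larger}; iterate. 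This terminates because the eigenvalues are bounded by $\ell-1$ and strictly increase. Applying this to a basis adapted to $\Fcal^{p+1}_{\log}\subset\Fcal^p_{\log}$ yields the statement for $\Gr^p$.

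So what your sketch is missing is precisely this correction/iteration argument that upgrades the single-section computation to a statement about elementary divisors of the Hodge subbundle. An alternative route you might have taken (but did not) is to exploit the $\mu_\ell$-action on $\Ucal$ covering $t\mapsto\zeta t$: since ${}^\rho\Fcal^p$ restricted to $\DBbb^\times$ is $\rho^\ast$ of something, it is $\mu_\ell$-stable, hence splits into isotypic pieces as a bundle, and on each piece the elementary divisor is forced. That would realise your ``commutes with the eigenspace decomposition'' at the bundle level, but it requires the equivariance input, not merely the MHS fact about $T_s$ at $t=0$.
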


 In fact our arguments provide more general results for Deligne extensions for degenerations of Hodge structures. These results constitute the content of Section \ref{section:monodromy} (see in particular Theorem \ref{thm:eigenvalues} and Corollary \ref{cor:eigenvalues}). Theorem \ref{ThmMetric} and Theorem \ref{thmD} generalize Theorem A of \cite{cdg} and analogous results by Halle--Nicaise \cite{NicaiseHalle} and Boucksom--Jonsson \cite{BJvolumes} which consider constructions coming from canonical bundles. With respect to these references, we stress that the theorems of this section do not assume any Calabi--Yau hypothesis, and apply to general graded pieces of the Hodge filtration. To the best of our knowledge, and despite of their classical appearance and relevance, the results of this section are new.

\subsection{Relations with mirror symmetry}
\setcounter{theorem}{0}

In this section we return to the initial motivation for the introduction of the BCOV invariant, discussed at the beginning of the text. More computational endeavours and applications of the invariant will be the topic of work in preparation. 

Consider $f \colon X \to \DBbb$ to be a maximally unipotent projective degeneration of Calabi--Yau $n$-folds\footnote{This is also known as a large complex structure limit of Calabi--Yau manifolds.}, meaning that for the monodromy operator $T$ on the local system corresponding to $H^n$, we have  $(T-1)^n \neq 0$ and $(T-1)^{n+1} = 0$. Denote by $X_{\infty}$ a general fiber. In this setting, mirror symmetry predicts the existence of a Calabi--Yau mirror $X_{\infty}^\vee$ and an ample class $[H]$ such that
\begin{equation}\label{eq:kappafmirror}
    \kappa_f =  \frac{(-1)^{n+1}}{12} \int_{X_{\infty}^\vee} c_{n-1}(X_{\infty}^\vee) \cap [H].
\end{equation}
See for instance the introduction in Yoshikawa's \cite{Yoshikawa-orbifold} and the discussion by Klemm--Pandhari\-pande \cite[Sec. 4]{Klemm-Pandharipande}.

Generally, even when potential mirrors are known, such as in Batyrev's framework using toric Fano varieties \cite{Batyrev-toric}, this seems to be out of reach. However, in the special case when $X_\infty$ is an abelian variety or a hyperk\"ahler variety, we can confirm that both sides of \eqref{eq:kappafmirror} are zero. The right hand side vanishes, because $X_\infty^\vee$ is also an abelian or hyperk\"ahler variety and so  $c_{n-1}(X_\infty^\vee)=0$. The vanishing of the left hand side is due to the constancy of the BCOV invariant for such families, cf. Proposition \ref{prop:ken-ichi}. Besides, in the direction of the conjecture we can prove the following corollary, which is a consequence of the general form of Theorem \ref{thmB} (Theorem \ref{thm:general-bcov} \emph{infra}) :
\begin{cor-intro}
Suppose that $f \colon X \to \DBbb$ is a projective degeneration of Calabi--Yau varieties, with unipotent monodromies. Then
\begin{displaymath}
    12 \kappa_f \in \ZBbb.
\end{displaymath}
\end{cor-intro}

\setcounter{theorem}{0}





Consider momentarily a Calabi--Yau 3-fold $Z$ and $[H]$ an ample cohomology class in $H^2 (Z)$. Then by the known stability of $T_Z$ and the Bogomolov--Gieseker inequality , we have

\begin{displaymath}
    \int_Z c_2(Z) \cap [H] \geq 0.
\end{displaymath}

Taking into account \eqref{eq:kappafmirror}, this leads us to make the following conjecture:

\begin{conjecture}
If $f \colon X \to \DBbb$  is a projective degeneration of 3-dimensional Calabi--Yau varieties with maximally unipotent monodromy, then $\kappa_f \geq 0$.
\end{conjecture}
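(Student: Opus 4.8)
The conjecture is the mirror-symmetric shadow of the inequality $\int_{Z}c_{2}(Z)\cap[H]\geq 0$ recalled just above it (valid for any Calabi--Yau $3$-fold $Z$ with ample class $[H]$, by stability of $T_{Z}$ and Bogomolov--Gieseker). So the first thing I would try is the ``mirror'' proof: for a maximally unipotent degeneration $f$, invoke the existence of the mirror $X_{\infty}^{\vee}$ and of an ample class $[H]$ and apply the predicted formula \eqref{eq:kappafmirror}; since $n=3$ the sign there is $+1$, so $12\kappa_{f}=\int_{X_{\infty}^{\vee}}c_{2}(X_{\infty}^{\vee})\cap[H]\geq 0$. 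The weakness is evident: this rests on the (open) existence of $X_{\infty}^{\vee}$ and validity of \eqref{eq:kappafmirror}, so it explains and motivates the statement but does not prove it. An unconditional argument must therefore stay on the degeneration side.

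For the unconditional route I would proceed as follows. Since a base change $t=s^{\ell}$ multiplies $\kappa$ by $\ell$ (compare the scaling in \eqref{eq:BCOVasymptoticintro}) and preserves maximal unipotency, the sign of $\kappa_{f}$ is unchanged under base change; so after semistable reduction and the relative minimal model program I may assume $f$ is as favourable as possible --- ideally a Kulikov degeneration with simple normal crossings special fibre $X_{0}=\sum_{i}D_{i}$. Then Theorem \ref{thmB} (or, without the Kulikov hypothesis, the general Theorem \ref{thm:general-bcov}) reduces the problem to a purely topological inequality; in the strict Calabi--Yau case it becomes
\[
    \chi(D(2))\ \geq\ 4\,\#D(4).
\]
Maximal unipotency now enters combinatorially: the dual complex of $X_{0}$ is, for a dlt minimal model, homeomorphic to $S^{3}$ (in general a closed $\QBbb$-homology $3$-sphere), by the work of Koll\'ar--Xu on dual complexes of Calabi--Yau pairs. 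From $\chi(S^{3})=0$ and the pseudomanifold condition this yields strata identities such as $\#\pi_{0}(D(3))=2\,\#D(4)$ and $\#\pi_{0}(D(2))=\#D(1)+\#D(4)$, which I would combine with the $d$-semistability (triple-point) relations on each double surface $D_{i}\cap D_{j}$ in order to bound $\chi(D_{i}\cap D_{j})$ from below.

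The hard part is precisely this last step: turning combinatorial and local-geometric constraints into the global inequality $\chi(D(2))\geq 4\,\#D(4)$. Through \eqref{eq:kappafmirror} this inequality \emph{is} the Bogomolov--Gieseker inequality for the conjectural mirror, so I expect it to need genuinely new input of one of two kinds: (a) an explicit verification in families where a mirror is actually constructed, e.g. Batyrev--Borisov toric complete intersections, where both sides of \eqref{eq:kappafmirror} become combinatorial quantities attached to reflexive polytopes; or (b) an intrinsic, Hodge-theoretic or tropical, form of the Bogomolov--Gieseker inequality on the integral-affine $3$-sphere carried by a maximal degeneration, in the spirit of the Gross--Siebert program. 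As a sanity check and a possible seed for a deformation argument, ordinary double point (conifold) degenerations satisfy $\kappa_{f}=\tfrac{1}{6}\,\#\sing(X_{0})>0$ by the odd-dimensional case of Theorem \ref{thmC}; so if one could connect an arbitrary maximal degeneration to such elementary ones by a chain of base changes, blow-ups and flops with controlled effect on $\kappa$, positivity would propagate. Making that chain precise --- in particular computing the jump of $\kappa$ across flops and blow-ups --- looks like the real obstacle, and is presumably why only a conjecture is stated.
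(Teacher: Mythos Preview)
The statement under discussion is a \emph{conjecture}: the paper does not prove it, and explicitly presents it as open. The paragraph following the conjecture in the paper records only two special cases where it is known (abelian $3$-folds, where $\tau_{BCOV}\equiv 1$ so $\kappa_{f}=0$; and the mirror quintic, via \cite{FLY}) and expresses the hope that the explicit formulas of Theorem~\ref{thmB} will eventually serve as a tool. There is therefore no ``paper's own proof'' to compare against.

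Your write-up reflects this correctly: you do not claim a proof, and you identify the genuine obstacles. Your motivating heuristic via \eqref{eq:kappafmirror} and Bogomolov--Gieseker is exactly the reasoning the paper uses to justify stating the conjecture. Your proposed unconditional route --- pass to a semistable Kulikov model, invoke Theorem~\ref{thmB}, and reduce to a combinatorial/topological inequality on the strata of $X_{0}$ using that the dual complex is a (rational homology) $3$-sphere --- is a natural line of attack and is consistent with the paper's remark that the explicit formulas for $\kappa_{f}$ should be ``a useful tool towards the proof''. You are also right that the crux is the global inequality on $\chi(D(2))$ versus the number of quadruple points, and that no mechanism in the paper supplies it.

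One correction: your ``sanity check'' via conifolds is not a check of \emph{this} conjecture. An ordinary double point in relative dimension $3$ has monodromy satisfying $(T-1)^{2}=0$ by Picard--Lefschetz, so such degenerations are never maximally unipotent; the positivity $\kappa_{f}=\tfrac{1}{6}\#\sing(X_{0})>0$ from Theorem~\ref{thmC} is instead evidence for the more ambitious (and, as the paper notes, open) question about the sign of $\kappa_{f}$ for arbitrary degenerations. Also, be aware that the coefficient of $Q$ in the strict Calabi--Yau $3$-fold formula differs between the introduction and Corollary~\ref{prop:LiuXia:special}; if you pursue the combinatorial route, work from the body of the paper rather than the introductory statement.
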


We remark that the conjecture is true for abelian $3$-folds, since in this case our BCOV invariant is 1 so that $\kappa_f$ is 0. Most importantly, it is also known to hold for the mirror quintic family, as in this case the BCOV predictions were confirmed by Fang--Lu--Yoshikawa \cite{FLY}. In the general case, we expect that the explicit formulas we exhibit for $\kappa_f$ (cf. Theorem \ref{thmB} for the semi-stable relative minimal setting) will provide a useful tool towards the proof of the conjecture.

More ambitiously, we could ask about the sign of the coefficient $\kappa_{f}$ for an arbitrary degeneration, non-necessarily with maximal unipotent monodromy. This question, for which we don't have a conjectural answer, is closely related to the problem of determining the birational type of the moduli space of polarized Calabi--Yau 3-folds. We thank K.-I. Yoshikawa for bringing our attention to this fact.

In another direction, it is expected (cf. \cite[p. 137]{KontMirror}) that birational Calabi--Yau manifolds have the same $B$-models. Since the BCOV invariant should only depend thereupon, this expectation should afford the following realization:

\begin{conjecture}\label{conj:birationalinv}
If $X$ and $X'$ are birational Calabi--Yau manifolds, then  $\taubcov{X} = \taubcov{X'}$.
\end{conjecture}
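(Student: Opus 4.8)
The plan is to reduce the birational invariance to the differential equation \eqref{eq:ddc-log-BB} together with the asymptotic description \eqref{eq:BCOVasymptoticintro} and the explicit boundary formulas of Theorem \ref{thmB}. The starting point is the classical fact, due to the minimal model program for Calabi--Yau manifolds, that two birational Calabi--Yau manifolds $X$ and $X'$ are connected by a sequence of flops (Kawamata, Koll\'ar); moreover a flop can be realized as the two resolutions of a common degeneration. Concretely, one wants to produce a family $f\colon \mathcal{X}\to \DBbb$ and $f'\colon \mathcal{X}'\to\DBbb$ of Calabi--Yau $n$-folds over a disc, both smooth over $\DBbb^\times$ with $\mathcal{X}_t\cong \mathcal{X}'_t$ there, such that $\mathcal{X}$ degenerates to (a model of) $X$ at $t=0$ and $\mathcal{X}'$ to $X'$, with the two total spaces related by the flop. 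Since $\tau_{BCOV}$ is a function of the complex structure only, the identification $\mathcal{X}_t\cong\mathcal{X}'_t$ for $t\neq 0$ forces $\tau_{BCOV}(\mathcal{X}_t)=\tau_{BCOV}(\mathcal{X}'_t)$; the whole content is then to compare the two $t\to 0$ limits, which by \eqref{eq:BCOVasymptoticintro} are governed by the invariants $(\kappa_f,\varrho_f)$ and the continuous part.

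The key steps, in order, would be: (1) Set up the local model of a flop: a small contraction $X\to \overline{X}$ contracting finitely many $\mathbb{P}^1$'s (or a $\mathbb P^k$ in higher-dimensional flops) with the flopped $X'\to\overline{X}$, and present both as fibers of a degeneration with $K$ trivial on the total space — so that Theorem \ref{thmB} (the Kulikov case) applies to both $f$ and $f'$, or alternatively arrange a semistable model and use the general Theorem \ref{thm:general-bcov}. (2) Show that the \emph{monodromy} of both families is trivial (a flop is an isomorphism in codimension one, so the vanishing cohomology is supported in high codimension and the local system $R^k f_* \mathbb{Q}$ extends across $0$), hence $\varrho_f=\varrho_{f'}=0$ and, via Theorem \ref{ThmMetric}/\ref{thmD}, all elementary exponents vanish; thus \eqref{eq:BCOVasymptoticintro} reduces to $\log\tau_{BCOV}(\mathcal X_t)=\kappa_f\log|t|^2+\text{continuous}$, and the same for $f'$. (3) Compute $\kappa_f-\kappa_{f'}$ from the topological formula in Theorem \ref{thmB}: since a flop leaves $H^k$ unchanged as a Hodge structure for all $k$ and only alters the special fiber by the flop locus, the Euler characteristics $\chi(D(k))$ entering the two formulas differ by the Euler-characteristic contribution of the exceptional loci, which one checks cancels because the contracted and flopped loci have equal Euler characteristics (both are $\mathbb{P}^k$-bundles over the same base, or more generally have the same class in the Grothendieck ring up to the pieces that do not affect the formula). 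Hence $\kappa_f=\kappa_{f'}$. (4) Conclude that $\log\tau_{BCOV}(\mathcal X_t)$ and $\log\tau_{BCOV}(\mathcal X'_t)$ differ by a bounded function near $0$; but both extend continuously, and on $\DBbb^\times$ they are \emph{equal}; letting $t\to 0$ gives $\tau_{BCOV}(X)=\tau_{BCOV}(X')$.

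The main obstacle I expect is step (1) together with the topological bookkeeping in step (3): producing a single family whose total space is smooth (or at worst admits a semistable model one controls) and whose two fibers over $0$ are genuinely $X$ and $X'$ — rather than singular birational models — is delicate, because the naive degeneration to a flop has a singular total space, and one must resolve it while keeping track of how $K$ and the stratification $D(k)$ change; this is exactly where the "further assumptions on the degeneration" in \eqref{eq:BCOVasymptoticintro} must be verified, and where one may instead need to argue directly that the \emph{continuous term} in \eqref{eq:BCOVasymptoticintro} also agrees for the two families (for instance via the holomorphic anomaly equation \eqref{eq:ddc-log-BB}, noting that the Hodge and Weil--Petersson forms $\omega_{H^k},\omega_{WP}$ are insensitive to flops since the variations of Hodge structure coincide). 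A clean alternative, avoiding degenerations entirely, is to deform: connect $X$ and $X'$ through a family of flops over a base, use that $dd^c\log\tau_{BCOV}$ is given by \eqref{eq:ddc-log-BB} which is flop-invariant, and integrate; but then one still needs a boundary/normalization input to kill the pluriharmonic ambiguity, which again routes back through Theorem \ref{thmB}.
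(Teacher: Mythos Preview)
This statement is listed in the paper as an open \emph{conjecture}, not a theorem; the paper gives no proof, remarks that even the three-dimensional case from \cite{FLY} remains open, and cites only the partial arithmetic result of Maillot--R\"ossler \cite{MaRo}. So you are proposing a strategy for an open problem, and it should be assessed as such.

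Your outline has two structural gaps. First, the reduction to flops (your step (1)) is only a theorem in dimension~3; in higher dimensions the decomposition of a birational map of Calabi--Yau manifolds into elementary flops is not known, so the very first move already restricts you to $n=3$ at best. Second, and more seriously, your step (4) conflates two different objects. If you succeed in building \emph{smooth} families $\mathcal X\to\DBbb$ and $\mathcal X'\to\DBbb$ with central fibres $X$ and $X'$, then $t\mapsto\tau_{BCOV}(\mathcal X_t)$ is already continuous across $0$ and the conclusion is immediate; Theorems \ref{thmB}--\ref{thmC} and the invariants $\kappa_f,\varrho_f$ play no role whatsoever, since there is no degeneration. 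Conversely, if the central fibres are singular (the generic situation: the common contraction $\overline X$, or a normal crossings model thereof), then the limiting quantity you extract from \eqref{eq:BCOVasymptoticintro}, namely $\tau_{BCOV,\lim}$ of \eqref{eq:tau-bcov-lim}, is \emph{not} $\tau_{BCOV}(X)$ or $\tau_{BCOV}(X')$: it is an invariant of the punctured family, and there is no mechanism in the paper that identifies it with the BCOV invariant of a crepant resolution of the special fibre. So matching $\kappa_f=\kappa_{f'}$ and $\varrho_f=\varrho_{f'}$ only shows that the two punctured families have the same boundary behaviour (which is tautological, since they agree on $\DBbb^\times$); it says nothing about $\tau_{BCOV}(X)$ versus $\tau_{BCOV}(X')$.

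The genuine content, then, lies entirely in constructing the smooth families --- i.e.\ simultaneous small resolutions of the total space of a degeneration to $\overline X$ --- and this is exactly where the well-known obstruction sits: such small resolutions typically fail to be projective (or even K\"ahler) over the base, so the hypotheses needed to define and vary $\tau_{BCOV}$ are not available. Your acknowledgement of difficulties in step (1)/(3) is on target, but the real obstacle is not the topological bookkeeping of $\chi(D(k))$; it is that the degeneration route never puts you in a position to evaluate $\tau_{BCOV}$ at $X$ and $X'$ themselves. This is why the conjecture is still open.
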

The conjecture extends the corresponding conjecture in \cite{FLY} in dimension 3, which in this generality remains open. In these lines, Maillot--R\"ossler \cite{MaRo} proved that if $X$ and $X'$ are defined over $\QBbb$, then a power of the quotient $\taubcov{X}/\taubcov{X'}$ is a rational number.

After a preliminary version of this work was circulated, Y. Zhang was lead to extend our construction and produce a BCOV type invariant for Calabi--Yau pairs $(X,Y)$ (cf. \cite{Yeping}). These pairs consist of a compact K\"ahler manifold $X$ together with a smooth divisor $Y$ in some linear series $|m K_{X}|$. It can be expected that this construction be a useful auxiliary tool in the proof of Conjecture \ref{conj:birationalinv}.

As a final consideration, we remark that the current known constructions of mirror Calabi--Yau varieties, e.g. Batyrev's \cite{Batyrev-toric}, do not always produce Calabi--Yau manifolds, but rather orbifolds. It would thus be desirable to extend these constructions to this context, possibly replacing Dolbeault cohomology with Chen--Ruan cohomology. In this direction we quote the extensive work of Yoshikawa \cite{Yoshikawak31, YoshiborcherdsAst, Yoshikawak32, Yoshikawak33, Yoshikawatrinity, Yoshikawa-orbifold}, who considered  equivariant analytic torsion of K3 surfaces with involution. A running theme is the  relationship between equivariant torsions and Borcherds products. This is remarkable since the non-equivariant BCOV torsion is trivial for K3 surfaces. 

\endgroup

K\"ahler
\setcounter{theorem}{\thetmp}
\subsection{Notations and conventions}\label{subsec:notations}

For the convenience of the reader, we introduce the various notations and conventions that are being used in this work.

A \emph{Calabi--Yau} variety is, for the purposes of this article, a compact connected complex K\"ahler manifold $X$ with trivial canonical bundle $K_X \simeq \Ocal_X$.
We say that a Calabi--Yau variety $X$ of dimension $n$ is a \emph{strict Calabi--Yau} variety if moreover $H^i(X, \Ocal_X)=0$ for $0 < i < n$. Hence, except for $K3$ surfaces, neither hyperk\"ahler varieties are strict Calabi--Yau  varieties, nor are abelian varieties of dimension at least 2. Smooth hypersurfaces of degree $n+1$ in $\PBbb^n$ are strict Calabi--Yau varieties.
By the Bogomolov--Beauville decomposition theorem \cite{Bogomolov, Beauville}, in the algebraic category these can be realized as finite \'etale quotients of varieties of the form $T \times V \times H$, where $T$ is a complex torus, $V$ is a strict Calabi--Yau variety and $H$ a hyperk\"ahler variety.

A \emph{degeneration} $f \colon X \to S$ is a flat morphism of reduced and irreducible complex analytic spaces, with connected fibers and smooth general fibers. Often we will take $S = \DBbb$, a disc centered at $0$, and then we suppose that the morphism is smooth outside of the origin. In that case we denote by $X_\infty$ a general fiber, and by $X_0$ the fiber above the origin. The differentiable type of $X_\infty$ is independent of the choice of a general fiber.
A degeneration $f\colon X\to \DBbb$ is said to have \emph{normal crossings} if $X$ is non-singular and $X_{0}$ is a non-necessarily reduced simple normal crossings divisor in $X$. If $X_{0}$ is furthermore reduced, then we say that $f$ is semi-stable. We may equivalently talk about \emph{normal crossings (resp. semi-stable) degenerations}.

A \emph{projective} morphism  $f\colon X \to S$ is a proper morphism of analytic spaces such that, locally with respect to the base, $f$ factors through a closed immersion $\PBbb^n \times S$ followed by the projection on the second variable. A \emph{projective degeneration} is a morphism which is both a degeneration and projective.
A \emph{rational} K\"ahler structure on a complex manifold is a K\"ahler form such that the associated cohomology class is rational.
A \emph{K\"ahler morphism} $f \colon X \to S$  is a proper submersion of complex manifolds together with a K\"ahler metric $\omega$ on $X$.\footnote{It would be enough to assume the existence of a smooth closed real $(1,1)$ form on $X$, inducing a K\"ahler metric on fibers. } Usually we will confound a K\"ahler metric and its associated K\"ahler form. A \emph{K\"ahler degeneration} $f\colon X\to S$ is a proper morphism of complex manifolds, which is a degeneration and a K\"ahler morphism on the smooth locus. We still require that, locally with respect to the base $S$, $X$ affords a K\"ahler form. For instance, a projective degeneration  $f\colon X\to\DBbb$ between complex manifolds admits a structure of K\"ahler degeneration, by considering a factorization through a projective space bundle.

A \emph{germ} of any of the above types $T$ (\emph{e.g.} K\"ahler) of morphisms is the localization of a morphism of compact complex manifolds, of type $T$. For example, here are two typical situations: i) a germ of a K\"ahler morphism $X\to\DBbb$ is the restriction over a small disc $\DBbb$ of a K\"ahler morphism $Y\to S$ between compact complex manifolds, with $S$ a compact Riemann surface; ii) a germ of a degeneration $X\to\DBbb$ of algebraic varieties is a localization over a disc $\DBbb$ of a degeneration $Y\to S$ between smooth proper algebraic varieties, with one-dimensional $S$.

\section{Logarithmic extensions of Hodge bundles}\label{section:monodromy}
In this section we recall Deligne's extension of a local system over the punctured unit disc $\DBbb^\times$. We also review Steenbrink's construction of the lower extension of the local system of de Rham cohomology of a normal crossings degeneration, together with its Hodge filtration. For these questions we follow closely the presentation expounded in \cite[Chap. 11]{Peters-Steenbrink},  \cite[Sec. 2]{Steenbrink-limits} and \cite[Sec. 2]{Steenbrink-mixedonvanishing}. Finally, we study the behaviour of the relative Hodge filtration with respect to semi-stable reduction. The comparison of Hodge filtrations before and after semi-stable reduction produces some elementary divisors, and we show that they exactly correspond to the eigenvalues of the semi-simple part of the monodromy operator acting on the limiting Hodge filtration.

\subsection{Deligne's extensions}\label{subsec:Deligne-extension}
Let $(\Vcal,\nabla)$ be a finite rank flat holomorphic vector bundle on $\DBbb^{\times}$. Let $q\colon\HBbb\to\DBbb^{\times}$ be the universal covering map $q(\tau)=\exp(2\pi i\tau)$, where $\HBbb$ is the upper half-plane. The $\CBbb$-vector space of multi-valued flat sections of $\Vcal$ is, by definition
\begin{displaymath}
	V_{\infty}:=\Gamma(\HBbb,q^{\ast}\Vcal)^{q^{\ast}\nabla}=\ker\left(q^{\ast}\nabla \colon \Gamma(\HBbb,q^{\ast}\Vcal)\rightarrow\Gamma(\HBbb,q^{\ast}\Vcal\otimes\Omega_{\HBbb})\right).
\end{displaymath}
This vector space is finite dimensional. The monodromy transformation is the endomorphism $T\in\End_{\CBbb}(V_{\infty})$ induced by $\tau\mapsto\tau+1$. We assume that $T$ is quasi-unipotent. We may then introduce the Chevalley decomposition $T=T_{s}T_{u}=T_{u}T_{s}$ of $T$ into a semi-simple endomorphism $T_{s}$ and a unipotent endomorphism $T_{u}$. The logarithm of $T_{u}$ is well-defined. We denote $N=\frac{1}{2\pi i}\log T_{u}$. To define a logarithm of $T_{s}$, one chooses a branch of the logarithm on $\CBbb^{\times}$. For a fixed choice of branch (still denoted $\log$) we denote  $S=\frac{1}{2\pi i}\log T_{s}$. We can thus define $\frac{1}{2\pi i}\log T:=S+N$.

The vector bundle $\Vcal$ on $\DBbb^{\times}$ can be uniquely extended to a vector bundle $\Vcal_{\log}$ on $\DBbb$, referred to as the \emph{Deligne extension}, in such a way that:
\begin{enumerate}[(i)]
	\item\label{Vlog:0} there is an identification $\Vcal_{\log\ \mid\DBbb^{\times}}\overset{\sim}{\longrightarrow}\Vcal$, depending only on the choice of branch of logarithm.
	\item\label{Vlog:1} the connection $\nabla$ extends to a connection with regular singular poles
	\begin{displaymath}
		\nabla\colon\Vcal_{\log}\longrightarrow\Vcal_{\log}\otimes\Omega_{\DBbb}(\log [0]).
	\end{displaymath}
	Here we denote by $\Omega_{\DBbb}(\log [0])$ the sheaf of meromorphic differentials on $\DBbb$ with at most a simple pole at $0$.
	\item\label{Vlog:2} the eigenvalues of $-2\pi i \Res_{0}\nabla$ belong to $(2\pi i\QBbb)\cap\log\CBbb^{\times}$.
	\item\label{Vlog:3} the operator $T$ induces a vector bundle endomorphism of $\Vcal_{\log}$, whose fiber $T_{0}$ is related to $\Res_{0}\nabla$ by
\begin{displaymath}
	T_{0}=\exp(-2\pi i\Res_{0}\nabla).
\end{displaymath}
\end{enumerate}
Two frequent choices are the \emph{upper} and \emph{lower} branches. The upper branch takes values in $\RBbb+2\pi i[0,1)$, and will be denoted $^{\textit{u}}\log$. The lower branch takes values in $\RBbb+2\pi i(-1,0]$, and will be denoted $^{\ell}\log$. In later geometric constructions we will mostly encounter the lower branch.

Whenever the monodromy is unipotent, the extension $\Vcal_{\log}$ is called the \emph{canonical extension} of $\Vcal$.\footnote{In the literature one often refers to the lower extension as the canonical extension, \emph{e.g.} \cite[Def. 11.4]{Peters-Steenbrink}. In the present article, this terminology is reserved to the unipotent case.} Notice that, when it exists, the canonical extension ``commutes'' with any finite base change $q\mapsto q^{\ell}$.

An explicit description of $\Vcal_{\log}$ will be necessary. The regular sections of $\Vcal_{\log}$ are obtained from twisted flat multivalued sections as follows. Let $\ebold\in V_{\infty}$. Since $\ebold(\tau+1)=T\ebold(\tau)$, the twisted section
\begin{equation}\label{eq:twisted}
	\tilde{\ebold}(\tau):=\exp(-\tau \log T)\ebold(\tau)
\end{equation}
is invariant under $\tau\mapsto\tau+1$ and descends to a global holomorphic section of $\Vcal$ on $\DBbb^{\times}$, denoted $\tilde{\ebold}(q)$. From the flatness of $\ebold(\tau)$ and the relation $2\pi i d\tau=dq/q$, it is straightforward to check the equality on $\DBbb^{\times}$
\begin{displaymath}
    \nabla\tilde\ebold(q)=-(\log T)\tilde\ebold(q)d\tau=-\frac{1}{2\pi i}(\log T) \tilde\ebold(q)\frac{dq}{q}.
\end{displaymath}
The procedure $\ebold(\tau)\mapsto\tilde\ebold(q)$ describes a $\CBbb$-linear injective morphism
\begin{displaymath}
	\varphi\colon V_{\infty}\longrightarrow \Gamma(\DBbb^{\times},\Vcal)
\end{displaymath}
and induces an isomorphism, depending only on the choice of branch of logarithm
\begin{displaymath}
    V_{\infty}\otimes\Ocal_{\DBbb^{\times}}\overset{\sim}{\longrightarrow}\Vcal.
\end{displaymath}
Then, one defines
\begin{displaymath}
	\Vcal_{\log}:=\varphi(V_{\infty})\otimes_{\CBbb}\Ocal_{\DBbb}.
 \end{displaymath}
In other words, we declare that $\tilde{\ebold}$ as above is holomorphic at $q=0$. It is straightforward to check that $\Vcal_{\log}$ satisfies the properties \eqref{Vlog:0}--\eqref{Vlog:3} stated above.

A formal consequence of the construction is the isomorphism described as ``first twisting and then evaluating at 0":
\begin{equation}\label{eq:psi}
	\begin{split}
		\psi\colon V_{\infty}&\overset{\sim}{\longrightarrow}\Vcal_{\log}(0)\\
		\ebold &\longmapsto\tilde{\ebold}(0).
	\end{split}
\end{equation}
By definition, this isomorphism is equivariant with respect to the monodromy transformations, namely $T$ acting on $V_{\infty}$ and $T_{0}$ acting on $\Vcal_{\log}(0)$.

A final reminder concerns unipotent reduction. By assumption, the semi-simple endomorphism $T_{s}$ has finite order. We choose $\ell\geq 1$ with $T_{s}^{\ell}=\id$. Let $\rho\colon\DBbb\to\DBbb$ be the finite ramified cover $\rho(t)=t^{\ell}$. The space of multi-valued flat sections of the pull-back $(\rho^{\ast}\Vcal,\rho^{\ast}\nabla)$, say $U_{\infty}$, is actually isomorphic to $V_{\infty}$:
\begin{equation}\label{eq:nu}
	\begin{split}
		\nu\colon V_{\infty}&\overset{\sim}{\longrightarrow} \Gamma(\HBbb, q^{\ast}\rho^{\ast}\Vcal)^{q^{\ast}\rho^{\ast}\nabla}=:U_{\infty}\\
		\ebold(\tau)&\longmapsto \ebold(\ell\tau).
	\end{split}
\end{equation}
The monodromy transformation on $U_{\infty}$ is unipotent and identifies to $T^{\ell}=T_{u}^{\ell}$, with logarithm $\ell N$. The flat vector bundle $(\rho^{\ast}\Vcal,\rho^{\ast}\nabla)$ thus affords a canonical extension, that we denote by $\Ucal=(\rho^{\ast}\Vcal)_{\log}$. The procedure of twisting and evaluating at 0 for $\Vcal_{\log}$ and $\Ucal$ provides a commutative diagram of isomorphisms
\begin{equation}\label{eq:psi-rho}
	\xymatrix{
		\ebold(\tau)\ar@{|->}[d]	&V_{\infty}\ar[r]^{\sim}_{\psi}\ar[d]_{\nu}^{\sim}		&\Vcal_{\log}(0)\ar@{-->}[d]^{\sim}	&\tilde{\ebold}(0)\ar@{|->}[d]\\
		\ebold(\ell\tau)	&U_{\infty}\ar[r]^{\sim}_{ ^{\rho}\psi}	&\Ucal(0)		&^{\rho}\tilde{\ebold}(0).
	}
\end{equation}
The action of $T_{s}$ on $V_{\infty}$ induces an action of $T_{s}$ on $U_{\infty}$ and $\Ucal(0)$ through the isomorphisms above. On $U_{\infty}$, this automorphism is induced by the translation $\tau\mapsto\tau+1/\ell$.

With these notations, it is clear that given linearly independent elements of the form $\tilde\ebold_{1}(0), \ldots, \tilde\ebold_{r}(0)$, the corresponding $^{\rho}\tilde\ebold_{1}(0),\ldots, \ ^{\rho}\tilde\ebold_{r}(0)$ in $\Ucal(0)$ are linearly independent as well, and reciprocally. For future usage we record the following lemma. For simplicity of exposition we restrict ourselves to lower extensions, but a similar statement holds for other extensions. Notice that for the lower extension, it follows from the construction that there is a natural inclusion $\rho^{\ast}\Vcal_{\log}\subseteq (\rho^{\ast}\Vcal)_{\log}=\Ucal$.
\begin{lemma}\label{lemma:monodromy}
Let $\Vcal_{\log}$ be the lower extension of $(\Vcal,\nabla)$, and let $\sigma\in\Gamma(\DBbb,\Vcal_{\log})$ be such that $\sigma(0)\neq 0$. Define $k\geq 0$ as the largest integer such that $^{\rho}\sigma:=t^{-k}\rho^{\ast}\sigma\in\Ucal$. Then $k\leq\ell-1$ and $^{\rho}\sigma(0)\in\Ucal(0)$ is an eigenvector of $T_{s}$ of eigenvalue $\exp(-2\pi i k/\ell) $.
\end{lemma}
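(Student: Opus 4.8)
The plan is to work directly with the twisted multivalued sections that describe the lower extensions $\Vcal_{\log}$ and $\Ucal$, using the explicit formulas and the commutative diagram \eqref{eq:psi-rho}. First I would write $\sigma$, as a holomorphic section of $\Vcal_{\log}$ near $0$, in the form $\sigma(q) = \sum_{j} c_j(q)\, \tilde\ebold_j(q)$ where $\{\ebold_j\}$ is a basis of $V_\infty$ chosen adapted to the Jordan decomposition of $T$ (so that each $\ebold_j$ spans a $T_s$-eigenline, of eigenvalue $\exp(2\pi i s_j)$ with $s_j = {}^\ell\log$-determined representative in $(-1,0]$), and the $c_j$ are holomorphic on $\DBbb$. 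Since $\sigma(0) \neq 0$, at least one $c_j(0) \neq 0$. Pulling back by $\rho(t) = t^\ell$ and using the right half of \eqref{eq:psi-rho}, $\rho^\ast\tilde\ebold_j$ corresponds to ${}^\rho\tilde\ebold_j$, which is a holomorphic generator of the corresponding line in $\Ucal$; but the key point is the normalization discrepancy coming from the $T_s$-twist. Concretely, from \eqref{eq:twisted}, $\tilde\ebold_j(\tau) = \exp(-\tau\,{}^\ell\log T)\ebold_j(\tau)$, and the analogous twisted section over the cover uses $\exp(-\tau' \cdot \ell N)$ only (unipotent), where $\tau' = \ell\tau$; so $\rho^\ast\tilde\ebold_j$ and ${}^\rho\tilde\ebold_j$ differ by the scalar-valued function $\exp(\ell\tau \cdot {}^\ell\log T_s$-eigenvalue$) = \exp(2\pi i \tau \cdot \ell s_j)$, which in the coordinate $t$ (with $q = t^\ell$, hence $q = \exp(2\pi i \ell \tau)$... ) I would track carefully: it becomes $t^{m_j}$ for the integer $m_j \in \{0,\dots,\ell-1\}$ determined by $\ell s_j \equiv -m_j \pmod{\ell\ZBbb}$, i.e. $\exp(-2\pi i m_j/\ell)$ is the $T_s$-eigenvalue on $\ebold_j$.

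Second, I would assemble this: $\rho^\ast\sigma = \sum_j c_j(t^\ell)\, t^{m_j}\, {}^\rho\tilde\ebold_j$ inside $\Ucal$. Grouping the terms by the value of $m_j$ and factoring out the smallest power of $t$ that occurs among indices $j$ with $c_j(0)\neq 0$, one reads off that the largest $k$ with $t^{-k}\rho^\ast\sigma \in \Ucal$ is exactly $k = \min\{\, m_j : c_j(0) \neq 0\,\}$, which lies in $\{0,\dots,\ell-1\}$, giving $k \le \ell - 1$. Evaluating $t^{-k}\rho^\ast\sigma$ at $t=0$ kills every term with $m_j > k$ and every term with $c_j(0) = 0$, leaving $\sum_{j:\, m_j = k,\ c_j(0)\neq 0} c_j(0)\, {}^\rho\tilde\ebold_j(0)$. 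By construction all these ${}^\rho\tilde\ebold_j(0)$ are $T_s$-eigenvectors in $\Ucal(0)$ of the \emph{same} eigenvalue $\exp(-2\pi i k/\ell)$ (using that the $T_s$-action on $\Ucal(0)$ is intertwined, via \eqref{eq:psi-rho} and \eqref{eq:nu}, with the translation $\tau\mapsto\tau+1/\ell$, hence multiplies ${}^\rho\tilde\ebold_j(0)$ by $\exp(-2\pi i m_j/\ell)$). Therefore ${}^\rho\sigma(0)$ is a nonzero $T_s$-eigenvector of eigenvalue $\exp(-2\pi i k/\ell)$, as claimed.

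The main obstacle I anticipate is purely bookkeeping: pinning down the sign and the exact integer $m_j$ in the relation between the $T_s$-eigenvalue on $\ebold_j$, the lower-branch logarithm ${}^\ell\log$, the factor of $\ell$ coming from $\rho$, and the passage between the coordinates $\tau$, $q$, $t$ (with $q = t^\ell$ on the base, but $q^\ast$ on $\HBbb$ using $q(\tau) = \exp(2\pi i\tau)$ and the cover of the cover). One must be consistent about which $\exp(2\pi i\cdot)$ sends $\tau$ to which coordinate, so that "the $T_s$-eigenvalue is $\exp(-2\pi i m_j/\ell)$ with $0\le m_j\le \ell-1$" comes out with the stated sign. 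A clean way to avoid confusion is to first verify the rank-one case (i.e. $\dim V_\infty = 1$, $T = T_s$ of order $\ell$) completely explicitly — there $\tilde\ebold(q) = q^{-k/\ell}\ebold$ up to a unit, $\rho^\ast\tilde\ebold = t^{-k}\ebold$, ${}^\rho\tilde\ebold = \ebold$, so $k$ is forced and the eigenvalue identity is immediate — and then reduce the general statement to this case by the eigen-decomposition above, noting that the unipotent part $T_u$ contributes only holomorphic (integer-power) twists and hence does not affect the order-of-vanishing computation or the eigenvalue of the leading term.
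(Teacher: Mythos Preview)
Your proposal is correct and follows essentially the same route as the paper: choose a basis of $V_\infty$ consisting of $T_s$-eigenvectors, expand $\sigma$ in the associated twisted frame $\tilde\ebold_j$, compute explicitly that $\rho^\ast\tilde\ebold_j = t^{k_j}\,{}^\rho\tilde\ebold_j$ with $0\le k_j\le \ell-1$ determined by the $T_s$-eigenvalue, and read off $k=\min\{k_j: c_j(0)\neq 0\}$ together with the eigenvector property of $(t^{-k}\rho^\ast\sigma)(0)$. The paper carries out the sign and power bookkeeping you flag directly (your rank-one check is a reasonable sanity step but not needed), and otherwise the arguments coincide.
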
 
\begin{proof}
We choose $\ebold_{1},\ldots,\ebold_{r}\in V_{\infty}$ a basis of eigenvectors of $T_{s}$, whose eigenvalues we write $\exp(-2\pi i k_{1}/\ell),\ldots,\exp(-2\pi i k_{r}/\ell)$, where $0\leq k_{j}\leq \ell-1$. By construction, it follows that $^{\rho}\tilde\ebold_{j}(0)\in\Ucal(0)$ is an eigenvector of $T_{s}$ of eigenvalue $\exp(-2\pi i k_{j}/\ell)$. We write
\begin{displaymath}
    \sigma=\sum_{j}f_{j}(q)\tilde\ebold_{j}(q),
\end{displaymath}
where the $f_{j}(q)$ are holomorphic functions on $\DBbb$. Because $\sigma(0)\neq 0$, at least one of the functions $f_{j}$ does not vanish at $0$. Observe that
\begin{equation*}\label{eq:ej-rho}
	\begin{split}
        (\rho^{\ast}\tilde\ebold_{j})(t)&=\tilde\ebold_{j}(t^\ell)=\tilde\ebold_{j}(\ell\tau)\\
                                        &=\exp(-2\pi i (S+N) \ell\tau)\ebold_{j}(\ell\tau)\\
                                        &=\exp(-2\pi i (-k_{j}/\ell) \ell\tau)\exp(-2\pi i N \ell\tau)\ebold_{j}(\ell\tau)\\
                                        &=t^{k_{j}}\exp(-2\pi i N \ell\tau)\ebold_{j}(\ell\tau)
    \end{split}
\end{equation*}
where $\exp(-2\pi i \ell N\tau)\ebold_{j}(\ell\tau)=:^{\rho}\tilde\ebold_{j}(t)$ belongs to $\Ucal$.
Hence, the pull-back $\rho^{\ast}\sigma$ can be written
\begin{displaymath}
    \rho^{\ast}\sigma=\sum_{j}t^{k_{j}}f_{j}(0)\ ^{\rho}\tilde\ebold_{j}(t) + O(t^{\ell}).
\end{displaymath}
Define $k$ as the smallest $k_{j}$ such that $f_{j}(0)\neq 0$. In particular $k\leq\ell-1$. Then
\begin{displaymath}
    t^{-k}\rho^{\ast}\sigma=\sum_{k_{j}=k}f_{j}(0)\  ^{\rho}\tilde\ebold_{j}(0)+\sum_{k_{j}>k}t^{k_{j}-k}f_{j}(0)\ ^{\rho}\tilde\ebold_{j}(0) +O(t^{\ell-k})
\end{displaymath}
and thus
\begin{displaymath}
    (t^{-k}\rho^{\ast}\sigma)(0)=\sum_{j\colon k_{j}=k}f_{j}(0)\ ^{\rho}\tilde\ebold_{j}(0)\neq 0,
\end{displaymath}
which is an eigenvector of $T_{s}$ of eigenvalue $\exp(-2\pi i k/\ell)$. This concludes the proof.
\end{proof}
\begin{definition}
With notations as in Lemma \ref{lemma:monodromy}, we define the elementary exponent of $\sigma$ to be the rational number $\kappa(\sigma)=k/\ell\in\QBbb\cap [0,1)$. Hence, $^{\rho}\sigma(0)$ is an eigenvector of $T_{s}$ of eigenvalue $\exp(-2\pi i\kappa(\sigma))$.
\end{definition}

\subsection{Steenbrink's construction}\label{sec:Steenbrink}
Let $X$ be a complex manifold of dimension $m$ with a normal crossings divisor $D$, such that $D_{\textrm{red}}$ is locally given by an equation $z_1 \ldots z_k = 0$ in suitable holomorphic coordinates $z_{1},\ldots, z_{m}$.  Recall that the sheaf of logarithmic differentials $\Omega_X(\log D)$ is the $\Ocal_{X}$-module locally generated by $\frac{dz_1}{z_1}, \ldots, \frac{dz_k}{z_k}, dz_{k+1}, \ldots, dz_m$.

Let $f\colon X\to \DBbb$ be a projective normal crossings degeneration, with fibres of dimension $n$. We denote its restriction to $\DBbb^{\times}$ by $f^{\times}$. Locally, the special fiber is given by an equation $z_1^{m_1} \ldots z_k^{m_k}=0$. Pull-back of differential forms induces an injection $f^{\ast}\Omega_{\DBbb}(\log[0])\to\Omega_{X}(\log X_{0})$. The sheaf of relative logarithmic forms is then defined by
\begin{displaymath}
    \Omega_{X/\DBbb}(\log) = \Omega_X(\log X_{0} )/f^{\ast}\Omega_\DBbb(\log [0]).
\end{displaymath}
This is a locally free sheaf, and we define $\Omega^{p}_{X/\DBbb}(\log)=\wedge^{p}\Omega_{X/\DBbb}(\log)$. The exterior differential makes $\Omega^{\bullet}_{X/\DBbb}(\log)$ a complex, named the logarithmic de Rham complex of $f$. After Steenbrink \cite[Sec. 2]{Steenbrink-limits}, its $k$-th hypercohomology sheaf $R^k f_\ast \Omega_{X/\DBbb}^\bullet(\log)$ defines an extension of the flat bundle $\Vcal:=R^k f^\times_\ast \bb C\otimes\Ocal_{\DBbb^{\times}}$, for which the Gauss-Manin connection has logarithmic singularities, whose residue has eigenvalues in $[0,1)
\cap \QBbb$. It is hence the lower Deligne extension $\Vcal_{\log}$ of $\Vcal$. With notations as in the previous subsection, we let $V_{\infty}$ be the space of multi-valued flat sections of $\Vcal$ on $\DBbb^{\times}$. By parallel transport, the space $V_{\infty}$ is canonically isomorphic to the $k$-th cohomology of a general fiber of $f$, also denoted $H^{k}(X_{\infty},\CBbb)$ or simply $H^{k}(X_{\infty})$. In this setting, the isomorphism \eqref{eq:psi} gives an isomorphism
\begin{equation}\label{eq:steenbrink-psi}
    \psi\colon H^{k}(X_{\infty})\overset{\sim}{\longrightarrow} R^{k}f_{\ast}\Omega^{\bullet}_{X/\DBbb}(\log)(0).
\end{equation}
It is the composition of the isomorphisms stated in \cite[Prop. 2.16 and Thm. 2.18]{Steenbrink-limits}. Notice that this isomorphism depends on the fixed choice of holomorphic parameter on $\DBbb$.

One defines the Hodge filtration bundles $\mathcal F^p\Vcal_{\log}$ of $\Vcal_{\log}=R^k f_\ast \Omega_{X/\DBbb}^\bullet(\log)$, or simply $\Fcal^{p}_{\log}$, by considering the sections of the Hodge filtration on the smooth part $\Fcal^{p}\Vcal$ that extend to $\Vcal_{\log}$. It is equivalently the filtration induced by the b\^ete filtration $\Omega^{\geq p}_{X/\DBbb}(\log)$ of the complex $\Omega_{X/\DBbb}^{\bullet}(\log)$. If $j\colon\DBbb^{\times}\hookrightarrow\DBbb$ is the inclusion, we may thus write
\begin{equation}\label{eq:extension-Fp}
    \Fcal^{p}_{\log}=j_{\ast}(\Fcal^{p}\Vcal)\cap\Vcal_{\log}\subseteq j_{\ast}\Vcal.
\end{equation}
In \cite[Thm 2.11]{Steenbrink-mixedonvanishing} Steenbrink shows that the sheaves $R^{q}f_{\ast}\Omega_{X/\DBbb}^{p}(\log)$ are locally free. From the $E_{1}$ degeneration of the Hodge-de Rham spectral sequence in the smooth case, it then follows that for $R^{k}f_{\ast}\Omega^{\bullet}_{X/\DBbb}(\log)$ we have
\begin{equation}\label{eq:Gr_p}
    \Fcal^{p}_{\log}/\Fcal^{p+1}_{\log}=R^{k-p}f_{\ast}\Omega^{p}_{X/\DBbb}(\log).
\end{equation}
These are called $(p,k-p)$ \emph{Hodge bundles}.

Let us now consider a semi-stable reduction diagram
\begin{equation}\label{eq:ss-reduction}
    \xymatrix{
        Y\ar[d]_{g}\ar[r] ^{r} &X\ar[d]^{f}\\
        \DBbb\ar[r]^{\rho}  &\DBbb,
    }
\end{equation}
where $\rho(t)=t^{\ell}$ and $r$ is generically finite. The previous discussion applied to $R^{k}g_{\ast}^{\times}\CBbb\otimes\Ocal_{\DBbb^{\times}}$ produces the canonical extension $\Ucal=R^{k}g_{\ast}\Omega_{Y/\DBbb}(\log)$ with its Hodge filtration by vector sub-bundles that we may denote $^{\rho}\Fcal^{p}$. Hence on $R^{k}g_{\ast}\Omega_{Y/\DBbb}(\log)$ we have
\begin{displaymath}
    ^{\rho}\Fcal^{p}/ ^{\rho}\Fcal^{p+1}=R^{k-p}g_{\ast}\Omega^{p}_{Y/\DBbb}(\log).
\end{displaymath}
Observe that the $k$-th cohomology of a general fiber of $g$, $H^{k}(Y_{\infty})$, is naturally identified with $H^{k}(X_{\infty})$ (see the isomorphism \eqref{eq:nu}). Hence we now have an isomorphism (cf. \eqref{eq:psi-rho})
\begin{displaymath}
    ^{\rho}\psi\colon H^{k}(X_{\infty})=H^{k}(Y_{\infty})\overset{\sim}{\longrightarrow} \Ucal(0)=R^{k}g_{\ast}\Omega_{Y/\DBbb}^{\bullet}(\log)(0).
\end{displaymath}
The isomorphisms $\psi$ and $^{\rho}\psi$ are to be compared with \cite[(2.12)]{Steenbrink-mixedonvanishing}. Taking the fiber at $0$ of the Hodge filtration $^{\rho}\Fcal^{\bullet}$ and transporting it to $H^{k}(X_{\infty})$ through $^{\rho}\psi$, we obtain Steenbrink's limiting Hodge filtration on $H^{k}(X_{\infty})$, that we denote by $F^{p}H^{k}(X_{\infty})$ or simply $F^{p}_{\infty}$. To sum up, we have
\begin{equation}\label{eq:Finfty}
    \xymatrix{
        & \Ucal(0)      &\supseteq     & ^{\rho}\Fcal^{p}(0)\\
        & H^{k}(X_{\infty})\ar[u]^{\sim}_{ ^{\rho}\psi}    &\supseteq      & F^{p}_{\infty}\ar[u]^{\sim}_{ ^{\rho}\psi}.
    }
\end{equation}
The limiting Hodge filtration depends on the choice of holomorphic parameter on $\DBbb$. The canonical object is the ``nilpotent orbit'' $\lbrace\exp( \lambda N)F_{\infty}^{\bullet}, \lambda\in\CBbb\rbrace$ in a suitable Grassmannian.

\begin{remark}\label{rmk:V-manifolds}
In \cite{Steenbrink-mixedonvanishing} Steenbrink allows $Y$ in \eqref{eq:ss-reduction} to be the $V$-manifold, and $g$ to be semi-stable in the sense of $V$-manifolds. Such a situation naturally arises as follows. We write $f^{-1}(0)=\sum_{i=1}^{r} m_{i}D_{i}$, where the divisors $D_{i}$ are smooth, intersect transversally, and the $m_{i}$ are their multiplicities in the schematic fiber $f^{-1}(0)$. Let $\ell=\mathrm{lcm}(m_{1},\ldots,m_{r})$, $\rho(t)=t^{\ell}$ and perform the base change $X\times_{\rho}\DBbb$, of $f$ by $\rho$. Then define $Y$ as the normalization of $X\times_{\rho}\DBbb$. Steenbrink shows that $Y$ is a $V$-manifold and the structure morphism $g\colon Y\to\DBbb$ is semi-stable in the sense of $V$-manifolds. Our discussion equally applies to this setting.
\end{remark}

To conclude this subsection, we compare Hodge filtrations and Hodge bundles before and after semi-stable reduction. Notice that there are inclusions, induced by pull-back of differential forms from $X$ to~$Y$,
\begin{gather*}
    \rho^{\ast}R^{k}f_{\ast}\Omega_{X/\DBbb}^{\bullet}(\log)\subseteq R^{k}g_{\ast}\Omega_{Y/\DBbb}^{\bullet}(\log),\\
        \rho^{\ast}\Fcal^{p}_{\log}\subseteq \ ^{\rho}\Fcal^{p},\quad
         \rho^{\ast}R^{q}f_{\ast}\Omega^{p}_{X/\DBbb}(\log)\subseteq R^{q}g_{\ast}\Omega^{p}_{Y/\DBbb}(\log).
\end{gather*}
The respective quotients of these inclusions are torsion sheaves supported at the point $0$. We may thus write
\begin{equation*}\label{eq:aj}
    \frac{^{\rho}\Fcal^{p}}{\rho^{\ast}\Fcal^{p}_{\log}}\simeq\bigoplus_{j=1}^{h^{p}}\frac{\Ocal_{\DBbb,0}}{t^{a_{j}^{p}}\Ocal_{\DBbb,0}},
\end{equation*}
for some integers $a_{j}^{p}\geq 0$, and similarly for the Hodge bundles
\begin{equation*}\label{eq:bj}
    \frac{R^{q}g_{\ast}\Omega^{p}_{Y/\DBbb}(\log)}{\rho^{\ast} R^{q}f_{\ast}\Omega^{p}_{X/\DBbb}(\log)}
    \simeq \bigoplus_{j=1}^{h^{p,q}}\frac{\Ocal_{\DBbb,0}}{t^{b_{j}^{p,q}}\Ocal_{\DBbb,0} },
\end{equation*}
for some integers $b_{j}^{p,q}\geq 0$. We indicated by $h^{p}$ the rank of $\Fcal^{p}_{\log}$ and $h^{p,q}$ the rank of $R^{q}f_{\ast}\Omega^{p}_{X/\DBbb}(\log)$.

\begin{lemma}
The integers $a_{j}^{p}$ and $b_{r}^{p,q}$ satisfy
\begin{displaymath}
    0\leq a_{j}^{p}\leq\ell-1,\quad 0\leq b_{r}^{p,q}\leq\ell-1,
\end{displaymath}
where $\ell=\deg\rho$. Moreover, the rational numbers $a_{j}^{p}/\ell$ and $b_{r}^{p,q}/\ell$ are independent of the choice of semi-stable reduction.
\end{lemma}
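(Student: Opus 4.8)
The plan is to reduce the statement to elementary manipulations of lattices over the discrete valuation ring $\Ocal_{\DBbb,0}$, the only geometric inputs being Steenbrink's local freeness theorem (so that the Hodge filtration sheaves are genuine subbundles) and the base-change computation already carried out in the proof of Lemma \ref{lemma:monodromy}.

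\emph{Bounds for the $a_{j}^{p}$.} Write $\Ucal=R^{k}g_{\ast}\Omega^{\bullet}_{Y/\DBbb}(\log)$ with $k=p+q$; by Steenbrink this is the canonical extension of the unipotent local system $\rho^{\ast}\Vcal$. The first point is that $\rho^{\ast}\Vcal_{\log}\subseteq\Ucal$ with cokernel annihilated by $t^{\ell-1}$. Indeed, choosing a basis $\ebold_{1},\dots,\ebold_{r}$ of $V_{\infty}$ made of eigenvectors of $T_{s}$ with eigenvalues $\exp(-2\pi i k_{j}/\ell)$, $0\le k_{j}\le\ell-1$, the twisted sections $\tilde\ebold_{j}$ form an $\Ocal_{\DBbb}$-frame of $\Vcal_{\log}$, and the computation in the proof of Lemma \ref{lemma:monodromy} gives $\rho^{\ast}\tilde\ebold_{j}=t^{k_{j}}\,{}^{\rho}\tilde\ebold_{j}$ with $\{{}^{\rho}\tilde\ebold_{j}\}$ an $\Ocal_{\DBbb}$-frame of $\Ucal$; hence $\Ucal/\rho^{\ast}\Vcal_{\log}\simeq\bigoplus_{j}\Ocal_{\DBbb,0}/t^{k_{j}}\Ocal_{\DBbb,0}$. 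Next, by \eqref{eq:Gr_p} and \cite{Steenbrink-mixedonvanishing} both $\Fcal^{p}_{\log}\subseteq\Vcal_{\log}$ and $^{\rho}\Fcal^{p}\subseteq\Ucal$ are subbundles; since a subbundle of a vector bundle over a DVR is determined by its restriction to $\DBbb^{\times}$, and $\rho^{\ast}\Fcal^{p}_{\log}\subseteq{}^{\rho}\Fcal^{p}$ with torsion cokernel, $^{\rho}\Fcal^{p}$ is the saturation of $\rho^{\ast}\Fcal^{p}_{\log}$ inside $\Ucal$, while $\rho^{\ast}\Fcal^{p}_{\log}$ is saturated in $\rho^{\ast}\Vcal_{\log}$. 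The elementary fact that, for lattices $M\subseteq L'\subseteq L$ over a DVR with $M$ saturated in $L'$ and $t^{c}L\subseteq L'$, one has $t^{c}\,\operatorname{sat}_{L}(M)\subseteq\operatorname{sat}_{L}(M)\cap L'=\operatorname{sat}_{L'}(M)=M$, applied with $M=\rho^{\ast}\Fcal^{p}_{\log}$, $L'=\rho^{\ast}\Vcal_{\log}$, $L=\Ucal$, $c=\ell-1$, then yields $t^{\ell-1}\cdot{}^{\rho}\Fcal^{p}\subseteq\rho^{\ast}\Fcal^{p}_{\log}$, so the cokernel $\bigoplus_{j}\Ocal_{\DBbb,0}/t^{a_{j}^{p}}\Ocal_{\DBbb,0}$ is killed by $t^{\ell-1}$, whence $0\le a_{j}^{p}\le\ell-1$.

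\emph{Bounds for the $b_{r}^{p,q}$, and independence.} The bounds on the $b_{r}^{p,q}$ follow formally by a snake lemma argument: applying $\rho^{\ast}$ to $0\to\Fcal^{p+1}_{\log}\to\Fcal^{p}_{\log}\to R^{q}f_{\ast}\Omega^{p}_{X/\DBbb}(\log)\to 0$ (exact since the last term is locally free) and mapping it via pull-back of forms to $0\to{}^{\rho}\Fcal^{p+1}\to{}^{\rho}\Fcal^{p}\to R^{q}g_{\ast}\Omega^{p}_{Y/\DBbb}(\log)\to 0$, with the two leftmost vertical arrows injective, the snake lemma exhibits $\coker(\rho^{\ast}R^{q}f_{\ast}\Omega^{p}_{X/\DBbb}(\log)\to R^{q}g_{\ast}\Omega^{p}_{Y/\DBbb}(\log))$ as a quotient of $\coker(\rho^{\ast}\Fcal^{p}_{\log}\to{}^{\rho}\Fcal^{p})$, which is killed by $t^{\ell-1}$; hence $0\le b_{r}^{p,q}\le\ell-1$. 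For independence, first observe that for a fixed $\rho(t)=t^{\ell}$ the bundle $\Ucal$ and its Hodge subbundle $^{\rho}\Fcal^{p}$ depend only on the variation of Hodge structure $\rho^{\ast}\Vcal$, so the $a_{j}^{p}$ and $b_{r}^{p,q}$ do not depend on the semistable model. Given two reductions $\rho_{i}(t)=t^{\ell_{i}}$ ($i=1,2$), set $\ell=\operatorname{lcm}(\ell_{1},\ell_{2})$, $m_{i}=\ell/\ell_{i}$, $\sigma_{i}(t)=t^{m_{i}}$, so $\rho:=\rho_{i}\circ\sigma_{i}$ is again a semistable reduction; since $\rho_{i}^{\ast}\Vcal$ is unipotent, its canonical extension $\Ucal_{i}$ commutes with the finite base change $\sigma_{i}$, giving $\sigma_{i}^{\ast}\Ucal_{i}=\Ucal$ and, by uniqueness of subbundle extensions, $\sigma_{i}^{\ast}({}^{\rho_{i}}\Fcal^{p})={}^{\rho}\Fcal^{p}$, while $\sigma_{i}^{\ast}\rho_{i}^{\ast}\Fcal^{p}_{\log}=\rho^{\ast}\Fcal^{p}_{\log}$, compatibly with all inclusions. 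Applying $\sigma_{i}^{\ast}$ to $\coker(\rho_{i}^{\ast}\Fcal^{p}_{\log}\to{}^{\rho_{i}}\Fcal^{p})\simeq\bigoplus_{j}\Ocal_{\DBbb,0}/t^{a_{j}^{p}(\rho_{i})}\Ocal_{\DBbb,0}$ and using $\sigma_{i}^{\ast}(\Ocal_{\DBbb,0}/t^{a}\Ocal_{\DBbb,0})=\Ocal_{\DBbb,0}/t^{m_{i}a}\Ocal_{\DBbb,0}$ yields $a_{j}^{p}(\rho)=m_{i}\,a_{j}^{p}(\rho_{i})$, hence $a_{j}^{p}(\rho)/\ell=a_{j}^{p}(\rho_{i})/\ell_{i}$ for $i=1,2$, so $a_{j}^{p}(\rho_{1})/\ell_{1}=a_{j}^{p}(\rho_{2})/\ell_{2}$; the same argument applies to the $b_{r}^{p,q}$.

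\emph{Expected main difficulty.} The only genuinely non-formal ingredient is the passage through Steenbrink's theory: one must know that the Hodge filtration sheaves on the logarithmic de Rham complex are subbundles and that their formation via the b\^ete filtration on a semistable model depends only on the variation of Hodge structure, so that the identifications ``$^{\rho}\Fcal^{p}=\operatorname{sat}_{\Ucal}(\rho^{\ast}\Fcal^{p}_{\log})$'' and ``$\sigma_{i}^{\ast}({}^{\rho_{i}}\Fcal^{p})={}^{\rho}\Fcal^{p}$'' are legitimate. Once this is granted, everything reduces to linear algebra over $\Ocal_{\DBbb,0}$ together with the base-change bookkeeping already present in Lemma \ref{lemma:monodromy}.
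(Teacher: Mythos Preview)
Your proof is correct and follows essentially the same strategy as the paper's: the bounds come from Lemma~\ref{lemma:monodromy} together with the subbundle/saturation bookkeeping you spell out, and independence comes from the commutation of canonical extensions with base change in the unipotent case combined with passing to a common (lcm) cover. The paper's argument is just a two-sentence sketch of exactly these points, invoking \eqref{eq:extension-Fp}--\eqref{eq:Gr_p} where you invoke saturation and the snake lemma; you have filled in the details it omits.
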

\begin{proof}
The first statement is a direct consequence of Lemma \ref{lemma:monodromy}. The second assertion easily follows from two facts: first, in the unipotent case, formation of canonical extensions commutes with pull-back by $t\mapsto t^{m}$ on $\DBbb$; second, any two semi-stable reduction diagrams are dominated by a third one. This is combined with \eqref{eq:extension-Fp}--\eqref{eq:Gr_p}, applied in the unipotent case.
\end{proof}
The second claim of the lemma can be reformulated by saying that the rational numbers $a_{j}^{p}/\ell$ and $b_{j}^{p,q}/\ell$ only depend on the local system $R^{k}f_{\ast}^{\times}\CBbb$ and its associated variation of Hodge structures. We give them a name:

\begin{definition}[Elementary exponents of Hodge bundles]\label{def:elementary}
The rational numbers $\alpha^{p}_{j}:=a_{j}^{p}/\ell$ are called the elementary exponents of $p$-th level of the Hodge filtration for $R^{k}f_{\ast}^{\times}\CBbb$. The rational numbers $\alpha^{p,q}_{j}:=b_{j}^{p,q}/\ell$ are called the elementary exponents of the $(p,q)$ Hodge bundle for $R^{k}f_{\ast}^{\times}\CBbb$ ($k=p+q$). We also denote by $\alpha^{p,q}=\sum_{j=1}^{h^{p,q}}\alpha^{p,q}_{j}$, \emph{i.e.} the sum of all the elementary exponents of the $(p,q)$ Hodge bundle.
\end{definition}
In the rest of this section we relate the elementary exponents to the eigenvalues of $T_{s}$ acting on the limiting Hodge filtration.

\subsection{Elementary exponents and eigenvalues of monodromy} \label{subsec:elementary-divisors}
We maintain the setting and notations of the previous subsections. Hence $f\colon X\to\DBbb$ is a normal crossings projective degeneration over the unit disc. Also, $g\colon Y\to\DBbb$ is a semi-stable reduction of $f$ as in \eqref{eq:ss-reduction}, existing after some finite base change $\rho(t)=t^{\ell}$. We compared the Hodge filtrations $\Fcal^{\bullet}_{\log}$ on $\Vcal_{\log}:=R^{k}f_{\ast}\Omega_{X/\DBbb}^{\bullet}(\log)$ and $^{\rho}\Fcal^{\bullet}$ on $\Ucal:=R^{k}g_{\ast}\Omega_{Y/\DBbb}^{\bullet}(\log)$,  producing the elementary exponents of Definition \ref{def:elementary}. The limiting Hodge filtration $F^{\bullet}_{\infty}$ on $H^{k}(X_{\infty})=H^{k}(Y_{\infty})$ was obtained after identifying the latter with $\Ucal(0)$ via the isomorphism $^{\rho}\psi$, and then transporting the filtration $^{\rho}\Fcal^{\bullet}(0)$ through this identification (cf. \eqref{eq:Finfty}). Recall that the semi-simple part of the monodromy operator $T_{s}$ acts on $\Ucal(0)$.

\begin{theorem}\label{thm:eigenvalues}
Let $\sigma_{1},\ldots,\sigma_{h}$ be a basis of the $\Ocal_{\DBbb}$-module $\Fcal^{p}_{\log}$. Then there exists another basis $\theta_{1},\ldots,\theta_{h}$ with the following properties:
\begin{enumerate}[(a)]
	\item\label{item:eigen-1} for every $m=1,\ldots, h$, we have
	\begin{displaymath}
		\bigoplus_{j=1}^{m}\Ocal_{\DBbb}\sigma_{j}=\bigoplus_{j=1}^{m}\Ocal_{\DBbb}\theta_{j}.
	\end{displaymath}
	\item\label{item:eigen-2} there exist integers $0\leq a_{j}\leq \ell-1$ such that the elements $^{\rho}\theta_{j}:=t^{-a_{j}}\rho^{\ast}\theta_{j}$ define a $\Ocal_{\DBbb}$-basis of $^{\rho}\Fcal^{p}$.
	\item\label{item:eigen-3} the elementary exponent $\kappa(\theta_{j})$ equals $a_{j}/\ell$. Hence the fiber at 0 element $^{\rho}\theta_{j}(0)\in \Ucal(0)$ is an eigenvector of $T_{s}$ of eigenvalue $\exp(-2\pi i a_{j}/\ell)$.
\end{enumerate}
In particular, the operator $T_{s}$ preserves the limiting Hodge filtration $F_{\infty}^{\bullet}$ and the rational numbers $a_{j}/\ell$ are, modulo reordering, the elementary exponents of $p$-th level of the Hodge filtration for $R^{k}f_{\ast}^{\times}\CBbb$.
\end{theorem}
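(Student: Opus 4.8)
The strategy is to build the basis $\theta_j$ by a Smith-normal-form type argument adapted to the filtered situation, using Lemma~\ref{lemma:monodromy} as the bridge between divisibility of pullbacks and monodromy eigenvalues. First I would consider the two $\Ocal_{\DBbb}$-lattices $\rho^{\ast}\Fcal^{p}_{\log}\subseteq {}^{\rho}\Fcal^{p}$ inside ${}^{\rho}\Fcal^{p}$ (equivalently inside $j_{\ast}\rho^{\ast}\Vcal$), both of rank $h=h^{p}$. Since $\Ocal_{\DBbb,0}$ is a discrete valuation ring, the theory of elementary divisors provides a basis $e_1,\ldots,e_h$ of $\rho^{\ast}\Fcal^{p}_{\log}$ and integers $a_1,\ldots,a_h\geq 0$ such that $t^{-a_1}e_1,\ldots,t^{-a_h}e_h$ is a basis of ${}^{\rho}\Fcal^{p}$; by the preceding Lemma the $a_j$ satisfy $0\leq a_j\leq \ell-1$ and are (up to ordering) the $a^p_j$. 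The point is that $e_j$ may be chosen of the form $\rho^{\ast}\theta_j$ for $\theta_j\in\Gamma(\DBbb,\Fcal^{p}_{\log})$, because $\rho^{\ast}$ is injective on sections and $\rho^{\ast}\Fcal^{p}_{\log}$ is exactly the $\Ocal_{\DBbb}$-span of the pullbacks of a basis of $\Fcal^{p}_{\log}$; more precisely, I would run the elementary-divisor algorithm \emph{on the source lattice $\Fcal^{p}_{\log}$ itself}, tracking how a basis change on $\Fcal^{p}_{\log}$ affects the divisibility of pullbacks in ${}^{\rho}\Fcal^{p}$, so the output basis $\theta_1,\ldots,\theta_h$ lives downstairs.

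To get the flag condition \eqref{item:eigen-1} I would perform this reduction incrementally. Starting from the given basis $\sigma_1,\ldots,\sigma_h$, at step $m$ I look at the sublattices $\rho^{\ast}(\bigoplus_{j\le m}\Ocal_{\DBbb}\sigma_j)$ inside its saturation in ${}^{\rho}\Fcal^{p}$, choose $\theta_m$ inside $\bigoplus_{j\le m}\Ocal_{\DBbb}\sigma_j$ realizing the largest possible power $t^{-a_m}$ with $t^{-a_m}\rho^{\ast}\theta_m\in{}^{\rho}\Fcal^{p}$, and then replace $\sigma_m$ by $\theta_m$ while keeping $\sigma_1,\ldots,\sigma_{m-1}$ unchanged and adjusting $\sigma_{m+1},\ldots,\sigma_h$ so that $\bigoplus_{j\le m}\Ocal_{\DBbb}\sigma_j$ is preserved. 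This is a standard triangular (Hermite-type) reduction: each step only modifies the current and later basis vectors by $\Ocal_{\DBbb}$-linear combinations of the earlier ones, which is exactly what \eqref{item:eigen-1} demands. One has to check that after this procedure the $t^{-a_j}\rho^{\ast}\theta_j$ do form a basis of ${}^{\rho}\Fcal^{p}$ and not merely a sublattice; this follows because the product $\prod t^{a_j}$ equals the length of ${}^{\rho}\Fcal^{p}/\rho^{\ast}\Fcal^{p}_{\log}$ (comparing determinants of the two lattices), and the greedy/triangular choice never loses index.

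Once \eqref{item:eigen-1} and \eqref{item:eigen-2} are in hand, part \eqref{item:eigen-3} is immediate: by construction $a_j$ is the largest integer with $t^{-a_j}\rho^{\ast}\theta_j\in\Ucal$ (here I use that ${}^{\rho}\Fcal^{p}=j_{\ast}(\rho^{\ast}\Fcal^{p}\Vcal)\cap\Ucal$, so saturation inside ${}^{\rho}\Fcal^{p}$ agrees with saturation inside $\Ucal$ for sections of $\Fcal^{p}_{\log}$), hence Lemma~\ref{lemma:monodromy} applies verbatim and gives that ${}^{\rho}\theta_j(0)$ is a $T_s$-eigenvector of eigenvalue $\exp(-2\pi i a_j/\ell)$ with $\kappa(\theta_j)=a_j/\ell$. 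Since the ${}^{\rho}\theta_j(0)$ form a basis of $\Ucal(0)$ compatible with the flag ${}^{\rho}\Fcal^{p}(0)$, and $T_s$ acts diagonally in this basis, $T_s$ preserves $F^{\bullet}_{\infty}$; and the multiset $\{a_j/\ell\}$ is the multiset of elementary exponents $\{\alpha^p_j\}$ because the elementary divisors of $\rho^{\ast}\Fcal^{p}_{\log}\subseteq{}^{\rho}\Fcal^{p}$ are an invariant of the pair of lattices.

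\textbf{Main obstacle.} The delicate point is the simultaneous control of the flag condition \eqref{item:eigen-1} and the realization of \emph{all} elementary divisors in \eqref{item:eigen-2}: a naive greedy choice at each step could in principle fail to exhaust the torsion quotient. The key is that the reduction is \emph{triangular} (we never touch $\sigma_1,\ldots,\sigma_{m-1}$, only $\sigma_m$ and later ones), together with a determinant/length bookkeeping ensuring the produced sublattice has the correct colength; I expect this to be where one must argue carefully rather than invoke Smith normal form as a black box, since the latter does not a priori respect the prescribed flag.
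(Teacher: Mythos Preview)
Your approach has a genuine gap, and it lies precisely where you flag the ``main obstacle.'' The assertion that the greedy triangular choice ``never loses index'' is false for general pairs of lattices $L\subset M$ over a DVR with a prescribed flag in $L$. Here is an explicit counterexample. Take $R=\Ocal_{\DBbb}$, $M=Rf_{1}\oplus Rf_{2}$, and $L=Re_{1}\oplus Re_{2}$ with $e_{1}=t^{2}f_{1}$ and $e_{2}=f_{1}+tf_{2}$. The Smith normal form of $L\subset M$ gives elementary divisors $\lbrace 0,3\rbrace$. Take the flag $L_{1}=Re_{1}$ (so $\sigma_{1}=e_{1}$, $\sigma_{2}=e_{2}$). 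Any $\theta_{1}\in L_{1}$ satisfying \eqref{item:eigen-1} is a unit multiple of $t^{2}f_{1}$, so $a_{1}\leq 2$. Any $\theta_{2}=ve_{2}+ce_{1}=(v+ct^{2})f_{1}+vtf_{2}$ with $v$ a unit has $t^{-1}\theta_{2}\notin M$ (the $f_{1}$-coefficient $v+ct^{2}$ has unit constant term), so $a_{2}=0$. Thus $a_{1}+a_{2}\leq 2<3$, and no flag-adapted $\theta_{1},\theta_{2}$ can give a basis of $M$ after dividing by powers of $t$. The conclusion of the theorem simply fails in this abstract setting; hence no purely lattice-theoretic bookkeeping argument can establish \eqref{item:eigen-2}.

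The paper's proof does not attempt any length count. Instead it exploits the monodromy structure that your approach discards: starting from $\sigma_{m+1}$, one applies Lemma~\ref{lemma:monodromy} and, if $^{\rho}\sigma_{m+1}(0)$ happens to be linearly dependent on the previously constructed $^{\rho}\theta_{j}(0)$, one observes that only those $\theta_{j}$ with the \emph{same} $T_{s}$-eigenvalue contribute to the relation. Subtracting them off with the \emph{constant} coefficients from the relation yields a new $\sigma'$ which still satisfies $\sigma'(0)\neq 0$ downstairs (because $\sigma_{m+1}(0)$ is independent of $\theta_{1}(0),\ldots,\theta_{m}(0)$ in $\Vcal_{\log}(0)$) and whose elementary exponent has strictly increased. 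Iterating terminates, and by construction the resulting $^{\rho}\theta_{1}(0),\ldots, {}^{\rho}\theta_{h}(0)$ are linearly independent in $\Ucal(0)$; Nakayama then gives \eqref{item:eigen-2} directly. The eigenvector condition \eqref{item:eigen-3} is built in at every step rather than deduced at the end. This argument uses in an essential way that the sections come from downstairs via $\rho^{\ast}$ and that $T_{s}$ is semisimple; without that structure (as the counterexample shows) the theorem is false.
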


\begin{proof}
 Fix a basis of $V_{\infty}$, say $\ebold_{1},\ldots,\ebold_{r}$, made of eigenvectors of $T_{s}$. We write their eigenvalues as $\exp(-2\pi i k_{1}/\ell), \ldots,\  \exp(-2\pi i k_{r}/\ell)$, with $0\leq k_{j}\leq\ell -1$ ordered increasingly. Recall the notations $\tilde\ebold_{j}$ and $^{\rho}\tilde\ebold_{j}$ for the associated $\Ocal_\DBbb$-module bases of $\Vcal_{\log}$ and $\Ucal$. We construct the basis $\theta_{1},\ldots,\theta_{h}$ inductively.\\

We start by applying Lemma \ref{lemma:monodromy} to  $\sigma_{1}$. Hence there exists an integer $a_{1}$, with  $0\leq a_{1}\leq\ell-1$, such that $^{\rho}\sigma_{1}:=t^{-a_{1}}\rho^{\ast}\sigma_{1}\in\Ucal$ and $^{\rho}\sigma_{1}(0)$ is an eigenvector of $T_{s}$ (in particular non-zero). Hence \eqref{item:eigen-1} is satisfied for $k=1$ with the choice $\theta_{1}=\sigma_{1}$. Also \eqref{item:eigen-3} is satisfied.\\

Suppose we already constructed $\theta_{1},\ldots,\theta_{m}$ as in the statement, for some $m<h$. More precisely, this means:
\begin{enumerate}
    \item[\textbullet] condition \eqref{item:eigen-1} holds restricted to the range $1,\ldots, m$;
    \item[\textbullet] for every $j\leq m$ there exists an integer $0\leq a_{j}\leq\ell-1$ such that $^{\rho}\theta_{j}=t^{-a_{j}}\rho^{\ast}\theta_{j}$ belongs to $^{\rho}\Fcal^{p}$, and the vectors $^{\rho}\theta_{1}(0),\ldots, \ ^{\rho}\theta_{m}(0)$ are linearly independent;
    \item[\textbullet] $\kappa(\theta_{j})=a_{j}/\ell$ and $^{\rho}\theta_{j}(0)$ is an eigenvector of $T_{s}$ of eigenvalue $\exp(-2\pi a_{j}/\ell)$.
\end{enumerate}
In particular, the vector $\sigma_{m+1}(0)$ is linearly independent with $\theta_{1}(0),\ldots,\theta_{m}(0)$. We apply Lemma \ref{lemma:monodromy} with $\sigma=\sigma_{m+1}$. We write the $T_{s}$-eigenvalue of $^{\rho}\sigma(0)$ as $\exp(-2\pi i a/\ell)$, for some $0\leq a\leq\ell-1$. Two cases can occur. In the first case, $^{\rho}\sigma(0)$ is already linearly independent with $^{\rho}\theta_{1}(0),\ldots, \ ^{\rho}\theta_{m}(0)$. Then we take $\theta_{m+1}=\sigma$ and $a_{m+1}=a$. In the second case, there is a non-trivial linear relation:
\begin{equation}\label{eq:relation-at-0}
	^{\rho}\sigma(0)=\sum_{j=1}^{m}\mu_{j} \ ^{\rho}\theta_{j}(0).
\end{equation}
We infer for the coefficients $\mu_{j}$ that, either $\mu_{j}=0$, or $\mu_{j}\neq 0$ and then $^{\rho}\theta_{j}(0)$ has eigenvalue $\exp(-2\pi i a/\ell)$. Hence, if $\mu_{j}\neq 0$ then $a_{j}=a$ and $^{\rho}\theta_{j}=t^{-a}\rho^{\ast}\theta_{j}$. We denote by $J(\sigma)$ the non-empty subset of $\lbrace 1,\ldots,m\rbrace$ for which $a_{j}=a$. Define now the sections
\begin{equation*}\label{def:sigma-prime}
	\sigma^{\prime}:=\sigma-\sum_{j\in J(\sigma)}\mu_{j}\theta_{j}\in\Gamma(\DBbb,\Fcal^{p}_{\log})
\end{equation*}
and
\begin{displaymath}
	\hat\sigma^{\prime}=t^{-a}\rho^{\ast}\sigma^{\prime}= \ ^{\rho}\sigma-\sum_{j\in J(\sigma)}\mu_{j} \ ^{\rho}\theta_{j}\in\Gamma(\DBbb,\ ^{\rho}\Fcal^{p}).
\end{displaymath}
Due to \eqref{eq:relation-at-0}, this section is necessarily of the form
\begin{equation*}\label{eq:hat-sigma-prime}
	\hat\sigma^{\prime}=t^{b-a}\sum_{k_{j}=b}\gamma_{j} \ ^{\rho}\tilde\ebold_{j}(t)+\sum_{k_{j}>b}t^{k_{j}-a}\gamma_{j} \ ^{\rho}\tilde\ebold_{j}(t)+t^{-a} R(t^{\ell}),
\end{equation*}
for some constants $\gamma_{j}$, some minimal $b>a$ and $R(t^{\ell})=\rho^{\ast}R (s)$, where $R(s)$ a regular section of $\Vcal_{\log}$ vanishing at $s=0$. Notice the first sum is non-trivial. Otherwise, by the minimality of $b$, it would necessarily be $\sigma^{\prime}=R(s)$, which vanishes at 0 and hence entails
\begin{displaymath}
	\sigma(0)=\sum_{j}\mu_{j}\theta_{j}(0),
\end{displaymath}
contradicting the linear independence of $\sigma(0)=\sigma_{m+1}(0)$ with $\theta_{1}(0),\ldots,\theta_{m}(0)$ observed before.  Now
\begin{displaymath}
	t^{-(b-a)}\hat\sigma^{\prime}=t^{-b}\rho^{\ast}\sigma^{\prime}= \ ^{\rho}\sigma^{\prime}
\end{displaymath}
is a regular section of $^{\rho}\Fcal^{p}$ and its value at 0 is
\begin{displaymath}
	\sum_{k_{j}=b}\gamma_{j} \ ^{\rho}\tilde\ebold_{j}(0),
\end{displaymath}
which is non-zero and an eigenvector of $T_{s}$ of eigenvalue $\exp(-2\pi i b/\ell)$. Hence $\kappa(\sigma^{\prime})=b/\ell$. Furthermore, $^{\rho}\sigma^{\prime}$ is independent with the sections $^{\rho}\theta_{j}$ with $j\in J(\sigma)$. Indeed, the eigenvalue of $^{\rho}\sigma^{\prime}(0)$ is $\exp(-2\pi ib/\ell)$ and for $j\in J(\sigma)$ the eigenvalue of $^{\rho}\theta_{j}(0)$  is $\exp(-2\pi ia/\ell)$, and $b>a$. Still, it could be that $^{\rho}\sigma^{\prime}(0)$ is linearly dependent with the $^{\rho}\theta_{j}(0)$, with $j\not\in J(\sigma)$. If such is the case, then we repeat the argument, starting with a non-trivial linear relation
\begin{equation*}\label{eq:rho-sigma-prime}
	^{\rho}\sigma^{\prime}(0)=\sum_{j\in\lbrace 1,\ldots,m\rbrace\setminus J(\sigma)}\mu_{j}^{\prime} \ ^{\rho}\theta_{j}(0).
\end{equation*}
In particular, this produces a new subset of indices $J(\sigma^{\prime})\subset\lbrace 1,\ldots,m\rbrace$ disjoint with $J(\sigma)$. This procedure clearly comes to an end after a finite number of steps, since $J(\sigma^{\prime})\cap J(\sigma)=\emptyset$ and $\kappa(\sigma^{\prime})=b/\ell>a/\ell=\kappa(\sigma)$. The outcome is a section $\theta_{m+1}$ with the expected properties. Namely, the sections $\theta_{1},\ldots,\theta_{m+1}$ are independent and satisfy
\begin{displaymath}
	\bigoplus_{j=1}^{m+1}\Ocal_{\DBbb}\sigma_{j}=\bigoplus_{j=1}^{m+1}\Ocal_{\DBbb}\theta_{j}.
\end{displaymath}
Furthermore, for some $0\leq a_{m+1}\leq\ell-1$, $^{\rho}\theta_{m+1}=t^{-a_{m+1}}\rho^{\ast}\theta_{m+1}$ is a regular section of $^{\rho}\Fcal^{p}$, and its value at 0 is an eigenvector of $T_{s}$ of eigenvalue $\exp(-2\pi i a_{m+1}/\ell)$.
\end{proof}
\begin{remark}\label{rmk:complements}
\begin{enumerate}[(i)]
    \item Recall that the limiting Hodge structure $F_{\infty}^{\bullet}$ depends on the choice of holomorphic coordinate on $\DBbb$. However, the nilpotent orbit $\lbrace\exp(\lambda N)F_{\infty}^{\bullet}, \lambda\in\CBbb\rbrace$ is canonical. The operators $T_{s}$ and $\exp(N\lambda)$ commute, and therefore  $T_{s}$ preserves $\exp(\lambda N)F_{\infty}^{p}$ as well. Also, the eigenvalues of $T_{s}$ on $F_{\infty}^{p}$ and $\exp(\lambda N)F_{\infty}^{p}$ are the same, and hence they only depend on the nilpotent orbit. This is consistent with the fact that the elementary exponents do not depend on the choice of coordinate on $\DBbb$. It follows that, for the purpose of comparing the elementary exponents with the monodromy eigenvalues, $\DBbb$ can be taken to have any radius, as opposed to the radius 1 assumption we made so far.
    \item For one-variable variations of \emph{polarized} Hodge structures, the invariance of the limiting Hodge filtration under the action of $T_{s}$ is already observed by Schmid \cite[(4.9)]{schmid}. A different argument for the invariance in the geometric case is provided by Steenbrink \cite[Thm. 2.13]{Steenbrink-mixedonvanishing}.
    \item\label{rmk:compl-3} Theorem \ref{thm:eigenvalues} can be stated in the more general context of variations of Hodge structures. We restricted to the geometric case for the sake of conciseness.
\end{enumerate}
\end{remark}
As a consequence, we derive the corresponding statement for the elementary exponents of the Hodge bundles:
\begin{corollary}\label{cor:eigenvalues}
The elementary exponents $\alpha^{p,q}_{j}$ of the $(p,q)$ Hodge bundle for $R^{k}f_{\ast}^{\times}\CBbb$ ($k=p+q$) are such that the $\exp(-2\pi i \alpha^{p,q}_{j})$ constitute the eigenvalues of $T_{s}$ acting on $\Gr_{F_{\infty}}^{p}H^{k}(X_{\infty})$ (with multiplicities). In particular, we have
\begin{displaymath}
    \alpha^{p,q}=-\frac{1}{2\pi i}\tr\left(^{\ell}\log T_{s}\mid \Gr^{p}_{F_{\infty}}H^{p+q}(X_{\infty})\right).
\end{displaymath}
\end{corollary}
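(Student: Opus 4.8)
The plan is to deduce the corollary from Theorem \ref{thm:eigenvalues} by passing from the full Hodge filtration to its graded pieces. Write $k=p+q$ throughout. Recall from \eqref{eq:Gr_p} that the $(p,q)$ Hodge bundles are the quotients $\Fcal^{p}_{\log}/\Fcal^{p+1}_{\log}=R^{k-p}f_{\ast}\Omega^{p}_{X/\DBbb}(\log)$ and, after semi-stable reduction, $^{\rho}\Fcal^{p}/\ {}^{\rho}\Fcal^{p+1}=R^{k-p}g_{\ast}\Omega^{p}_{Y/\DBbb}(\log)$; and that under $^{\rho}\psi$ the fiber quotient $^{\rho}\Fcal^{p}(0)/\ {}^{\rho}\Fcal^{p+1}(0)$ is identified with $\Gr^{p}_{F_{\infty}}H^{k}(X_{\infty})$ (cf. \eqref{eq:Finfty}). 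So the whole point is a careful comparison of these two short exact sequences before and after semi-stable reduction, via a basis of $\Fcal^{p}_{\log}$ adapted to $\Fcal^{p+1}_{\log}$.

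First I would fix a basis $\sigma_{1},\ldots,\sigma_{h^{p}}$ of $\Fcal^{p}_{\log}$ whose first $h^{p+1}$ members form a basis of $\Fcal^{p+1}_{\log}$, and run the construction of Theorem \ref{thm:eigenvalues} on it, obtaining $\theta_{1},\ldots,\theta_{h^{p}}$ and integers $0\leq a_{j}\leq\ell-1$ such that $^{\rho}\theta_{j}:=t^{-a_{j}}\rho^{\ast}\theta_{j}$ form a basis of $^{\rho}\Fcal^{p}$, with $\kappa(\theta_{j})=a_{j}/\ell$ and $^{\rho}\theta_{j}(0)$ an eigenvector of $T_{s}$ of eigenvalue $\exp(-2\pi i a_{j}/\ell)$. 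By property (a) of that theorem, $\theta_{1},\ldots,\theta_{h^{p+1}}$ is again a basis of $\Fcal^{p+1}_{\log}$. I would then observe that the inductive procedure in the proof of Theorem \ref{thm:eigenvalues}, at step $m$, depends only on $\sigma_{m+1}$ and on the already built $\theta_{1},\ldots,\theta_{m}$ (the exponent $a_{m+1}$ is produced by Lemma \ref{lemma:monodromy} applied to an element of $\Vcal_{\log}$, and the filtration enters only through the fact that the auxiliary sections remain in the relevant piece, a fortiori in $\Fcal^{p+1}_{\log}$ when the inputs lie there). Hence running the same procedure at level $p+1$ on $\sigma_{1},\ldots,\sigma_{h^{p+1}}$ reproduces $\theta_{1},\ldots,\theta_{h^{p+1}}$ with the same integers $a_{j}$, and the level-$(p+1)$ case of Theorem \ref{thm:eigenvalues} then gives that $^{\rho}\theta_{1},\ldots,{}^{\rho}\theta_{h^{p+1}}$ is a basis of $^{\rho}\Fcal^{p+1}$.

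With this in hand I would reduce modulo the $(p+1)$-st pieces. On the one hand, the images $\bar\theta_{h^{p+1}+1},\ldots,\bar\theta_{h^{p}}$ form an $\Ocal_{\DBbb}$-basis of $R^{k-p}f_{\ast}\Omega^{p}_{X/\DBbb}(\log)$; on the other, since $\rho^{\ast}$ is exact on these locally free sheaves and $^{\rho}\theta_{j}=t^{-a_{j}}\rho^{\ast}\theta_{j}$, the images $\overline{{}^{\rho}\theta}_{j}=t^{-a_{j}}\rho^{\ast}\bar\theta_{j}$ for $h^{p+1}<j\leq h^{p}$ form an $\Ocal_{\DBbb}$-basis of $R^{k-p}g_{\ast}\Omega^{p}_{Y/\DBbb}(\log)$. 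Therefore the torsion module $R^{k-p}g_{\ast}\Omega^{p}_{Y/\DBbb}(\log)/\rho^{\ast}R^{k-p}f_{\ast}\Omega^{p}_{X/\DBbb}(\log)$ is $\bigoplus_{j>h^{p+1}}\Ocal_{\DBbb}/t^{a_{j}}\Ocal_{\DBbb}$, so that, up to reordering, $\alpha^{p,q}_{j}=a_{j}/\ell$. Finally, $T_{s}$ preserves $F^{\bullet}_{\infty}$ by Theorem \ref{thm:eigenvalues}, hence acts on $^{\rho}\Fcal^{p}(0)$ and on $^{\rho}\Fcal^{p+1}(0)$, and therefore on the quotient $\Gr^{p}_{F_{\infty}}H^{k}(X_{\infty})$; the classes $\overline{{}^{\rho}\theta}_{j}(0)$, $h^{p+1}<j\leq h^{p}$, form a basis of eigenvectors there with eigenvalues $\exp(-2\pi i a_{j}/\ell)=\exp(-2\pi i\alpha^{p,q}_{j})$, which is the first assertion. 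The trace formula is then immediate: since $\alpha^{p,q}_{j}\in[0,1)$ we have $-2\pi i\alpha^{p,q}_{j}\in 2\pi i(-1,0]$, so $^{\ell}\log\big(\exp(-2\pi i\alpha^{p,q}_{j})\big)=-2\pi i\alpha^{p,q}_{j}$; hence the eigenvalues of $^{\ell}\log T_{s}$ on $\Gr^{p}_{F_{\infty}}H^{k}(X_{\infty})$ are exactly the $-2\pi i\alpha^{p,q}_{j}$, and summing gives $\tr\!\big(^{\ell}\log T_{s}\mid\Gr^{p}_{F_{\infty}}H^{k}(X_{\infty})\big)=-2\pi i\sum_{j}\alpha^{p,q}_{j}=-2\pi i\,\alpha^{p,q}$.

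I expect the main obstacle to be the second step: checking that the construction in the proof of Theorem \ref{thm:eigenvalues} is compatible with the sub-filtration $\Fcal^{p+1}_{\log}\subset\Fcal^{p}_{\log}$, equivalently that the twist $\theta_{j}\mapsto{}^{\rho}\theta_{j}$ commutes with passage to the Hodge-bundle quotient, so that $^{\rho}\theta_{1},\ldots,{}^{\rho}\theta_{h^{p+1}}$ really is a basis of $^{\rho}\Fcal^{p+1}$ and the torsion quotients match term by term. Everything else — exactness of $\rho^{\ast}$, local freeness of the Hodge bundles (Steenbrink), the identification \eqref{eq:Finfty}, and the branch-of-logarithm computation — is routine.
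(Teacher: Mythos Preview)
Your proof is correct and follows the same approach as the paper: choose a basis of $\Fcal^{p+1}_{\log}$, extend it to a basis of $\Fcal^{p}_{\log}$, and apply Theorem~\ref{thm:eigenvalues}. You supply more detail than the paper at exactly the point it glosses over, namely why the first $h^{p+1}$ transformed sections $^{\rho}\theta_{1},\ldots,{}^{\rho}\theta_{h^{p+1}}$ form a basis of $^{\rho}\Fcal^{p+1}$; your observation that the inductive procedure in the proof of Theorem~\ref{thm:eigenvalues} is independent of the filtration level is a clean way to see this (an alternative is to note directly that these sections lie in $^{\rho}\Fcal^{p+1}=j_{\ast}(\rho^{\ast}\Fcal^{p+1}\Vcal)\cap\Ucal$ and have linearly independent values at $0$, hence span a saturated submodule of the correct rank).
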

\begin{proof}
Recall that the graded quotients $\Fcal^{p}_{\log}/\Fcal^{p+1}_{\log}$ are locally free and isomorphic to $R^{k-p}f_{\ast}\Omega^{p}_{X/\DBbb}(\log)$. Similarly after semi-stable reduction. The result easily follows from Theorem \ref{thm:eigenvalues}, by choosing a basis $\sigma_{1},\ldots,\sigma_{h}$ of $\Fcal^{p+1}_{\log}$ and completing it into a basis $\sigma_{1},\ldots,\sigma_{r}$ of $\Fcal^{p}_{\log}$, in such a way that the $\sigma_{h+1},\ldots,\sigma_{r}$ project onto a basis of $R^{k-p}f_{\ast}\Omega^{p}_{X/\DBbb}(\log)$. 
\end{proof}
\begin{remark}\label{rmk:eigen-prim}
\begin{enumerate}[(i)]
    \item Steenbrink's construction of the limiting Hodge filtration via $V$-manifolds, recalled in Remark \ref{rmk:V-manifolds}, shows that we may take $\ell=\operatorname{lcm}(m_{1},\ldots, m_{r})$ , where the $m_{i}$ are the multiplicities of the irreducible components of the schematic fiber $f^{-1}(0)$.
    \item\label{rmk:eigen-prim-ii} In the polarized setting and for primitive cohomology groups, one has analogous statements to Theorem \ref{thm:eigenvalues} and Corollary \ref{cor:eigenvalues}, formally with the same proof. The formulation and details  are left to the reader.
\end{enumerate}
\end{remark}

For other extensions, the same argument gives an analogous result of Theorem \ref{thm:eigenvalues}. However, it is not clear that the successive quotients of the (extended) Hodge filtration are locally free. Nonetheless, for the upper extension, the situation is much better. Precisely, let $^{u}\Vcal$ be the upper extension of the local system $\Vcal=R^{k}f^{\times}_{\ast}\CBbb\otimes\Ocal_{\DBbb^{\times}}$ (see \textsection\ref{subsec:Deligne-extension}), and extend the Hodge filtration analogously to \eqref{eq:extension-Fp}. Then, by \cite[Prop. 2.9 \& Lemma 2.11]{KollarHigher2}\footnote{The discussion in \emph{loc. cit.} and the references used therein are valid over a one-dimensional disc.}, the extended Hodge filtration $^{u}\Fcal^{\bullet}$ has locally free successive quotients $^{u}\Fcal^{p}/^{u}\Fcal^{p+1}$. By \emph{loc. cit.}, if $k=n+i$ the last piece of the filtration has the form 
\begin{displaymath}
    ^{u}\Fcal^{n}=R^{i}f_{\ast}\omega_{X/\DBbb}. 
\end{displaymath}
We thus have a counterpart of Corollary \ref{cor:eigenvalues} which generalizes our previous \cite[Theorem A]{cdg} for the direct image of the relative canonical sheaf of a degenerating family of Calabi--Yau varieties:

\begin{proposition}\label{prop:kawamata-style}
Let $f: X \to \DBbb$ be a projective degeneration between complex manifolds. If $g: Y \to \DBbb$ denotes a semi-stable reduction as in diagram \eqref{eq:ss-reduction}, there is a natural inclusion of locally free sheaves
$$R^i g_* \omega_{Y/\DBbb} \to \rho^* R^i f_* \omega_{X/\DBbb}.$$
The quotient is of the form $ \bigoplus_{j=1}^{h^{n,i}}\frac{\Ocal_{\DBbb,0}}{t^{a_j}\Ocal_{\DBbb,0} }$. Then the rational numbers $a_j/\deg \rho \in [0,1) \cap \mathbb{Q}$ are independent of the choice of semi-stable reduction, and $\exp(2\pi i a_j/\deg\rho)$ are the eigenvalues of $T_s$ acting on $F^n_{\infty} H^{n+i} (X_{\infty}).$
\end{proposition}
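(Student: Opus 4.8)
The plan is to transpose to the \emph{upper} Deligne extension the analysis of Section~\ref{subsec:elementary-divisors}, which was carried out there for the \emph{lower} extension, and then to isolate the top graded piece of the Hodge filtration by means of Kollár's theorem recalled just above. Throughout, write $k=n+i$, let $\Vcal=R^{k}f^{\times}_{\ast}\CBbb\otimes\Ocal_{\DBbb^{\times}}$, let $^{u}\Vcal$ be its upper extension with Hodge filtration $^{u}\Fcal^{\bullet}$, and let $\Ucal=(\rho^{\ast}\Vcal)_{\log}$ be the canonical extension attached to the semi-stable reduction $g\colon Y\to\DBbb$, where $\rho(t)=t^{\ell}$.

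First I would redo the computation preceding Lemma~\ref{lemma:monodromy}, but using the upper branch $^{u}\log$ in \eqref{eq:twisted}. For an eigenvector $\ebold\in V_{\infty}$ of $T_{s}$, the resulting twisted section $^{u}\tilde\ebold$ of $^{u}\Vcal$ satisfies $^{\rho}\tilde\ebold=t^{a}\,\rho^{\ast}(^{u}\tilde\ebold)$ inside $j_{\ast}\rho^{\ast}\Vcal$, for an integer $0\le a\le\ell-1$ with $\exp(2\pi i a/\ell)$ equal to the $T_{s}$-eigenvalue of $\ebold$; the only change with respect to Lemma~\ref{lemma:monodromy} is that the sign of the exponent is reversed, which is exactly the effect of switching the branch. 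Hence there is a canonical inclusion of vector bundles $\Ucal\subseteq\rho^{\ast}\,{}^{u}\Vcal$ with cokernel a torsion sheaf at the origin, and its elementary divisors $t^{a_{j}}$ have $\exp(2\pi i a_{j}/\ell)$ equal to the eigenvalues of $T_{s}$ on $\Ucal(0)$ (with multiplicity). Running the argument of Theorem~\ref{thm:eigenvalues} for this inclusion shows in particular that $T_{s}$ preserves $^{u}\Fcal^{\bullet}$, and then the argument of Corollary~\ref{cor:eigenvalues} at the top level $p=n$ — where $^{u}\Fcal^{n+1}=0$ since $F^{n+1}H^{k}=0$, so that $^{u}\Fcal^{n}/{}^{u}\Fcal^{n+1}={}^{u}\Fcal^{n}$ is locally free by \cite{KollarHigher2} — yields that the induced inclusion $^{\rho}\Fcal^{n}\subseteq\rho^{\ast}\,{}^{u}\Fcal^{n}$ has cokernel $\bigoplus_{j}\Ocal_{\DBbb,0}/t^{a_{j}}\Ocal_{\DBbb,0}$ with $\exp(2\pi i a_{j}/\ell)$ the eigenvalues of $T_{s}$ on $^{\rho}\Fcal^{n}(0)$, which is $F^{n}_{\infty}H^{k}(X_{\infty})=\Gr^{n}_{F_{\infty}}H^{k}(X_{\infty})$ via $^{\rho}\psi$.

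It then remains to identify these sheaves with the stated direct images. By \cite{KollarHigher2} one has $^{u}\Fcal^{n}=R^{i}f_{\ast}\omega_{X/\DBbb}$ (note that no normal crossings hypothesis on $X_{0}$ is needed here, only smoothness of $X$). Since $g$ is semi-stable its monodromy is unipotent, so its upper, lower and canonical extensions coincide; thus $^{\rho}\Fcal^{n}=R^{i}g_{\ast}\Omega^{n}_{Y/\DBbb}(\log)$ by Steenbrink, and $\Omega^{n}_{Y/\DBbb}(\log)\simeq\omega_{Y/\DBbb}$ because $Y_{0}$ is reduced, so $^{\rho}\Fcal^{n}=R^{i}g_{\ast}\omega_{Y/\DBbb}$. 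The inclusion of the previous paragraph is then the asserted natural map $R^{i}g_{\ast}\omega_{Y/\DBbb}\to\rho^{\ast}R^{i}f_{\ast}\omega_{X/\DBbb}$; it is an isomorphism over $\DBbb^{\times}$, since over the punctured disc both sides are the $(n,i)$ Hodge bundle of the variation $\rho^{\ast}\Vcal$ and the map is the identity there, so the cokernel is a torsion $\Ocal_{\DBbb,0}$-module and hence of the claimed cyclic form. Finally, the independence of the ratios $a_{j}/\deg\rho$ from the choice of semi-stable reduction follows exactly as in the lemma after Definition~\ref{def:elementary}: any two such reductions are dominated by a common one, and in the unipotent range formation of canonical extensions — hence of the sheaves $R^{i}g_{\ast}\omega_{Y/\DBbb}$ — commutes with further base changes $t\mapsto t^{m}$.

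The step I expect to be the main obstacle is the passage from the full local system to the top Hodge bundle, i.e. justifying that the elementary divisors of $^{\rho}\Fcal^{n}\subseteq\rho^{\ast}\,{}^{u}\Fcal^{n}$ detect the $T_{s}$-eigenvalues on the subspace $F^{n}_{\infty}$ rather than merely on all of $H^{k}(X_{\infty})$. This needs the analogue, for the reversed inclusion of the upper extension, of the inductive basis construction of Theorem~\ref{thm:eigenvalues}, producing a basis simultaneously adapted to the filtration and to the $T_{s}$-eigendecomposition; it is precisely here that Kollár's description of $^{u}\Fcal^{n}=R^{i}f_{\ast}\omega_{X/\DBbb}$ as the last step of a filtration with locally free quotients is essential, while the remainder is bookkeeping with the two branches of the logarithm.
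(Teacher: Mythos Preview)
Your proposal is correct and follows essentially the paper's own route: the paragraph preceding the proposition already says that ``the same argument gives an analogous result of Theorem~\ref{thm:eigenvalues}'' for the upper extension, and then invokes Koll\'ar's description of $^{u}\Fcal^{n}$ to identify it with $R^{i}f_{\ast}\omega_{X/\DBbb}$; your write-up is a faithful unpacking of exactly this. One small remark: the step you flag as the main obstacle is in fact the easiest instance of Corollary~\ref{cor:eigenvalues}, since $F^{n+1}=0$ makes $\Gr^{n}_{F_{\infty}}=F^{n}_{\infty}$ and no quotienting is required --- so the adapted-basis argument of Theorem~\ref{thm:eigenvalues} applies directly to $^{\rho}\Fcal^{n}\subseteq\rho^{\ast}\,{}^{u}\Fcal^{n}$ without having to pass to a graded piece.
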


\section{K\"ahler extensions of determinants of Hodge bundles}

In the theory of the BCOV line bundle, it is frequent to work with the K\"ahler extensions of the sheaves of differentials, rather than their logarithmic counterparts (see \emph{e.g.} \cite{FLY, cdg}). In this section we compare the determinant of cohomology of these two kinds of extensions. While in the comparison properties addressed in the previous section we found eigenvalues of the monodromy operator, in this section we encounter other invariant such as the total dimension of vanishing cycles.

\subsection{K\"ahler extensions}
Let $f\colon X\to S$ be a projective degeneration between complex manifolds, of relative dimension $n$. We assume that $S$ is a connected complex curve. Let $S^{\times}\subseteq S$ be the (non-empty and Zariski open) locus of regular values of $f$. Write $X^{\times}=f^{-1}(S^{\times})$. Then $\Omega^{p}_{X^{\times}/S^{\times}}$ is a locally free sheaf and its determinant of cohomology $\lambda(\Omega^{p}_{X^{\times}/S^{\times}})$ is defined.

\begin{definition}\label{def:kahler-extension}
For each $0\leq p\leq n$, consider the complex of locally free sheaves
\begin{eqnarray*}\label{eq:kahler-res}
\widetilde{\Omega}^p_{X/S} \colon
\left(f^\ast \Omega_S\right)^{\otimes p}\to
\left(f^\ast \Omega_S\right)^{\otimes p-1}\otimes \Omega_X\to\cdots
\to
\left(f^\ast \Omega_S\right)\otimes \Omega_X^{p-1}\to
\Omega_X^p,
\end{eqnarray*}
quasi-isomorphic on $X^{\times}$ to $\Omega^p_{X^{\times}/S^{\times}}$.
We define the \emph{K\"ahler extension} of $\lambda(\Omega_{X^\times/S^\times}^p)$, as
\begin{equation}\label{def:kahlerext}
    \lambda(\widetilde{\Omega}_{X/S}^p) = \bigotimes_{k=0}^{p}\lambda\left(\left(f^\ast \Omega_S\right)^{\otimes k}\otimes \Omega_X^{p-k}\right)^{(-1)^{k}}.
\end{equation}
\end{definition}

\begin{remark}
In \cite[Example 4.2]{SaitoTakeshiBored}, the complex $\widetilde{\Omega}^p_{X/S} $ is referred to as the derived exterior power complex $L\Lambda^q K$, applying a functor $L\Lambda^q$ to $K = [f^* \Omega_{S} \to \Omega_X].$ We refer the reader to \emph{loc. cit.} for further discussions of this notion.
\end{remark}
\subsection{Compatibility with Serre duality}
We study the compatibility of the formation of $\lambda(\widetilde{\Omega}^{p}_{X/S})$ with Serre duality. For simplicity, we restrict to the local case $S=\DBbb$ and $f$ is a submersion on $\DBbb^{\times}$. As $X$ is smooth, the relative dualizing sheaf $\omega_{X/\DBbb}$ of $f\colon X\to \DBbb$ is isomorphic to $\det\Omega_{X/\DBbb}\simeq K_{X}\otimes K_{\DBbb}^{-1}$. Then there is a canonical pairing
\begin{displaymath}
    \widetilde{\Omega}^p_{X/\DBbb} \times \widetilde{\Omega}^{n-p}_{X/\DBbb} \longrightarrow \widetilde{\Omega}^n_{X/\DBbb}  \longrightarrow \omega_{X/\DBbb}.
\end{displaymath}
The pairing is described in \cite[p. 420]{SaitoTakeshiBored}\footnote{To be strictly conform with \emph{loc. cit.}, we should work with the projective morphism of regular schemes over $\Spec\CBbb\lbrace t\rbrace$ induced by $f$. The reader will get easily convinced that this abuse of notation is justified, for the results of this section can be checked after localization at the origin.}
and extends the natural pairing on relative K\"ahler differentials. In particular, on the smooth locus, it is a perfect pairing. We conclude there is a morphism of complexes
\begin{equation*}\label{eq:rho-p}
    \rho_p: \widetilde{\Omega}_{X/\DBbb}^p \longrightarrow R\Hom(\widetilde{\Omega}_{X/\DBbb}^{n-p}, \omega_{X/\DBbb}).
\end{equation*}
The cone of $\rho_p$ is acyclic on the smooth locus and its homology is supported on the singular locus. By  Grothendieck-Serre duality, we have a quasi-isomorphism
\begin{equation*}\label{GrothSerre}
    Rf_\ast R\Hom(\widetilde{\Omega}^{n-p}_{X/\DBbb}, \omega_{X/\DBbb}[n])\simeq R\Hom(Rf_\ast \widetilde{\Omega}^{n-p}_{X/\DBbb}, \Ocal_\DBbb).
\end{equation*}
Taking determinants we obtain an isomorphism
\begin{displaymath}
    \lambda\left(R\Hom\left(\widetilde{\Omega}_{X/\DBbb}^{n-p}, \omega_{X/\DBbb}\right) \right) \simeq \lambda(\widetilde{\Omega}^{n-p}_{X/\DBbb})^{(-1)^{n+1}}.
\end{displaymath}
We conclude that there is an isomorphism, written additively for lighter notations,
\begin{displaymath}
    \lambda(\widetilde{\Omega}^p_{X/\DBbb}) \simeq (-1)^{n+1}\lambda(\widetilde{\Omega}^{n-p}_{X/\DBbb}) - \delta_p \cdot \Ocal([0])
\end{displaymath}
where $\delta_p \in \bb Z$. Outside of the origin, it extends the (canonical) Serre duality isomorphism. By d\'evissage we find that
\begin{displaymath}
    \delta_p = \sum (-1)^k \ell_{\Ocal_{\DBbb,0}} \left(R^k f_\ast \operatorname{cone}(\rho_p) \right).
\end{displaymath}
Here $\ell_R$ denotes the length of an $R$-module. It follows from \cite[Cor. 4.5]{SaitoTakeshiBored},
\begin{equation*}\label{eq:takeshicomp}
    \delta_p = (-1)^{n-p} \deg c_{n+1}^{X_0}(\Omega_{X/\DBbb}).
\end{equation*}
Since we suppose that the total space $X$ is smooth, we have the formula (cf. \cite[Ex. 14.1.5]{Fulton})
\begin{equation}\label{eq:classeloc}
\deg c_{n+1}^{X_0}(\Omega_{X/\DBbb}) = (-1)^n \left(\chi(X_\infty) - \chi(X_0)\right)
\end{equation}
where $\chi(X_\infty)$ (resp. $\chi(X_0)$) denotes the topological Euler characteristic of a general (resp. special) fiber of $f\colon X \to \DBbb$.

We summarize the above observations in the following proposition:
\begin{proposition}\label{prop:kahlercomp}
Outside the origin, Serre duality induces an isomorphism
\begin{displaymath}
    \lambda(\Omega^p_{X^\times/\DBbb^\times}) \simeq (-1)^{n+1}\lambda(\Omega^{n-p}_{X^\times/\DBbb^\times}).
\end{displaymath}
It extends to an isomorphism of K\"ahler extensions
\begin{displaymath}
    \lambda(\widetilde{\Omega}^p_{X/\DBbb}) \simeq (-1)^{n+1} \lambda(\widetilde{\Omega}^{n-p}_{X/\DBbb}) + (-1)^{n+1-p} c \cdot \Ocal([0]).
\end{displaymath}
Here $c=\deg c_{n+1}^{X_0}(\Omega_{X/\DBbb})$ is the degree of the localized top Chern class, which is computed by \eqref{eq:classeloc}.
\end{proposition}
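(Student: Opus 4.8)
The plan is to prove the isomorphism first over the punctured disc, where it is ordinary relative Serre duality, and then to extend it across the origin while keeping track of the defect, which the preceding discussion has already identified with a localized Chern class.

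First I would work on the smooth locus. Over $\DBbb^{\times}$ the morphism $f$ is a proper submersion of relative dimension $n$, so $\Omega^p_{X^\times/\DBbb^\times}$ and $\Omega^{n-p}_{X^\times/\DBbb^\times}$ are locally free and the wedge pairing into $\omega_{X^\times/\DBbb^\times}=\det\Omega_{X^\times/\DBbb^\times}$ is perfect, i.e. $R\Hom(\Omega^{n-p}_{X^\times/\DBbb^\times},\omega_{X^\times/\DBbb^\times})\simeq\Omega^p_{X^\times/\DBbb^\times}$. Relative Grothendieck--Serre duality for $f^\times$, whose relative dualizing complex is $\omega_{X^\times/\DBbb^\times}[n]$, then identifies $Rf^\times_\ast\Omega^p_{X^\times/\DBbb^\times}[n]$ with $R\Hom(Rf^\times_\ast\Omega^{n-p}_{X^\times/\DBbb^\times},\Ocal_{\DBbb^\times})$. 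Taking determinants of cohomology and using $\det(C[n])=\det(C)^{(-1)^n}$ yields $\lambda(\Omega^p_{X^\times/\DBbb^\times})\simeq(-1)^{n+1}\lambda(\Omega^{n-p}_{X^\times/\DBbb^\times})$, the sign $(-1)^{n+1}$ coming from the shift together with the dual.

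Next I would extend this comparison to the K\"ahler extensions. The input is T. Saito's description of the pairing $\widetilde\Omega^p_{X/\DBbb}\times\widetilde\Omega^{n-p}_{X/\DBbb}\to\omega_{X/\DBbb}$ at the level of complexes recalled above, producing a morphism $\rho_p\colon\widetilde\Omega^p_{X/\DBbb}\to R\Hom(\widetilde\Omega^{n-p}_{X/\DBbb},\omega_{X/\DBbb})$ which restricts over $\DBbb^\times$ to the perfect pairing of the first step. Applying $Rf_\ast$, invoking the global Grothendieck--Serre quasi-isomorphism $Rf_\ast R\Hom(\widetilde\Omega^{n-p}_{X/\DBbb},\omega_{X/\DBbb}[n])\simeq R\Hom(Rf_\ast\widetilde\Omega^{n-p}_{X/\DBbb},\Ocal_\DBbb)$, and passing to determinants, I obtain an isomorphism $\lambda(\widetilde\Omega^p_{X/\DBbb})\simeq(-1)^{n+1}\lambda(\widetilde\Omega^{n-p}_{X/\DBbb})-\delta_p\cdot\Ocal([0])$ for an integer $\delta_p$, agreeing with the smooth-locus isomorphism away from $0$. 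Since $\cone(\rho_p)$ is acyclic on the smooth locus, its higher direct images are torsion sheaves at the origin, and d\'evissage along the complex gives $\delta_p=\sum_k(-1)^k\ell_{\Ocal_{\DBbb,0}}\!\left(R^kf_\ast\cone(\rho_p)\right)$.

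Finally I would evaluate $\delta_p$. This is where \cite[Cor. 4.5]{SaitoTakeshiBored} enters: the Euler--Poincar\'e characteristic of $\cone(\rho_p)$ is $(-1)^{n-p}\deg c_{n+1}^{X_0}(\Omega_{X/\DBbb})$, so $\delta_p=(-1)^{n-p}\deg c_{n+1}^{X_0}(\Omega_{X/\DBbb})$. Substituting, and using $-(-1)^{n-p}=(-1)^{n+1-p}$, gives $\lambda(\widetilde\Omega^p_{X/\DBbb})\simeq(-1)^{n+1}\lambda(\widetilde\Omega^{n-p}_{X/\DBbb})+(-1)^{n+1-p}c\cdot\Ocal([0])$ with $c=\deg c_{n+1}^{X_0}(\Omega_{X/\DBbb})$; the formula \eqref{eq:classeloc}, valid because $X$ is smooth, then rewrites $c$ as $(-1)^n(\chi(X_\infty)-\chi(X_0))$. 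The main obstacle I expect is bookkeeping rather than conceptual: one must check that the extended isomorphism genuinely restricts to the Serre duality isomorphism over $\DBbb^\times$ (so that the whole defect is concentrated at $0$ and measured exactly by $\cone(\rho_p)$), and then track signs consistently through the shift $[n]$, the alternating sum in the d\'evissage, and Saito's formula --- the appearance of $(-1)^{n+1-p}$ rather than $(-1)^{n-p}$ in the final statement is entirely a consequence of this sign accounting.
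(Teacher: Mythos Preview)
Your proposal is correct and follows essentially the same argument as the paper: the proposition is simply a summary of the discussion immediately preceding it, which proceeds exactly by constructing Saito's pairing $\rho_p$, applying Grothendieck--Serre duality, taking determinants to produce the defect $\delta_p$ supported at the origin, and then invoking \cite[Cor.~4.5]{SaitoTakeshiBored} to identify $\delta_p=(-1)^{n-p}\deg c_{n+1}^{X_0}(\Omega_{X/\DBbb})$. Your sign accounting and your remark about the main obstacle being bookkeeping are both on point.
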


\begin{corollary}\label{cor:2lambda}
In even relative dimension $n=2m$, there is an isomorphism
\begin{displaymath}
    2 \lambda(\widetilde{\Omega}_{X/\DBbb}^m) \simeq (-1)^{m+1} c \cdot \Ocal([0]).
\end{displaymath}
\end{corollary}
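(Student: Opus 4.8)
The plan is to specialize Proposition \ref{prop:kahlercomp} to the value $p = m$ and then carry out the sign bookkeeping. Since $n = 2m$, we have $n - p = 2m - m = m$, so the isomorphism of K\"ahler extensions furnished by the proposition relates $\lambda(\widetilde{\Omega}^m_{X/\DBbb})$ to \emph{itself}. First I would write that isomorphism additively, as in the statement of the proposition,
\[
\lambda(\widetilde{\Omega}^m_{X/\DBbb}) \simeq (-1)^{n+1}\lambda(\widetilde{\Omega}^{m}_{X/\DBbb}) + (-1)^{n+1-m}\, c \cdot \Ocal([0]),
\]
and then substitute $n = 2m$. One computes $(-1)^{n+1} = (-1)^{2m+1} = -1$ and $(-1)^{n+1-m} = (-1)^{m+1}$, so the displayed isomorphism becomes
\[
\lambda(\widetilde{\Omega}^m_{X/\DBbb}) \simeq -\lambda(\widetilde{\Omega}^m_{X/\DBbb}) + (-1)^{m+1}\, c \cdot \Ocal([0]).
\]
Transposing the term $-\lambda(\widetilde{\Omega}^m_{X/\DBbb})$ to the left-hand side, i.e. tensoring both sides with $\lambda(\widetilde{\Omega}^m_{X/\DBbb})$, yields precisely $2\lambda(\widetilde{\Omega}^m_{X/\DBbb}) \simeq (-1)^{m+1}\, c \cdot \Ocal([0])$, which is the claim, with $c = \deg c_{n+1}^{X_0}(\Omega_{X/\DBbb})$ computed by \eqref{eq:classeloc}.

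The single point that deserves a word of care — and the closest thing to an obstacle — is that the isomorphism in Proposition \ref{prop:kahlercomp} originates from the Serre duality pairing $\widetilde{\Omega}^p_{X/\DBbb}\times\widetilde{\Omega}^{n-p}_{X/\DBbb}\to\omega_{X/\DBbb}$, and for $p = m$ both factors of this pairing are literally the same complex $\widetilde{\Omega}^m_{X/\DBbb}$. One should therefore note that the resulting relation genuinely involves the square $2\lambda(\widetilde{\Omega}^m_{X/\DBbb})$ of a single determinant line, rather than a pairing between two a priori distinct extensions; this is immediate from the construction, since the two arguments of $\rho_m$ coincide. With that observation in place there is no further difficulty: all the substance already resides in Proposition \ref{prop:kahlercomp}, and the corollary is pure sign arithmetic.
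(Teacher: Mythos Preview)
Your proof is correct and is exactly the argument the paper intends: the corollary is stated immediately after Proposition \ref{prop:kahlercomp} without proof, precisely because it is the specialization $p=m=n-p$ together with the sign computation you carry out. Your added remark about the self-pairing is a fair observation but not something the paper pauses over.
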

The above results adapt to a general one dimensional base $S$ and the case of several singular fibers, and integrating over a compact Riemann surface we obtain the following corollary:
\begin{corollary}\label{cor:funny}
Suppose that $S$ is a compact Riemann surface and $f\colon X\to S$ is a degeneration with at most ordinary double point singularities, of even relative dimension. Then the number of singularities is even.
\end{corollary}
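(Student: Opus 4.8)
The plan is to globalize Corollary \ref{cor:2lambda} over the compact Riemann surface $S$ and extract a parity constraint by evaluating degrees of line bundles. First I would recall the setup: $f\colon X\to S$ is a degeneration of even relative dimension $n=2m$ with at most ordinary double point singularities, and $S$ is compact. Let $\Sigma\subset S$ be the finite set of critical values. For each $s\in\Sigma$, working locally on a small disc $\DBbb_s$ around $s$, Corollary \ref{cor:2lambda} gives an isomorphism $2\lambda(\widetilde{\Omega}^m_{X/\DBbb_s})\simeq (-1)^{m+1}c_s\cdot\Ocal([s])$, where $c_s=\deg c_{n+1}^{X_s}(\Omega_{X/S})$ is the local contribution. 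By \eqref{eq:classeloc} applied locally, $c_s=(-1)^n(\chi(X_\infty)-\chi(X_s))$; for a single ordinary double point in even fiber dimension $n=2m$, the special fiber differs from a smooth one by collapsing a vanishing cycle in middle degree, and a standard computation of the Milnor number of the node (the vanishing cohomology of an $A_1$ singularity is one-dimensional, located in degree $n$) gives $\chi(X_\infty)-\chi(X_s)=(-1)^n$, hence $c_s=1$ for each node. More generally $c_s$ equals the number of nodes in the fibre $X_s$ (additivity of the localized Chern class over the singular points).

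Next I would assemble the global statement. The sheaf $\lambda(\widetilde{\Omega}^m_{X/S})$ is a line bundle on all of $S$ — this is because the K\"ahler extension complex of Definition \ref{def:kahler-extension} is a bounded complex of locally free sheaves on $X$ whose higher direct images' determinant is defined everywhere, the construction being local on the base and agreeing with the local pictures $\lambda(\widetilde{\Omega}^m_{X/\DBbb_s})$ near each $s\in\Sigma$. Gluing the local isomorphisms of Corollary \ref{cor:2lambda}, we obtain an isomorphism of line bundles on $S$
\begin{displaymath}
    2\lambda(\widetilde{\Omega}^m_{X/S}) \simeq (-1)^{m+1}\sum_{s\in\Sigma} c_s\cdot\Ocal([s]) = (-1)^{m+1}\,(\#\sing(X))\cdot\Ocal(\text{pt}),
\end{displaymath}
where $\#\sing(X)=\sum_s c_s$ is the total number of nodes and $\Ocal(\text{pt})$ denotes a degree-one line bundle. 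Away from $\Sigma$ this is the Serre-duality self-pairing $\lambda(\Omega^m_{X^\times/S^\times})\simeq(-1)^{n+1}\lambda(\Omega^m_{X^\times/S^\times})$ with $n+1$ odd, i.e. the line bundle is ``anti-self-dual,'' which is what forces the constraint.

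Finally I would take degrees. Write $d=\deg\lambda(\widetilde{\Omega}^m_{X/S})\in\ZBbb$. Taking $\deg$ of the displayed isomorphism yields $2d=(-1)^{m+1}\#\sing(X)$, whence $\#\sing(X)=(-1)^{m+1}\cdot 2d$ is even. (Equivalently: the Serre-duality pairing identifies $\lambda$ with $-\lambda$ plus a correction divisor supported on $\Sigma$ of total degree $\pm\#\sing(X)$; comparing degrees gives $2\deg\lambda=\pm\#\sing(X)$.) The main obstacle I anticipate is the bookkeeping in the second paragraph: verifying that $\lambda(\widetilde{\Omega}^m_{X/S})$ genuinely extends to a line bundle on the compact $S$ (so that it has a well-defined integer degree) and that the local isomorphisms of Corollary \ref{cor:2lambda} patch to a global one with the divisor on the right-hand side being exactly $\sum_s c_s[s]$ — this requires knowing that the construction in Definition \ref{def:kahler-extension} and the Serre-duality comparison of Proposition \ref{prop:kahlercomp} are compatible with restriction to discs, which the excerpt asserts ("the above results adapt to a general one dimensional base $S$"). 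Once that compatibility is granted, the degree computation is immediate and the parity follows.
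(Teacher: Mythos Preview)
Your proposal is correct and follows essentially the same approach as the paper: globalize Corollary \ref{cor:2lambda} over the compact base $S$, identify each local contribution $c_s$ with the number of ordinary double points in the fiber (via \eqref{eq:classeloc}), and then take degrees to obtain $2\deg\lambda(\widetilde{\Omega}^m_{X/S})=\pm\#\sing(X)$. The paper's proof is simply a terser statement of this same argument.
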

\begin{proof}
For ordinary double point singularities, it follows from \eqref{eq:classeloc} that the localized Chern classes compute the number of singular points in the fibers (cf. \cite[Ex. 14.1.5 (d)]{Fulton}). We conclude by Corollary \ref{cor:2lambda}.

\end{proof}
\begin{remark}
In the special case of a Lefschetz pencil of degree $d$ hypersurfaces in $\bb P^{2m+1}$, the above corollary indicates that there are an even number of singular fibers. This is compatible with the fact that the degree of the discriminant variety is $(2m+2)(d-1)^{2m+1}$ by Boole's formula \cite[Chap. 1]{GKZ}.
\end{remark}

\subsection{Comparison with the logarithmic extensions}\label{subsec:comparison}
Let $f\colon X\to \DBbb$ be as before. If the special fiber $X_{0}$ has normal crossings, then $\lambda(\Omega^{p}_{X^{\times}/\DBbb^{\times}})$ affords two natural extensions: the K\"ahler extension $\lambda(\widetilde{\Omega}_{X/\DBbb}^{p})$ and the logarithmic extension $\lambda(\Omega_{X/\DBbb}^{p}(\log))$. Both can be compared:
\begin{equation}\label{eq:def-mu-p}
    \lambda(\widetilde{\Omega}_{X/\DBbb}^{p})=\lambda(\Omega_{X/\DBbb}^{p}(\log))+\mu_{p}\cdot\Ocal([0]),
\end{equation}
for some integer $\mu_{p}$. We now describe general expressions for $\mu_{p}$ in terms of the geometry of $X_{0}$. More generally, without any assumption on the special fiber, we can reduce to the normal crossings case by an embedded resolution of singularities. One then needs to keep track of the change of the determinant of cohomology of the K\"ahler differentials under the blowing-up process. This setting will be partially treated in a second step, after the easier normal crossings case.

Suppose that the special fiber of  $f\colon X\to\DBbb$  is a normal crossings divisor of the form $X_{0}=\sum_{i=1}^r m_{i}D_{i}$, with smooth $D_{i}$. Define for $I \subseteq \{1, \ldots, r \}$, $D_I = \bigcap_{i \in I} D_i$. For $k\geq 0$ an integer, also define the codimension $k$ stratum in $X$
\begin{displaymath}
    D(k)=\bigsqcup_{\substack{I \subseteq \{1, \ldots, r \}\\ |I|=k}} D_I
\end{displaymath}
and denote by $a_k: D(k) \to X_0$ the natural map. Finally, given a cohomological complex $\Ccal^{\bullet}$, denote by $\Ccal^{\bullet\leq k}$ the b\^ete truncation of the complex up to degree $k$, namely $\ldots\to \Ccal^{k-2}\to\Ccal^{k-1}\to\Ccal^{k}$. This notation will be applied below for holomorphic de Rham complexes.

\begin{proposition}\label{prop:mu_p}
\label{prop:delta1} Assume that $f\colon X\to\DBbb$ is a projective normal crossings degeneration, and write $X_{0}=\sum_{i}m_{i}D_{i}$, where the $D_{i}$ are the reduced irreducible components, assumed to be smooth. Then
\begin{displaymath}
   \mu_{p}=(-1)^{p-1}\chi(\Omega_{X_{\infty}}^{\bullet \leq p-1})-\sum_{k=1}^{p}(-1)^{p-k}\chi(\Omega_{D(k)}^{\bullet \leq p-k}).
\end{displaymath}
In particular,
\begin{displaymath}
    \mu_1= \chi(\Ocal_{X_{\infty}})-\sum \chi(\Ocal_{D_i}).
\end{displaymath}
If moreover $f$ is semi-stable, i.e. $X_{0}$ is in addition reduced, then
\begin{displaymath}
    \mu_1 = -\sum_{k \geq 2} (-1)^{k} \chi(\Ocal_{D(k)}).
\end{displaymath}
\end{proposition}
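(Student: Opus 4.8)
The plan is to read off $\mu_p$ as the order of vanishing at the origin of the canonical isomorphism between $\lambda(\widetilde\Omega^p_{X/\DBbb})$ and $\lambda(\Omega^p_{X/\DBbb}(\log))$ over $\DBbb^{\times}$ (both restrict to $\lambda(\Omega^p_{X^\times/\DBbb^\times})$), by separately expressing each of these two line bundles on $\DBbb$ in terms of the determinants of cohomology of the \emph{absolute} sheaves $\Omega^j_X$, respectively $\Omega^j_X(\log X_0)$, and then comparing. All isomorphisms used below restrict over $\DBbb^{\times}$ to the tautological ones, so only a few explicit twists contribute at $0$; I work additively, using that $\det Rf_\ast$ of a torsion sheaf of length $\ell$ supported at the origin is $\ell\cdot\Ocal([0])$, and that $\Omega_\DBbb(\log[0])=\Omega_\DBbb+\Ocal([0])$ inside the sheaf of meromorphic $1$-forms.

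First I unwind the K\"ahler side. The complex $\widetilde\Omega^p_{X/\DBbb}$ of Definition \ref{def:kahler-extension}, equipped with the b\^ete filtration (by the number of $f^\ast\Omega_\DBbb$-factors), fits in a short exact sequence of complexes $0\to\Omega^p_X\to\widetilde\Omega^p_{X/\DBbb}\to(f^\ast\Omega_\DBbb\otimes\widetilde\Omega^{p-1}_{X/\DBbb})[1]\to0$. Multiplicativity of $\lambda=\det Rf_\ast$, the projection formula, and the fact that $\widetilde\Omega^{p-1}_{X/\DBbb}$ restricts on a smooth fibre to $\Omega^{p-1}$ of that fibre (so that the generic rank of $Rf_\ast\widetilde\Omega^{p-1}_{X/\DBbb}$ is $e_{p-1}:=\chi(X_\infty,\Omega^{p-1}_{X_\infty})$) give the recursion $\lambda(\widetilde\Omega^p_{X/\DBbb})+\lambda(\widetilde\Omega^{p-1}_{X/\DBbb})=\lambda(\Omega^p_X)-e_{p-1}\,\Omega_\DBbb$, hence
\begin{displaymath}
\lambda(\widetilde\Omega^p_{X/\DBbb})=\sum_{j=0}^{p}(-1)^{p-j}\lambda(\Omega^j_X)-B_p\,\Omega_\DBbb,\qquad B_p:=(-1)^{p-1}\sum_{j=0}^{p-1}(-1)^{j}e_j=(-1)^{p-1}\chi(\Omega_{X_\infty}^{\bullet\leq p-1}).
\end{displaymath}
The same argument applied to the short exact sequence
\begin{displaymath}
0\to f^\ast\Omega_\DBbb(\log[0])\otimes\Omega^{p-1}_{X/\DBbb}(\log)\to\Omega^p_X(\log X_0)\to\Omega^p_{X/\DBbb}(\log)\to0,
\end{displaymath}
which comes from $\Omega^p_{X/\DBbb}(\log)=\Lambda^p\Omega_{X/\DBbb}(\log)$ and the locally split sequence defining $\Omega_{X/\DBbb}(\log)$ (here one uses that $df/f$ is a nowhere-vanishing section of $\Omega_X(\log X_0)$, its residue along $D_i$ being the multiplicity $m_i$), yields $\lambda(\Omega^p_{X/\DBbb}(\log))=\sum_{j=0}^{p}(-1)^{p-j}\lambda(\Omega^j_X(\log X_0))-B_p\,\Omega_\DBbb(\log[0])$, with the \emph{same} $B_p$ since $\Omega^{p-1}_{X/\DBbb}(\log)$ likewise restricts to $\Omega^{p-1}$ on a smooth fibre.

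Subtracting the two expressions and using $\Omega_\DBbb(\log[0])=\Omega_\DBbb+\Ocal([0])$, the $\Omega_\DBbb$-contributions cancel and one obtains
\begin{displaymath}
\mu_p=B_p-\sum_{j=0}^{p}(-1)^{p-j}\,\chi(X,C_j),\qquad C_j:=\Omega^j_X(\log X_0)/\Omega^j_X,
\end{displaymath}
because the exact sequence $0\to\Omega^j_X\to\Omega^j_X(\log X_0)\to C_j\to0$ gives $\lambda(\Omega^j_X(\log X_0))-\lambda(\Omega^j_X)=\chi(X,C_j)\,\Ocal([0])$, the sheaf $C_j$ being supported on $X_0$. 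The Euler characteristic $\chi(X,C_j)$ is computed from Deligne's weight filtration on $\Omega^\bullet_X(\log X_0)$: one has $W_0\Omega^j_X(\log X_0)=\Omega^j_X$ and, for $k=1,\dots,j$, the Poincar\'e residue identifies $\Gr_k^W\Omega^j_X(\log X_0)$ with $(a_k)_\ast\Omega^{j-k}_{D(k)}$, the \emph{ordinary} (non-logarithmic) sheaf of $(j-k)$-forms on the smooth strata; hence $\chi(X,C_j)=\sum_{k=1}^{j}\chi(D(k),\Omega^{j-k}_{D(k)})$. Substituting this, exchanging the order of summation, and recognising $\sum_{i=0}^{p-k}(-1)^i\chi(D(k),\Omega^i_{D(k)})=\chi(\Omega_{D(k)}^{\bullet\leq p-k})$ together with $B_p=(-1)^{p-1}\chi(\Omega_{X_\infty}^{\bullet\leq p-1})$, we obtain the stated formula for $\mu_p$. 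For $p=1$ it reads $\mu_1=\chi(\Ocal_{X_\infty})-\chi(\Ocal_{D(1)})=\chi(\Ocal_{X_\infty})-\sum_i\chi(\Ocal_{D_i})$; when $f$ is semi-stable the special fibre $X_0$ is reduced, so $\chi(\Ocal_{X_\infty})=\chi(\Ocal_{X_0})$ by flatness, and substituting the Mayer--Vietoris resolution $0\to\Ocal_{X_0}\to(a_1)_\ast\Ocal_{D(1)}\to(a_2)_\ast\Ocal_{D(2)}\to\cdots$ turns the formula into $\mu_1=-\sum_{k\geq 2}(-1)^{k}\chi(\Ocal_{D(k)})$.

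The delicate point --- where an error is easiest to make --- is the bookkeeping of the determinant twists by $\Omega_\DBbb$ versus $\Omega_\DBbb(\log[0])$ in the two recursions: these differ exactly by $\Ocal([0])$, and it is precisely this discrepancy, \emph{not} the comparison of the absolute sheaves $\Omega^j_X$ and $\Omega^j_X(\log X_0)$ (which only sees the special fibre, through its strata $D(k)$), that produces the general-fibre term $(-1)^{p-1}\chi(\Omega_{X_\infty}^{\bullet\leq p-1})$; omitting it would give a spurious multiplicity-free answer. A secondary point requiring some care is that the graded pieces of the weight filtration are the ordinary forms on the strata $D(k)$ with no normal-bundle twist --- the only ambiguity being an ordering of the components, which is invisible to Euler characteristics.
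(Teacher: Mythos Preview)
Your proof is correct and follows essentially the same route as the paper: both expand $\lambda(\widetilde\Omega^p_{X/\DBbb})$ and $\lambda(\Omega^p_{X/\DBbb}(\log))$ in terms of the absolute determinants $\lambda(\Omega^j_X)$, resp.\ $\lambda(\Omega^j_X(\log X_0))$, plus a twist by $\Omega_\DBbb$, resp.\ $\Omega_\DBbb(\log[0])$, then subtract and use the weight filtration $\Gr^W_k\Omega^j_X(\log X_0)\simeq(a_k)_\ast\Omega^{j-k}_{D(k)}$ to finish. The paper does the expansion by directly unpacking the definition and the Koszul-type resolution, whereas you package it as a two-term recursion via short exact sequences; these are the same computation. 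One cosmetic slip: your b\^ete-filtration sequence should read $0\to(f^\ast\Omega_\DBbb\otimes\widetilde\Omega^{p-1}_{X/\DBbb})[1]\to\widetilde\Omega^p_{X/\DBbb}\to\Omega^p_X\to0$ rather than the reverse, since $\Omega^p_X$ is the degree-$0$ quotient; but this is invisible to the additive identity $\lambda(A)+\lambda(C)=\lambda(B)$ you use, so the recursion and everything downstream are unaffected.
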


\begin{proof}
Applying the projection formula to the definition \eqref{def:kahlerext} we find
\begin{displaymath}
    \lambda(\widetilde{\Omega}_{X/\DBbb}^p)= \sum_{k=0}^p (-1)^k \det (\Omega_\DBbb^{\otimes k} \otimes R f_* \Omega^{p-k}_{X}).
\end{displaymath}
Here $Rf_{\ast}\Omega^{p-k}_{X}$ is the whole right derived image of $\Omega_{X}^{p-k}$, which is a perfect complex on $\DBbb$. We now want to use that for a product of two perfect complexes $A,B$, we have $\det (A\otimes B) = (\rk B)(\det A)+ (\rk A)(\det B)$. The rank of $Rf_* \Omega_X^{p-k}$ can be computed on the generic fiber $X_{\infty}$, and hence equals to $\chi(\Omega_X^{p-k}|_{X_\infty})$.
From the cotangent sequence of $f$, one can derive the relation
\begin{displaymath}
    \chi(\Omega^{p-k}_{X}|_{X_{\infty}})=\chi(\Omega_{X_{\infty}}^{p-k})+\chi(\Omega_{X_{\infty}}^{p-k-1}),
\end{displaymath}
with the convention $\chi(\Omega_{X_{\infty}}^{-1})=0$. We find that the determinant K\"ahler extension can be written
\begin{equation}\label{eq:kahler-res-simpl}
\begin{split}
    \lambda(\widetilde{\Omega}_{X/\DBbb}^{p})= &\sum_{k=0}^p (-1)^k k \left(\chi(\Omega^{p-k}_{X_{\infty}})+\chi(\Omega^{p-k-1}_{X_{\infty}})\right) \Omega_{\DBbb} +\sum_{k=0}^p (-1)^k \lambda(\Omega_{X}^{p-k})\\
    =&
   \sum_{k=0}^{p-1}(-1)^{p-k}\chi(\Omega^{k}_{X_{\infty}})\Omega_{\DBbb}
   +\sum_{k=0}^p (-1)^k \lambda(\Omega_{X}^{p-k})
    =(-1)^{p}\chi(\Omega_{X_{\infty}}^{\bullet \leq p-1})\Omega_{\DBbb}
    +\sum_{k=0}^p (-1)^k \lambda(\Omega_{X}^{p-k}).
    \end{split}
\end{equation}
On the other hand, the sheaf $\Omega_{X/\DBbb}(\log)$ is already locally free, so in fact the $p$-th derived exterior power is a resolution of $\Omega_{X/\DBbb}^{p}(\log)$. Then in a similar way we find
\begin{equation} \label{eq:omegalogderived}
    \lambda(\Omega_{X/\DBbb}^p(\log)) = (-1)^{p}\chi(\Omega_{X_{\infty}}^{\bullet \leq p-1}) \Omega_{\DBbb}(\log [0])+\sum_{k=0}^p (-1)^k \lambda(\Omega_{X}^{p-k}(\log X_{0})).
\end{equation}
Comparing \eqref{eq:kahler-res-simpl} and \eqref{eq:omegalogderived}, we find
\begin{equation}\label{eq:difference-dets-1}
      \lambda(\Omega_{X/\DBbb}^{p}(\log))-\lambda(\widetilde{\Omega}_{X/\DBbb}^{p})=
       (-1)^{p}\chi(\Omega_{X_{\infty}}^{\bullet \leq p-1})\Ocal([0])
      +\sum_{k=0}^p (-1)^k \left(\lambda(\Omega_{X}^{p-k}(\log X_{0}))-\lambda(\Omega_{X}^{p-k})\right).\\
\end{equation}

For the second term on the right hand side of \eqref{eq:difference-dets-1}, we introduce the filtration by the order of poles on $\Omega_{X}^{k}(\log)$:
\begin{displaymath}
    W_m \Omega_X^k (\log X_{0}) = \Omega_X^{k-m} \wedge \Omega_X^m (\log X_{0}),\quad m\leq k\quad\text{with}\quad \Gr^W_m \Omega_X^k (\log X_{0})\overset{\sim}{\rightarrow} (a_{m})_{\ast} \Omega_{D(m)}^{k-m}.
\end{displaymath}
We find
\begin{displaymath}
    \lambda(\Omega_{X}^{k}(\log X_{0}))-\lambda(\Omega_{X}^{k})=\sum_{m=1}^{k}\lambda((a_{m})_{\ast}\Omega_{D(m)}^{k-m})=\sum_{m=1}^{k}\chi(\Omega_{D(m)}^{k-m})\Ocal([0]),
\end{displaymath}
and then
\begin{equation}\label{eq:difference-dets-3}
    \sum_{k=0}^p (-1)^k \left(\lambda(\Omega_{X}^{p-k}(\log X_{0}))-\lambda(\Omega_{X}^{p-k})\right)=\sum_{k=1}^{p}(-1)^{p-k}\chi(\Omega_{D(k)}^{\bullet\leq p-k})\Ocal([0]).
\end{equation}
To sum up, equations \eqref{eq:difference-dets-1}--\eqref{eq:difference-dets-3} combine to
\begin{displaymath}
    \lambda(\Omega_{X/\DBbb}^{p}(\log))-\lambda(\widetilde{\Omega}_{X/\DBbb}^{p})
    =\left((-1)^{p}\chi(\Omega_{X_{\infty}}^{\bullet \leq p-1})+\sum_{k=1}^{p}(-1)^{p-k}\chi(\Omega_{D(k)}^{\bullet\leq p-k})\right)\Ocal([0]),
\end{displaymath}
hence
\begin{displaymath}
    \mu_{p}=(-1)^{p-1}\chi(\Omega_{X_{\infty}}^{\bullet \leq p-1})-\sum_{k=1}^{p}(-1)^{p-k}\chi(\Omega_{D(k)}^{\bullet \leq p-k}),
\end{displaymath}
as was to be shown. This completes the proof of the first part of the statement. The first claimed expression for $\mu_{1}$ is readily checked. In the semi-stable case, we consider the natural exact sequence
\begin{equation}\label{eq:resolutionnormalcrossing}
     0\to \Ocal_{X_0}\to \Ocal_{D(1)}\to\Ocal_{D(2)}\to\Ocal_{D(3)}\to\ldots\to \Ocal_{D(n)}\to 0.
\end{equation}
Taking Euler characteristics, we derive
\begin{displaymath}
    \chi(\Ocal_{X_0})=-\sum_{k=1}^{n}(-1)^{k}\chi(\Ocal_{D(k)}).
\end{displaymath}
By flatness of $f\colon X\to\DBbb$ we have $\chi(\Ocal_{X_{\infty}})=\chi(\Ocal_{X_{0}})$. The second claimed expression for $\mu_{1}$ follows.
\end{proof}

We now discuss a variant of Proposition \ref{prop:mu_p} for morphisms which are not necessarily in normal crossings form. Before the statement, we need a preliminary observation. If $f\colon X\to\DBbb$ is a projective normal crossings degeneration, with $X_{0}=\sum m_{i}D_{i}$ as above, we have a commutative diagram of exact sequences
\begin{equation}\label{eq:diagram-res}
    \xymatrix{
 &   &   &  & 0 \ar[d]& \\
  &          &  0 \ar[d] & 0 \ar[d] & P \ar[d]& \\
   &      0 \ar[r] & f^\ast\Omega_\DBbb\ar[r] \ar[d] & f^\ast\Omega_\DBbb (\log [0])  \ar[d] \ar[r]^-{\mathrm{res}} & \Ocal_{X_0} \ar[r] \ar[d] & 0\\
    &     0 \ar[r] &   \Omega_X \ar[r] \ar[d] & \Omega_X(\log X_{0}) \ar[d] \ar[r]^-{\mathrm{res}} &  \Ocal_{D(1)} \ar[r] \ar[d] & 0\\
     0 \ar[r] &    P  \ar[r] & \Omega_{X/\DBbb} \ar[r] \ar[d]&\Omega_{X/\DBbb}(\log )  \ar[d]\ar[r] & Q \ar[d] \ar[r] & 0\\
      &      & 0  &0    & 0 &
    }
\end{equation}
We derive the existence of a quasi-isomorphism of complexes
\begin{displaymath}
    \cone[\Omega_{X/\DBbb}\to\Omega_{X/\DBbb}(\log)]\overset{\sim}{\dashrightarrow}\cone[\Ocal_{X_0}\to\Ocal_{D(1)}].
\end{displaymath}

\begin{proposition}\label{prop:kahlerlogcomp}
Suppose $f\colon X \to \DBbb$ is a projective degeneration between complex manifolds. Write $X_{0,\ \rm{red}} = \sum D_i$, and $\widetilde{D}_i$ for a desingularization of the irreducible components $D_{i}$. Denote by $\pi: \widetilde{X} \to X$  a simple normal crossings model. Then there is a canonical isomorphism
\begin{displaymath}
    \lambda(\Omega_{X/\DBbb})\overset{\sim}{\longrightarrow}\lambda(\Omega_{\widetilde{X}/\DBbb}(\log)) + \left( \chi(\Ocal_{X_\infty}) - \sum \chi(\Ocal_{\widetilde{D}_i}) \right) \Ocal([0])
\end{displaymath}
which induces the identity on the smooth locus.
\end{proposition}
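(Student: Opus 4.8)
The plan is to bootstrap from the normal crossings case already treated in Proposition \ref{prop:mu_p} (more precisely, from the key formula $\mu_1 = \chi(\Ocal_{X_\infty})-\sum\chi(\Ocal_{D_i})$ there, together with the quasi-isomorphism $\cone[\Omega_{X/\DBbb}\to\Omega_{X/\DBbb}(\log)]\overset{\sim}{\dashrightarrow}\cone[\Ocal_{X_0}\to\Ocal_{D(1)}]$ obtained from diagram \eqref{eq:diagram-res}), and then to control what happens under the embedded resolution $\pi\colon\widetilde{X}\to X$. Write $\widetilde{f}=f\circ\pi\colon\widetilde{X}\to\DBbb$, which is a normal crossings degeneration with reduced components (after possibly passing to $\widetilde{D}_i$, the desingularizations of the $D_i$, which become the components of $\widetilde{X}_{0,\mathrm{red}}$ — or are dominated by them). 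The target isomorphism only needs to be checked after localizing at $0\in\DBbb$, and it should induce the identity on $\DBbb^\times$; since $\pi$ is an isomorphism over $\DBbb^\times$, the generic-fiber identifications $\Omega_{X^\times/\DBbb^\times}\simeq\Omega_{\widetilde{X}^\times/\DBbb^\times}$ are canonical, and the whole statement is a computation of an integer discrepancy supported at the origin.

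First I would treat the purely normal crossings step: applying Proposition \ref{prop:mu_p} with $p=1$ to $\widetilde{f}\colon\widetilde{X}\to\DBbb$ gives $\lambda(\widetilde{\Omega}^1_{\widetilde{X}/\DBbb})=\lambda(\Omega^1_{\widetilde{X}/\DBbb}(\log))+\mu_1\cdot\Ocal([0])$ with $\mu_1=\chi(\Ocal_{\widetilde{X}_\infty})-\sum\chi(\Ocal_{\widetilde{D}_i})$, and since $\widetilde{X}_\infty\cong X_\infty$ this is exactly $\chi(\Ocal_{X_\infty})-\sum\chi(\Ocal_{\widetilde{D}_i})$. Note $\widetilde{\Omega}^1_{\widetilde{X}/\DBbb}$ is just the two-term complex $[\widetilde{f}^\ast\Omega_\DBbb\to\Omega_{\widetilde{X}}]$, i.e. $\Omega_{\widetilde{X}/\DBbb}$ in the derived sense; so this reads $\lambda(\Omega_{\widetilde{X}/\DBbb})\simeq\lambda(\Omega_{\widetilde{X}/\DBbb}(\log))+(\chi(\Ocal_{X_\infty})-\sum\chi(\Ocal_{\widetilde{D}_i}))\Ocal([0])$. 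It therefore remains to show that $\lambda(\Omega_{X/\DBbb})\simeq\lambda(\Omega_{\widetilde{X}/\DBbb})$ canonically (compatibly with the identity on the smooth locus), i.e. that the blow-up does not change the determinant of cohomology of the complex of relative differentials. For this I would compare $L\pi^\ast$ of the cotangent complexes, or more directly the natural map $\pi^\ast\Omega_{X/\DBbb}\to\Omega_{\widetilde{X}/\DBbb}$ whose cone is supported on the exceptional locus, and push forward: the point is that $R\widetilde{f}_\ast\Omega_{\widetilde{X}/\DBbb}$ and $Rf_\ast L\pi_\ast\pi^\ast\Omega_{X/\DBbb}$ differ by terms whose determinant is trivial because $\pi$ is an isomorphism away from a codimension $\geq 2$ locus in $X$ and, crucially, an isomorphism over all of $\DBbb^\times$, so any discrepancy sheaf on $\DBbb$ is torsion at $0$ of length zero — one has to see the Euler characteristics of the relevant sheaves on the (compact, proper over a point) special fibers cancel. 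Concretely I expect to invoke that $R\pi_\ast\Ocal_{\widetilde{X}}=\Ocal_X$ is not quite enough and instead run the argument at the level of $Rf_\ast$ applied to the exact triangle defining $\cone(\pi^\ast\Omega_{X/\DBbb}\to\Omega_{\widetilde{X}/\DBbb})$, checking that this cone, pushed to $\DBbb$, has vanishing determinant because its hypercohomology sheaves are locally free of the same rank on $\DBbb$ (the ranks being computed on $X_\infty\cong\widetilde{X}_\infty$) — hence any jump is concentrated at $0$ with length measured by an alternating sum of lengths that telescopes to $0$.

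The main obstacle is precisely this last comparison, $\lambda(\Omega_{X/\DBbb})\simeq\lambda(\Omega_{\widetilde{X}/\DBbb})$: when $X_0$ is badly singular, $\Omega_{X/\DBbb}$ need not be locally free and $R f_\ast\Omega_{X/\DBbb}$ can behave wildly, so one cannot naively use $c_1$ of a vector bundle. The way I would handle it is to factor the resolution $\pi$ into a composite of blow-ups along smooth centers and treat a single such blow-up $\pi\colon\widetilde{X}\to X$ with smooth center $C$ (necessarily contained in $X_0$, since $f$ is smooth over $\DBbb^\times$): there the exceptional divisor $E=\mathbb{P}(N_{C/X})$ is explicit, $\Omega_{\widetilde{X}}$ fits in the standard blow-up formula relating it to $\pi^\ast\Omega_X$, $\Omega_E$ and a twist of $\Ocal_E$, and likewise for the relative differentials (noting $\widetilde{f}^\ast\Omega_\DBbb=\pi^\ast f^\ast\Omega_\DBbb$). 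Taking determinants of cohomology and using that $E$ and $C$ are proper over the point $0$, the discrepancy is an integer computed as an Euler characteristic of sheaves on $E$ and on $C$, and the claim is that this integer vanishes — which I would verify by a Leray/projection-formula computation on the projective bundle $E\to C$, the relative cohomology of $\Ocal_E(j)$ along the fibers $\mathbb{P}^{c-1}$ being concentrated in a way that forces the alternating sum to cancel. Iterating over the finitely many blow-ups in the resolution and composing the resulting canonical isomorphisms yields $\lambda(\Omega_{X/\DBbb})\simeq\lambda(\Omega_{\widetilde{X}/\DBbb})$, and combined with the normal crossings step above this gives the proposition. Throughout, one keeps the identification on $\DBbb^\times$ fixed, so the isomorphism is canonical and restricts to the identity on the smooth locus as required.
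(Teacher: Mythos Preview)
Your overall architecture matches the paper's: factor the resolution into blow-ups in smooth centers, track the change in $\lambda(\Omega_{\cdot/\DBbb})$ at each step, and then compare with $\lambda(\Omega_{\widetilde{X}/\DBbb}(\log))$ via the $p=1$ case of Proposition~\ref{prop:mu_p}. However, two of your intermediate claims are incorrect, and only happen to cancel against each other.

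First, when you apply Proposition~\ref{prop:mu_p} to $\widetilde f$, the sum $\sum\chi(\Ocal_{D_i})$ in the formula $\mu_1=\chi(\Ocal_{X_\infty})-\sum\chi(\Ocal_{D_i})$ runs over \emph{all} irreducible components of $\widetilde{X}_{0}$, hence over the strict transforms $D_i'$ of the original components \emph{and} the exceptional divisors $E_i'$ created along the way. Your parenthetical ``or are dominated by them'' hints at this but you then drop the exceptionals; the correct value is $\mu_1=\chi(\Ocal_{X_\infty})-\sum\chi(\Ocal_{\widetilde{D}_i})-\sum\chi(\Ocal_{E_i})$ (using birational invariance of $\chi(\Ocal_{\cdot})$ to pass from $D_i'$, $E_i'$ to $\widetilde{D}_i$, $E_i$).

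Second, your claim that a single blow-up in a smooth center $C\subset X_0$ leaves $\lambda(\Omega_{\cdot/\DBbb})$ unchanged is false. The exact sequence $0\to\nu^\ast\Omega_{X/\DBbb}\to\Omega_{X'/\DBbb}\to\Omega_{E/C}\to 0$ (exactness on the left is a direct check) together with $R\nu_\ast\Ocal_{X'}=\Ocal_X$ gives $\lambda(\Omega_{X'/\DBbb})\simeq\lambda(\Omega_{X/\DBbb})+\chi(\Omega_{E/C})\cdot\Ocal([0])$, and the Euler sequence on the projective bundle $E\to C$ yields $\chi(\Omega_{E/C})=-\chi(\Ocal_E)$, which is typically nonzero (e.g.\ equal to $-1$ when $C$ is a point). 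Your projection-formula sketch would, if carried out, produce exactly this nonzero contribution rather than the zero you asserted. Iterating gives $\lambda(\Omega_{\widetilde{X}/\DBbb})\simeq\lambda(\Omega_{X/\DBbb})-\bigl(\sum\chi(\Ocal_{E_i})\bigr)\cdot\Ocal([0])$. Combining with the corrected $\mu_1$, the $\sum\chi(\Ocal_{E_i})$ terms cancel and the proposition follows --- this is precisely how the paper proceeds.
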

\begin{proof}
The bundle $\lambda(\Omega_{\widetilde{X}/\DBbb}(\log))$ is independent of the specific normal crossings model $\widetilde{X}$, since it is build up from lower extensions of Hodge bundles. We construct one model by applying embedded resolution of singularities to $X_0 \hookrightarrow X$. This means that $\widetilde{X}$ is obtained by a sequence of blow ups in the special fibers, of $X_i\to\DBbb$ say, along regular centers $Z_i$. Denote by $\nu: X_{i+1} \to X_i$ the blowup. In that case $Z_i$ is necessarily regularly embedded in $X_i$ and the exceptional divisor $E_i$ is a projective bundle over $Z_i$. We moreover have the exact sequence
\begin{equation}\label{eq:cottriang1}
    0 \longrightarrow \nu^\ast \Omega_{X_i/\DBbb} \longrightarrow \Omega_{X_{i+1}/\DBbb} \longrightarrow  \Omega_{E_i/Z_i} \longrightarrow 0,
\end{equation}
where the exactness on the left can be justified by a direct computation.

Recalling that $R\nu_\ast \Ocal_{{X_{i+1}}}\simeq \Ocal_{X_i}$, and taking the determinant of the cohomology, we find that \begin{displaymath}
    \lambda(\Omega_{X_{i+1}/\DBbb}) \overset{\sim}{\longrightarrow} \lambda(\Omega_{X_{i}/\DBbb})+\chi( \Omega_{E_i/Z_i}) \cdot \Ocal([0]).
\end{displaymath}
We notice that $\chi(\Omega_{E_{i}/Z_{i}})=-\chi(\Ocal_{E_{i}})$. Indeed, the Euler exact sequence for the cotangent sheaf of a projective bundle readily implies $R^{p}\nu_{\ast}\Ocal_{E_{i}}\simeq R^{p+1}\nu_{\ast}\Omega_{E_{i}/Z_{i}}$ for all $p$. Hence, for corresponding Leray spectral sequences we have
\begin{displaymath}
    \xymatrix{
    &H^{q}(E_{i}, R^{p}\nu_{\ast}\Ocal_{E_{i}})\ar@{=>}[r]\ar@{=}[d]    & H^{q+p}(E_{i},\Ocal_{E_{i}})\\
    &H^{q}(E_{i}, R^{p+1}\nu_{\ast}\Omega_{E_{i}/Z_{i}})\ar@{=>}[r]    &H^{q+p+1}(E_{i},\Omega_{E_{i}/Z_{i}}),
    }
\end{displaymath}
and we conclude by taking Euler characteristics. At the end of the sequence of blow ups, we thus find
\begin{displaymath}
    \lambda(\Omega_{\widetilde{X}/\DBbb})\overset{\sim}{\longrightarrow}\lambda(\Omega_{X/\DBbb}) - \left( \sum \chi(\Ocal_{E_i}) \right) \cdot\Ocal([0]).
\end{displaymath}
On the other hand, from \eqref{eq:diagram-res} there is a quasi-isomorphism
\begin{equation}\label{eq:logres1}
    \cone[\Omega_{\widetilde{X}/\DBbb}\rightarrow \Omega_{\widetilde{X}/\DBbb}(\log)]\
    \overset{\sim}{\dashrightarrow}\cone[\Ocal_{\widetilde{X}_0} \rightarrow \bigoplus \Ocal_{D_i^{\prime}} \oplus \bigoplus \Ocal_{E_i^{\prime}}],
\end{equation}
where the $D_{i}^{\prime}$ (resp. $E_{i}^{\prime}$) are the strict transforms of the $D_{i}$ (res. $E_{i}$). Recall that the Euler characteristics of the form $\chi(\Ocal_{D})$ are birational invariants of complex manifolds, so that $\chi(\Ocal_{D_{i}^{\prime}})=\chi(\Ocal_{\widetilde{D}_{i}})$ and $\chi(\Ocal_{E_{i}^{\prime}})=\chi(\Ocal_{E_{i}})$. We derive
\begin{displaymath}
    \lambda(\Omega_{\widetilde{X}/\DBbb}(\log ))\simeq \lambda(\Omega_{\widetilde{X}/\DBbb}) + \left( \sum \chi(\Ocal_{\widetilde{D}_i})+ \sum \chi(\Ocal_{E_{i}}) - \chi(\Ocal_{\widetilde{X}_0})\right) \cdot \Ocal([0]).
\end{displaymath}
Since the family $\widetilde{X} \to \DBbb$ is flat, we have $\chi(\Ocal_{\widetilde{X}_0}) = \chi(\Ocal_{\widetilde{X}_\infty}) = \chi(\Ocal_{X_\infty}).$ The result follows by composing the isomorphisms deduced from \eqref{eq:cottriang1} and \eqref{eq:logres1}.
\end{proof}

\begin{corollary}\label{cor:kahlerlogcomp}
     Under the hypothesis of Proposition \ref{prop:kahlerlogcomp}, if $X_0$  has at most rational singularities, then
     \begin{displaymath}
        \lambda(\Omega_{X/\DBbb})\overset{\sim}{\longrightarrow}\lambda(\Omega_{\widetilde{X}/\DBbb}(\log)).
    \end{displaymath}
     This in particular holds whenever $n \geq 2$ and $X_0$ admits at most ordinary double point singularities.
\end{corollary}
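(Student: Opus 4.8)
The plan is to read the statement off Proposition~\ref{prop:kahlerlogcomp}: the asserted isomorphism $\lambda(\Omega_{X/\DBbb}) \simeq \lambda(\Omega_{\widetilde X/\DBbb}(\log))$ holds exactly when the correction term of that proposition vanishes, i.e. when
\begin{displaymath}
    \chi(\Ocal_{X_\infty}) = \sum_i \chi(\Ocal_{\widetilde D_i}).
\end{displaymath}
Thus the entire content is this numerical identity, which I would verify separately under each hypothesis.

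For the rational singularities case, the first point is that this hypothesis includes normality of $X_0$; since the fibers of a degeneration are connected, a normal $X_0$ is irreducible and reduced. Hence $X_{0,\mathrm{red}} = X_0$ is the single irreducible component $D_1 = X_0$, the right-hand side above collapses to $\chi(\Ocal_{\widetilde D_1})$, and $\widetilde D_1$ (the strict transform of $X_0$ in the normal crossings model $\widetilde X$) is a resolution $\pi\colon \widetilde D_1 \to X_0$. By the definition of rational singularities $R^i\pi_\ast \Ocal_{\widetilde D_1} = 0$ for $i > 0$, and $\pi_\ast \Ocal_{\widetilde D_1} = \Ocal_{X_0}$ by normality, so $R\pi_\ast \Ocal_{\widetilde D_1} \simeq \Ocal_{X_0}$ and therefore $\chi(\Ocal_{\widetilde D_1}) = \chi(\Ocal_{X_0})$ by the Leray spectral sequence. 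Since $f$ is flat we also have $\chi(\Ocal_{X_0}) = \chi(\Ocal_{X_\infty})$, and the identity follows, whence the first assertion.

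For the case $n \geq 2$ with at most ordinary double point singularities, I would reduce to the previous case via the standard fact that an $n$-dimensional ordinary double point is a rational singularity once $n \geq 2$. Concretely, such a point is analytically the quadric cone $\{x_0^2 + \dots + x_n^2 = 0\} \subseteq \CBbb^{n+1}$; blowing up the vertex gives a resolution whose exceptional divisor is a smooth $(n-1)$-dimensional quadric, and since all the twists of its structure sheaf relevant to the infinitesimal neighbourhoods have vanishing higher cohomology, the theorem on formal functions yields $R^{>0}\pi_\ast\Ocal = 0$; moreover $X_0$ is normal by Serre's criterion, being a hypersurface (so $S_2$) whose singular locus has codimension $n \geq 2$ (so $R_1$). (Equivalently, such a point is canonical and Gorenstein, hence rational.) In particular $X_0$ is again integral, so the previous paragraph applies verbatim.

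This is a short argument once Proposition~\ref{prop:kahlerlogcomp} is available; the two steps worth spelling out are that the hypothesis forces $X_0$ to be an integral scheme --- so that the sum over irreducible components reduces to a single term --- and the classical but not purely formal input that ordinary double points in dimension at least two are rational singularities.
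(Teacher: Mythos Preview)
Your proof is correct and follows essentially the same approach as the paper: reduce the correction term in Proposition~\ref{prop:kahlerlogcomp} to zero by noting that rational singularities force $X_0$ to be normal and hence integral, then use $\chi(\Ocal_{\widetilde D}) = \chi(\Ocal_{X_0}) = \chi(\Ocal_{X_\infty})$ via the rational-singularities hypothesis and flatness. You supply more detail than the paper does for the ordinary double point case (the paper simply asserts that such points are rational in dimension $\geq 2$), which is fine; one small imprecision is that $\widetilde D_i$ in Proposition~\ref{prop:kahlerlogcomp} denotes an arbitrary desingularization rather than specifically the strict transform, but since $\chi(\Ocal_{-})$ is a birational invariant of smooth projective varieties this does not affect the argument.
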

\begin{proof}
Since $X_{0}$ is connected with at most rational singularities, it is in particular normal irreducible. Hence, with notations as in Proposition \ref{prop:kahlerlogcomp}, we have $X_{0}=D$. We then observe $\chi(\Ocal_{\widetilde{D}})=\chi(\Ocal_{X_{0}})$ by the rational singularities assumption and $\chi(\Ocal_{X_{0}})=\chi(\Ocal_{X_{\infty}})$ by flatness.
\end{proof}

\subsection{Families with at most ordinary double point singularities}\label{sec:hodgeodp}
In this subsection we study the Hodge bundles of morphisms whose singular fibers have at most ordinary double point singularities.

Let $f\colon X\to\DBbb$ be a projective degeneration between complex manifolds, with fibers of dimension $n$ . We assume that the special fiber $X_{0}$ has at most ordinary double point singularities. Hence, on a neighborhood (in $X$) of a singular point of $f^{-1}(0)$, there exist holomorphic coordinates $(z_{0},\ldots, z_{n}, s)$ such that $X$ is locally given by $s=z_{0}^{2}+\ldots+z_{n}^{2}$ and $f$ is the projection to $s$. Let $p_{1},\ldots, p_{r}$ be the set of singular points. Let $\nu\colon\widetilde{X}\to X$ be the blow-up in $X$ of $p_{1},\ldots,p_{r}$ and $g\colon\widetilde{X}\to \DBbb$ the natural morphism. Then the special fiber of $g$ has normal crossings of the form $\widetilde{X}_{0}=Z+2\sum_{i=1}^{r}E_{i}$, where the $E_{i}=\nu^{-1}(p_{i})\simeq\PBbb_{\CBbb}^{n}$ are disjoint and $Z$ is the strict transform of $X_{0}$. Moreover, the intersection $W_{i}:=Z\cap E_{i}\subset E_{i}$ is isomorphic to a smooth quadric in $\PBbb^{n}_{\CBbb}$.
As an application of Corollary \ref{cor:eigenvalues}, we compare the Hodge bundles of such families before and after semi-stable reduction.

\begin{proposition}\label{prop:ODP}
Let $\rho: \DBbb \to \DBbb$, $\rho(t)=t^{\ell}$, denote a base change realizing a semi-stable reduction $h\colon Y \to \DBbb$ of $f\colon X \to \DBbb$. Then there is a natural morphism
\begin{displaymath}
    \rho^\ast R^q g_\ast \Omega^{p}_{\widetilde{X}/\DBbb}(\log) {\longrightarrow} R^q h_\ast \Omega^{p}_{Y/\DBbb} (\log).
\end{displaymath}
If $n$ is odd or $(p,q) \neq (n/2, n/2)$, this is an isomorphism. In the case  $(p,q) = (n/2, n/2)$, the cokernel is isomorphic to
\begin{displaymath}
    \left(\frac{\Ocal_{\DBbb,0}}{t\Ocal_{\DBbb,0}}\right)^{\oplus \#\sing(X_0)}
\end{displaymath}
\end{proposition}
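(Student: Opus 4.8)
The plan is to recognize the displayed morphism as an instance of the elementary-exponent comparison of \S\ref{section:monodromy} and then to read the answer off Corollary \ref{cor:eigenvalues} together with the classical Picard--Lefschetz description of ordinary double points. First I would fix a semi-stable reduction. Since the components of $\widetilde{X}_{0}=Z+2\sum_{i}E_{i}$ have multiplicities $1$ and $2$, the base change $\rho(t)=t^{2}$ followed by normalization of $\widetilde{X}\times_{\rho}\DBbb$ produces, by Steenbrink's construction (cf. Remark \ref{rmk:V-manifolds}), a semi-stable model $h\colon Y\to\DBbb$ with $\ell=\deg\rho=2$; a computation in a slice transverse to $W_{i}$ shows that $Y$ is in fact smooth, with $Y_{0}=Z\cup\bigsqcup_{i}E_{i}'$, where $E_{i}'\to E_{i}=\PBbb^{n}$ is the double cover branched along the quadric $W_{i}$ (hence $E_{i}'$ is itself a smooth quadric) and $E_{i}'\cap Z\cong W_{i}$. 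The map $\rho^{\ast}R^{q}g_{\ast}\Omega_{\widetilde{X}/\DBbb}^{p}(\log)\to R^{q}h_{\ast}\Omega_{Y/\DBbb}^{p}(\log)$ is pull-back of relative logarithmic forms along $Y\to\widetilde{X}$, i.e. exactly the inclusion of Definition \ref{def:elementary} for the normal crossings degeneration $g$ and its semi-stable reduction $h$; its cokernel is therefore $\bigoplus_{j=1}^{h^{p,q}}\Ocal_{\DBbb,0}/t^{a_{j}}\Ocal_{\DBbb,0}$ with $0\leq a_{j}\leq\ell-1=1$, the elementary exponents being $\alpha^{p,q}_{j}=a_{j}/2\in\{0,\tfrac12\}$.

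By Corollary \ref{cor:eigenvalues} the numbers $\exp(-2\pi i\alpha^{p,q}_{j})=\exp(-\pi i a_{j})\in\{\pm1\}$ are precisely the eigenvalues of $T_{s}$ on $\Gr^{p}_{F_{\infty}}H^{p+q}(X_{\infty})$, where $T_{s}$ is the semi-simple part of the monodromy of the punctured family, common to $f$ and $g$. Hence $a_{j}=1$ exactly when the corresponding eigenvalue is $-1$, so the cokernel equals $(\Ocal_{\DBbb,0}/t\Ocal_{\DBbb,0})^{\oplus m^{p,q}}$, with $m^{p,q}$ the multiplicity of the eigenvalue $-1$ of $T_{s}$ on $\Gr^{p}_{F_{\infty}}H^{p+q}(X_{\infty})$. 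It remains to compute $m^{p,q}$. Since the Milnor fibre of an $n$-dimensional ordinary double point is homotopy equivalent to $S^{n}$, the sheaf of vanishing cycles $\phi_{f}\CBbb_{X}$ is concentrated in degree $n$ and supported on $\sing(X_{0})$; the specialization sequence then gives monodromy-equivariant isomorphisms $H^{k}(X_{0})\xrightarrow{\sim}H^{k}(X_{\infty})$ for $k\neq n$, so $m^{p,q}=0$ and the map is an isomorphism whenever $p+q\neq n$. On $H^{n}(X_{\infty})$ the monodromy is, by the Picard--Lefschetz formula, the composite $T=\prod_{i}T_{\delta_{i}}$ of the transformations in the vanishing cycles $\delta_{1},\ldots,\delta_{r}\in H^{n}(X_{\infty},\QBbb)$, $r=\#\sing(X_{0})$, which can be represented in disjoint neighbourhoods of the $p_{i}$ and are therefore pairwise orthogonal, with $(\delta_{i},\delta_{i})=0$ for $n$ odd and $(\delta_{i},\delta_{i})=\pm2$ for $n$ even.

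If $n$ is odd, each $T_{\delta_{i}}$ is a transvection, the $T_{\delta_{i}}$ commute by orthogonality, and $T$ is unipotent; hence $T_{s}=\id$, $m^{p,q}=0$ for every $(p,q)$, and the map is always an isomorphism. If $n$ is even, the Gram matrix of $\delta_{1},\ldots,\delta_{r}$ is non-degenerate, so these classes are linearly independent and span a subspace $V$ with $\dim V=r$; the commuting reflections combine into $T=\prod_{i}T_{\delta_{i}}$, acting by $-\id$ on $V$ and fixing $V^{\perp}$ pointwise, and $H^{n}(X_{\infty})=V\oplus V^{\perp}$ by Poincar\'e duality, so that $T=T_{s}$ is semi-simple with $(-1)$-eigenspace equal to $V$. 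Finally, the vanishing cohomology of an $n$-dimensional ordinary double point is the Tate structure $\QBbb(-n/2)$ --- equivalently, in the semi-stable model $\delta_{i}$ is proportional to an algebraic cycle class in $H^{n}(E_{i}',\QBbb)$, which is of Hodge--Tate type --- so $V$ is pure of type $(n/2,n/2)$ for the limiting mixed Hodge structure, i.e. $V\subseteq\Gr^{n/2}_{F_{\infty}}H^{n}(X_{\infty})$. Since $T_{s}$ preserves $F_{\infty}^{\bullet}$, the filtration splits compatibly with the eigenspace decomposition, and we conclude $m^{n/2,n/2}=r=\#\sing(X_{0})$ while $m^{p,q}=0$ for all other $(p,q)$, which is the assertion.

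The principal difficulty lies in the last paragraph: one must control \emph{simultaneously} the rank ($=\#\sing(X_{0})$) and the Hodge--Tate type of the vanishing part of $H^{n}(X_{\infty})$ for $n$ even, and one must verify that the model $h\colon Y\to\DBbb$ built above --- a priori only a $V$-manifold --- is a genuine semi-stable reduction with $\ell=2$ to which Corollary \ref{cor:eigenvalues} legitimately applies. The Hodge-theoretic input can be obtained either from Steenbrink's computation of the spectrum of the $A_{1}$-singularity, or directly from the geometry of the quadric double covers $E_{i}'\to E_{i}$ and their traces $W_{i}=E_{i}'\cap Z$, feeding the model $Y_{0}=Z\cup\bigsqcup_{i}E_{i}'$ into the Clemens--Schmid / Steenbrink spectral sequence.
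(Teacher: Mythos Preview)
Your proof is correct and follows the same overall strategy as the paper: both reduce the question to Corollary~\ref{cor:eigenvalues} and then invoke the Picard--Lefschetz description of the monodromy for ordinary double points. The paper's argument is slightly more economical in two places. First, it does not construct the semi-stable model explicitly (your verification that the normalization of $\widetilde{X}\times_{\rho}\DBbb$ is genuinely smooth, with $E_{i}'$ the double cover of $\PBbb^{n}$ branched along $W_{i}$, is correct but unnecessary once one appeals to Remark~\ref{rmk:V-manifolds} or simply to the abstract existence of a semi-stable reduction). Second, for $n$ even the paper locates the $(-1)$-eigenspace in $\Gr^{n/2}_{F_{\infty}}$ by a pure symmetry argument: since $N=0$ the limiting Hodge structure is pure of weight $n$, $T_{s}$ respects the Hodge decomposition, and complex conjugation forces any eigenvalue $-1$ occurring in $\Gr^{p}_{F_{\infty}}$ to occur in $\Gr^{n-p}_{F_{\infty}}$ as well; since the total multiplicity is $\#\sing(X_{0})$ per singular point, this pins down $p=n/2$. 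Your route via the Hodge--Tate type of the vanishing cohomology (equivalently, the spectrum of $A_{1}$) reaches the same conclusion with a bit more input but has the advantage of making the geometry explicit.
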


\begin{proof}
The statement can be derived from Corollary \ref{cor:eigenvalues}, by studying  the monodromy operator on the limiting mixed Hodge structure on $H^{n}(X_{\infty})$. It is described by the Picard-Lefschetz theorem. For simplicity, assume that there is a single ordinary double point. If the relative dimension $n$ is odd, then the monodromy acting on any $H^{k}(X_{\infty})$ is unipotent, and hence $T_{s}$ is trivial on it. We now focus on the case when $n$ is even. Since the monodromy is non-trivial at most on $H^{n}(X_{\infty})$, we only need to consider $p+q=n$. We have to show that $T_{s}$ acts trivially on $\Gr^{k}_{F_{\infty}}H^{n}(X_{\infty})$ for $k\neq n$. We use the fact that $T_{s}$ is an endomorphism of mixed Hodge structures on $H^{n}(X_{\infty})$. We have
\begin{displaymath}
    \Gr_{k}^{W}H^{n}(X_{\infty})=0,\quad k\neq n
\end{displaymath}
(cf. for instance \cite[Sec. 3]{wang}). Hence, it is enough to analyze the action of $T_{s}$ on $\Gr_{F_{\infty}}^{\bullet}\Gr_{n}^{W}H^{n}(X_{\infty})$. But $F_{\infty}^{\bullet}$ induces a pure Hodge structure of weight $n$ on $\Gr_{n}^{W}H^{n}(X_{\infty})$. If $T_{s}$ acts non-trivially on $\Gr^{p}_{F_{\infty}}\Gr_{n}^{W}H^{n}(X_{\infty})$, then it acts non-trivially on $\Gr^{q}_{F}\Gr_{n}^{W}H^{n}(X_{\infty})$, $p+q=n$. Since the only non-trivial eigenvalue of $T_{s}$ on $H^{n}(X_{\infty})$ is $-1$, with multiplicity one, then necessarily $p=q=n/2$. This concludes the proof.
\end{proof}


\section{$L^{2}$  metrics }
This section revolves around the $L^{2}$ metrics and their asymptotics for one-parameter degenerations of projective varieties. We provide precise expressions for the dominant and subdominant terms of the asymptotics of the $L^{2}$ metrics on determinants of Hodge bundles. For this, we rely on the general statements in Section \ref{section:monodromy} and Schmid's metric characterisation of the limiting monodromy weight filtration \cite{schmid}. Throughout, we will freely exploit the compatibility of Schmid's limiting mixed Hodge structures with Steenbrink's geometric approach \cite{Steenbrink-limits, Steenbrink-mixedonvanishing, Navarro-Guillen}.  

\subsection{Generalities on $L^{2}$ metrics}\label{subsec:Hodge-metrics}
We recall known facts about $L^{2}$ metrics, mainly to fix notations and normalizations.

Let $X$ be a compact K\"ahler manifold of dimension $n$ and with K\"ahler form $\omega$.
On the De Rham cohomology $H^{k}(X, \CBbb)$
there is a natural $L^2$ metric: given classes $\alpha,\beta$ and harmonic representatives $\tilde\alpha, \tilde\beta$, we put
\begin{displaymath}
    \langle \alpha, \beta \rangle_{L^2} = \int_X \tilde\alpha \wedge \star \overline{\tilde\beta} = \int_X  \langle\tilde\alpha, \tilde\beta\rangle \frac{\omega^n}{n!}.
\end{displaymath}
Here $\star$ denotes the Hodge star operator and the inner product in the second integral is the induced pairing on differential forms, coming from the K\"ahler metric. On 
Dolbeault cohomology $H^{p,q}(X) = H^{q}(X, \Omega_X^p)$ we may similarly define an $L^{2}$ metric. The Hodge decomposition
\begin{displaymath}
    H^{k}(X,\CBbb)=\bigoplus_{p+q=k} H^{p,q}(X)
\end{displaymath}
is then an orthogonal sum decomposition for the $L^{2}$ metrics.

Recall that, for any integer $k \leq n$, we define the primitive cohomology subspace of $H^{k}(X,\CBbb)$ by
\begin{displaymath}
    \Hprim{k}(X,\CBbb) = \ker\left( L^{n-k+1} : H^{k}(X, \CBbb) \to H^{2n-k+2} (X, \CBbb)\right),
\end{displaymath}
where $L=\omega\wedge\bullet$ is the Lefschetz operator. Then, for any $k\geq 0$ we have the Lefschetz decomposition
\begin{displaymath}
    H^{k}(X,\CBbb)=\bigoplus_{k-n\leq 2r\leq k}L^{r}H^{k-2r}_{\prim}(X,\CBbb).
\end{displaymath}
This decomposition is orthogonal for the $L^{2}$ metric. On each piece of the decomposition, the Hodge star operator is given by the rule
\begin{equation}\label{eq:hodge-star-L}
    \star L^r \alpha = (-1)^{(k-2r)(k-2r+1)/2}\frac{r!}{(n-k+r)!} L^{n-k+r} C \alpha,
\end{equation}
where $C$ is the Weil operator acting as multiplication by $i^{p-q}$ on $H^{p,q}(X)$. Therefore, the $L^{2}$ metric on $L^{r}H^{k-2r}_{\prim}(X,\CBbb)$ reads
\begin{equation}\label{eq:hodge-star-L-bis}
    \langle L^{r}\alpha, L^{r}\beta\rangle_{L^{2}}=(-1)^{(k-2r)(k-2r+1)/2}\frac{r!}{(n-k+r)!}\int_{X}\alpha\wedge C\overline{\beta}\wedge\omega^{n-k+2r}.
\end{equation}
One also defines $\Hdprim{p}{q}(X) = \Hprim{p+q}(X, \CBbb) \cap H^{p,q}(X)$. Primitive cohomology groups can be put in families, in the setting of a K\"ahler morphism $f\colon X\to S$. The construction produces holomorphic vector bundles for which we employ similar notations, for instance $(R^{k}f_{\ast}\CBbb)_{\prim}$ or $(R^{q}f_{\ast}\Omega_{X/S}^{p})_{\prim}$. The analogue of the Lefschetz decomposition holds in this generality, as a decomposition of holomorphic vector bundles. For this we notice that the Lefschetz operator induced by the K\"ahler structure is horizontal with respect to the Gauss--Mannin connection, and hence holomorphic.

At a later point we will exploit the integral structure of the cohomology groups. For a $\ZBbb$-module of finite type $\Lambda$, we denote by $\Lambda_{\nt}$ the maximal torsion free quotient of $\Lambda$.
\begin{definition}
Let $(X,\omega)$ be a compact K\"ahler manifold. We define $\vol_{L^{2}}(H^{k}(X,\ZBbb),\omega)$ as the covolume of the lattice $H^{k}(X,\ZBbb)_{\nt}\subset H^{k}(X,\RBbb)$, with respect to the Euclidean structure induced by the $L^{2}$ metric.
\end{definition}

\begin{proposition}\label{prop:rational-volume}
Let $X$ be a complex manifold, and $\omega$ be a K\"ahler form with rational cohomology class. Then $\vol_{L^2}(H^k(X, \ZBbb),\omega)$ is a rational number.
\end{proposition}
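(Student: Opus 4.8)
The plan is to peel off the Lefschetz components one at a time, reducing everything to the rational polarization forms on primitive cohomology, and then to invoke Poincar\'e duality for the final normalization.

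First I would record the elementary fact that two lattices $\Lambda,\Lambda'$ in a real vector space $V$ with $\Lambda\otimes\QBbb=\Lambda'\otimes\QBbb$ have, for any fixed Euclidean structure, $\operatorname{covol}(\Lambda)/\operatorname{covol}(\Lambda')\in\QBbb_{>0}$ (pass to $\Lambda\cap\Lambda'$ and note both indices are finite). Since $[\omega]$ is rational, the Lefschetz operator $L=[\omega]\wedge(-)$ is defined over $\QBbb$, hence so are the primitive subspaces $\Hprim{m}(X,\QBbb)$ and the Lefschetz decomposition $H^{k}(X,\QBbb)=\bigoplus_{r}L^{r}\Hprim{k-2r}(X,\QBbb)$, and the relevant powers $L^{r}$ are injective by Hard Lefschetz. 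Choosing a lattice $\Lambda_{m}\subset\Hprim{m}(X,\RBbb)$ commensurable with $H^{m}(X,\ZBbb)_{\nt}$ for each $m$ (e.g.\ $H^{m}(X,\ZBbb)_{\nt}\cap\Hprim{m}(X,\QBbb)$), the lattice $\bigoplus_{r}L^{r}\Lambda_{k-2r}$ is commensurable with $H^{k}(X,\ZBbb)_{\nt}$; since the Lefschetz decomposition is $L^{2}$-orthogonal, the remark above reduces the problem to showing, for each $(m,r)$ occurring, that the $L^{2}$-covolume of $L^{r}\Lambda_{m}$ inside $L^{r}\Hprim{m}(X,\RBbb)$ is rational. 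Transporting through the isomorphism $L^{r}\colon\Hprim{m}(X,\RBbb)\xrightarrow{\ \sim\ }L^{r}\Hprim{m}(X,\RBbb)$, which carries $\Lambda_{m}$ onto $L^{r}\Lambda_{m}$, this covolume is that of $\Lambda_{m}$ for the pulled-back metric, and \eqref{eq:hodge-star-L-bis} gives $\langle L^{r}\alpha,L^{r}\beta\rangle_{L^{2}}=c_{r}\int_{X}\alpha\wedge C\bar\beta\wedge\omega^{\,n-m}$, with $c_{r}\in\QBbb^{\times}$ an explicit ratio of factorials and signs and $C$ the Weil operator. For a $\QBbb$-basis $v_{1},\dots,v_{d}$ of $\Hprim{m}(X,\QBbb)$ spanning $\Lambda_{m}$ (after commensurable adjustment) the Gram matrix is $c_{r}\bigl(\int_{X}v_{i}\wedge v_{j}\wedge\omega^{\,n-m}\bigr)_{i,j}\cdot(\text{matrix of }C)$: the first factor has rational entries because cup product is integral and $[\omega]$ is rational, and $\det\bigl(C\mid\Hprim{m}(X,\CBbb)\bigr)=\prod_{p+q=m}(i^{p-q})^{h^{p,q}_{\prim}}=1$ by the Hodge symmetry $h^{p,q}_{\prim}=h^{q,p}_{\prim}$. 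Hence the Gram determinant equals $c_{r}^{\,d}\det\bigl(\int_{X}v_{i}\wedge v_{j}\wedge\omega^{\,n-m}\bigr)\in\QBbb^{\times}$, so the covolume squared is rational.

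It remains to upgrade this to rationality of the covolume itself, which is the crux. When $m$ is odd the polarization form $\int_{X}(-)\wedge(-)\wedge\omega^{\,n-m}$ is skew-symmetric and non-degenerate on the even-dimensional space $\Hprim{m}(X,\QBbb)$, so its determinant is the square of the rational Pfaffian, and $c_{r}^{\,d}$ is a rational square since $d$ is even; the covolume is then rational. When $m$ is even, and to organise all $k$ at once, I would use $\langle\alpha,\beta\rangle_{L^{2}}=\int_{X}\tilde\alpha\wedge\star\tilde\beta$ together with the identity $\star=L^{\,n-k}\circ C\circ D$ on $H^{k}$, where $D$ is the rational operator acting by $c_{r}$ on the $(m,r)$-piece: unimodularity of the cup product pairing $H^{k}(X,\ZBbb)_{\nt}\times H^{2n-k}(X,\ZBbb)_{\nt}\to\ZBbb$ forces the determinant of the $L^{2}$-Gram matrix of $H^{k}(X,\ZBbb)_{\nt}$ to equal $\det D$ times the discriminant of $L^{\,n-k}$, and Hard Lefschetz relates $H^{k}$ and $H^{2n-k}$ through the manifestly square scaling factors of \eqref{eq:hodge-star-L-bis}; matching these should pin the common value down to a rational square. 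The model case is $k=n$: there $\star^{2}=(-1)^{n}$ and the cup product is already its own unimodular pairing, which immediately gives $\det(L^{2}\text{-Gram})=\pm1$, hence covolume exactly $1$. I expect the only real obstacle to be the bookkeeping of the constants $c_{r}$, the signs, and the lattice indices introduced by Hard Lefschetz in this last step.
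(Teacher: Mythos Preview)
Your argument through the sentence ``so the covolume squared is rational'' is exactly the paper's proof: pass to the rational Lefschetz decomposition, use its $L^{2}$-orthogonality, rewrite each Gram matrix via \eqref{eq:hodge-star-L-bis}, and observe $\det C=1$ from the Hodge symmetry $h^{p,q}_{\prim}=h^{q,p}_{\prim}$. The paper stops there.

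You are right to flag that this only yields rationality of the \emph{square} of the covolume, and you then try to upgrade to the covolume itself. But this upgrade is impossible in general, so your even-degree step cannot be completed: already for $k=0$ one has $H^{0}(X,\ZBbb)=\ZBbb\cdot 1$ with $\|1\|_{L^{2}}^{2}=\int_{X}\omega^{n}/n!=\vol(X,\omega)$, hence $\vol_{L^{2}}(H^{0}(X,\ZBbb),\omega)=\sqrt{\vol(X,\omega)}$, and a rational K\"ahler class with $\vol(X,\omega)=2$ gives an irrational covolume. Your Poincar\'e duality argument correctly gives $\vol_{L^{2}}(H^{k})\cdot\vol_{L^{2}}(H^{2n-k})=1$, but this only constrains the product, not each factor; the ``bookkeeping'' you anticipate is not the obstacle. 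Your Pfaffian observation for odd $m$ is correct and is a genuine strengthening in that range.

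In short: the statement as literally phrased is too strong, and neither your argument nor the paper's establishes it. What the paper's proof shows, and what suffices for every application in the paper (notably the constancy of $B(X/S,\omega)$ in families with fiberwise rational K\"ahler class, and the asymptotics in \textsection\ref{subsec:asymptotic-bcov-dim-n}), is that $\vol_{L^{2}}(H^{k}(X,\ZBbb),\omega)^{2}\in\QBbb$: a continuous rational-valued function on a connected base is constant, so the covolumes themselves are locally constant even if irrational.
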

\begin{proof}
To prove that the volume is a rational number, it is enough to prove that for a given basis $e_1, \ldots, e_N$ of $H^k(X, \mathbb{Q})$, the expression $\det \langle e_i, e_j \rangle_{L^2} $ is rational. To this end, notice that the Lefschetz decomposition can be defined over $\mathbb{Q}$, since the K\"ahler form is rational:
\begin{displaymath}
    H^k(X, \mathbb{Q}) = \bigoplus_{r} L^{r} H^{k-2r}_{\prim}(X, \mathbb{Q}),\quad H^{k}_{\prim}(X, \mathbb{Q})=\ker\left( L^{n-k+1} : H^{k}(X, \QBbb) \to H^{2n-k+2} (X, \QBbb)\right).
\end{displaymath}
We also know that this decomposition is orthogonal for the $L^{2}$ metric. We can hence restrict ourselves to considering $\det\langle v_{i},v_{j}\rangle_{L^{2}}$ for a basis  $v_{1},\ldots, v_{d}$ of $L^{r} H^{k-2r}_{\prim}(X, \mathbb{Q})$. Recalling now \eqref{eq:hodge-star-L}, we see that $\star L^{r}$ acting on $H^{k-2r}_{\prim}(X, \mathbb{Q})$ can be decomposed as

\begin{equation}\label{eq:hodgestar}
    H^{k-2r}_{\prim}(X, \mathbb{C}) \overset{\text{C}}{\longrightarrow} H^{k-2r}_{\prim}(X, \mathbb{C}) \overset{L^{n-k+r}}{\longrightarrow} H^{2n-k+r}(X, \mathbb{C}) \overset{q}{\longrightarrow} H^{2n-k+r}(X, \mathbb{C}),
\end{equation}
where $q$ is multiplication by a rational number. Furthermore, the determinant of the Weil operator $C$ is 1, since
\begin{displaymath}
    \det C = \prod_{p,q} i^{(p-q)h^{p,q}_{\prim}}
\end{displaymath}
and $h^{p,q}_{\prim}=h^{q,p}_{\prim}$. Hence, since the other operators in \eqref{eq:hodgestar} preserve the rational structure, we have in fact
\begin{displaymath}
    \det \star L^{r} H^{k-2r}_{\prim}(X, \mathbb{Q}) = \det L^{n-k+r} H^{k-2r}_{\prim}(X, \mathbb{Q}).
\end{displaymath}
The pairing $\det\langle v_{i},v_{j}\rangle_{L^{2}}$ hence reduces to the determinant of a matrix of integrals of top degree rational cohomology classes, \emph{i.e.} the determinant of a matrix with rational entries. It is therefore a rational number.
\end{proof}

\subsection{The singularities of $L^{2}$ metrics}
Let $f\colon X\to\DBbb$ be a projective K\"ahler normal crossings degenerations, with $n$-dimensional fibers. We suppose that the K\"ahler form on $X^{\times}$ is rational on fibers, \emph{e.g.} induced by a projective embedding. The Hodge bundles $R^{q}f_{\ast}\Omega_{X/\DBbb}^{p}(\log)$ are then endowed with singular $L^2$ metrics, smooth over $\DBbb^{\times}$. The determinant bundle $\det R^{q}f_{\ast}\Omega_{X/\DBbb}^{p}(\log)$ inherits a singular $L^{2}$ metric. In \cite{Peters-flatness}, building on the work of Schmid \cite{schmid}, Peters considers the asymptotic expansion of the $L^{2}$ metric, including first and second order derivatives. Nevertheless, his expression does not have a clear interpretation in terms of the limiting mixed Hodge strucutre. Our aim is to address this point for the dominant and subdominant terms. 

To formulate our results, consider the limiting mixed Hodge structure $(F^{\bullet}_{\infty}, W_{\bullet})$ on $H^{k}(X_{\infty})$ ($k=p+q$), where $F_\infty^{\bullet}$ is the limiting Hodge structure and $W_{\bullet}$ is the monodromy weight filtration (cf. Schmid \cite{schmid}). We introduce the following invariants: 
\begin{equation}\label{eq:def-alpha-p-q}
    \alpha^{p,q}=-\frac{1}{2\pi i}\tr\left(^{\ell}\log T_{s}\mid \Gr^{p}_{F_{\infty}} H^{k}(X_{\infty})\right)
\end{equation}
and
\begin{equation}\label{def:beta-p-q}
    \beta^{p,q}=\sum_{r=-k}^{k}r\dim\Gr_{F_\infty}^{p}\Gr^{W}_{k+r}H^{k}(X_{\infty}).
\end{equation}
In the course of the proof of Theorem \ref{thm:expansion-hodge} below, we need the following lemma.
\begin{lemma}\label{lemma:beta-p-q}
The invariants $\beta^{p,q}$ satisfy the following two identities:
\begin{enumerate}
    \item $\beta^{p,q}=-\beta^{q,p}$.
    \item $\beta^{p,q}= \beta^{n-q, n-p}$.
\end{enumerate}
In particular,     $\beta^{p,q}=-\beta^{n-p,n-q}$.
\end{lemma}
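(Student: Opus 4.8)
The plan is to deduce both identities from symmetry properties of the limiting mixed Hodge structure on $H^{k}(X_{\infty})$, namely complex conjugation and hard Lefschetz duality. First I recall the definition $\beta^{p,q}=\sum_{r=-k}^{k}r\dim\Gr_{F_\infty}^{p}\Gr^{W}_{k+r}H^{k}(X_{\infty})$, with $k=p+q$; note that reindexing by the weight $w=k+r$ gives $\beta^{p,q}=\sum_{w}(w-k)\dim\Gr_{F_\infty}^{p}\Gr^{W}_{w}H^{k}(X_{\infty})$.

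For identity (1), $\beta^{p,q}=-\beta^{q,p}$, the key input is that each graded piece $\Gr^{W}_{w}H^{k}(X_{\infty})$ carries a pure Hodge structure of weight $w$, so that complex conjugation interchanges the Hodge pieces: $\dim\Gr_{F_\infty}^{p}\Gr^{W}_{w}H^{k}(X_{\infty}) = \dim\Gr_{F_\infty}^{w-p}\Gr^{W}_{w}H^{k}(X_{\infty})$ (here $\Gr^{q'}_{F_\infty} = \Gr^{p'}_{\bar F_\infty}$ with $p'+q'=w$ on the pure piece of weight $w$). Substituting $p \mapsto q = k-p$ in the definition of $\beta^{q,p}$ and then applying this conjugation symmetry in each weight $w$, the exponent $p$ becomes $w-q = w-k+p$; after the reindexing $w \mapsto 2k-w$ of the summation (which is symmetric since weights $w$ and $2k-w$ pair up by conjugation-compatible dualities, or more simply since one checks the support of the weights is symmetric about $k$ by the polarization properties of the limiting MHS), the factor $(w-k)$ flips sign, yielding $\beta^{q,p}=-\beta^{p,q}$. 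I will need to be slightly careful to phrase the conjugation symmetry purely in terms of dimensions of graded pieces in each fixed weight and to justify that the weight support is symmetric about $k$; this follows from the fact that the monodromy weight filtration $W_\bullet$ (relative to weight $k$) satisfies $N^{r}\colon \Gr^{W}_{k+r}\xrightarrow{\sim}\Gr^{W}_{k-r}$, which also supplies the symmetry needed.

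For identity (2), $\beta^{p,q}=\beta^{n-q,n-p}$, the key input is the hard Lefschetz / Poincaré duality symmetry of the limiting mixed Hodge structure: cup product with $\omega^{2n-k}$ (or the polarization pairing on the nearby fiber together with the operator $N$) induces, compatibly with $F_\infty$ and $W_\bullet$ up to the appropriate Tate twist, an isomorphism relating $H^{k}(X_{\infty})$ and $H^{2n-k}(X_{\infty})$, hence an identification $\Gr_{F_\infty}^{p}\Gr^{W}_{k+r}H^{k}(X_{\infty}) \cong \Gr_{F_\infty}^{n-q}\Gr^{W}_{(2n-k)+r}H^{2n-k}(X_{\infty})$ where $n-q = n-(k-p) = (2n-k)-(n-p)$, so the "complementary" exponent $n-q$ on $H^{2n-k}$ is in the right place and the weight shift is again by the same $r$. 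Summing $r\,\dim$ over $r$ on both sides gives $\beta^{p,q}=\beta^{n-q,n-p}$ directly, with no reindexing needed. Combining (1) and (2) gives $\beta^{p,q}=\beta^{n-q,n-p}=-\beta^{n-p,n-q}$.

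The main obstacle I anticipate is bookkeeping rather than conceptual: correctly tracking how the Hodge degree $p$, the weight-shift index $r$, and the total degree $k$ versus $n$ interact under the two symmetry operations, and citing the precise form of the polarization of the limiting mixed Hodge structure (Schmid, or Cattani--Kaplan--Schmid) that yields the duality with the correct Tate twist so that $F_\infty$-degrees and $W$-weights transform as claimed. I would carry out (1) first since it is purely internal to $H^{k}(X_{\infty})$, then (2), then state the combined identity.
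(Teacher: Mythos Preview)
Your approach is essentially the same as the paper's: for (1) you combine complex conjugation on the pure pieces $\Gr^{W}_{w}$ with the isomorphism $N^{r}\colon \Gr^{W}_{k+r}\xrightarrow{\sim}\Gr^{W}_{k-r}$, and for (2) you use the Lefschetz isomorphism $L^{n-k}\colon H^{k}(X_{\infty})\to H^{2n-k}(X_{\infty})$ compatible with both filtrations (as in Schmid, Theorem~6.16). One point in your treatment of (1) needs sharpening: knowing merely that the \emph{weight support} is symmetric about $k$ is not enough to carry out the reindexing $w\mapsto 2k-w$ at the level of $\Gr_{F_\infty}^{p}\Gr^{W}_{w}$; you need that $N$ is a morphism of mixed Hodge structures of type $(-1,-1)$, so that $N^{r}$ induces $\Gr_{F_\infty}^{a}\Gr^{W}_{k+r}\xrightarrow{\sim}\Gr_{F_\infty}^{a-r}\Gr^{W}_{k-r}$. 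With this, conjugation gives $\dim\Gr_{F_\infty}^{p}\Gr^{W}_{k+r}=\dim\Gr_{F_\infty}^{k+r-p}\Gr^{W}_{k+r}$, and then $N^{r}$ gives $\dim\Gr_{F_\infty}^{k+r-p}\Gr^{W}_{k+r}=\dim\Gr_{F_\infty}^{k-p}\Gr^{W}_{k-r}=\dim\Gr_{F_\infty}^{q}\Gr^{W}_{k-r}$, from which (1) follows by the sign flip $r\mapsto -r$. This is exactly the paper's argument; just make the Hodge-degree shift of $N$ explicit rather than hiding it behind ``the symmetry needed.''
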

\begin{proof}
Since $\Gr^{W}_{k+r}H^{k}(X_{\infty})$ is a pure Hodge structure of weight $k+r$ we have 
\begin{displaymath} 
    \dim \Gr_{F_\infty}^{p} \Gr^{W}_{k+r}H^{k}(X_{\infty})  = \dim \Gr_{F_\infty}^{k+r-p}\Gr^{W}_{k+r}H^{k}(X_{\infty}).
\end{displaymath} Moreover, the limiting mixed Hodge structure is equipped with a nilpotent operator $N$ on $H^{k}(X_{\infty})$ which induces an isomorphism
$$
    N^r \colon \Gr_{F_\infty}^{p}\Gr^{W}_{k+r}H^{k}(X_{\infty}) \overset{\sim}{\to}     \Gr_{F_\infty}^{p-r}\Gr^{W}_{k-r}H^{k}(X_{\infty}).
$$
 A direct combination of these observations proves the first point. 

The second point follows from the Lefschetz isomorphism, which induces isomorphisms (cf. Schmid \cite[Theorem 6.16]{schmid})
\begin{displaymath}
    L^{n-k}\colon \Gr_{F_\infty}^{p}\Gr^{W}_{k+r}H^{k}(X_{\infty})\overset{\sim}{\to} \Gr_{F_\infty}^{p+n-k}\Gr^{W}_{r+2n-k}H^{2n-k}(X_{\infty}).
\end{displaymath}
\end{proof}
\begin{theorem}\label{thm:expansion-hodge}
Let $\sigma$ be a holomorphic trivialization of $\det R^{q}f_{\ast}\Omega_{X/\DBbb}^{p}(\log)$. Then we have a real analytic asymptotic expansion for its $L^{2}$ norm
\begin{displaymath}
    h_{L^{2}}(\sigma(t),\sigma(t))=|t|^{2\alpha^{p,q}}g(t)\sum_{j=0}^{\infty}c_{j}\ (\log|t|^{-1})^{\beta^{p,q}-j},
\end{displaymath}
where: 1)  $g$ is real analytic in a neighborhood of $0$, with $g(0)\neq 0$ and 2) $c_{j}\in\RBbb$ and $c_{0}\neq 0$. In particular, if $T_{s}$ acts trivially on $\Gr^{p}_{F_{\infty}} H^{p+q}(X_{\infty})$, then the $L^{2}$ metric on $\det R^{q}f_{\ast}\Omega_{X/\DBbb}^{p}(\log)$ has at worst a logarithmic singularity at the origin, and is good in the sense of Mumford.
\end{theorem}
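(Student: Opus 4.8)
The plan is to deduce the expansion from Schmid's asymptotic analysis of Hodge metrics \cite{schmid} together with Peters' result \cite[Prop.~2.2.1]{Peters-flatness} on the \emph{shape} of the expansion, the genuinely new input being the identification of the two exponents in terms of the limiting mixed Hodge structure, which is supplied by Section~\ref{section:monodromy}. By \eqref{eq:Gr_p} and Hodge theory the $L^{2}$ metric on $R^{q}f_{\ast}\Omega^{p}_{X/\DBbb}(\log)=\Gr^{p}_{F_{\infty}}$ is, after splitting off via the rational orthogonal Lefschetz decomposition a bounded real analytic multiple coming from the explicit factors in \eqref{eq:hodge-star-L}, the Hodge metric of an underlying polarised variation, so that Schmid's theorems apply; Lemma~\ref{lemma:beta-p-q} intervenes in this reduction to primitive pieces and in the bookkeeping around Serre duality. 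By \cite{Peters-flatness}, $h_{L^{2}}(\sigma,\sigma)(t)$ already has an asymptotic expansion of the asserted product form $|t|^{2a}g(t)\sum_{j\geq0}c_{j}(\log|t|^{-1})^{b-j}$ with $g$ real analytic, $g(0)\neq0$, $c_{0}\neq0$, for \emph{some} $a\in\QBbb$ and $b\in\ZBbb$; it therefore remains only to show $a=\alpha^{p,q}$ and $b=\beta^{p,q}$.

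To pin down $a$ I would pass to a semi-stable reduction $g\colon Y\to\DBbb$ via $\rho(\tau)=\tau^{\ell}$, as in \eqref{eq:ss-reduction}. Over the punctured disc the $L^{2}$ metric depends only on the fibres, so it is merely reparametrised by $\rho$; by Definition~\ref{def:elementary} and Corollary~\ref{cor:eigenvalues} the pullback $\rho^{\ast}\det R^{q}f_{\ast}\Omega^{p}_{X/\DBbb}(\log)$ sits inside $\det R^{q}g_{\ast}\Omega^{p}_{Y/\DBbb}(\log)$ with cokernel $\Ocal_{\DBbb,0}/\tau^{\ell\alpha^{p,q}}\Ocal_{\DBbb,0}$, so $\rho^{\ast}\sigma=\tau^{\ell\alpha^{p,q}}v(\tau)\,\tilde\sigma$ with $v$ holomorphic non-vanishing and $\tilde\sigma$ a trivialisation of $\det R^{q}g_{\ast}\Omega^{p}_{Y/\DBbb}(\log)$. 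Comparing
\[ |\tau|^{2\ell a}\,g(\tau^{\ell})\sum_{j}c_{j}(\ell\log|\tau|^{-1})^{b-j}=h_{L^{2}}(\sigma,\sigma)(\tau^{\ell})=|\tau|^{2\ell\alpha^{p,q}}\,|v(\tau)|^{2}\,h_{L^{2}}(\tilde\sigma,\tilde\sigma)(\tau), \]
and using that, since $g$ has unipotent monodromy, the Hodge metric $h_{L^{2}}(\tilde\sigma,\tilde\sigma)$ on the canonical extension carries no $|\tau|$-power factor (Schmid's nilpotent orbit theorem: it is squeezed between powers of $\log|\tau|^{-1}$), one reads off $a=\alpha^{p,q}$ by matching the $|\tau|$-exponents. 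The same comparison, once $a=\alpha^{p,q}$ is known, forces $b$ to equal the corresponding exponent for $g$, which is unchanged by the base change since the limiting mixed Hodge structure only changes by $N\mapsto\ell N$, affecting neither $F_{\infty}^{\bullet}$ nor the weight filtration.

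For $b$ in the unipotent case I would pick a frame $e_{1},\dots,e_{r}$ of $R^{q}g_{\ast}\Omega^{p}_{Y/\DBbb}(\log)=\Gr^{p}_{F_{\infty}}$ adapted to Deligne's bigrading of the limiting mixed Hodge structure, so $e_{i}(0)$ spans a piece inside $\Gr^{p}_{F_{\infty}}\Gr^{W}_{k+r_{i}}H^{k}(Y_{\infty})$ ($k=p+q$). Schmid's norm estimate gives $\langle e_{i}(t),e_{i}(t)\rangle_{L^{2}}\asymp(\log|t|^{-1})^{r_{i}}$, and the $SL_{2}$-orbit theorem \cite[\S5--6]{schmid} shows that the Gram matrix renormalised by the factors $(\log|t|^{-1})^{-(r_{i}+r_{j})/2}$ converges as $t\to0$ to the positive definite Gram matrix of the $SL_{2}$-orbit model. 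Taking determinants, $h_{L^{2}}(\tilde\sigma,\tilde\sigma)(t)\asymp(\log|t|^{-1})^{\sum_{i}r_{i}}$ with positive leading coefficient, and $\sum_{i}r_{i}=\sum_{r}r\dim\Gr^{p}_{F_{\infty}}\Gr^{W}_{k+r}H^{k}(Y_{\infty})=\beta^{p,q}$ by \eqref{def:beta-p-q}; positivity of the $L^{2}$ pairing also gives $c_{0}>0$. This yields $a=\alpha^{p,q}$, $b=\beta^{p,q}$, proving the expansion. For the final assertion, if $T_{s}$ acts trivially on $\Gr^{p}_{F_{\infty}}H^{p+q}(X_{\infty})$ then $\alpha^{p,q}=0$ by \eqref{eq:def-alpha-p-q}, so $\log h_{L^{2}}(\sigma,\sigma)=\log g(t)+\beta^{p,q}\log\log|t|^{-1}+\log(c_{0}+c_{1}(\log|t|^{-1})^{-1}+\cdots)$; differentiating once or twice, the non-smooth terms contribute forms dominated by the Poincar\'e metric $|t|^{-2}(\log|t|)^{-2}|dt|^{2}$, which is precisely Mumford's goodness condition for the metric on the logarithmic (lower Deligne) extension.

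The main obstacle is the determination of $b$: one needs not merely the diagonal Hodge-norm growth but the full non-degenerate limit of the renormalised Gram matrix, so that no cancellation lowers the log-exponent of the determinant or kills its leading coefficient. This is exactly what Schmid's $SL_{2}$-orbit theorem provides, but applying it requires care in passing from the primitive polarised variation (where the theorem lives) to the Hodge bundle $R^{q}f_{\ast}\Omega^{p}_{X/\DBbb}(\log)$, in controlling the constant factors of \eqref{eq:hodge-star-L}, and in invoking the compatibility of Schmid's analytic limiting mixed Hodge structure with Steenbrink's geometric construction of $F_{\infty}^{\bullet}$ and $W_{\bullet}$ that underlies Section~\ref{section:monodromy}.
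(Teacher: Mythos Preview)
Your proposal is correct and shares the paper's overall architecture: reduce to the semi-stable case by Corollary~\ref{cor:eigenvalues} to identify the $|t|$-exponent as $\alpha^{p,q}$, quote Peters for the shape of the expansion, and then pin down the logarithmic exponent. The genuine difference lies in how you determine $\beta$. You argue directly via the $SL_{2}$-orbit theorem that the Gram matrix of a frame adapted to the Deligne bigrading, renormalised by $(\log|t|^{-1})^{-(r_{i}+r_{j})/2}$, tends to a positive definite limit, whence the determinant has leading order exactly $(\log|t|^{-1})^{\sum r_{i}}=(\log|t|^{-1})^{\beta^{p,q}}$. The paper instead proves only the one-sided estimate $h_{L^{2}}(\sigma,\sigma)=O((\log|t|)^{\beta^{p,q}})$ --- which needs merely Schmid's diagonal norm bounds (via Zucker) together with the trivial Hadamard-type inequality $\|\sigma_{1}\wedge\cdots\wedge\sigma_{h}\|\le\prod\|\sigma_{j}\|$ for the quotient metric --- and then obtains the reverse inequality by a duality trick: applying the same upper bound to $\det R^{n-q}f_{\ast}\Omega^{n-p}_{X/\DBbb}(\log)$, using the Serre duality $L^{2}$-isometry, and invoking the symmetry $\beta^{n-p,n-q}=-\beta^{p,q}$ of Lemma~\ref{lemma:beta-p-q}. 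Your route is self-contained but leans on the full strength of the $SL_{2}$-orbit theorem (and, as you note, on tracking the passage from the polarised primitive variation to the subquotient $\Gr^{p}_{F}$); the paper's route trades that analytic depth for the algebraic observation that $\beta^{p,q}$ is odd under Serre duality, so that a single upper bound automatically supplies the matching lower bound.
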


\begin{remark}
    As direct consequence of the theorem, we see that 
    \begin{displaymath}
        \log h_{L^2} = \alpha^{p,q} \log|t|^2 + \beta^{p,q} \log \log|t|^{-1}+ C + O\left(\frac{1}{\log|t|}\right),
    \end{displaymath}
    for a constant $C$ and where $\beta^{p,q} \in [-k h^{p,q}, k h^{p,q}]$. This generalizes Theorem A of \cite{cdg}, which was announced for $(p,q)=(n,0)$ and under the hypothesis that $h^{n,0}=1$.
\end{remark}

\begin{proof}[Proof of Theorem \ref{thm:expansion-hodge}]
First of all, we reduce to the semi-stable case: by Corollary \ref{cor:eigenvalues}, the change of the $L^{2}$ metric under semi-stable reduction is accounted for by the term $|t|^{2 \alpha^{p,q}}$.

For the rest of the argument, we can hence assume that $f \colon X \to \DBbb$ is semi-stable, so in particular the monodromy operator on $H^{k}(X_{\infty})$ is unipotent. Then a real analytic expansion of the form
\begin{displaymath}
    h_{L^{2}}(\sigma(t),\sigma(t))=g(t)\sum_{j=0}^{\infty}c_{j}\ (\log|t|^{-1})^{\beta-j}
\end{displaymath}
and satisfying properties 1) and 2) follows from \cite[Prop. 2.2.1]{Peters-flatness}, since the canonical isomorphism
\begin{displaymath}
    \det R^{q}f_{\ast}\Omega_{X/\DBbb}^{p}(\log)\simeq\det\Fcal_{\log}^{p}\otimes (\det\Fcal_{\log}^{p+1})^{\vee}
\end{displaymath}
is an isometry for the $L^{2}$ metrics. We notice that \emph{loc. cit.} is formulated within the polarized setting, \emph{i.e.} it initially applies to the primitive counterparts of the Hodge bundles $\Fcal^{p}_{\log}$. In general, one can reduce to the polarized case via the Lefschetz decomposition, since the Lefschetz operator is an isometry for the $L^{2}$ metrics. The next step will be to show that $\beta=\beta^{p,q}$. 

We claim it is enough to establish the weaker inequality $\beta \leq \beta^{p,q}$, namely
\begin{equation} \label{pq-estimate}
    h_{L^2}(\sigma(t), \sigma(t)) = O\left((\log|t|)^{\beta^{p,q}}\right).
\end{equation}
Indeed, suppose this estimate is satisfied for all $p,q$. Let $\sigma'$ be a trivializing section of \linebreak $\det R^{n-q} f_\ast \Omega_{X/\DBbb}^{n-p}(\log)$. Then we have the estimate 
\begin{equation}\label{pq-estimateI}
    h_{L^2}(\sigma'(t), \sigma'(t)) = O\left((\log|t|)^{\beta^{n-p,n-q}}\right) = O\left((\log|t|)^{-\beta^{p,q}}\right),
\end{equation}
where we applied Lemma \ref{lemma:beta-p-q}. These norms can be compared, since Serre duality induces an $L^2$ isometry
\begin{equation}\label{pq-estimateII}
    \det R^{q}f_{\ast}\Omega_{X/\DBbb}^{p}(\log) \simeq \det R^{n-q} f_\ast \Omega_{X/\DBbb}^{n-p}(\log)^\vee.
\end{equation}
Here we implicitly used that $\det \Omega_{X/\DBbb}(\log) = \omega_{X/\DBbb}$, since we are in the semi-stable setting. Combining \eqref{pq-estimateI} and \eqref{pq-estimateII} we find the reverse inequality $\beta_{p,q} \leq \beta$, and we thus conclude by \eqref{pq-estimate}.

We now proceed to prove \eqref{pq-estimate}. For the discussion, we rely on Section \ref{section:monodromy}. First of all, we fix a subset $E = \{\ebold_1, \ldots, \ebold_h\}$ of  $F^p_\infty H^k(X_\infty)$ which projects to a basis in $\Gr_{F_\infty}^p H^k(X_\infty)$ and which is adapted to the weight type filtration 
\begin{displaymath}
    W_\ell \Gr_{F_\infty}^p H^k(X_\infty) = W_\ell \cap F_\infty^p /W_\ell \cap F_\infty^{p+1}.
\end{displaymath}
In other words, for each $\ell$, there is a subset $E_\ell$ of $E$ such that the elements of $E_\ell$ are in \linebreak $W_\ell \cap F_\infty^p  \setminus W_{\ell-1}\cap F_\infty^p$ and project to a basis of $\Gr_\ell^W \Gr_{F_\infty}^p H^k(X_\infty)$. 

Secondly, lift the elements $E$ to holomorphic sections $\lbrace\widetilde{\sigma}_{j}\rbrace$ of $\Fcal^p_{\log}$. They project to a local holomorphic frame $\lbrace\sigma_{j}\rbrace$ of $\Fcal^{p}_{\log}/\Fcal^{p+1}_{\log}=R^{q}f_{\ast}\Omega^{p}_{X/\DBbb}(\log)$, by Nakayama's lemma. Also, introduce the twisted sections $\widetilde{\ebold}_j(t)= e^{-2\pi i N \tau} \ebold_j(\tau)$, for $\tau\in\HBbb$ (cf. \eqref{eq:twisted}). These can be identified with holomorphic sections of $R^{k}f_{\ast}\Omega^{\bullet}_{X/\DBbb}(\log)$. Under this identification, we have the equality $\widetilde{\sigma}_{j}(0)=\widetilde{\ebold}_{j}(0)$ (cf. \eqref{eq:psi}, \eqref{eq:steenbrink-psi} and \eqref{eq:Finfty}). Therefore,
\begin{displaymath}
    \widetilde{\sigma}_{j}(t)-\widetilde{\ebold}_{j}(t)\in t\cdot \Gamma(\DBbb, R^{k}f_{\ast}\Omega^{\bullet}_{X/\DBbb}(\log)).
\end{displaymath}
Together with \cite[Chap. II, Prop. 25]{griffiths-topics}, we derive for the $L^{2}$ norm
\begin{displaymath}
    \|\widetilde{\sigma}_{j}(t)-\widetilde{\ebold}_{j}(t)\|_{L^{2}}^{2}=O\left(|t|(\log|t|)^{b}\right),
\end{displaymath}
for some integer $b$. By Schmid's theorem \cite[Thm. 6.6]{schmid}, adapted to the present setting by Zucker \cite[Prop. 3.9]{zucker-L2}, we derive
\begin{equation}\label{eq:normsigmaI}
    \|\widetilde{\sigma}_{j}(t)\|_{L^{2}}^{2}=O\left( (\log|t|)^{\ell-k}\right),
\end{equation}
where $\ell$ is such that $\ebold_{j}\in E_{\ell}$. Since the $L^2$ norm on $\Fcal^{p}_{\log}/\Fcal^{p+1}_{\log}$ is the quotient norm of the $L^2$ norm on $\Fcal^p_{\log}$, we have:
\begin{equation}\label{eq:normsigmaII}
    \|\sigma_{1}\wedge\ldots\wedge\sigma_{h}\|^{2}_{L^{2}}(t)
    \leq\prod_j \|\sigma_{j}\|_{L^{2}}^{2}(t)
    \leq\prod_j \|\widetilde{\sigma}_{j}\|_{L^{2}}^{2}(t).
\end{equation}
Combining \eqref{eq:normsigmaI} and \eqref{eq:normsigmaII}, together with $\Gr_\ell^W \Gr_{F_\infty}^p H^k(X_\infty)\simeq \Gr_{F_\infty}^p \Gr_\ell^W H^k(X_\infty)$, we conclude the claimed estimate \eqref{pq-estimate}
\end{proof}
\begin{remark}
As for Theorem \ref{thm:eigenvalues} and Corollary \ref{cor:eigenvalues}, there is a counterpart of Theorem \ref{thm:expansion-hodge} for more general degenerations of Hodge structures, and in particular for the upper extension $^{u}\Fcal^{\bullet}$ (and their graded quotients) in the geometric case. In this situation, only the exponent $\alpha^{p,q}$ needs to be changed to the corresponding elementary exponent for the upper extensions. The rest of the asymptotic expansion remains the same, as it is determined after semi-stable reduction.
\end{remark}

\section{BCOV metrics and invariant for Calabi--Yau varieties}\label{sec:BCOV-bundle}
In this section we recall the construction of the BCOV bundle following Fang--Lu--Yoshikawa \cite{FLY}. It is named after Bershadsky--Cecotti--Ooguri--Vafa \cite{bcov}, who developed a mostly conjectural technique for computing ``higher loop string amplitudes''. For a K\"ahler family of Calabi--Yau manifolds, the bundle can be endowed with a Quillen type metric, independent of the particular choice of K\"ahler structure. In relative dimension 3, Fang--Lu--Yoshikawa could extract from this Quillen metric an important invariant of Calabi--Yau $3$-folds, called the BCOV invariant. This is a suitable normalization of a combination of holomorphic analytic torsions. It is actually this invariant, rather than the original quantity in \cite{bcov}, that is expected to fulfill the predictions in \emph{loc. cit.} in connection with mirror symmetry. The case of the mirror quintic family was successfully solved in \cite{FLY}. The analogous of this conjectural program in dimension 4 has been proposed by Klemm-Pandharipande \cite{Klemm-Pandharipande}, and further studied in dimension 5 by Pandharipande--Zinger \cite{PandiZinger}. However a right counterpart of the BCOV invariant in arbitrary dimension, independent of the K\"ahler structure, was still missing. Filling this gap is the ultimate purpose of this section.

In this section, we wish to distinguish the dualizing (or canonical) sheaf, from K\"ahler forms. We herein adopt the notation $K_X$ for the canonical sheaf of a complex manifold $X$, and similarly for the relative setting.

\subsection{The BCOV line bundle and its Quillen-BCOV metric}\label{subsec:Quillen-BCOV}
Let $f\colon X \to S$ be a K\"ahler morphism whose fibers are connected Calabi--Yau manifolds of dimension $n$. From the K\"ahler structure, the relative cotangent bundle $\Omega_{X/S}$ inherits an induced smooth hermitian metric.
\begin{definition}
The BCOV bundle of the family $f\colon X\to S$ is the line bundle on $S$
\begin{displaymath}
    \lambda_{BCOV}(X/S):=\bigotimes_{p}  \lambda(\Omega^p_{X/S})^{(-1)^p p}
\end{displaymath}
\end{definition}
From the choice of K\"ahler structure and the induced metrics on the powers $\Omega_{X/S}^{p}$, the line bundle $\lambda_{BCOV}$ carries a $L^{2}$ and a Quillen type metric, that we denote $h_{L^{2}}$ and $h_{Q}$, respectively. Let us momentarily assume that $S$ is reduced to a point. Hence we deal with a single Calabi--Yau manifold $X$ endowed with a K\"ahler hermitian metric $h_{X}$ on $T_{X}$. As it is customary in the Quillen metric literature (see \emph{e.g.} \cite[Sec. 4]{Gillet-Soule:ARR}), we work with the normalized K\"ahler form
\begin{displaymath}
    \omega=\frac{i}{2\pi}\sum_{j,k}h_{X}\left(\frac{\partial}{\partial z_j},\frac{\partial}{\partial z_k}\right)dz_{j}\wedge d\ov{z}_{k}.
\end{displaymath}
Henceforth, $L^{2}$ metrics and volumes will be computed with respect to this normalized K\"ahler form. For instance, the volume of $X$ is
\begin{displaymath}
    \vol(X,\omega)=\int_{X}\frac{\omega^{n}}{n!}.
\end{displaymath}
Following \cite[Sec. 4]{FLY} we put
\begin{equation*}\label{eq:A-constant}
    A(X,\omega)=\vol(X,\omega)^{\frac{\chi(X)}{12}}\exp\left[-\frac{1}{12}\int_{X}\log\left(\frac{i^{n^{2}}\eta\wedge\ov{\eta}}{\omega^{n}/n!}\frac{\vol(X,\omega)}{\|\eta\|_{L^{2}}^{2}}\right)c_{n}(X,\omega)\right],
\end{equation*}
where $\eta$ is a holomorphic trivialization of $\Omega_{X}^{n}$ and $c_{n}(X,\omega)$ is the Chern--Weil representative of $c_{n}(T_{X})$ associated to the K\"ahler metric. For a Ricci-flat K\"ahler metric, the factor $A(X,\omega)$ simplifies to
\begin{displaymath}
    A(X,\omega)=\vol(X,\omega)^{\frac{\chi(X)}{12}}.
\end{displaymath}
In the general family setting $f\colon X\to S$, we denote by $\omega_{s}$ the normalized K\"ahler form on the fibers $X_{s}$. Then the function $s\mapsto A(X_{s},\omega_{s})$ is clearly smooth, and will be denoted $A(X/S,\omega)$.
\begin{definition}\label{def:Q-bcov}
The Quillen-BCOV metric on $\lambda_{BCOV}$ is the smooth hermitian metric
\begin{displaymath}
    h_{Q,BCOV}:=A(X/S,\omega)\cdot h_{Q}.
\end{displaymath}
We refer to the pair $(\lambda_{BCOV}, h_{Q,BCOV})$ as the BCOV hermitian line bundle.
\end{definition}

Recall the definition of the Weil--Petersson form $\omega_{WP}=c_{1}(f_{\ast}K_{X/S},h_{L^{2}})$ (cf. \cite{tian}). The curvature of the BCOV hermitian line bundle was computed in \cite[Thm. 4.9]{FLY}:
\begin{proposition}\label{prop:Q-independent}
The curvature of $(\lambda_{BCOV}, h_{Q,BCOV})$ is given by $\frac{\chi(X_{\infty})}{12} \omega_{WP}$. In particular the BCOV hermitian line bundle is independent of the choice of K\"ahler structure.
\end{proposition}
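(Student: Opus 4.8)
The plan is to compute the curvature of $(\lambda_{BCOV},h_{Q,BCOV})$ by separating the two ingredients in its definition, $h_{Q,BCOV}=A(X/S,\omega)\cdot h_Q$ with $h_Q=\bigotimes_p h_{Q,p}^{(-1)^p p}$, where $h_{Q,p}$ is the Quillen metric on $\lambda(\Omega^p_{X/S})=\det Rf_{\ast}\Omega^p_{X/S}$. Thus $c_1(\lambda_{BCOV},h_{Q,BCOV})=c_1(\lambda_{BCOV},h_Q)-dd^c\log A(X/S,\omega)$, and the aim is to show the two terms combine into $\frac{\chi(X_\infty)}{12}\,\omega_{WP}$. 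This follows Fang--Lu--Yoshikawa \cite[Thm. 4.9]{FLY}, the computation being insensitive to the fibre dimension $n$.

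For the first term I would invoke the Bismut--Gillet--Soul\'e curvature theorem, $c_1(\lambda(\Omega^p_{X/S}),h_{Q,p})=\big[\int_{X/S}\Todd(T_{X/S},\omega)\,\Ch(\Omega^p_{X/S},\omega)\big]^{(\deg 2)}$, and sum over $p$ with weights $(-1)^p p$. The key algebraic input is the identity of characteristic forms, in Chern roots $x_1,\dots,x_n$ of $T=T_{X/S}$,
\begin{displaymath}
    \Todd(T)\cdot\sum_p (-1)^p p\,\Ch(\Lambda^p T^\vee) = -\sum_{j=1}^n \frac{x_j}{e^{x_j}-1}\prod_{k\neq j}x_k ;
\end{displaymath}
expanding $\tfrac{x}{e^x-1}=1-\tfrac x2+\tfrac{x^2}{12}-\cdots$ and using $x_j^2\prod_{k\neq j}x_k=x_j\prod_k x_k$, one finds that its component of degree $2n+2$ on $X$ is exactly $-\tfrac1{12}c_1(T)c_n(T)$. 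Hence $c_1(\lambda_{BCOV},h_Q)=\tfrac1{12}\int_{X/S}c_1(K_{X/S},\omega)\,c_n(T_{X/S},\omega)$.

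Now the Calabi--Yau hypothesis enters: $f_{\ast}K_{X/S}$ is an invertible sheaf $L$ on $S$ and the evaluation $f^{\ast}L\to K_{X/S}$ is an isomorphism, so comparing the metric on $K_{X/S}$ induced by $\omega$ with the one pulled back from $(L,h_{L^2})$ gives $c_1(K_{X/S},\omega)=f^{\ast}\omega_{WP}+dd^c\phi$, where $\phi=\log\|\eta\|^2_{L^2}-\log\|\eta\|^2_\omega$ for $\eta$ a local trivialisation of $L$ (pulled back to $X$). Since $c_n(T_{X/S},\omega)$ is a closed $(n,n)$-form and $\int_{X/S}c_n(T_{X/S},\omega)=\chi$, pushing forward yields $c_1(\lambda_{BCOV},h_Q)=\tfrac{\chi}{12}\omega_{WP}+\tfrac1{12}\,dd^c\!\int_{X/S}\phi\,c_n(T_{X/S},\omega)$. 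On the other hand, in $\log A=\tfrac{\chi}{12}\log\vol-\tfrac1{12}\int_{X/S}\log\!\big(\tfrac{i^{n^2}\eta\wedge\bar\eta}{\omega^n/n!}\cdot\tfrac{\vol}{\|\eta\|^2_{L^2}}\big)c_n(T_{X/S},\omega)$ the two $\log\vol$ contributions cancel (again because $\int_{X/S}c_n=\chi$), leaving $\log A=\tfrac1{12}\int_{X/S}\phi\,c_n(T_{X/S},\omega)$ up to an additive constant; hence $dd^c\log A$ is precisely the correction term above, and $c_1(\lambda_{BCOV},h_{Q,BCOV})=\tfrac{\chi}{12}\omega_{WP}$.

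For the independence of the K\"ahler structure I would reduce to $S$ a point (the fibrewise metric at $s$ is $h_{Q,BCOV}(X_s,\omega|_{X_s})$, depending only on $(X_s,\omega|_{X_s})$), and then show $h_{Q,BCOV}(Z,\omega)\in\RBbb_{>0}$ is unchanged when the K\"ahler form on a fixed Calabi--Yau $Z$ is varied. This I would deduce from the Bismut--Gillet--Soul\'e anomaly formula, which expresses $\log(h_Q(\omega_1)/h_Q(\omega_0))$ as an integral over $Z$ of Bott--Chern secondary classes of $\Todd(T_Z)\Ch(-)$ for the BCOV combination of bundles; this uses the degree-$2n$ part of the identity above, $\big[\Todd(T_Z)\sum_p(-1)^p p\,\Ch(\Omega^p_Z)\big]^{(2n)}=\tfrac n2 c_n(T_Z)$, together with $c_1(T_Z)=0$ in $H^2(Z)$ to kill the remaining terms. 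The factor $A(Z,\omega)$ is, essentially by construction, the Bott--Chern secondary object whose variation cancels this anomaly, so a term-by-term comparison gives $\log(A(Z,\omega_0)/A(Z,\omega_1))=\log(h_Q(\omega_1)/h_Q(\omega_0))$. I expect the main obstacle to be exactly this last step: matching the Bott--Chern secondary terms produced by the anomaly formula against the variation of $\log A$, while keeping track of all normalisation conventions for $dd^c$ and for the $L^2$ norms. The curvature computation itself is a clean formal consequence of the BGS theorem, the characteristic-class identity, and the Calabi--Yau condition.
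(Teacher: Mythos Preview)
Your proposal is correct and follows precisely the approach of Fang--Lu--Yoshikawa \cite[Thm.~4.9]{FLY}, which is exactly what the paper cites in lieu of a proof; the characteristic class identity, the Calabi--Yau rewriting of $c_1(K_{X/S},\omega)$, and the cancellation with $dd^c\log A$ are all handled as in \emph{loc.~cit.}, and you rightly note that the independence statement requires the pointwise anomaly argument rather than just the curvature formula. Your only stated concern---matching the Bott--Chern secondary terms against the variation of $\log A$---is indeed where the bookkeeping lives, but the computation goes through in any dimension exactly as you sketch.
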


\subsection{The $L^2$-BCOV metric}\label{subsec:L2-BCOV}
In this section, we work with general complex manifolds, not necessarily of Calabi--Yau type. We define a renormalized $L^2$ norm on the BCOV bundle. It has the feature that it is independent of the choice of K\"ahler form  (cf. Proposition \ref{prop:L2-independent}).

\begin{definition}\label{def:L2-bcov}
\begin{enumerate}
    \item Let $(X,\omega)$ be a compact complex K\"ahler manifold of dimension $n$. We define
    \begin{equation}\label{eq:def-B}
        B(X,\omega):=\prod_{k=1}^{2n} \vol_{L^2}(H^k(X, \ZBbb),\omega)^{(-1)^{k+1}k/2 }.
    \end{equation}
    Here we adopt the convention that $\vol_{L^2}(H^k(X, \mathbb{Z}),\omega)=1$ if $b_k(X)=0$.
    \item Let $f\colon X \to S$ be a K\"ahler morphism with K\"ahler structure form $\omega$, whose fibers are compact complex manifolds of dimension $n$. We define a function $B(X/S,\omega)\in\Ccal^{\infty}(S)$ by $B(X/S,\omega)(s)=B(X_{s},\omega_{s})$.
    \item The $L^2$-BCOV metric, or rescaled $L^{2}$ metric, on $\lambda_{BCOV}$ is the $\mathcal{C}^\infty$ metric
    \begin{displaymath}
        h_{L^2,BCOV} = B(X/S, \omega)\cdot h_{L^2}.
    \end{displaymath}
\end{enumerate}
\end{definition}

\begin{remark}
\begin{enumerate}[(i)]
    \item The definition drives some inspiration from Kato's formalism of heights of motives \cite{Katoheight}, see specially paragraph \textbf{1.3} in \emph{loc. cit.}
    \item Since Poincar\'e duality is a unimodular pairing, it is not difficult to prove that 
\begin{displaymath} 
    \vol_{L^2}(H^k(X, \mathbb{Z}),\omega) \vol_{L^2}(H^{2n-k}(X, \mathbb{Z}),\omega)= 1.
\end{displaymath} 
    Hence the product in the definition of $B(X,\omega)$ can be written more succinctly as
\begin{equation*}\label{eq:Bnfold}
    B(X, \omega) = \prod_{k=0}^{n-1} \vol_{L^2}(H^k(X, \mathbb{Z}),\omega)^{(-1)^{k}(n-k)}.
\end{equation*}
In particular for a (simply connected) Calabi--Yau 3-fold $X$ with a K\"ahler form $\omega$, we find that
\begin{equation}\label{eq:B3fold}
    B(X, \omega) = \vol_{L^2}(H^0(X, \mathbb{Z}),\omega)^{3} \vol_{L^2}(H^2(X, \mathbb{Z}),\omega).
\end{equation}
\end{enumerate}
\end{remark}

\begin{proposition}\label{prop:L2-independent}
The $L^2$-BCOV metric  $h_{L^2,BCOV}$ is independent of the K\"ahler structure.
\end{proposition}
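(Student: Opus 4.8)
The plan is to reduce the statement to a single compact Kähler manifold and then to a weight bookkeeping that collapses under Hodge symmetry. First I would note that the metric $h_{L^{2},BCOV}=B(X/S,\omega)\,h_{L^{2}}$ is, by construction, computed fibrewise from the Kähler form induced on each fibre, while the line bundle $\lambda_{BCOV}(X/S)$ is itself Kähler-independent; so it suffices to fix one compact Kähler $n$-fold $X$, two Kähler forms $\omega_{0},\omega_{1}$ on it, and to prove $B(X,\omega_{0})\,h_{L^{2},\omega_{0}}=B(X,\omega_{1})\,h_{L^{2},\omega_{1}}$ as Hermitian metrics on $\lambda_{BCOV}(X)=\bigotimes_{p,q}\det H^{q}(X,\Omega_{X}^{p})^{(-1)^{p+q}p}$. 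Being a line, it is enough to check this on one nonzero vector; I would fix, for each $(p,q)$, a generator $w_{p,q}$ of the (Kähler-independent) space $\det H^{q}(X,\Omega_{X}^{p})$ and test on $\sigma=\bigotimes_{p,q}w_{p,q}^{\otimes(-1)^{p+q}p}$.

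Next I would rewrite both factors in terms of $N_{p,q}(\omega):=h_{L^{2},\omega}(w_{p,q},w_{p,q})$ and the ratios $R_{p,q}:=N_{p,q}(\omega_{1})/N_{p,q}(\omega_{0})$. From the definition, $h_{L^{2},\omega}(\sigma,\sigma)=\prod_{p,q}N_{p,q}(\omega)^{(-1)^{p+q}p}$, so the $h_{L^{2}}$-part contributes $\prod_{p,q}R_{p,q}^{(-1)^{p+q}p}$. For $B(X,\omega)$ I would use that the Hodge decomposition $H^{k}(X,\CBbb)=\bigoplus_{p+q=k}H^{q}(X,\Omega_{X}^{p})$ is canonical, hence Kähler-independent, and $L^{2}$-orthogonal; on top exterior powers this gives a fixed isomorphism $\det H^{k}(X,\CBbb)\simeq\bigotimes_{p+q=k}\det H^{q}(X,\Omega_{X}^{p})$ that is an $L^{2}$-isometry for every $\omega$. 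A fixed integral generator $v_{k}$ of $\det H^{k}(X,\ZBbb)_{\nt}$ thus equals $c_{k}\bigotimes_{p+q=k}w_{p,q}$ with $c_{k}$ independent of $\omega$, whence $\vol_{L^{2}}(H^{k}(X,\ZBbb),\omega)=|c_{k}|^{2}\prod_{p+q=k}N_{p,q}(\omega)$, in the normalization (as used in the proof of Proposition \ref{prop:rational-volume}) in which $\vol_{L^{2}}$ is the Gram determinant of an integral basis. Hence the $B$-part contributes $\prod_{k\geq1}\bigl(\prod_{p+q=k}R_{p,q}\bigr)^{(-1)^{k+1}k/2}$, the $|c_{k}|^{2}$ cancelling in the quotient. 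Collecting exponents and using $(-1)^{p+q}p+(-1)^{p+q+1}(p+q)/2=(-1)^{p+q}(p-q)/2$, the desired equality at $\sigma$ becomes the identity
\begin{displaymath}
    \prod_{\substack{0\leq p,q\leq n\\ p+q\geq1}} R_{p,q}^{\,(-1)^{p+q}(p-q)/2}=1 .
\end{displaymath}

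The last step is that this is forced by Hodge symmetry: for every Kähler form, complex conjugation of harmonic forms is a $\CBbb$-antilinear $L^{2}$-isometry $H^{q}(X,\Omega_{X}^{p})\simeq H^{p}(X,\Omega_{X}^{q})$, hence an antilinear $L^{2}$-isometry on determinants, and comparing it with the fixed generators $w_{p,q}$, $w_{q,p}$ shows that $N_{p,q}(\omega)$ and $N_{q,p}(\omega)$ differ by a positive factor independent of $\omega$, i.e.\ $R_{p,q}=R_{q,p}$. Since the exponent $(-1)^{p+q}(p-q)/2$ is antisymmetric in $p\leftrightarrow q$ and vanishes on the diagonal, the product cancels termwise to $1$. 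This gives $B(X,\omega_{0})\,h_{L^{2},\omega_{0}}(\sigma,\sigma)=B(X,\omega_{1})\,h_{L^{2},\omega_{1}}(\sigma,\sigma)$, hence equality of the two Hermitian metrics, and the proposition follows.

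I do not expect an analytic obstacle here; the substance is the weight bookkeeping, and the one thing to watch is normalizations — one takes the $L^{2}$ metric on $H^{q}(X,\Omega_{X}^{p})$ to be the restriction of the de Rham $L^{2}$ metric (so the Hodge decomposition is isometric on determinants) and $\vol_{L^{2}}$ to be the Gram determinant, not its square root. It is precisely with these conventions, and precisely because of the BCOV weights $(-1)^{p}p$, that the comparison turns into the antisymmetric expression above; with a different weighting the product would not simplify.
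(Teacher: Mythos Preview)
Your proof is correct and follows essentially the same approach as the paper: both arguments reduce to a single manifold, exploit the $L^{2}$-orthogonal Hodge decomposition to express everything in terms of the norms on $\det H^{p,q}$, and then use that complex conjugation is an $L^{2}$-isometry $H^{p,q}\simeq H^{q,p}$ to cancel the K\"ahler dependence. The paper packages this by introducing the auxiliary line $L(H^{k})=\sum_{p+q=k}p\det H^{p,q}$ and observing that $2L(H^{k})\simeq k\det H^{k}(X,\CBbb)$ isometrically, while you unpack the same cancellation termwise via the antisymmetric exponent $(-1)^{p+q}(p-q)/2$; your remark on the Gram-determinant normalization of $\vol_{L^{2}}$ is exactly the convention implicit in the paper's argument (and in the proof of Proposition~\ref{prop:rational-volume}).
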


\begin{proof}
We can check the statement pointwise, and hence we may work with a single compact K\"ahler manifold $(X,\omega)$. For each $k$, inspired by an identity from \cite[1.3] {Katoheight} we define $$L(H^k) =\sum_{p+q=k} p \det H^{q}(X, \Omega_X^p),$$ where as usual we adopt additive notations for tensor products. Consider also the complex conjugate line $$\overline{L(H^k)} = \sum_{p+q=k} p \overline{\det H^{q}(X, \Omega_X^p)} =  \sum_{p+q=k} (k-p) {\det H^{q}(X, \Omega_X^{p})}.$$ The $L^2$ metric induces a metric on $L(H^k)$, as well as $\overline{L(H^k)}$. Since complex conjugation \linebreak $L(H^k) \to \overline{L(H^k)}$ is an isometry (as real vector spaces), we have an $L^2$ isometry
\begin{displaymath}
    2 L(H^k) = L(H^k) + \overline{L(H^k)} = \sum_{p+q=k} k \det H^p(X, \Omega^q) = k \det H^k(X, \mathbb{C}).
\end{displaymath}
Up to sign, the line $\det H^k(X, \mathbb{C})$ has a natural element determined by the integral structure, namely $e_1 \wedge \ldots \wedge e_N$ for a basis $e_1,\ldots, e_N$ of $H^k(X, \mathbb{Z})_{\nt}$. Dividing by the norm of this section we find that the right hand side, and hence the left hand side, don't depend on the K\"ahler structure. More precisely the $L^2$ metric on $L(H^k)$ multiplied by $\vol_{L^2}(H^k(X, \mathbb{Z}), \omega)^{-k/2}$ is independent of the choice of K\"ahler form. Since the BCOV bundle is clearly given by
\begin{displaymath}
    \sum_{0\leq k\leq 2n} (-1)^k L(H^k),
\end{displaymath}
the proposition follows.
\end{proof}

\subsection{The BCOV invariant for Calabi--Yau $n$-folds}\label{subsec:BCOV-invariant}

Let now $X$ be a compact Calabi--Yau $n$-fold. Let $\omega$ be a K\"ahler metric. The vector bundles $\Omega_{X}^{p}$ inherit a smooth hermitian metric. The ``virtual'' vector bundle
\begin{displaymath}
    \bigoplus_{p}(-1)^{p}p\Omega_{X}^{p}
\end{displaymath}
has a well-defined holomorphic analytic torsion depending on $\omega$ and written $\Tcal_{BCOV}(X,\omega)$. It also carries the metrics $h_{Q,BCOV}$ and $h_{L^{2},BCOV}$.
\begin{definition}\label{deff:bcovinv} Let $X$ be a Calabi--Yau $n$-fold. The BCOV invariant of $X$ is the real number given by
\begin{displaymath}
    \tau_{BCOV}(X) = h_{Q, BCOV}/h_{L^2, BCOV}
\end{displaymath}
In other words, for any auxiliary K\"ahler form $\omega$,
\begin{equation*}\label{def:bcovinv}
    \tau_{BCOV}(X) = \frac{A(X, \omega)}{B(X, \omega)} \mathcal{T}_{BCOV}(X, \omega),
\end{equation*}
where $A(X, \omega)$ and $B(X, \omega)$ are as in definitons \ref{def:Q-bcov} and \ref{def:L2-bcov}.
\end{definition}
The terminology invariant is justified by the following proposition.
\begin{proposition}
The BCOV invariant $\tau_{BCOV}$ depends only on the complex structure of $X$.
\end{proposition}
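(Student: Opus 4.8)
The proof is short and rests entirely on the metric-independence statements already established above. First I would observe that for a single Calabi--Yau $n$-fold $X$ the BCOV line
\[
    \lambda_{BCOV}(X)=\bigotimes_{p}\lambda(\Omega_{X}^{p})^{(-1)^{p}p}=\bigotimes_{p,q}\det H^{q}(X,\Omega_{X}^{p})^{(-1)^{p+q}p}
\]
is a one-dimensional complex vector space built solely from the Dolbeault cohomology of $X$; no choice of K\"ahler metric or any other auxiliary datum enters its construction. Fixing any nonzero $\sigma\in\lambda_{BCOV}(X)$, Definition \ref{deff:bcovinv} expresses
\[
    \tau_{BCOV}(X)=\frac{h_{Q,BCOV}(\sigma,\sigma)}{h_{L^{2},BCOV}(\sigma,\sigma)}\in\RBbb_{>0},
\]
and this ratio is manifestly independent of the chosen $\sigma$.

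Next I would invoke the two independence results. Proposition \ref{prop:Q-independent}, which derives from the curvature formula of \cite{FLY} (applied to the constant family of $X$ over the contractible cone of K\"ahler forms, on which $\omega_{WP}$ vanishes), tells us that the Quillen--BCOV metric $h_{Q,BCOV}$ on $\lambda_{BCOV}(X)$ does not depend on the auxiliary K\"ahler form. Proposition \ref{prop:L2-independent} gives the analogous statement for the rescaled metric $h_{L^{2},BCOV}=B(X,\omega)\cdot h_{L^{2}}$: grouping the factors $p\det H^{q}(X,\Omega_{X}^{p})$ with $p+q=k$ and twisting by $\vol_{L^{2}}(H^{k}(X,\ZBbb),\omega)^{-k/2}$ yields a metric that is pinned down by the integral lattice $H^{k}(X,\ZBbb)_{\nt}\subset\det H^{k}(X,\CBbb)$, which carries no metric dependence.

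Combining the two, for a fixed complex structure on $X$ both the numerator and the denominator in the displayed formula for $\tau_{BCOV}(X)$ are independent of the K\"ahler form, and hence so is their quotient; therefore $\tau_{BCOV}(X)$ depends only on the complex structure of $X$. There is essentially no obstacle here, since the substantive input — the curvature identity behind Proposition \ref{prop:Q-independent} and the lattice argument behind Proposition \ref{prop:L2-independent} — has already been carried out; the only point meriting a word of care is that Proposition \ref{prop:Q-independent}, stated for a family $f\colon X\to S$, is here applied with $S$ a point (equivalently, restricted to a single fibre), where the asserted independence is precisely what is needed.
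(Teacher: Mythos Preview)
Your proof is correct and follows the same approach as the paper: the result is the direct combination of Proposition \ref{prop:Q-independent} and Proposition \ref{prop:L2-independent}. Your additional commentary (on the line $\lambda_{BCOV}(X)$ being metric-free, on the independence of the choice of $\sigma$, and on applying Proposition \ref{prop:Q-independent} at a single point) is all correct but more than the paper provides; the paper's proof consists of a single sentence citing the two propositions.
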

\begin{proof}
This is the combination of Proposition \ref{prop:Q-independent} and Proposition \ref{prop:L2-independent}.
\end{proof}
Our definition of the BCOV invariant for Calabi--Yau $n$-folds is an $n$-fold generalization of the BCOV invariant defined by Fang--Lu--Yoshikawa for Calabi--Yau $3$-folds \cite[Definition 4.13]{FLY}. Indeed, their invariant is given by
\begin{displaymath}
    \vol(X, \omega)^{-3} \vol_{L^2}(H^2(X, \mathbb{Z}),\omega)^{-1} A(X, \omega) \mathcal{T}_{BCOV}(X, \omega),
\end{displaymath}
which by \eqref{eq:B3fold} coincides with our construction, since by definition $\vol(X, \omega) = \vol_{L^2}(H^0(X, \mathbb{Z}), \omega)$.

Let us now discuss the differential equation satisfied by the BCOV invariant for families. Let $f\colon X\to S$ be a K\"ahler morphism between complex manifolds, whose fibers are $n$-dimensional Calabi--Yau varieties. Then $s\mapsto \log\tau_{BCOV}(X_{s})$ yields a smooth function on the space of parameters. Endow the Hodge bundles of $f$ with the associated $L^{2}$ metrics. After Fang--Lu \cite{FangLu}, we define the $k$-th Hodge form on $S$ as the following combination of Chern--Weil forms:
\begin{equation}\label{eq:def-hodge-form-1}
    \omega_{H^{k}}=\sum_{p=0}^{k} c_{1}(\Fcal^{p}R^{k}f_{\ast}\Omega_{X/S}^{\bullet}, h_{L^{2}}).
\end{equation}
By Griffiths' computation of the curvature of Hodge bundles, this is known to be a semi-positive $(1,1)$ form on $S$, cf. \emph{loc. cit.}. It can equivalently be written
\begin{equation}\label{eq:def-hodge-form-2}
    \omega_{H^{k}}=\sum_{p=0}^{k}p c_{1}(R^{k-p}f_{\ast}\Omega^{p}_{X/S}, h_{L^{2}})\in c_1(L(H^k)).
\end{equation}

\begin{proposition}\label{prop:ddc-log-B}
Let $f\colon X\to S$ be a K\"ahler morphism between connected complex manifolds, whose fibers are Calabi--Yau manifolds of dimension $n$. There is an equality of differential forms on $S$
\begin{equation}\label{eq:ddc-log-B}
    \begin{split}
        dd^{c}\log\tau_{BCOV}&=\sum_{k=0}^{2n}(-1)^{k}\omega_{H^{k}}-\frac{\chi}{12}\omega_{WP}
    \end{split}
\end{equation}
where $\chi$ is the topological Euler characteristic of any fiber of $f$. In particular, the alternating sum of Hodge forms does not depend on the particular choice of K\"ahler structure.
\end{proposition}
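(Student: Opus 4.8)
The plan is to read off $dd^{c}\log\tau_{BCOV}$ from the very definition of $\tau_{BCOV}$ as a ratio of two metrics on the line bundle $\lambda_{BCOV}(X/S)$. Writing $h_{Q,BCOV}=\tau_{BCOV}\cdot h_{L^{2},BCOV}$ and evaluating both sides on a local holomorphic frame of $\lambda_{BCOV}$, the standard relation between the curvature of a metrized line bundle and $dd^{c}$ of a conformal factor gives, as $(1,1)$-forms on $S$,
\[
    dd^{c}\log\tau_{BCOV}=c_{1}\bigl(\lambda_{BCOV},h_{L^{2},BCOV}\bigr)-c_{1}\bigl(\lambda_{BCOV},h_{Q,BCOV}\bigr).
\]
By Proposition \ref{prop:Q-independent} the last term equals $\frac{\chi}{12}\omega_{WP}$, so the whole statement is reduced to identifying $c_{1}(\lambda_{BCOV},h_{L^{2},BCOV})$ with $\sum_{k}(-1)^{k}\omega_{H^{k}}$.

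For that, I would unwind $h_{L^{2},BCOV}=B(X/S,\omega)\cdot h_{L^{2}}$, which yields $c_{1}(\lambda_{BCOV},h_{L^{2},BCOV})=c_{1}(\lambda_{BCOV},h_{L^{2}})-dd^{c}\log B(X/S,\omega)$. Since $\lambda_{BCOV}(X/S)=\bigotimes_{p,q}(\det R^{q}f_{\ast}\Omega^{p}_{X/S})^{(-1)^{p+q}p}$, additivity of first Chern forms and a regrouping by $k=p+q$ turn $c_{1}(\lambda_{BCOV},h_{L^{2}})$ into $\sum_{p,q}(-1)^{p+q}p\,c_{1}(R^{q}f_{\ast}\Omega^{p}_{X/S},h_{L^{2}})$, which by \eqref{eq:def-hodge-form-2} is exactly $\sum_{k}(-1)^{k}\omega_{H^{k}}$. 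Thus the proposition comes down to the single assertion $dd^{c}\log B(X/S,\omega)=0$.

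This vanishing is the one substantive point. Since $B(X/S,\omega)(s)=\prod_{k}\vol_{L^{2}}(H^{k}(X_{s},\ZBbb),\omega_{s})^{(-1)^{k+1}k/2}$, it suffices to prove that each function $s\mapsto\log\vol_{L^{2}}(H^{k}(X_{s},\ZBbb),\omega_{s})$ is pluriharmonic. Its square is the $L^{2}$-norm of a flat — hence holomorphic — integral generator of $\det R^{k}f_{\ast}\CBbb\otimes\Ocal_{S}$, so $dd^{c}\log\vol_{L^{2}}(H^{k}(X_{s},\ZBbb),\omega_{s})=-\tfrac12 c_{1}(\det R^{k}f_{\ast}\CBbb\otimes\Ocal_{S},h_{L^{2}})$; and I would invoke the classical fact that the $L^{2}$-metric on the determinant of the full de Rham bundle of a variation of Hodge structure is flat. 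Concretely, on $\det R^{k}f_{\ast}\CBbb\otimes\Ocal_{S}$ the Gauss--Manin connection agrees with the Chern connection of $h_{L^{2}}$, because the trace of the Higgs field is zero: by Griffiths transversality the Higgs field strictly shifts the Hodge filtration and so has no diagonal block. (Equivalently, this is precisely the identity that makes \eqref{eq:def-hodge-form-1} and \eqref{eq:def-hodge-form-2} agree, cf. \cite{FangLu}.) Hence $c_{1}(\det R^{k}f_{\ast}\CBbb\otimes\Ocal_{S},h_{L^{2}})=0$ and $dd^{c}\log B(X/S,\omega)=0$, which combined with the two previous paragraphs proves \eqref{eq:ddc-log-B}.

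For the final assertion, I would observe that $\tau_{BCOV}$ is independent of the Kähler structure by Propositions \ref{prop:Q-independent} and \ref{prop:L2-independent}, so $dd^{c}\log\tau_{BCOV}$ is; and $\omega_{WP}=c_{1}(f_{\ast}K_{X/S},h_{L^{2}})$ is as well, since the $L^{2}$-norm of a holomorphic $n$-form $\eta$ on an $n$-dimensional fiber equals $\int i^{n^{2}}\eta\wedge\overline{\eta}$ up to a universal constant, independently of $\omega$. Therefore $\sum_{k}(-1)^{k}\omega_{H^{k}}=dd^{c}\log\tau_{BCOV}+\frac{\chi}{12}\omega_{WP}$ does not depend on the choice of Kähler structure. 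The main obstacle is the flatness of the $L^{2}$-metric on $\det R^{k}f_{\ast}\CBbb\otimes\Ocal_{S}$ (equivalently $dd^{c}\log B=0$); once that is granted, the rest is formal bookkeeping with first Chern forms of metrized line bundles.
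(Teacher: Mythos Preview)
Your reduction is exactly the paper's: write $\tau_{BCOV}$ as a ratio of metrics, use Proposition~\ref{prop:Q-independent} for the Quillen side, and reduce everything to the vanishing $dd^{c}\log B(X/S,\omega)=0$, equivalently $c_{1}(\det R^{k}f_{\ast}\CBbb\otimes\Ocal_{S},h_{L^{2}})=0$ for every $k$. Your derivation of the final clause (independence of the K\"ahler structure) is also the intended one.

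Where you diverge is only in the justification of that vanishing. You argue via the standard VHS fact that on the determinant of the full de Rham bundle the Gauss--Manin connection coincides with the Chern connection of the Hodge metric, because the Higgs field and its adjoint are strictly off-diagonal for the Hodge decomposition and hence traceless; this is correct (and, as you note, equivalent to the compatibility of \eqref{eq:def-hodge-form-1} and \eqref{eq:def-hodge-form-2}). The paper instead proves $c_{1}(R^{k}f_{\ast}\CBbb,h_{L^{2}})=0$ by a direct geometric argument: the Lefschetz isomorphism $L^{n-k}$ is, piecewise on the primitive decomposition, an $L^{2}$-isometry up to a constant, giving $c_{1}(R^{k}f_{\ast}\CBbb,h_{L^{2}})=c_{1}(R^{2n-k}f_{\ast}\CBbb,h_{L^{2}})$; Poincar\'e duality gives the same two Chern forms as negatives of each other; hence both vanish. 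Your route is shorter and appeals to a well-known principle, while the paper's is self-contained and makes transparent the role of the K\"ahler package (Lefschetz + Poincar\'e). One small caveat: the decomposition $\nabla=D_{h}+\theta+\theta^{\ast}$ you invoke is cleanest in the polarized setting, so strictly speaking one should pass through the primitive decomposition first---the paper's argument does this explicitly, whereas yours leaves it implicit.
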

\begin{proof}
By the curvature formula for the Quillen-BCOV metric \cite[Thm. 4.9]{FLY}, we just need to prove that $dd^{c}\log B(X/S,\omega)=0$, where $\omega$ gives the K\"ahler structure. Equivalently, according to \eqref{eq:def-B}, we need to show that
\begin{displaymath}
    \sum_{k=0}^{2n}(-1)^{k}k\ c_{1}(R^{k}f_{\ast}\CBbb,h_{L^{2}})=0.
\end{displaymath}
First we recall that the fiberwise primitive decomposition provides an identity of local systems
\begin{equation}\label{eq:rel-dec-prim}
    R^{k}f_{\ast}\CBbb=\bigoplus_{2k-2n\leq 2r\leq k}L^{r}(R^{k-2r}f_{\ast}\CBbb)_{\prim},
\end{equation}
where $L$ is the relative Lefschetz operator induced by the K\"ahler structure. The direct sum is orthogonal with respect to the $L^{2}$ metrics. Furthermore, we know that for $k\leq n$, $L^{n-k}$ induces an isomorphism of local systems
\begin{displaymath}
    L^{n-k}\colon R^{k}f_{\ast}\CBbb\overset{\sim}{\longrightarrow} R^{2n-k}f_{\ast}\CBbb.
\end{displaymath}
On each piece of the primitive decomposition \eqref{eq:rel-dec-prim}, the operator $L^{n-k}$ acts as an isometry up to constant, as equation \eqref{eq:hodge-star-L-bis} reveals. We infer from these remarks that
\begin{displaymath}
    c_{1}(R^{k}f_{\ast}\CBbb,h_{L^{2}})=c_{1}(R^{2n-k}f_{\ast}\CBbb,h_{L^{2}}).
\end{displaymath}
On the other hand, by Poincar\'e duality, $(R^{k}f_{\ast}\CBbb,h_{L^{2}})$ and $((R^{2n-k}f_{\ast}\CBbb)^\vee,h^\vee_{L^{2}})$ are isometric.
This concludes the proof.

\end{proof}
\begin{remark}
\begin{enumerate}[(i)]
    \item If the K\"ahler structure is fiberwise rational, then the function $B(X/S,\omega)$ is constant on $S$ by Proposition \ref{prop:rational-volume}. The existence of such K\"ahler forms is, locally over $S$, guaranteed for projective morphisms. However Proposition \ref{prop:ddc-log-B} holds in the generality of K\"ahler fibrations.
    \item In the physics literature, equation \eqref{eq:ddc-log-B} is referred to as the \emph{holomorphic anomaly equation of $F_{1}$}, see \cite[Eq. (3.10)]{bcov}. In relative dimension 3, it is part of an infinite system of differential equations relating some partition functions $F_{g}$, where $g\geq 0$ runs over all the possible genuses of compact Riemann surfaces.
\end{enumerate}
\end{remark}

\subsection{Triviality of the BCOV invariant for special geometries}

Recall the Beauville-Bomogolov classification, that any Calabi--Yau variety is an \'etale quotient of a product $T \times V \times H$, where $T$ is an abelian variety, $V$ is a strict Calabi--Yau variety, and $H$ is a hyperk\"ahler variety. Recall that the later means a K\"ahler manifold whose $H^{2,0}$ is spanned by a holomorphic symplectic form. The facts we need regarding hyperk\"ahler manifolds can be found in \cite{Hitchin}.

We are grateful to Ken--Ichi Yoshikawa for sharing with us his argument for the triviality of the BCOV torsion for hyperk\"ahler variety equipped with a Ricci flat metric.

\begin{proposition}\label{prop:ken-ichi}
If $f\colon X \to S$ is a projective morphism of complex analytic spaces, whose fibers are either abelian varities of dimension at least 2 or hyperk\"ahler varieties, then the function $s\mapsto\taubcov{X_s}$ is locally constant on $S$.
\end{proposition}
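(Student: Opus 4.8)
The plan is to use the differential equation for $\log\tau_{BCOV}$ from Proposition \ref{prop:ddc-log-B}, together with the vanishing of the right-hand side for these two classes of varieties. Concretely, since $\taubcov{X_s}$ is a positive smooth function on $S$, it suffices to show $dd^c\log\taubcov{X_s}=0$ and that the function is bounded (or rather that it is pluriharmonic and then invoke a separate argument); but in fact the cleaner route is to show that \emph{both} terms on the right-hand side of \eqref{eq:ddc-log-B}, namely $\sum_{k}(-1)^k\omega_{H^k}$ and $\frac{\chi}{12}\omega_{WP}$, vanish identically. For abelian varieties of dimension $n\geq 2$ one has $\chi=0$, so the Weil--Petersson term drops out; for hyperk\"ahler varieties $\chi\neq 0$ in general, so here one must instead check that the Hodge-theoretic combination cancels the Weil--Petersson term. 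Actually, the most robust approach, which handles both cases uniformly, is to prove the stronger statement that $\tau_{BCOV}$ is literally constant on each fiber's deformation, by an explicit computation of all the constituent quantities $A(X,\omega)$, $B(X,\omega)$ and $\mathcal T_{BCOV}(X,\omega)$ for a Ricci-flat metric.

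First I would treat the abelian case. For a complex torus $X=\CBbb^n/\Lambda$ with a flat K\"ahler metric $\omega$, the Dolbeault Laplacians $\Delta^{(p,q)}_{\db}$ all coincide (up to the combinatorial multiplicity $\binom{n}{p}$ of copies) with the scalar Laplacian on functions, so the Ray--Singer torsions $\det\Delta^{(p,q)}_{\db}$ are all equal to a common value $D$ raised to the power $\binom{n}{p}$. The BCOV combination $\mathcal T_{BCOV}=\prod_{p,q}(\det\Delta^{(p,q)}_{\db})^{(-1)^{p+q}pq}$ then collapses because $\sum_{p,q}(-1)^{p+q}pq\binom{n}{p}=\left(\sum_p(-1)^pp\binom{n}{p}\right)\left(\sum_q(-1)^qq\right)$ — more precisely one must be a little careful with the $q$-sum since $q$ ranges only $0\le q\le n$, but using $\sum_{p}(-1)^p p\binom np=0$ for $n\geq 2$ the whole exponent vanishes, giving $\mathcal T_{BCOV}(X,\omega)=1$. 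Similarly $\chi=0$ forces $A(X,\omega)=\vol(X,\omega)^{\chi/12}=1$, and $B(X,\omega)$ is computed from covolumes of $H^k(X,\ZBbb)$, which for flat metrics on a torus are constant in moduli (the integral cohomology is $\wedge^k$ of a fixed lattice and the $L^2$ pairing is, up to scaling by volume, the standard one); combined with $\chi = 0$ the volume-dependence in $B$ also cancels. Hence $\taubcov{X_s}$ is constant. Alternatively, and more in the spirit of the paper, one simply notes $\chi=0$ kills $\omega_{WP}$ and a direct check shows $\sum_k(-1)^k\omega_{H^k}=0$ for tori, so $dd^c\log\taubcov{}=0$, and then one argues the pluriharmonic function is actually constant by a boundedness/monodromy argument or by reduction to a point.

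For the hyperk\"ahler case I would follow Yoshikawa's argument (credited in the excerpt): equip a hyperk\"ahler fiber with its Ricci-flat (hyperk\"ahler) metric. The key fact is that the holomorphic symplectic form induces, via cup product with its powers, isomorphisms of the Dolbeault groups $H^q(X,\Omega^p_X)\cong H^{q}(X,\Omega^{p+2}_X)$ and more importantly relates the spectra of the various $\Delta^{(p,q)}_{\db}$. One uses the $SU(2)$-action (from the hyperk\"ahler structure, acting on forms through the Lefschetz-type operators associated to the three K\"ahler forms) commuting with the Laplacian to show that the alternating combination defining $\mathcal T_{BCOV}$ telescopes to $1$, or at least to a quantity with no moduli dependence. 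Then $A(X,\omega)=\vol(X,\omega)^{\chi/12}$ and the $B$-factor must be handled: here rationality of the K\"ahler class (Proposition \ref{prop:rational-volume}) makes $B(X/S,\omega)$ locally constant when one chooses a relatively rational polarization, which one may do since $f$ is projective; and the residual volume dependence of $A$ is matched by the volume dependence hidden in $B$ via Poincar\'e duality and the formula $B(X,\omega)=\prod_{k=0}^{n-1}\vol_{L^2}(H^k(X,\ZBbb),\omega)^{(-1)^k(n-k)}$. Putting these together yields that $\taubcov{X_s}$ is locally constant.

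The main obstacle I anticipate is the hyperk\"ahler torsion computation: showing $\mathcal T_{BCOV}$ is moduli-independent (ideally equal to $1$) requires carefully exploiting the quaternionic symmetry of the Laplacian spectrum on a hyperk\"ahler manifold, which is more delicate than the torus case, and keeping track of how the non-Ricci-flat correction factor $A(X,\omega)$ interacts with the volume normalization when one is not free to rescale. A secondary subtlety, present in both cases, is passing from $dd^c\log\taubcov{}=0$ to \emph{locally constant}: since $S$ is just a complex manifold this needs either a direct evaluation (the route I prefer, avoiding the issue entirely) or an appeal to the structure of the period map; I would sidestep it by doing the explicit computation with a Ricci-flat metric on each fiber rather than arguing purely through the curvature identity.
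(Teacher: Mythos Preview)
Your overall strategy---reduce to showing $\mathcal T_{BCOV}(X_s,\omega_s)=1$ for a Ricci-flat metric in a rational K\"ahler class---matches the paper, and your abelian argument is essentially the content of the reference the paper cites (Berthomieu). The reduction step is also right in spirit, though your remark about ``the residual volume dependence of $A$ is matched by the volume dependence hidden in $B$'' is unnecessary: for a rational K\"ahler class the fiber volumes are constant, so $A(X_s,\omega_s)=\vol(X_s,\omega_s)^{\chi/12}$ is already constant in $s$, and $B$ is locally constant by Proposition~\ref{prop:rational-volume}. No matching is needed.

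The substantive divergence is in the hyperk\"ahler case. You propose to use the $SU(2)$-action on forms to make the BCOV combination telescope; you correctly flag this as the main obstacle, and indeed you do not carry it out. The paper takes a different and much shorter route. It uses the general Serre-duality identity for analytic torsion, $T(E,\omega)=T(E^\vee\otimes K_X,\omega)^{(-1)^{n+1}}$; since $n$ is even and the Ricci-flat metric makes $K_X\simeq\Ocal_X$ an isometry, this gives $T(E,\omega)=T(E^\vee,\omega)^{-1}$. The paper then proves a short lemma: the holomorphic symplectic form $\eta$ induces, up to a constant, a holomorphic \emph{isometry} $T_X\simeq\Omega_X$ (checked pointwise using the three complex structures $I,J,K$), hence $\Omega_X^p\simeq(\Omega_X^p)^\vee$ isometrically. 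Combining, $T(\Omega_X^p,\omega)=T(\Omega_X^p,\omega)^{-1}$, so each individual torsion equals $1$ and $\mathcal T_{BCOV}=1$. This bypasses any spectral telescoping and is the missing key idea in your proposal.
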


\begin{proof}
Since the statement is local over the base and only depends on $S_{\rm{red}}$, we can first desingularize $S$ and  further assume that $S$ is a polydisc. By Proposition \ref{prop:rational-volume} it suffices to show that the BCOV torsion $s\mapsto\Tcal_{BCOV}(X_{s},\omega_{s})$ is constant if we compute it with respect to a Ricci flat metric $\omega$ in the K\"ahler class providing a projective embedding. Even more, we claim that $\Tcal_{BCOV}(X_{s},\omega_{s})=1$. We may thus assume that $S$ is reduced to a point, and work with a single variety $X$. The case of abelian varieties is actually well-known, and we refer for instance to the remark in \cite[p. 154]{Berth}.


Now for the hyperk\"ahler case. Let $n$ be the dimension of $X$. Recall that $n$ is necessarily even. The BCOV torsion decomposes as a product,
\begin{displaymath}
    \Tcal_{BCOV}(X,\omega)=\prod_{p=1}^{n}T(\Omega_{X}^{p},\omega)^{(-1)^{p}p},
\end{displaymath}
where $T(\Omega_{X}^{p},\omega)$ is the holomorphic analytic torsion of $\Omega_{X}^{p}$ endowed with the hermitian metric induced by $\omega$, and computed with respect to the K\"ahler structure $\omega$. The holomorphic analytic torsion of a hermitian vector bundle $E$ satisfies (see for instance \cite[Thm. 1.4]{GilletSouleTodd})
\begin{equation}\label{eq:torsion-serre}
     T(E, \omega) = T(E^\vee \otimes K_X, \omega)^{(-1)^{n+1}},
\end{equation}
where $K_X$ is equipped with the metric coming from the Ricci flat metric on $T_X$. For the Ricci-flat metric, there is an isometry $K_X\simeq\Ocal_{X}$, where the latter is equipped with the trivial metric. Indeed, if $\eta$ is a holomorphic symplectic 2 form, then $\eta^{n/2}$ is a holomorphic trivialization of $K_{X}$. Furthermore, Ricci flatness implies that the pointwise norm of $\eta^{n/2}$ is constant. A suitable rescaling of $\eta$ thus provides the claimed isometry. From \eqref{eq:torsion-serre} we then infer $T(E, \omega) = T(E^\vee, \omega)^{-1}$. Therefore, to conclude, it will be enough to prove that the symplectic holomorphic form induces a holomorphic isometry $\Omega^p_X \simeq \wedge^p T_X \simeq (\Omega_X^{p})^\vee$. This is the content of the following lemma, probably well-known to the specialists, which we state separately.
\end{proof}

\begin{lemma}
Let $X$ be a hyperk\"ahler manifold, with non-degenerate holomorphic symplectic form $\eta$. Then the induced isomorphism  $\wedge^p T_X \simeq \Omega_X^p$ is, up to a constant,  an isometry.
\end{lemma}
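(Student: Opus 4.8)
The plan is to reduce the statement to a pointwise linear algebra computation, the only non-formal input being that the metric is hyperk\"ahler. Write $n=\dim_{\CBbb}X=2m$, let $g$ be the Ricci flat metric with K\"ahler form $\omega$, and let $h$ be the induced Hermitian metric on $T_X$ (and $h^\vee$ the dual metric on $\Omega_X^1$). First I would invoke the standard structure theory (see \cite{Hitchin}): the metric $g$ carries a compatible hyperk\"ahler triple, i.e. integrable complex structures $I$ (the given one), $J$, $K$ with $K=IJ=-JI$, all orthogonal for $g$ and each making $g$ K\"ahler; equivalently, the Riemannian holonomy of $g$ is contained in $\Sp(m)$. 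The form $\omega_J+i\,\omega_K$, with $\omega_J=g(J\cdot,\cdot)$, $\omega_K=g(K\cdot,\cdot)$, is then a parallel, nowhere vanishing holomorphic $(2,0)$-form; since $H^{2,0}(X)$ is one dimensional and spanned by $\eta$, we have $\eta=\lambda(\omega_J+i\,\omega_K)$ for some $\lambda\in\CBbb^\times$, and in particular $\|\eta\|_h$ is a constant, namely $|\lambda|$ times the (constant) norm of the parallel form.

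Next I would use that, at each point $x\in X$, the data $(T_xX,h_x,\eta_x)$ is isomorphic to the standard model: a quaternionic Hermitian vector space of quaternionic dimension $m$ is isometric to $\HBbb^m$, and under the identification $\HBbb^m\cong\CBbb^{2m}$ afforded by $I$, the pair consisting of $h$ and $\omega_J+i\,\omega_K$ becomes the standard Hermitian inner product together with the standard holomorphic symplectic form. Hence there is an $h_x$-unitary basis $u_1,w_1,\ldots,u_m,w_m$ of $(T_xX,I_x)$ with $\eta_x=\lambda\sum_{j=1}^m u_j^\vee\wedge w_j^\vee$. The linear algebra is then immediate: the isomorphism $\phi_x\colon T_xX\to\Omega^1_{X,x}$, $v\mapsto \iota_v\eta_x$, satisfies $\phi_x(u_j)=\lambda\,w_j^\vee$ and $\phi_x(w_j)=-\lambda\,u_j^\vee$, and since $(u_j^\vee,w_j^\vee)_j$ is a unitary basis of $\Omega^1_{X,x}$ for $h_x^\vee$, we get $h_x^\vee(\phi_x v,\phi_x v')=|\lambda|^2\,h_x(v,v')$, i.e. $\phi_x$ is $|\lambda|$ times a unitary map. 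Taking $p$-th exterior powers, $\wedge^p\phi_x\colon\wedge^p T_xX\to\wedge^p\Omega^1_{X,x}=\Omega^p_{X,x}$ scales the Hermitian metric by the constant factor $|\lambda|^{2p}$, which is exactly the asserted identity of metrics up to the constant $|\lambda|^{p}$.

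The point I want to stress is that essentially all of the content sits in the hyperk\"ahler hypothesis, entering through the holonomy reduction to $\Sp(m)$ (equivalently, through the parallelism of $\eta$): for a general K\"ahler metric, or a general holomorphic $2$-form, the relative position of $h_x$ and $\eta_x$ varies with $x$ and the induced map $\wedge^pT_X\to\Omega^p_X$ is \emph{not} a pointwise scalar multiple of an isometry. Once the pointwise standard form is available the rest is routine; the only mild care needed is to check that the scaling constant $|\lambda|$ is genuinely independent of $x$, which is where Ricci flatness is used, via the constancy of the pointwise norm of the parallel form $\omega_J+i\,\omega_K$. Alternatively, one can phrase this propagation more intrinsically: $\phi=\iota_{(\cdot)}\eta$ is a parallel bundle isomorphism, so $\phi^\ast h^\vee$ is a parallel Hermitian metric on $T_X$ that agrees with $|\lambda|^2 h$ at one point, hence everywhere.
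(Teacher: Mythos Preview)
Your proof is correct and follows essentially the same route as the paper's: both invoke the hyperk\"ahler triple $I,J,K$ to identify $\eta$ (up to a constant) with the parallel form $\omega_J+i\,\omega_K$, reduce to the $p=1$ case, and then pass formally to exterior powers. The only cosmetic difference is in the linear algebra step for $p=1$: the paper computes directly with the real part $\xi=\Real v$ of a holomorphic tangent vector, showing $\|\eta(v,\cdot)\|^2=\|J\xi+iK\xi\|^2=2\|\xi\|^2=\|v\|^2$, whereas you package the same computation as the existence of an $h$-unitary Darboux basis at each point (equivalently, the holonomy reduction to $\Sp(m)\subset U(2m)$). Your additional remark that the constant is point-independent because $\phi=\iota_{(\cdot)}\eta$ is parallel is a clean way to say what the paper achieves by normalizing $\eta$ at the outset.
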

\begin{proof}
Since $X$ is hyperk\"ahler, the Ricci flat metric defines a K\"ahler form with respect to three orthogonal parallel complex structures, $I, J, K$, satisfying $I^2 = J^2 = K^2 = IJK = -1$. After renormalization by a scalar, it is possible to write
\begin{equation} \label{eq:etadefn}
    \eta(\bullet, \bullet)= \frac{g(J\bullet, \bullet) + i g(K\bullet, \bullet)}{2}.
\end{equation}
We prove that contracting $\eta$ by a holomorphic tangent vector provides an isometry $T_X \simeq \Omega_X$, where the holomorphic tangent and cotangent bundles are given by the  complex structure $I$. The general statement follows. If $v \in T_X$, then $I v = i v$, and so $v = \xi - i I \xi$ where $\xi = \Real{v}.$ A direct computation shows that $\|v \|^2 = 2\|\xi \|^2$. On the other hand, by \eqref{eq:etadefn} one can conclude $\eta(v,\bullet) =  g(J \xi + i K \xi, \bullet)$. By definition, $\|g(x, \bullet)\|^2 = \langle g(x, \bullet ), \overline{g(x, \bullet)} \rangle =  \langle x, \overline{x} \rangle  $, and hence
\begin{displaymath}
    \|\eta(v)\|^2 =\langle J \xi + i K \xi , J \xi - i K \xi  \rangle = \langle J\xi, J\xi \rangle + \langle  K \xi, K \xi \rangle =  2 \| \xi\|^2=\|v\|^{2}.
\end{displaymath}
Here we have used that $J$ and $K$ preserve the metric. This concludes the proof.
\end{proof}
\begin{remark}\label{rem:bcovk3ab}
 If $X$ an abelian variety of dimension at least 2 or a $K3$ surface, it is not difficult to prove that $\taubcov{X}$ is in fact equal to 1. We ignore whether this continues to be true for higher dimensional hyperk\"ahler manifolds. The difficulty lies in having enough relations between the cohomology groups to conclude that $A(X, \omega)=B(X,\omega)$ for Ricci flat metrics. 
\end{remark}

\section{General asymptotics of the BCOV invariant}
In this section we investigate the singular behaviour of the BCOV invariant along one-parameter degenerations of Calabi--Yau manifolds. 

After initially comparing, in the first subsection, the various extensions of the BCOV bundle, we go on to establish the logarithmic behavior of the BCOV invariant along one-parameter degenerations. There we provide a closed formula for general normal crossings projective degenerations of Calabi--Yau varieties (cf. Theorem \ref{thm:general-bcov}). This can be recast as providing the boundary conditions of the holomorphic anomaly equation (cf. Proposition \ref{prop:ddc-log-B}), and it has proven to be key to the proof of the known cases of the BCOV conjecture, cf. \cite{FLY}. We then proceed to determine the subdominant term of the BCOV invariant in terms of limiting mixed Hodge structures (cf. Proposition \ref{prop:loglogbcov}). These statements will be the point of departure for the computations for special geometries in Section \ref{sec:specialgeom}.

\subsection{K\"ahler and logarithmic extensions of the BCOV bundle}
Let $f\colon X\to\DBbb$ be a projective degeneration between complex manifolds. The BCOV line bundle $\lambda_{BCOV}(X^{\times}/\DBbb^{\times})$ affords a natural extension to $\DBbb$, the so-called K\"ahler extension:
\begin{definition}
The K\"ahler extension of the BCOV bundle $\lambda(X^{\times}/\DBbb^{\times})$ is defined as
\begin{displaymath}
    \widetilde{\lambda}_{BCOV}=\bigotimes_{p}\lambda(\widetilde{\Omega}_{X/\DBbb}^{p})^{(-1)^{p}p},
\end{displaymath}
where we recall that $\lambda(\widetilde{\Omega}_{X/\DBbb}^{p})$ is the K\"ahler extension of $\lambda(\Omega_{X^{\times}/\DBbb^{\times}}^{p})$, cf. Definition \ref{def:kahler-extension}.
\end{definition}
If the singular fiber $X_{0}$ has normal crossings, then there is another natural extension: the logarithmic extension.
\begin{definition}
Let $f\colon X\to\DBbb$ be a projective normal crossings degeneration. The logarithmic extension of the BCOV bundle is defined as
\begin{displaymath}
    \lambda_{BCOV}(\log)=\bigotimes_{p}\lambda(\Omega_{X/\DBbb}^{p}(\log))^{(-1)^{p}p}.
\end{displaymath}
\end{definition}
In our previous work \cite{cdg}, the singularities of the Quillen-BCOV metric were formulated in terms of the K\"ahler extension of the BCOV bundle. In contrast, the degeneration of the $L^{2}$-BCOV metric is well understood for the logarithmic extension, thanks to Theorem \ref{thm:expansion-hodge}. A comparison of both extensions is needed in order to extract the singularities of the BCOV invariant. We define $\mu_{BCOV}$ as the integer realizing this comparison, assuming that $f\colon X\to\DBbb$ is a normal crossings projective degeneration:
\begin{displaymath}
    \widetilde{\lambda}_{BCOV}=\lambda_{BCOV}(\log)+\mu_{BCOV}\Ocal([0]).
\end{displaymath}
Hence, recalling the definition \eqref{eq:def-mu-p} of the integers $\mu_{p}$, we have
\begin{displaymath}
    \mu_{BCOV} = \sum_{p=0}^{n}p(-1)^{p}\mu_{p}.
\end{displaymath}
We now exploit the explicit computation of $\mu_{p}$ provided by Proposition \ref{prop:mu_p}, in order to give an expression for $\mu_{BCOV}$. We make use of the notation introduced in section \ref{subsec:comparison}.
\begin{proposition}\label{prop:mu-bcov}
Let $f\colon X\to\DBbb$ be a projective normal crossings degeneration, and write\linebreak  $X_{0}=\sum m_{i} D_{i}$. Assume that the smooth fibers are Calabi--Yau $n$-folds. Let $d(k)=\dim D(k)=n-k+1$. Then
\begin{displaymath}
       \begin{split}
          \mu_{BCOV}=&-\frac{(9n+5)n}{24}\chi(X_{\infty}) -\sum_{k=1}^{n}(-1)^{k}\frac{(9n+3k+2)d(k)}{24}\chi(D(k))\\
        &\quad \quad
             -\frac{(-1)^{n}}{12}\sum_{k=1}^{n}\int_{D(k)}c_{1}(\Omega_{D(k)})c_{n-k}(\Omega_{D(k)}).
         \end{split}
\end{displaymath}
\end{proposition}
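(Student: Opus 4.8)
The plan is to assemble $\mu_{BCOV} = \sum_{p=0}^n p(-1)^p \mu_p$ by plugging in the formula for $\mu_p$ from Proposition \ref{prop:mu_p}, namely
$\mu_p = (-1)^{p-1}\chi(\Omega_{X_\infty}^{\bullet\leq p-1}) - \sum_{k=1}^p (-1)^{p-k}\chi(\Omega_{D(k)}^{\bullet\leq p-k})$,
and then reorganizing the resulting double sums. First I would treat the two contributions separately: the "generic fiber" term $S_1 := \sum_{p=0}^n p(-1)^p (-1)^{p-1}\chi(\Omega_{X_\infty}^{\bullet\leq p-1}) = -\sum_{p=0}^n p\,\chi(\Omega_{X_\infty}^{\bullet\leq p-1})$, and the "strata" term $S_2 := -\sum_{p=0}^n p(-1)^p \sum_{k=1}^p (-1)^{p-k}\chi(\Omega_{D(k)}^{\bullet\leq p-k}) = -\sum_{k=1}^n (-1)^k \sum_{p=k}^n p\,\chi(\Omega_{D(k)}^{\bullet\leq p-k})$ after swapping the order of summation. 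In each case the inner object is a partial Euler characteristic $\chi(\Omega_Z^{\bullet\leq j}) = \sum_{a=0}^j (-1)^a \chi(\Omega_Z^a)$ of a smooth projective variety $Z$ (either $X_\infty$ or a component $D(k)$ of dimension $d = d(k)$).

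The key computational step is the identity, for a smooth projective $Z$ of dimension $d$, of the weighted sum $\sum_{p} p \cdot \chi(\Omega_Z^{\bullet \leq p-1+c})$ over the relevant range, in terms of $\chi(Z)$ and $\int_Z c_1(\Omega_Z) c_{d-1}(\Omega_Z)$. I would establish this using two inputs: (i) Hirzebruch--Riemann--Roch, which gives $\chi(\Omega_Z^a) = \int_Z \Ch(\Omega_Z^a)\Todd(T_Z)$, so that the generating function $\sum_a \chi(\Omega_Z^a) y^a = \int_Z \prod_i \frac{(1+y e^{-x_i})x_i}{1-e^{-x_i}}$ where the $x_i$ are the Chern roots of $T_Z$; and (ii) the fact that for the alternating partial sums only the terms up to degree $2$ in the $x_i$ survive after integrating the appropriate Taylor coefficients, because $\sum_{p} p \, (\text{partial sum up to }p)$ collapses telescopically. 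Concretely, after the reorganization one finds $\sum_{p=0}^{N} p\,\chi(\Omega_Z^{\bullet\leq p-1}) = \sum_{a} \binom{N-a+1}{2}(-1)^a \chi(\Omega_Z^a)$ (with the appropriate shift for the $D(k)$ case where $N = n$ but the truncation level is $p-k$), and then one substitutes HRR and extracts the top-degree part. The quadratic-in-$N$ weights $\binom{N-a+1}{2}$ are exactly what produces the coefficients $\frac{(9n+5)n}{24}$ and $\frac{(9n+3k+2)d(k)}{24}$, while the $c_1 c_{d-1}$ term arises from the degree-$(d-1)$-plus-degree-$1$ part of $\Ch(\Omega_Z^a)\Todd(T_Z)$ summed against the linear-in-$a$ piece of the weights.

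I would then specialize $Z = X_\infty$: since the smooth fibers are Calabi--Yau $n$-folds we have $K_{X_\infty}\simeq \Ocal$, hence $c_1(\Omega_{X_\infty}) = 0$, so the $c_1 c_{n-1}$ term drops out of the $S_1$ contribution and only the $\chi(X_\infty)$ term with coefficient $-\frac{(9n+5)n}{24}$ remains — this matches the first summand of the claimed formula. For $Z = D(k)$ of dimension $d(k) = n-k+1$, there is no Calabi--Yau hypothesis, so both the Euler characteristic term (contributing $-(-1)^k\frac{(9n+3k+2)d(k)}{24}\chi(D(k))$, with the sign $(-1)^k$ coming from the outer sum over strata) and the $-\frac{(-1)^n}{12}\int_{D(k)} c_1(\Omega_{D(k)})c_{n-k}(\Omega_{D(k)})$ term survive; here $c_{n-k} = c_{d(k)-1}$ is indeed the correct Chern class. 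Summing over $k$ yields precisely the stated expression.

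The main obstacle I anticipate is purely bookkeeping: getting the combinatorial coefficients exactly right. One must carefully handle the range of $p$ (in particular whether $p=0,1$ contribute), the shift in the truncation degree ($p-1$ for $X_\infty$ versus $p-k$ for $D(k)$, which changes the effective "$N$" in $\binom{N-a+1}{2}$ from $n$-related to $(n-k)$-related and is the origin of the asymmetry between the $(9n+5)$ and $(9n+3k+2)$ factors), and the interplay of the three sign factors $(-1)^p$, $(-1)^{p-k}$, and the $(-1)^{d-1}$ hidden in $\Ch(\Omega_Z^{\text{top-ish}})$. I would double-check the final coefficients by testing the formula on a low-dimensional example (e.g. $n=3$ against the known Fang--Lu--Yoshikawa computation, or the semi-stable elliptic curve case $n=1$) before declaring victory. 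The HRR extraction itself is standard once the generating-function identity is written down, so no conceptual difficulty is expected there.
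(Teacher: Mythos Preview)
Your approach is essentially the same as the paper's: the paper also splits $\mu_{BCOV}=\sum_p p(-1)^p\mu_p$ into the $X_\infty$ piece and the $D(k)$ pieces, swaps the order of summation, and reduces to evaluating $\sum_j(-1)^j j\,\chi(\Omega_Z^j)$ and $\sum_j(-1)^j\tfrac{j^2+j}{2}\chi(\Omega_Z^j)$ for $Z=X_\infty$ or $D(k)$, using the Calabi--Yau hypothesis only to kill the $c_1c_{n-1}$ term for $X_\infty$. The one difference is that the paper cites \cite[Lemma 4.6]{cdg} for these two identities (they are exactly your HRR step), whereas you propose to derive them via the generating function $\sum_a\chi(\Omega_Z^a)y^a$; also note that after swapping sums the weight is $S_N-S_a=\binom{N+1}{2}-\binom{a+1}{2}$ rather than $\binom{N-a+1}{2}$, which is precisely the kind of bookkeeping slip you anticipated.
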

\begin{proof}
Recall the expression we obtained for $\mu_p$ in Proposition \ref{prop:mu_p}:
\begin{displaymath}
   \mu_{p}=(-1)^{p-1}\chi(\Omega_{X_{\infty}}^{\bullet \leq p-1})-\sum_{k=1}^{p}(-1)^{p-k}\chi(\Omega_{D(k)}^{\bullet \leq p-k}).
\end{displaymath}
Therefore
\begin{equation}\label{eq:mu-bcov-1}
    \begin{split}
      \mu_{BCOV}=&-\sum_{p=1}^{n}p\chi(\Omega_{X_{\infty}}^{\bullet\leq p-1})-\sum_{p=1}^{n}\sum_{k=1}^{p}(-1)^{k}p\chi(\Omega_{D(k)}^{\bullet \leq p-k})\\
        =&-\sum_{p=1}^{n}p\chi(\Omega_{X_{\infty}}^{\bullet\leq p-1})-\sum_{k=1}^{n}(-1)^{k}\sum_{p=k}^{n}p\chi(\Omega_{D(k)}^{\bullet \leq p-k}).
    \end{split}
\end{equation}
For an integer $d\geq 0$, define $S_{d}=0+\ldots+d$. Then
\begin{equation}\label{eq:mu-bcov-2}
    \begin{split}
      \sum_{p=1}^{n}p\chi(\Omega_{X_{\infty}}^{\bullet\leq p-1})&=\sum_{j=0}^{n}(-1)^{j}(S_{n}-S_{j})\chi(\Omega_{X_{\infty}}^{j})
      =S_{n}\chi(X_{\infty})-\sum_{j=0}^{n}(-1)^{j}S_{j}\chi(\Omega^{j}_{X_{\infty}})\\
      &=\frac{n^{2}+n}{2}\chi(X_{\infty})-\sum_{j=0}^{n}(-1)^{j}\frac{j^{2}+j}{2}\chi(\Omega^{j}_{X_{\infty}}).
     \end{split}
\end{equation}
There are formulas for such sums. We will invoke them below. Let's first tackle the remaining terms in \eqref{eq:mu-bcov-1}. They have a similar structure:
\begin{equation}\label{eq:mu-bcov-3}
    \begin{split}
        \sum_{k=1}^{n}(-1)^{k}\sum_{p=k}^{n}p\chi(\Omega_{D(k)}^{\bullet \leq p-k})&=\sum_{k=1}^{n}(-1)^{k}\sum_{p=0}^{n-k}(p+k)\chi(\Omega_{D(k)}^{\bullet \leq p})\\
        &=\sum_{k=1}^{n}(-1)^{k}\sum_{p=0}^{n-k}p\chi(\Omega_{D(k)}^{\bullet \leq p}) + \sum_{k=1}^{n}(-1)^{k}k\sum_{p=0}^{n-k}\chi(\Omega_{D(k)}^{\bullet \leq p})\\
        &=\sum_{k=1}^{n}(-1)^{k}\sum_{p=1}^{d(k)}p\chi(\Omega_{D(k)}^{\bullet\leq p-1})+\sum_{k=1}^{n}(-1)^{k}(k-1)\sum_{p=0}^{n-k}\chi(\Omega_{D(k)}^{\bullet \leq p}),
    \end{split}
\end{equation}
where we used that $d(k)=\dim D(k)=n-k+1$.
The first term in \eqref{eq:mu-bcov-3} is computed as in \eqref{eq:mu-bcov-2}:
\begin{equation}\label{eq:mu-bcov-5}
    \sum_{k=1}^{n}(-1)^{k}\sum_{p=1}^{d(k)}p\chi(\Omega_{D(k)}^{\bullet\leq p-1})=\sum_{k=1}^{n}(-1)^{k}\left(\frac{d(k)^{2}+d(k)}{2}\chi(D(k))-\sum_{j=0}^{d(k)}(-1)^{j}\frac{j^{2}+j}{2}\chi(\Omega^{j}_{D(k)})\right).
\end{equation}
For the second term in \eqref{eq:mu-bcov-3}, we observe
   $ \sum_{p=0}^{n-k}\chi(\Omega_{D(k)}^{\bullet \leq p})=d(k)\chi(D(k))-\sum_{p=1}^{d(k)}(-1)^{p}p\chi(\Omega_{D(k)}^{p})$,
hence
\begin{equation}\label{eq:mu-bcov-15}
\begin{split}
        \sum_{k=1}^{n}(-1)^{k}(k-1)\sum_{p=0}^{n-k}\chi(\Omega_{D(k)}^{\bullet \leq p})=&\sum_{k=1}^{n}(-1)^{k}(k-1)(n-k+1)\chi(D(k))\\
         &-\sum_{k=1}^{n}(-1)^{k}(k-1)\sum_{p=1}^{d(k)}(-1)^{p}p\chi(\Omega_{D(k)}^{p}).
\end{split}
\end{equation}
To simplify \eqref{eq:mu-bcov-2}--\eqref{eq:mu-bcov-15}, we infer from \cite[Lemma 4.6]{cdg}:
\begin{eqnarray}\label{eq:mu-bcov-5-bis}
    \sum_{j=1}^{d(k)}(-1)^{j}j\chi(\Omega_{D(k)}^{j})&=&(-1)^{d(k)}\frac{d(k)}{2}\int_{D(k)}c_{d(k)}(\Omega_{D(k)})=\frac{d(k)}{2}\chi(D(k)),
\\
 \label{eq:mu-bcov-16}
 \sum_{j=1}^{d(k)}(-1)^{j}\frac{j^{2}+j}{2}\chi(\Omega_{D(k)}^{j})&=&\frac{d(k)(3d(k)+7)}{24}\chi(D(k))
 \\
 \nonumber
 &&+\frac{(-1)^{d(k)}}{12}\int_{D(k)}c_{1}(\Omega_{D(k)})c_{d(k)-1}(\Omega_{D(k)}).
\end{eqnarray}
A similar expression holds for $X_{\infty}$ instead of $D(k)$. Taking into account that $c_{1}(\Omega_{X_{\infty}})=0$, it reads
\begin{equation}\label{eq:mu-bcov-7}
    \sum_{j=1}^{n}(-1)^{j}\frac{j^{2}+j}{2}\chi(\Omega_{X_{\infty}}^{j})=\frac{n(3n+7)}{24}\chi(X_{\infty}).
\end{equation}
Also, thanks to \eqref{eq:mu-bcov-5-bis}, equation \eqref{eq:mu-bcov-5} simplifies to
\begin{equation}\label{eq:mu-bcov-8}
   \sum_{k=1}^{n}(-1)^{k}\sum_{p=1}^{d(k)}p\chi(\Omega_{D(k)}^{\bullet\leq p-1})= \sum_{k=1}^{n}(-1)^{k}\frac{(k-1)(n-k+1)}{2}\chi(D(k))
\end{equation}
To conclude, it suffices to plug \eqref{eq:mu-bcov-5-bis}--\eqref{eq:mu-bcov-8} into \eqref{eq:mu-bcov-2}--\eqref{eq:mu-bcov-15} and then adding up.
\end{proof}

\subsection{Logarithmic behaviour of the BCOV invariant}\label{subsec:asymptotic-bcov-dim-n}

Recall from the introduction that there is a strong motivation for finding the asymptotic  behaviour of the invariant. For the sake of motivation, consider the special case of a K\"ahler degeneration of Calabi--Yau $3$-folds $f: X \to \DBbb$ admitting a single ordinary double point over the origin, Fang--Lu--Yoshikawa \cite[Theorem 8.2]{FLY} proved that
\begin{equation*}\label{eq:TAUFLYODP}
    \log \tau_{BCOV}(X_t) = \frac{1}{6}\log|t|^2 + O(\log\log|t|^{-1}).
\end{equation*}
This was used in \emph{loc. cit.} in order to prove an instance of genus 1 mirror symmetry for the quintic $3$-folds.

More recently, in \cite{LiuXia}, Liu and Xia study systematically the logarithmic behavior of the BCOV invariant of \cite{FLY}. More precisely they study the limits
\begin{equation*}\label{def:logdiv}
   \kappa_f := \lim_{t \to 0} \frac{\log \taubcov{X_t}}{\log|t|^2}
\end{equation*}
where $t$ is a local parameter around $0 \in \DBbb$ for a projective K\"ahler degeneration $X \to \DBbb$ of Calabi--Yau 3-folds. It follows from \cite[Theorem 9.1]{FLY}  that the limit exists and is a real number. In \cite[Thm. 0.1]{yoshikawa5}, Yoshikawa proved it is a rational number. In \cite{LiuXia} formulas are obtained for sums of local contributions, for a compact one-dimensional base $S$ of a generically smooth family of Calabi--Yau 3-folds $X \to S$. They conjecture that these formulas localize in a precise sense under some specific conditions. In this subsection we address the corresponding matters in arbitrary dimension.

Before we proceed with the statement of the first theorem, we need to recall a definition from \cite{cdg}. Given a projective degeneration $f\colon X\to \DBbb$ between complex manifolds, whose smooth fibers are Calabi--Yau manifolds, we have an injective morphism of line bundles
\begin{displaymath}
    \mathbf{ev}\colon f^{\ast}f_{\ast}K_{X/\DBbb}\hookrightarrow K_{X/\DBbb}.
\end{displaymath}
We may equivalently see $\mathbf{ev}$ as a global section of $K_{X/\DBbb}\otimes (f^{\ast}f_{\ast}K_{X/\DBbb})^{-1}$.

\begin{definition}
We denote $B=\mathrm{div}\ \mathbf{ev}$. We say that $f\colon X\to\DBbb$ is a Kulikov family if $B=0$.
\end{definition}

The divisor $B$ is effective. Because the smooth fibers have trivial canonical bundle, $B$ is supported on the special fiber $X_{0}$. To sum up, we have
\begin{displaymath}
    K_{X/\DBbb}=f^{\ast}f_{\ast}K_{X/\DBbb}\otimes\Ocal(B),\quad |B|\subseteq X_{0}.
\end{displaymath}

\begin{theorem}\label{thm:general-bcov}
Let $f\colon X\to\DBbb$ be a germ of a projective degeneration between complex algebraic varieties, whose smooth fibers are Calabi--Yau manifolds. Then the limit
\begin{equation}\label{eq:sing-bcov}
    \kappa_{f}:=\lim_{t\to 0}\frac{\log\tau_{BCOV}(X_{t})}{\log|t|^{2}}
\end{equation}
exists and is a rational number. Moreover, if $X$ is non-singular and the special fiber $X_{0}=\sum m_{i}D_{i}$ has normal crossings, then
\begin{equation}\label{eq:expression-kappa}
    \begin{split}
        \kappa_{f}&=\frac{3n+1}{12}(\chi(X_{\infty})-\chi(X_{0}))+\sum_{k=1}^{n+1}(-1)^{k}\frac{(k-1)(3k+6n+2)}{24}\chi(D(k))\\
        &\quad -\frac{(-1)^{n}}{12}\int_{B}c_{n}(\Omega_{X})-\sum_{k=1}^{n}\frac{(-1)^{n}}{12}\int_{D(k)}c_{1}(\Omega_{D(k)})c_{n-k}(\Omega_{D(k)})\\
        &\quad \quad -\frac{\alpha}{12}\chi(X_{\infty})-\sum_{0\leq p,q\leq n}(-1)^{p+q}p\alpha^{p,q},
    \end{split}
\end{equation}
where $\alpha=\frac{1}{2\pi i}\tr\left(^{u}\log T_{s}\mid \Gr_{F_{\infty}}^{n}H^{n}(X_{\infty})\right)$.
\end{theorem}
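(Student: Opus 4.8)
The plan is to assemble the formula \eqref{eq:expression-kappa} from the three ingredients that have already been prepared: the asymptotics of the Quillen-BCOV metric from \cite{cdg}, the asymptotics of the $L^2$-BCOV metric coming from Theorem \ref{thm:expansion-hodge}, and the comparison of the K\"ahler and logarithmic extensions of the BCOV bundle from Proposition \ref{prop:mu-bcov} (together with Proposition \ref{prop:kahlercomp} to handle the singular total space case). The first observation is that $\tau_{BCOV}(X_t) = h_{Q,BCOV}/h_{L^2,BCOV}$, so that on a local holomorphic trivialization $\sigma$ of some fixed extension of $\lambda_{BCOV}(X^\times/\DBbb^\times)$ one has $\log\tau_{BCOV}(X_t) = \log h_{Q,BCOV}(\sigma,\sigma) - \log h_{L^2,BCOV}(\sigma,\sigma)$, and the leading coefficient in $\log|t|^2$ is independent of $\sigma$. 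So I would fix, say, the K\"ahler extension $\widetilde\lambda_{BCOV}$, express $\log h_{Q,BCOV}$ on it using the main theorem of \cite{cdg}, and express $\log h_{L^2,BCOV}$ by first passing to the logarithmic extension via $\widetilde\lambda_{BCOV} = \lambda_{BCOV}(\log) + \mu_{BCOV}\,\Ocal([0])$ and then applying Theorem \ref{thm:expansion-hodge} termwise.

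Concretely, first I would establish existence and rationality of $\kappa_f$ in general: choosing any projective model and any embedded resolution reducing to the normal crossings case, the three displayed asymptotics are each of the form $(\text{rational})\log|t|^2 + (\text{rational})\log\log|t|^{-1} + O(1)$, and blow-ups change the extension of $\lambda_{BCOV}$ only by integer multiples of $\Ocal([0])$ (this is where Proposition \ref{prop:kahlerlogcomp}/Corollary \ref{cor:kahlerlogcomp} and the \v{C}ech-type resolution computations enter), so the limit \eqref{eq:sing-bcov} exists and lies in $\QBbb$. Then, in the normal crossings case with $X$ smooth, I would write $\kappa_f$ as a sum of three contributions: (a) $\mu_{BCOV}$, the discrepancy between the two extensions, which is exactly Proposition \ref{prop:mu-bcov}; (b) the $\log|t|^2$-coefficient of $\log h_{Q,BCOV}$ on $\widetilde\lambda_{BCOV}$, which by \cite{cdg} is the sum of a ``$-\tfrac{1}{12}\int_B c_n(\Omega_X)$''-type term plus a ``$-\tfrac{\alpha}{12}\chi$''-type term (coming from the $A(X/S,\omega)$ factor and the singularity of the Quillen metric expressed through the upper Deligne extension of the top Hodge bundle); and (c) the $\log|t|^2$-coefficient of $\log h_{L^2,BCOV}$ on $\lambda_{BCOV}(\log)$, which by Theorem \ref{thm:expansion-hodge} and the definition $\lambda_{BCOV}(\log)=\bigotimes_p\lambda(\Omega^p_{X/\DBbb}(\log))^{(-1)^pp}$ equals $\sum_{p,q}(-1)^{p+q}p\,\alpha^{p,q}$ (the $B(X/S,\omega)$ factor being locally constant for a fibrewise rational K\"ahler class, by Proposition \ref{prop:rational-volume}). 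Adding (a)$+$(b)$-$(c) and inserting the explicit Chern-number formula for $\mu_{BCOV}$ from Proposition \ref{prop:mu-bcov}, together with $\deg c_{n+1}^{X_0}(\Omega_{X/\DBbb}) = (-1)^n(\chi(X_\infty)-\chi(X_0))$ from \eqref{eq:classeloc}, should collapse the combinatorial coefficients to exactly those in \eqref{eq:expression-kappa}.

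The main obstacle I anticipate is purely bookkeeping: reconciling the normalizations and sign conventions of \cite{cdg} for the Quillen-BCOV asymptotics with the present paper's conventions for the logarithmic and K\"ahler extensions, and then carrying out the arithmetic that turns $\tfrac{(9n+5)n}{24}$, $\tfrac{(9n+3k+2)d(k)}{24}$, the $S_d$-type sums, and the $\tfrac{3n+1}{12}$ from the Quillen side into the clean coefficients $\tfrac{(k-1)(3k+6n+2)}{24}$ and $\tfrac{3n+1}{12}$ appearing in the statement. A secondary subtlety is that $\kappa_f$ as defined in \eqref{eq:sing-bcov} is the leading coefficient and thus insensitive to the $\log\log$ term, so one must be careful that the $O(\log\log|t|^{-1})$ remainders from both metrics genuinely do not contribute to the $\log|t|^2$-coefficient; this follows since each asymptotic is of the stated form with a well-defined $|t|^{2(\cdot)}$ prefactor. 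Once the conventions are pinned down, the identity \eqref{eq:expression-kappa} should follow by direct substitution, with the rationality already secured in the first step.
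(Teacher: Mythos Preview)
Your proposal is correct and follows essentially the same route as the paper: split $\log\tau_{BCOV}$ into the Quillen-BCOV and $L^2$-BCOV parts, apply \cite[Cor.~4.9]{cdg} on the K\"ahler extension for the former, Theorem~\ref{thm:expansion-hodge} on the logarithmic extension for the latter (with $B(X/S,\omega)$ locally constant by Proposition~\ref{prop:rational-volume}), and bridge them via $\mu_{BCOV}$ from Proposition~\ref{prop:mu-bcov}. One small simplification: the paper handles the general (singular) case just by noting that $\kappa_f$ depends only on $f|_{\DBbb^\times}$ and passing to a normal-crossings resolution, so the detour through Proposition~\ref{prop:kahlerlogcomp} is not needed here, and the final combinatorial collapse uses the stratification identity $\chi(X_0)=-\sum_{k=1}^{n+1}(-1)^k\chi(D(k))$ rather than \eqref{eq:classeloc}.
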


\begin{remark}
\begin{enumerate}[(i)]
    \item The restriction to \emph{germs} of projective morphisms of algebraic varieties stems from the application of our previous work \cite{cdg}. In turn \emph{loc. cit.} relies on Yoshikawa's theorem on the singularities of Quillen metrics \cite{yoshikawa}, where the compacteness of the base of the fibration is needed. The germ assumption is a technical one, satisfied in most applications. It should be possible to remove this assumption from a functorial lifting of the Grothendieck--Riemann--Roch theorem to the level of line bundles. An example of this in the case of curves is provided by \cite{ErikssonQuillen}, where the first author applies Deligne's Riemann--Roch isomorphism to obtain the singularities of the Quillen metric.
    \item The limit $\kappa_{f}$ depends only on the restriction of $f$ to $\DBbb^{\times}$, although the existence of a model over $\DBbb$ is needed.
    \item Notice that the first sum in the expression of $\kappa_{f}$ runs from $k=1$ to $n+1$, in contrast to the first sum in the expression for $\mu_{BCOV}$ (Proposition \ref{prop:mu-bcov}), which runs from $k=1$ to $n$. Observe that $D(n+1)$ is zero dimensional, and that $\chi(D(n+1))=\# D(n+1)$ could be non-trivial.
    \item The integral $\int_{B}c_{n}(\Omega_{X})$ can be worked out explicitly in terms of the geometry/topology of the components of the special fiber $X_{0}$, and their incidence relations. Instead of giving a cumbersome general expression, we refer the reader to the proof of Theorem \ref{thm:bcov-odp-dim-n} for an example of use.
\end{enumerate}
\end{remark}

\begin{proof}[Proof of Theorem \ref{thm:general-bcov}]
Because $\kappa_{f}$ only depends on the restriction of $f$ to $\DBbb^{\times}$, it is enough to prove the second assertion. By resolution of singularities we can suppose that $X$ is smooth and $X_{0}=\sum n_{i}D_{i}$ is a divisor with normal crossings. Also we may take an auxiliary K\"ahler structure $\omega$ induced by a projective embedding, with respect to which we compute $L^{2}$ metrics and volumes.

Let $\widetilde{\lambda}_{BCOV}$ be the K\"ahler extension of the BCOV bundle. Let $\sigma$ be a trivializing holomorphic section. In \cite[Cor. 4.9]{cdg} we showed that
\begin{equation}\label{eq:general-bcov-1}
    \begin{split}
         \log h_{Q, BCOV}(\sigma,\sigma)=&\left\lbrace\frac{9n^{2}+11n+2}{24}(\chi(X_{\infty})-\chi(X_{0}))
         -\frac{\alpha}{12}\chi(X_{\infty})
          +\frac{(-1)^{n+1}}{12}\int_{B}c_{n}(\Omega_{X})\right\rbrace\log|t|^{2}\\
          &+o(\log|t|).
    \end{split}
\end{equation}
Notice that in \emph{loc. cit.} the last term in the asymptotics was written as $\int_{B}c_{n}(\Omega_{X/\DBbb})$, which equals $\int_{B} c_{n}(\Omega_{X})$ as stated above. Recall now the relation $\widetilde{\lambda}_{BCOV}=\lambda_{BCOV}(\log)+\mu_{BCOV}\Ocal([0])$. This means that if we are given trivializing sections $\sigma$ as above, and $\sigma^{\prime}$ of $\lambda_{BCOV}(\log)$, then the relation between the respective Quillen-BCOV square norms is
\begin{equation*}\label{eq:general-bcov-2}
    \log h_{Q,BCOV}(\sigma,\sigma)=\log h_{Q,BCOV}(\sigma^{\prime},\sigma^{\prime})-\mu_{BCOV}\log|t|^{2}+\text{continuous}.
\end{equation*}
We can take $\sigma^{\prime}$ of the form
\begin{displaymath}
    \sigma^{\prime}=\bigotimes_{p,q}(\sigma^{(p,q)})^{(-1)^{p+q}p},
\end{displaymath}
where $\sigma^{(p,q)}$ trivializes $\det R^{q}f_{\ast}\Omega^{p}_{X/\DBbb}(\log)$. Then, by Theorem \ref{thm:expansion-hodge} and because the $L^{2}$ volumes $\vol_{L^{2}}(H^{k}(X_{t},\ZBbb),\omega)$ stay constant (choice of rational K\"ahler structure and Proposition \ref{prop:rational-volume}), we find
\begin{equation}\label{eq:general-bcov-3}
    \log h_{L^{2}, BCOV}(\sigma^{\prime},\sigma^{\prime})=\left(\sum_{p,q}(-1)^{p+q}p\alpha^{p,q}\right)\log |t|^{2}+O(\log\log|t|^{-1}).
\end{equation}
But $\log\tau_{BCOV}(X_{t})=\log h_{Q,BCOV}(\sigma^{\prime},\sigma^{\prime})-\log h_{L^{2},BCOV}(\sigma^{\prime},\sigma^{\prime})$. Therefore the conclusion is achieved by combining equations \eqref{eq:general-bcov-1}--\eqref{eq:general-bcov-3} with Proposition \ref{prop:mu-bcov}, together with the relation
\begin{equation}\label{eq:chi-X0}
    \chi(X_{0})=-\sum_{k=1}^{n+1}(-1)^{k}\chi(D(k))=-\sum_{k=1}^{n}(-1)^{k}\chi(D(k))+(-1)^{n}\chi(D(n+1)).
\end{equation}
\end{proof}

\begin{corollary}
Let $f\colon X\to\DBbb$ be a germ of normal crossings projective degeneration of algebraic varieties. Write $X_0=\sum_{i=1}^{r} m_{i} D_{i}$ and define $M=\operatorname{lcm}(m_1,\ldots, m_r)$. Then
\begin{displaymath}
    12M\kappa_{f}\in\ZBbb.
\end{displaymath}
If all the monodromies are unipotent, then $12\kappa_{f}\in\ZBbb$.
\end{corollary}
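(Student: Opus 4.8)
The plan is to read the result directly off the explicit formula \eqref{eq:expression-kappa} of Theorem \ref{thm:general-bcov}, which already applies here since $X$ is non-singular and $X_{0}=\sum m_{i}D_{i}$ has normal crossings. That theorem already gives that $\kappa_{f}$ exists and is rational; what remains is only to pin down a denominator. Write $\kappa_{f}=A-\tfrac{\alpha}{12}\chi(X_{\infty})-\sum_{0\le p,q\le n}(-1)^{p+q}p\,\alpha^{p,q}$, where $A$ collects the purely topological and geometric contributions, so that
$12A=(3n+1)(\chi(X_{\infty})-\chi(X_{0}))+\sum_{k=1}^{n+1}(-1)^{k}\tfrac{(k-1)(3k+6n+2)}{2}\chi(D(k))+(-1)^{n+1}\int_{B}c_{n}(\Omega_{X})+(-1)^{n+1}\sum_{k=1}^{n}\int_{D(k)}c_{1}(\Omega_{D(k)})c_{n-k}(\Omega_{D(k)})$.

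First I would check $12A\in\ZBbb$. The coefficient $\tfrac{(k-1)(3k+6n+2)}{2}$ is an integer for every $k$, since $(k-1)(3k+6n+2)$ is always even (if $k$ is odd then $k-1$ is even; if $k$ is even then $3k+6n+2$ is even), and the Euler characteristics $\chi(X_{\infty}),\chi(X_{0}),\chi(D(k))$ are integers. For the Chern-class integrals, note that since $f$ is a germ the special fiber $X_{0}$ and all its strata $D(k)$ are compact; $B$ is an effective divisor supported on $X_{0}$, so $\int_{B}c_{n}(\Omega_{X})$ is the pairing of the integral class $c_{n}(\Omega_{X})$ with the integral homology class $[B]$ of complementary dimension, hence an integer, and $\int_{D(k)}c_{1}(\Omega_{D(k)})c_{n-k}(\Omega_{D(k)})$ is a Chern number of the compact manifold $D(k)$ of dimension $n-k+1=\deg(c_{1}c_{n-k})/2$, hence an integer. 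Thus $12A\in\ZBbb$.

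Next I would bound the denominators of $\alpha$ and of the $\alpha^{p,q}$ by $M=\operatorname{lcm}(m_{1},\dots,m_{r})$. By Remark \ref{rmk:eigen-prim}(i) a semi-stable reduction exists with ramification index $\ell=M$ (Steenbrink's $V$-manifold construction of Remark \ref{rmk:V-manifolds}), so $T_{s}^{M}=\operatorname{id}$ on the cohomology of $X_{\infty}$. By Corollary \ref{cor:eigenvalues}, the numbers $\exp(-2\pi i\,\alpha^{p,q}_{j})$ are eigenvalues of $T_{s}$, hence $M$-th roots of unity; combined with $\alpha^{p,q}_{j}\in[0,1)$ this forces $\alpha^{p,q}_{j}\in\tfrac1M\ZBbb$, so $\alpha^{p,q}=\sum_{j}\alpha^{p,q}_{j}\in\tfrac1M\ZBbb$. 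Likewise the eigenvalues of $T_{s}$ on $\Gr^{n}_{F_{\infty}}H^{n}(X_{\infty})$ are $M$-th roots of unity, so $\alpha=\tfrac{1}{2\pi i}\tr\big({}^{u}\!\log T_{s}\mid \Gr^{n}_{F_{\infty}}H^{n}(X_{\infty})\big)\in\tfrac1M\ZBbb$. Multiplying $12\kappa_{f}=12A-\alpha\,\chi(X_{\infty})-12\sum_{p,q}(-1)^{p+q}p\,\alpha^{p,q}$ through by $M$ then yields $12M\kappa_{f}=M\cdot 12A-M\alpha\,\chi(X_{\infty})-12M\sum_{p,q}(-1)^{p+q}p\,\alpha^{p,q}\in\ZBbb$, as claimed.

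Finally, if all monodromies are unipotent then $T_{s}=\operatorname{id}$, so $\alpha=0$ and, by Corollary \ref{cor:eigenvalues} again, $\alpha^{p,q}_{j}\in\ZBbb\cap[0,1)=\{0\}$, hence $\alpha^{p,q}=0$; therefore $12\kappa_{f}=12A\in\ZBbb$. The only step that is not pure bookkeeping is the control of the denominators of the elementary exponents by $M$ rather than by an uncontrolled ramification index, which is exactly why one appeals to the $V$-manifold semi-stable reduction; I do not foresee any other obstacle.
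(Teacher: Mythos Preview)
Your proof is correct and follows essentially the same approach as the paper: you use the explicit formula \eqref{eq:expression-kappa}, observe that $(k-1)(3k+6n+2)$ is always even so that the purely geometric part lies in $\tfrac{1}{12}\ZBbb$, and invoke the $V$-manifold semi-stable reduction (Remarks \ref{rmk:V-manifolds} and \ref{rmk:eigen-prim}) to see that $T_{s}^{M}=\id$, forcing $M\alpha,\,M\alpha^{p,q}\in\ZBbb$. The paper's proof records precisely these two observations, only more tersely; your added remarks on the integrality of the Chern numbers $\int_{B}c_{n}(\Omega_{X})$ and $\int_{D(k)}c_{1}c_{n-k}$ are correct and simply make explicit what the paper takes for granted.
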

\begin{proof}
The first statement follows from the following observations. First, $\operatorname{lcm}(m_{1},\ldots,m_{r})$ kills the semi-simple part of the monodromy endomorphisms acting on the cohomology groups $H^{k}(X_{\infty})$, and hence all the $M\alpha$ and $M\alpha^{p,q}$ are integers. Second, the numerators $(k-1)(3k+6n+2)$ in the second sum in \eqref{eq:expression-kappa} are always even integers. The second claim follows analogously, using that the $\alpha$ and $\alpha^{p,q}$ vanish.
\end{proof}

\subsection{The subdominant term in the asymptotics of the BCOV invariant}
Let $f\colon X\to\DBbb$ be a germ of a projective degeneration between complex algebraic varieties, whose smooth fibers are Calabi--Yau manifolds. In the previous subsection, we provided a general expression for the leading term of the asymptotic behaviour of the function $t\mapsto\log\taubcov{X_{t}}$. The following statement describes the subdominant term.
\begin{proposition}\label{prop:loglogbcov}
The assumptions being as above, we have an asymptotic expansion
\begin{displaymath}
    \log\taubcov{X_{t}}=\kappa_{f}\log|t|^{2}+\varrho_{f}\log\log|t|^{-1}+\text{continuous},
\end{displaymath}
where $\varrho_{f}$ is given in terms of the limiting Hodge structures by
\begin{displaymath}
    \varrho_{f}=\frac{\chi(X_{\infty})}{12}\beta^{n,0}-\sum_{p,q} (-1)^{p+q}p\beta^{p,q}.
\end{displaymath}
\end{proposition}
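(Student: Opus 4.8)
The plan is to rerun the proof of Theorem~\ref{thm:general-bcov}, this time retaining the subdominant terms that were absorbed into the error terms there. Both $t\mapsto\log\taubcov{X_{t}}$ and the invariants $\beta^{p,q}$ depend only on $f|_{\DBbb^{\times}}$, so after an embedded resolution of the model we may assume $X$ smooth and $X_{0}=\sum m_{i}D_{i}$ a normal crossings divisor, and fix an auxiliary K\"ahler form $\omega$ with rational cohomology class. With the trivializing section $\sigma'=\bigotimes_{p,q}(\sigma^{(p,q)})^{(-1)^{p+q}p}$ of $\lambda_{BCOV}(\log)$ from that proof, where $\sigma^{(p,q)}$ is a holomorphic frame of $\det R^{q}f_{\ast}\Omega^{p}_{X/\DBbb}(\log)$, one has $\log\taubcov{X_{t}}=\log h_{Q,BCOV}(\sigma',\sigma')-\log h_{L^{2},BCOV}(\sigma',\sigma')$, and it suffices to produce for each of the two terms an expansion $(\mathrm{const})\log|t|^{2}+(\mathrm{const})\log\log|t|^{-1}+\text{continuous}$ and to read off the $\log\log$ coefficients.

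The $L^{2}$-BCOV side is easy: rationality of the K\"ahler class makes $B(X/\DBbb,\omega)$ constant by Proposition~\ref{prop:rational-volume}, so $\log h_{L^{2},BCOV}(\sigma',\sigma')=\sum_{p,q}(-1)^{p+q}p\,\log h_{L^{2}}(\sigma^{(p,q)},\sigma^{(p,q)})$ up to a constant, and the \emph{full} expansion of Theorem~\ref{thm:expansion-hodge} applied to each factor (rather than the $O(\log\log)$ bound used for \eqref{eq:general-bcov-3}) shows its $\log\log$ coefficient is $\sum_{p,q}(-1)^{p+q}p\,\beta^{p,q}$. This accounts for the summand $-\sum_{p,q}(-1)^{p+q}p\,\beta^{p,q}$ of $\varrho_{f}$.

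For the Quillen-BCOV side I would exploit the cancellation implicit in Definition~\ref{deff:bcovinv}: from $h_{Q,BCOV}=A(X/\DBbb,\omega)\,\Tcal_{BCOV}(X_{t},\omega)\,h_{L^{2}}$ and $h_{L^{2},BCOV}=B(X/\DBbb,\omega)\,h_{L^{2}}$ the $L^{2}$ factors cancel, so $\log\taubcov{X_{t}}=\log A(X_{t},\omega)-\log B(X_{t},\omega)+\log\Tcal_{BCOV}(X_{t},\omega)$ with $\log B$ constant. Since $h^{n,0}=1$ for a Calabi--Yau manifold, $R^{0}f_{\ast}\Omega^{n}_{X/\DBbb}(\log)$ is a rank-one logarithmic extension; fix a generator $\eta$ with fibrewise $L^{2}$ norm $\|\eta_{t}\|_{L^{2}}$. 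As $\vol(X_{t},\omega)$ is constant and $\int_{X_{t}}c_{n}(X_{t},\omega)=\chi(X_{\infty})$, the defining formula for $A$ collapses to
\[
\log A(X_{t},\omega)=\frac{\chi(X_{\infty})}{12}\log\|\eta_{t}\|_{L^{2}}^{2}-\frac{1}{12}\int_{X_{t}}\log\!\left(\frac{i^{n^{2}}\eta_{t}\wedge\overline{\eta_{t}}}{\omega^{n}/n!}\right)c_{n}(X_{t},\omega).
\]
By Theorem~\ref{thm:expansion-hodge} with $(p,q)=(n,0)$, $\log\|\eta_{t}\|_{L^{2}}^{2}$ has $\log\log$ coefficient $\beta^{n,0}$, so the first term contributes $\tfrac{\chi(X_{\infty})}{12}\beta^{n,0}$; the integral contributes no $\log\log$ term, since $c_{n}(X_{t},\omega)$ represents a fixed class and, $\eta$ being a logarithmic section, the integrand grows at worst like $\log|t|$ along the components of $X_{0}$ --- this is the local analysis of that term already carried out in \cite{cdg}, and it only feeds $\kappa_{f}$ and a continuous remainder. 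Hence $\varrho_{f}$ comes out as claimed provided one knows that the $\log\log$ coefficient of the BCOV analytic torsion $\log\Tcal_{BCOV}(X_{t},\omega)$ equals $-\sum_{p,q}(-1)^{p+q}p\,\beta^{p,q}$ --- equivalently, that the Quillen metric $h_{Q}$ on $\lambda_{BCOV}(\log)$ has no $\log\log$ singularity at all.

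The hard part is exactly this last input: the estimate \eqref{eq:general-bcov-1} is only stated modulo $o(\log|t|)$, so what is needed is a genuine second-order expansion of the Quillen--BCOV metric. I expect it to follow by revisiting Yoshikawa's singularity theorem for Quillen metrics \cite{yoshikawa}, as used in \cite{cdg}, at the level of its $\log\log$ term, in the geometric normal crossings situation and compatibly with Steenbrink's limiting mixed Hodge structures, and then matching the resulting coefficient against the one forced by Theorem~\ref{thm:expansion-hodge}. Once the subdominant term of the BCOV torsion is pinned down, the remaining continuity and bookkeeping are routine.
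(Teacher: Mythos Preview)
Your approach is the paper's approach: decompose $\log\tau_{BCOV}$ as Quillen-BCOV minus $L^{2}$-BCOV on a trivialization of $\lambda_{BCOV}(\log)$, handle the $L^{2}$ side via Theorem~\ref{thm:expansion-hodge} (exactly as you do), and handle the Quillen side via a refined expansion. The only difference is bookkeeping: where you split $h_{Q,BCOV}=A\cdot\Tcal_{BCOV}\cdot h_{L^{2}}$ and analyse $\log A$ by hand, the paper simply invokes \cite[Prop.~4.2]{cdg} for the full second-order expansion of $h_{Q,BCOV}$ in one stroke.

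The ``hard part'' you flag at the end---a $\log\log$-level refinement of Yoshikawa's singularity theorem for the Quillen metric in the normal crossings setting---is not something that still needs to be done; it is precisely the content of \cite[Prop.~4.2]{cdg}, which the paper cites as a black box. That result already gives the Quillen-BCOV norm of a trivialization the form $(\text{const})\log|t|^{2}+\frac{\chi(X_{\infty})}{12}\beta^{n,0}\log\log|t|^{-1}+\text{continuous}$ (this is also what makes your separate analysis of the integral term in $\log A$ unnecessary, though your treatment of it is consistent with \emph{loc.~cit.}). Once you supply that citation in place of your last paragraph, your argument is complete and coincides with the paper's.
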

\begin{proof}
It is enough to combine the definition of the BCOV invariant (Definition \ref{deff:bcovinv}), together with \cite[Prop. 4.2]{cdg}, Theorem \ref{thm:general-bcov} and Theorem \ref{thm:expansion-hodge}.
\end{proof}

An immediate consequence of the proposition is the existence of the following limit:
\begin{equation}\label{eq:tau-bcov-lim}
    \tau_{BCOV,\ \lim}=\lim_{t\to 0}\frac{\taubcov{X_{t}}}{|t|^{2\kappa_{f}}(\log|t|^{-1})^{\varrho_{f}}}\in\RBbb_{>0}.
\end{equation}
This limit actually depends on the choice of coordinate $t$ on $\DBbb$. Under a change of coordinate $t\mapsto\lambda(t)$, the limit changes to $|\lambda'(0)|^{2\kappa_{f}}\cdot\tau_{BCOV,\ \lim}$. This can be restated by saying that $\tau_{BCOV,\ \lim}$ defines a hermitian metric on the $\QBbb$-complex line $(\omega_{\DBbb,0})^{\otimes\kappa_{f}}$. By construction, this metric only depends on the restriction of $f\colon X\to\DBbb$ to $\DBbb^{\times}$, but not on the special fiber $X_{0}$.

\begin{remark}
\begin{enumerate}
    \item As an application of Lemma \ref{lemma:beta-p-q}, the expression for $\varrho_{f}$ can equivalently be written as
\begin{displaymath}
    \varrho_{f}=\frac{\chi(X_{\infty})}{12}\beta^{n,0}+2\sum_{\substack{p+q<n\\ q< p}}(-1)^{p+q}(q-p)\beta^{p,q}+(-1)^{n}\sum_{\substack{p+q=n\\ q<p}}(q-p)\beta^{p,q}.
\end{displaymath}
    \item For maximally unipotent degenerations of Calabi--Yau varieties as above, the mirror map provides a (quasi-)canonical coordinate on $\DBbb$. This choice of coordinate induces a trivialization of $\omega_{\DBbb,0}$, so that $\tau_{BCOV,\ \lim}$ becomes an unambigously defined quantity. 
    \item For open Calabi--Yau manifolds with cyclindrical ends, Conlon--Mazzeo--Rochon \cite{CMR} have defined an avatar of the Quillen-BCOV metric. It would be interesting to explore the connection between their work and our limiting invariant $\tau_{BCOV,\ \lim}$. 
\end{enumerate}
\end{remark}

\subsection{Extensions of the holomorphic anomaly equation}
Let $f\colon X\to S$ be a flat projective morphism of compact connected complex manifolds, with $\dim S=1$. We let $S^{\times}\subseteq S$ be the locus of the regular values of $f$, which is necessarily Zariski open and non-empty. Write $\lbrace P_{1},\ldots, P_{r}\rbrace$ for the complement $S\setminus S^{\times}$. If the smooth fibers of $f$ are Calabi--Yau manifolds, then they have a well-defined BCOV invariant. By Theorem \ref{thm:general-bcov} the smooth function $s\mapsto\log\tau_{BCOV}(X_{s})$ on $S^{\times}$ extends to a locally integrable function on $S$. For every point $P_{i}$, the singularity of $\log\tau_{BCOV}(X_{s})$ is of logarithmic type, with an attached coefficient $\kappa_{f}(P_{i})$ defined as in \eqref{eq:sing-bcov}. In general, for a locally integrable differential form $\theta$ on $S$, we denote by $[\theta]$ the current of integration against $\theta$. Hence $[\log\tau_{BCOV}]$ is defined. Also the current extension $[dd^{c}\log\tau_{BCOV}]$ can be defined:

\begin{lemma}
The differential form $dd^{c}\log\tau_{BCOV}$ on $S^{\times}$ is locally integrable on the whole of $S$. Moreover, we have the equality of currents
\begin{equation}\label{eq:dirac-bcov}
    dd^{c}[\log\tau_{BCOV}]-[dd^{c}\log\tau_{BCOV}]=\sum_{i=1}^{r}\kappa_{f}(P_{i})\cdot \delta_{P_{i}}.
\end{equation}
\end{lemma}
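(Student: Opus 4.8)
The plan is to reduce everything to a local computation around each $P_{i}$. On $S^{\times}$ the function $\log\tau_{BCOV}$ is smooth and, by Proposition \ref{prop:ddc-log-B}, $dd^{c}\log\tau_{BCOV}$ equals a combination of Hodge and Weil--Petersson forms there, so away from the $P_{i}$ both sides of \eqref{eq:dirac-bcov} obviously agree. Thus fix $P=P_{i}$ and a holomorphic coordinate $t$ on a small disc $\DBbb$ centred at $P$ with $\DBbb^{\times}\subseteq S^{\times}$. Applying an embedded resolution of singularities to $X_{0}\hookrightarrow X$ (which changes neither $\tau_{BCOV}(X_{t})$ for $t\neq 0$ nor $\kappa_{f}(P)$), I may assume $X$ is smooth and $X_{0}$ is a normal crossings divisor. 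Then the logarithmic extension $\lambda_{BCOV}(\log)$ is a genuine line bundle on $\DBbb$, and I choose a holomorphic trivialization $\sigma'=\bigotimes_{p,q}(\sigma^{(p,q)})^{(-1)^{p+q}p}$, with $\sigma^{(p,q)}$ trivializing $\det R^{q}f_{\ast}\Omega^{p}_{X/\DBbb}(\log)$, so that $\log\tau_{BCOV}(X_{t})=\log h_{Q,BCOV}(\sigma',\sigma')-\log h_{L^{2},BCOV}(\sigma',\sigma')$ on $\DBbb^{\times}$.

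Next I would record the asymptotics of the two terms. For the Quillen--BCOV factor, Yoshikawa's theorem on the singularities of Quillen metrics \cite{yoshikawa}, in the form exploited in \cite{cdg}, gives an expansion $\log h_{Q,BCOV}(\sigma',\sigma')=a_{Q}\log|t|^{2}+v_{Q}(t)$ on $\DBbb^{\times}$, where $v_{Q}$ is a finite sum of a real-analytic function of $t$, constant multiples of $\log\log|t|^{-1}$, and a remainder that is continuous up to $0$ with $dd^{c}$ locally integrable and no boundary defect (the singularity is ``of logarithmic type''). For the $L^{2}$--BCOV factor, Theorem \ref{thm:expansion-hodge} gives $h_{L^{2}}(\sigma^{(p,q)},\sigma^{(p,q)})=|t|^{2\alpha^{p,q}}g_{p,q}(t)\sum_{j}c_{j}(\log|t|^{-1})^{\beta^{p,q}-j}$ with $g_{p,q}$ real-analytic and non-vanishing; since the integral $L^{2}$-covolumes are constant for a rational K\"ahler class (Proposition \ref{prop:rational-volume}), this yields $\log h_{L^{2},BCOV}(\sigma',\sigma')=a_{L^{2}}\log|t|^{2}+v_{L^{2}}(t)$ with $v_{L^{2}}$ of the same tame type. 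Hence $\log\tau_{BCOV}(X_{t})=(a_{Q}-a_{L^{2}})\log|t|^{2}+v(t)$ with $v=v_{Q}-v_{L^{2}}$ tame, and comparing with \eqref{eq:sing-bcov} and Proposition \ref{prop:loglogbcov} identifies $a_{Q}-a_{L^{2}}=\kappa_{f}(P)$ (and $v=\varrho_{f}(P)\log\log|t|^{-1}+\text{continuous}$).

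It then remains to compute currents. For $\phi=a\log|t|^{2}+v$ with $v$ tame as above, $dd^{c}v$ is locally integrable on $\DBbb$ and $dd^{c}[v]=[dd^{c}v]$: a real-analytic function is smooth; for $\psi=\log\log|t|^{-1}$ one computes $dd^{c}\psi=\frac{1}{2\pi}\,\frac{i\,dt\wedge d\bar t}{2|t|^{2}(\log|t|)^{2}}$ on $\DBbb^{\times}$, which is positive and locally integrable, with $\lim_{\ep\to 0}\int_{\partial\DBbb_{\ep}}d^{c}\psi=\lim_{\ep\to 0}(-1/\log\ep)=0$, so $\psi$ carries no Dirac mass; and the remainder contributes neither a failure of integrability nor a boundary defect by its defining control. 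Combining this with the Poincar\'e--Lelong formula $dd^{c}\log|t|^{2}=\delta_{P}$ gives, near $P$, that $dd^{c}\log\tau_{BCOV}$ is locally integrable and $dd^{c}[\log\tau_{BCOV}]-[dd^{c}\log\tau_{BCOV}]=\kappa_{f}(P)\,\delta_{P}$. Summing over $P_{1},\dots,P_{r}$ and using smoothness on $S^{\times}$ (where, by Proposition \ref{prop:ddc-log-B}, $dd^{c}\log\tau_{BCOV}$ is the alternating sum of Hodge forms minus $\frac{\chi}{12}\omega_{WP}$, whose local integrability near each $P_{i}$ also follows from the expansion in Theorem \ref{thm:expansion-hodge}) establishes both assertions.

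The main obstacle is making precise the ``tameness'' of the lower-order terms in the two expansions and verifying that none of them contributes a Dirac mass at $P$ --- that is, that only the $\log|t|^{2}$ term does. This requires using the \emph{full} asymptotic expansions of the Quillen and $L^{2}$ metrics (Theorem \ref{thm:expansion-hodge} together with the results behind \cite{cdg}), not just their leading coefficients: the key point is that each remaining term is ``good'' in a Mumford-type sense, i.e.\ has vanishing boundary defect $\lim_{\ep\to 0}\int_{\partial\DBbb_{\ep}}d^{c}(\cdot)=0$, so that passing from the pointwise $dd^{c}$ on $\DBbb^{\times}$ to the distributional $dd^{c}$ on $\DBbb$ produces no extra current supported at $P$.
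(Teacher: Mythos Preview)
Your proposal is correct and follows essentially the same route as the paper: localize, split $\log\tau_{BCOV}$ into the Quillen-BCOV and $L^{2}$-BCOV pieces, invoke the asymptotic expansions (\cite[Prop.~4.2]{cdg} for the Quillen side, Theorem~\ref{thm:expansion-hodge} for the $L^{2}$ side) to isolate a $\kappa_{f}(P)\log|t|^{2}$ term plus a remainder whose $d$ and $dd^{c}$ are controlled, and then run the standard Stokes/Poincar\'e--Lelong computation. One small slip: $dd^{c}\log\log|t|^{-1}$ is actually \emph{negative} (with the convention $dd^{c}=\frac{i}{2\pi}\partial\bar\partial$ one gets $-\frac{i\,dt\wedge d\bar t}{2\pi|t|^{2}(\log|t|^{2})^{2}}$), not positive as you wrote --- but only local integrability and the vanishing of the boundary integral matter, and those stand.
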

\begin{proof}
For the proof one needs a complement to the analysis of the subdominant term in Proposition \ref{prop:loglogbcov}, in order to include its $d$ and $dd^{c}$ derivatives. On the one hand, we need a control on the remainder of the asymptotics of the Quillen-BCOV metrics \eqref{eq:general-bcov-1}. This was addressed in \cite[Prop. 4.2]{cdg}: the remainder, together with its $d$ and $dd^c$ derivatives, is modeled on $\log\log|t|^{-1}$ and its $d$ and $dd^{c}$ derivatives. On the other hand, we have a similar property for the $L^{2}$ metrics on Hodge bundles for a choice of fiberwise rational K\"ahler structure, by Theorem \ref{thm:expansion-hodge}. One easily concludes from these facts that $dd^{c}\log\tau_{BCOV}$ has at worst Poincar\'e growth, and is in particular locally integrable. Also \eqref{eq:dirac-bcov} follows from the indicated behaviour of the remainder term and by Theorem \ref{thm:general-bcov}, by a standard evaluation of the current $dd^{c}[\log\tau_{BCOV}]-[dd^{c}\log\tau_{BCOV}]$ on test functions.
\end{proof}

Let us now choose a K\"ahler structure on $f$, fiberwise rational on the smooth locus. We let $\omega_{H^{k}}$ be the corresponding Hodge forms, defined on $S^{\times}$ by \eqref{eq:def-hodge-form-1}--\eqref{eq:def-hodge-form-2}. Denote as before $\omega_{WP}$ the Weil--Petersson form on $S^{\times}$. By Theorem \ref{thm:expansion-hodge}, the differential forms $\omega_{H^{k}}$ and $\omega_{WP}$ are locally integrable and have at most Poincar\'e growth singularities on $S$. Hence they define currents $[\omega_{H^{k}}]$ and $[\omega_{WP}]$ by integration. These are all closed semi-positive currents, and in particular they have well-defined (current) cohomology classes on $S$ denoted $\{\omega_{H^{k}}\}$ and $\{\omega_{WP}\}$ for these cohomology classes. The cohomology classes of $\delta_{P_{i}}$ are identified to the cycle cohomology classes of the points $P_{i}$, denoted $\{P_{i}\}$.

\begin{proposition}
We have an equality in $H^{1,1}(S)$
\begin{equation*}
    \frac{\chi}{12}\{\omega_{WP}\}-\sum_{k=1}^{2n}(-1)^{k}\{\omega_{H^{k}}\}=\sum_{i}\kappa_{f}(P_{i})\{P_{i}\},
\end{equation*}
where $\chi$ is the topological Euler characteristic of a general smooth fiber of $f$. Consequently, there is a relation
\begin{displaymath}
    \frac{\chi}{12}\int_{S}\omega_{WP}-\sum_{k=1}^{2n}(-1)^{k}\int_{S}\omega_{H^{k}}=\sum_{i=1}^{r}\kappa_{f}(P_{i}).
\end{displaymath}
\end{proposition}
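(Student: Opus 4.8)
The plan is to obtain the cohomological identity by taking cohomology classes of currents in the preceding lemma, equation \eqref{eq:dirac-bcov}, and then to integrate. First I would recall that, by Proposition \ref{prop:ddc-log-B} applied to the restriction $f\colon X^\times\to S^\times$ and the chosen fiberwise rational K\"ahler structure, one has on $S^\times$ the identity of smooth forms
\begin{displaymath}
    dd^{c}\log\tau_{BCOV}=\sum_{k=0}^{2n}(-1)^{k}\omega_{H^{k}}-\frac{\chi}{12}\omega_{WP},
\end{displaymath}
where the $k=0$ term vanishes since $R^{0}f_{\ast}\Omega^{\bullet}_{X/S}$ is the trivial line bundle, so the sum may equally be taken from $k=1$ to $2n$. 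By Theorem \ref{thm:expansion-hodge} (and the boundedness of the Weil--Petersson form, which is a summand of one of the Hodge forms) all the forms $\omega_{H^{k}}$ and $\omega_{WP}$ extend to closed, locally integrable $(1,1)$-forms on $S$ with at most Poincar\'e growth singularities at the $P_i$, and hence define closed currents $[\omega_{H^{k}}]$, $[\omega_{WP}]$ with well-defined cohomology classes $\{\omega_{H^{k}}\}$, $\{\omega_{WP}\}\in H^{1,1}(S)$; likewise $[dd^{c}\log\tau_{BCOV}]$ is a well-defined current, and by the displayed identity its class equals $\sum_{k=1}^{2n}(-1)^{k}\{\omega_{H^{k}}\}-\frac{\chi}{12}\{\omega_{WP}\}$.

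Next I would take cohomology classes in the current equation \eqref{eq:dirac-bcov}. The current $dd^{c}[\log\tau_{BCOV}]$ is, by definition, $dd^{c}$ of a global distribution on the compact Riemann surface $S$ (recall $\log\tau_{BCOV}$ is locally integrable on $S$), hence it is an exact current and its de Rham cohomology class vanishes. The class of $\delta_{P_i}$ is the cycle class $\{P_i\}$. Therefore passing to classes in \eqref{eq:dirac-bcov} yields
\begin{displaymath}
    0-\left(\sum_{k=1}^{2n}(-1)^{k}\{\omega_{H^{k}}\}-\frac{\chi}{12}\{\omega_{WP}\}\right)=\sum_{i}\kappa_{f}(P_{i})\{P_{i}\},
\end{displaymath}
which is precisely the asserted equality in $H^{1,1}(S)$.

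Finally, for the numerical relation I would integrate over $S$: since $S$ is a compact Riemann surface, $\int_{S}\{P_i\}=1$ for each $i$, and integration of the closed currents against the fundamental class recovers $\int_{S}\omega_{WP}$ and $\int_{S}\omega_{H^{k}}$ (these are finite by the local integrability just invoked). This gives
\begin{displaymath}
    \frac{\chi}{12}\int_{S}\omega_{WP}-\sum_{k=1}^{2n}(-1)^{k}\int_{S}\omega_{H^{k}}=\sum_{i=1}^{r}\kappa_{f}(P_{i}),
\end{displaymath}
as claimed. The only point requiring care — and the step I would expect to be the main obstacle — is the justification that the currents $[dd^{c}\log\tau_{BCOV}]$, $[\omega_{H^{k}}]$ and $[\omega_{WP}]$ genuinely represent the cohomology classes entering the computation and that the identity of smooth forms on $S^\times$ descends to an identity of these extended currents on $S$; this is exactly what the Poincar\'e-growth estimates of Theorem \ref{thm:expansion-hodge} together with \cite[Prop. 4.2]{cdg}, already used in the proof of the preceding lemma, are designed to provide, so no new analytic input is needed beyond organizing these facts.
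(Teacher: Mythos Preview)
Your proposal is correct and follows essentially the same route as the paper: combine the current equation \eqref{eq:dirac-bcov} with the holomorphic anomaly equation of Proposition \ref{prop:ddc-log-B}, pass to cohomology classes using that $dd^{c}[\log\tau_{BCOV}]$ is exact, and then integrate over $S$. The paper's proof is more terse, but the logical structure and the analytic inputs you identify (Poincar\'e-growth estimates from Theorem \ref{thm:expansion-hodge} and \cite[Prop.~4.2]{cdg}) are exactly the ones intended.
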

\begin{proof}
The first equation is a combination of \eqref{eq:dirac-bcov} and the differential equation in Proposition \ref{prop:ddc-log-B}, applied over $S^{\times}$ and extended to the level of currents by integration. The second claim is obtained by integration over $S$.
\end{proof}

\section{Asymptotics for special geometries }\label{sec:specialgeom}

The general discussion in the previous section can be bolstered if further assumptions on the degeneration $ \colon X \to S$ are imposed. We initially treat the cases of  semi-stable minimal (or Kulikov) degenerations (cf. Proposition \ref{cor:liuxiahigherdim}) as well as for ordinary double point singularities (cf. Theorem \ref{thm:bcov-odp-dim-n}). For the small dimensions 3 and 4, we can give refined statements for general degenerations (cf. Theorem \ref{thm:LiuXia-general} and Theorem \ref{thm:LiuXia-general4}). As a corollary we will then deduce the conjecture of Liu-Xia of \cite[Conj. 0.5]{LiuXia}.  We conclude the article with some algebraic geometric applications of our results, summarized here as constraints to the existence of particular degenerations of Calabi--Yau varieties. 

\subsection{Kulikov degenerations}
In this subsection we consider a semi-stable germ of a projective degeneration  $f: X \to  \DBbb$  of smooth algebraic varieties. Furthermore, we suppose that $f$ is Kulikov,  \emph{i.e.} $B=0$. We write $X_{0}=\sum_{i}D_{i}$.
\begin{proposition}\label{cor:liuxiahigherdim}
With the above assumptions and notations, we have
\begin{equation}\label{eq:kappa-kulikov}
    \kappa_{f}=\sum_{k=1}^{n+1}(-1)^{k}\frac{k(k-1)}{24}\chi(D(k)).
\end{equation}
\end{proposition}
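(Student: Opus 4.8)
The plan is to specialize the general formula for $\kappa_f$ in Theorem \ref{thm:general-bcov}, equation \eqref{eq:expression-kappa}, to the Kulikov semi-stable setting. The key simplifications all come from the two standing hypotheses. First, since $f$ is Kulikov we have $B=0$, which immediately kills the term $-\frac{(-1)^n}{12}\int_B c_n(\Omega_X)$. Second, and most importantly, since $f$ is semi-stable all monodromy operators are unipotent, so $T_s$ is trivial on every $H^k(X_\infty)$; hence $\alpha = 0$ and all elementary exponents $\alpha^{p,q}$ vanish (by Corollary \ref{cor:eigenvalues}, the $\exp(-2\pi i\alpha^{p,q}_j)$ are eigenvalues of $T_s$, so they are all $1$ and the $\alpha^{p,q}_j$, lying in $[0,1)$, are all $0$). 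This removes the entire last line $-\frac{\alpha}{12}\chi(X_\infty)-\sum(-1)^{p+q}p\alpha^{p,q}$ of \eqref{eq:expression-kappa}.

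Next I would handle the term $\frac{3n+1}{12}(\chi(X_\infty)-\chi(X_0))$. Because $f$ is semi-stable, the resolution sequence \eqref{eq:resolutionnormalcrossing} gives $\chi(\Ocal_{X_0}) = -\sum_{k\geq 1}(-1)^k\chi(\Ocal_{D(k)})$, but what is actually needed here is the topological relation \eqref{eq:chi-X0}, namely $\chi(X_0) = -\sum_{k=1}^{n+1}(-1)^k\chi(D(k))$, which holds for any normal crossings fiber (spectral sequence / inclusion-exclusion for the stratification). The smooth fibers are Calabi--Yau $n$-folds, and I should check whether $\chi(X_\infty)$ contributes: in the semi-stable Kulikov case one does \emph{not} expect $\chi(X_\infty)=0$ in general, so the plan is instead to absorb $\chi(X_\infty)$ by writing it via $X_\infty \simeq_{\text{diff}}$ a smooth fiber and noting that the whole combination must reorganize. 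Actually the cleaner route: substitute \eqref{eq:chi-X0} for $\chi(X_0)$ directly into \eqref{eq:expression-kappa}, so that the first line becomes $\frac{3n+1}{12}\chi(X_\infty) + \sum_{k=1}^{n+1}(-1)^k\left(\frac{3n+1}{12} + \frac{(k-1)(3k+6n+2)}{24}\right)\chi(D(k))$, leaving only $\chi(X_\infty)$ and $\chi(D(k))$ terms plus the Chern-number term $-\frac{(-1)^n}{12}\sum_k \int_{D(k)}c_1(\Omega_{D(k)})c_{n-k}(\Omega_{D(k)})$.

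The main obstacle, then, is the bookkeeping that makes the $\chi(X_\infty)$ term and the Chern-number terms disappear and turns the coefficient of $\chi(D(k))$ into exactly $\frac{k(k-1)}{24}$. For the $\chi(X_\infty)$ cancellation one uses that $X_\infty$ is Calabi--Yau, so $c_1(\Omega_{X_\infty})=0$; combined with the adjunction relations among the $D_i$ (in the semi-stable case $\sum_i D_i = X_0 \sim 0$, so $\Ocal_X(X_0)|_{D_i}$ relates $c_1(\Omega_{D_i})$ to the other components), one gets identities expressing $\int_{D(k)}c_1(\Omega_{D(k)})c_{n-k}(\Omega_{D(k)})$ and the leftover $\chi$ contributions in terms of the $\chi(D(k))$. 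Concretely I would run the same kind of computation as in the proof of Proposition \ref{prop:mu-bcov}, using \cite[Lemma 4.6]{cdg} (our equations \eqref{eq:mu-bcov-5-bis}, \eqref{eq:mu-bcov-16}, \eqref{eq:mu-bcov-7}), together with the Kulikov/semi-stable adjunction to convert $c_1(\Omega_{D(k)})c_{n-k}(\Omega_{D(k)})$-integrals into Euler characteristics of the $D(k)$ and $D(k+1)$; the pairwise cancellations should telescope so that only $\sum_{k=1}^{n+1}(-1)^k\frac{k(k-1)}{24}\chi(D(k))$ survives. This is a finite, if somewhat intricate, algebraic identity in the formal symbols $\chi(D(k))$ and the top-Chern integrals, and verifying it carefully is the real content of the proof; everything else is substitution.
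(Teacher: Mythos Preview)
Your overall strategy matches the paper's: specialize \eqref{eq:expression-kappa} using $B=0$ and $\alpha=\alpha^{p,q}=0$, then convert the Chern integrals $\int_{D(k)}c_1(\Omega_{D(k)})c_{n-k}(\Omega_{D(k)})$ into $\chi(D(j))$'s via the log--Calabi--Yau adjunction on the strata and a telescoping recursion. That part is correct in outline.

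There is, however, a genuine gap in how you dispose of $\chi(X_\infty)$. You propose to cancel it ``using that $X_\infty$ is Calabi--Yau, so $c_1(\Omega_{X_\infty})=0$''. That condition was already consumed in the derivation of \eqref{eq:expression-kappa} (via \eqref{eq:mu-bcov-7} in the proof of Proposition~\ref{prop:mu-bcov}); it gives you nothing further here. The adjunction recursion on the $D(k)$ only produces terms in the $\chi(D(j))$; it never manufactures a $\chi(X_\infty)$ to cancel the leftover $\frac{3n+1}{12}\chi(X_\infty)$. What is actually needed is the semi-stable Euler characteristic identity
\[
\chi(X_\infty)=\sum_{k=1}^{n+1}(-1)^{k+1}k\,\chi(D(k)),
\]
obtained e.g.\ from the constancy of the degree of $c_n(\Omega_{X/\DBbb}(\log))$ along fibers together with the residue sequence (cf.\ \eqref{eq:euler-saito} and \cite[Lemma~1.4]{SaitoTakeshiBored}). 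Combined with \eqref{eq:chi-X0} this yields $\chi(X_\infty)-\chi(X_0)=\sum_{k}(-1)^{k+1}(k-1)\chi(D(k))$, and substituting this into \eqref{eq:expression-kappa} gives the intermediate form \eqref{eq:kappa-kulikov-I} with coefficient $\frac{k(k-1)}{8}$. Only \emph{then} does the log--CY recursion
\[
\int_{D(k)}c_1(\Omega_{D(k)})c_{n-k}(\Omega_{D(k)})=(-1)^{n-k+1}(k+1)\chi(D(k+1))+\int_{D(k+1)}c_1(\Omega_{D(k+1)})c_{n-k-1}(\Omega_{D(k+1)})
\]
finish the job, bringing $\frac{k(k-1)}{8}$ down to $\frac{k(k-1)}{24}$. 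Your reference to the identities \eqref{eq:mu-bcov-5-bis}--\eqref{eq:mu-bcov-7} is misplaced: those belong to the proof of Proposition~\ref{prop:mu-bcov}, whereas here the relevant tool is the adjunction recursion above, which uses the Kulikov hypothesis through $K_{D_I}+B_I=0$.
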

\begin{proof}
First, from Theorem \ref{thm:general-bcov} it easily follows the intermediate expression 
\begin{equation}\label{eq:kappa-kulikov-I}
    \kappa_{f}=\sum_{k=1}^{n+1}(-1)^{k}\frac{k(k-1)}{8}\chi(D(k))
       -\sum_{k=1}^{n}\frac{(-1)^{n}}{12}\int_{D(k)}c_{1}(\Omega_{D(k)})c_{n-k}(\Omega_{D(k)}).
\end{equation}
Indeed, the semi-stable case, the monodromy is unipotent, and therefore $\alpha=\alpha^{p,q}=0$ for all $p,q$. A standard computation, see \emph{e.g.} \cite[Lemma 1.4]{SaitoTakeshiBored}, shows that if $D_i^\circ = D_i \setminus  \bigcup_{j \neq i} D_{ij}$, then
\begin{equation}\label{eq:euler-saito}
    \chi(X_\infty) = \sum \chi(D_i^\circ) =  \sum_{k=1}^{n+1}(-1)^{k+1}k\chi(D(k)).
\end{equation}
Combining with equation \eqref{eq:chi-X0}, we derive
\begin{displaymath}
    \chi(X_{\infty})-\chi(X_{0})=\sum_{k=1}^{n+1}(-1)^{k+1}(k-1)\chi(D(k)).
\end{displaymath}
Now it is enough to plug this relation in the general expression for $\kappa_{f}$ \eqref{eq:expression-kappa}, together with the vanishing of $B$ and $\alpha$, $\alpha^{p,q}$, to conclude with \eqref{eq:kappa-kulikov-I}.

We next proceed to simplify the contribution of the integrals in \eqref{eq:kappa-kulikov-I}. For this, we establish a recursion relating the integral over $D(k)$ to the integral over $D(k+1)$. Enumerate the components of $X_{0}$ as $D_{1},\ldots, D_{r}$. Let $I\subseteq \lbrace 1,\ldots,r\rbrace$ be a multi-index subset of order $k$. Accordingly, define $D_{I}=\cap_{i\in I}D_{i}$ and $B_{I}=D_{I}\cap(\cup_{j\not\in I} D_{j})$. The Kulikov assumption, together with the triviality (as a Cartier divisor) of $X_{0}=\sum_{i} D_{i}$ and the adjunction formula guarantee that the pairs $(D_{I},B_{I})$ are $\log$-Calabi--Yau, \emph{i.e.} $K_{D_{I}}+B_{I}=0$. Combining this with the conormal exact sequence for the inclusions $D_{I\cup\lbrace j\rbrace}\hookrightarrow D_{I}$  $(j\not\in I)$, we obtain the equalities
\begin{eqnarray*}
    \int_{D(k)}c_{1}(\Omega_{D(k)})c_{n-k}(\Omega_{D(k)}) &=&-\sum_{|I|=k} \int_{D_I}[B_I] \cap c_{n-k}(\Omega_{D(k)}) \\ &=& -(k+1)\int_{D(k+1)} c_{n-k}(\Omega_{D(k+1)}) + \sum_{\substack{|I|=k\\ j\not\in I}} \int_{D_{I\cup\lbrace j\rbrace}} [D_j]_{\mid_{D_{I\cup \{j\}}}} c_{n-k-1}(\Omega_{D_{I \cup \{ j\} })}) \\ &=& (-1)^{n-k+1} (k+1) \chi(D(k+1))+ \int_{D(k+1)}c_{1}(\Omega_{D(k+1)})c_{n-k-1}(\Omega_{D(k+1)}).
\end{eqnarray*}
In the last equality we used that $\int_{D_I} c_{n-k}(\Omega_{D_I}) = (-1)^{n-k}\chi(D_I)$ and $K_{D_I} = -B_I = \sum_{i \in I} [D_i]|_{D_I}$. The result follows by applying this recursion to \eqref{eq:kappa-kulikov-I}.
\end{proof}
\begin{remark}
In the case of K3 surfaces, an application of adjunction shows that the right hand side of \eqref{eq:kappa-kulikov} vanishes. This is in agreement with the constancy of the BCOV invariant established in Proposition \ref{prop:ken-ichi}, in fact equal to 1 (cf. Remark \ref{rem:bcovk3ab}), which also implies the vanishing of $\kappa_{f}$.
\end{remark}

\subsection{Ordinary double point singularities}
In this subsection, let $f\colon X\to\DBbb$ be a germ of a projective degeneration between smooth algebraic varieties, with general Calabi--Yau fibers of dimension $n$. Suppose that $X_{0}$ admits at most ordinary double point singularities. 
\begin{theorem}\label{thm:bcov-odp-dim-n}
With the above assumptions and notations, we have
\begin{displaymath}
    \kappa_{f}=\frac{n+1}{24}\#\sing(X_{0})\quad\text{ and } \quad \varrho_f = \#\sing(X_{0}) \quad\text{if } n \text{ is odd},
\end{displaymath}
or
\begin{displaymath}
    \kappa_{f}=-\frac{n-2}{24}\#\sing(X_{0}) \quad\text{ and } \quad \varrho_f = 0\quad\text{if } n \text{ is even},
\end{displaymath}
where $\#\sing(X_{0})$ denotes the number of singular points in the fiber $X_{0}$.
\end{theorem}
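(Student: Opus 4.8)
The plan is to reduce the ordinary double point case to the general normal crossings formula of Theorem \ref{thm:general-bcov} via the explicit semi-stable resolution described at the start of \textsection\ref{sec:hodgeodp}, and then to compute the subdominant coefficient $\varrho_f$ using Proposition \ref{prop:loglogbcov} together with Proposition \ref{prop:ODP}. Throughout I may assume there is a single ordinary double point, since both $\kappa_f$ and $\varrho_f$ are sums of local contributions (this localization follows from the current equation \eqref{eq:dirac-bcov} and the fact that the resolution is supported near the singular points); the general case follows by additivity.

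First I would set up the geometry. Let $\nu\colon\widetilde{X}\to X$ be the blow-up of the singular points $p_1,\dots,p_r$; then $g\colon\widetilde X\to\DBbb$ has special fiber $\widetilde X_0 = Z + 2\sum_i E_i$ with $E_i\simeq\PBbb^n$, $W_i := Z\cap E_i$ a smooth quadric in $\PBbb^n$, and $Z$ the strict transform of $X_0$. This is not yet semi-stable (the $E_i$ have multiplicity $2$), so I would either perform the further base change $\rho(t)=t^2$ and normalize (the $V$-manifold semi-stable reduction of Remark \ref{rmk:V-manifolds}, with $\ell=2$), or apply Theorem \ref{thm:general-bcov} directly to $g$, which is already a normal crossings model. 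The cleaner route is to apply Theorem \ref{thm:general-bcov} to $\widetilde X\to\DBbb$: here $D(1)=Z\sqcup\bigsqcup_i E_i$, $D(2)=\bigsqcup_i W_i$, and $D(k)=\emptyset$ for $k\geq 3$. So I need the Euler characteristics $\chi(Z)$, $\chi(E_i)=\chi(\PBbb^n)=n+1$, $\chi(W_i)$ (Euler characteristic of a smooth $(n-1)$-dimensional quadric), and $\chi(X_\infty)$, $\chi(X_0)$; these are related by $\chi(X_\infty)-\chi(X_0) = (-1)^n\deg c_{n+1}^{X_0}(\Omega_{X/\DBbb})= \#\sing(X_0)$ via \eqref{eq:classeloc} and \cite[Ex. 14.1.5(d)]{Fulton}. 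I also need the divisor $B=\Div\,\mathbf{ev}$ for $g\colon\widetilde X\to\DBbb$: since blowing up a node in dimension $n$ produces an exceptional $\PBbb^n$ appearing with a specific discrepancy, $B = (n-1)\sum_i E_i$ or similar, and $\int_B c_n(\Omega_{\widetilde X})$ reduces to $\deg c_n$ restricted to the exceptional divisors, which I can compute from the normal bundle of $E_i$. Finally the monodromy contributions $\alpha,\alpha^{p,q}$ are governed by Picard--Lefschetz: for $n$ odd the monodromy on all $H^k(X_\infty)$ is unipotent so $\alpha=\alpha^{p,q}=0$; for $n$ even the only nontrivial eigenvalue is $-1$ with multiplicity one, concentrated (by Proposition \ref{prop:ODP}) in bidegree $(n/2,n/2)$, so $\alpha^{n/2,n/2}=1/2$, $\alpha=0$, and all other $\alpha^{p,q}=0$.

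For $\varrho_f$ I would invoke Proposition \ref{prop:loglogbcov}: $\varrho_f = \frac{\chi(X_\infty)}{12}\beta^{n,0}-\sum_{p,q}(-1)^{p+q}p\,\beta^{p,q}$. Since the singular fibers have only ODPs, the weight filtration on $H^k(X_\infty)$ is pure of weight $k$ for $k\neq n$, and for $k=n$ the limiting MHS has $\Gr^W_{n-1}$ and $\Gr^W_{n+1}$ each of dimension equal to $\#\sing(X_0)$ when $n$ is even (vanishing cycle contribution, $N\neq 0$), while for $n$ odd it is pure of weight $n$ (so $N=0$ on it as well, after accounting for the sign of the intersection form) — here I need to be careful: for $n$ odd the vanishing cycle has odd self-intersection behaviour and the monodromy is unipotent of the form $T=1+N$ with $N\neq 0$, giving $\beta^{p,q}\neq 0$ only for $(p,q)$ with $p+q=n$; for $n$ even the monodromy is semi-simple ($T=-1$ on the vanishing cycle), so $N=0$ there and hence $\beta^{p,q}=0$ for all $p,q$ (the weight filtration is pure in each degree, so $\Gr^W_{k+r}H^k=0$ for $r\neq 0$), immediately yielding $\varrho_f=0$. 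For $n$ odd I would compute $\beta^{n/2\text{-type}}$ explicitly: the new cohomology has $\Gr^W_{n-1}$ and $\Gr^W_{n+1}$ of dimension $\#\sing(X_0)$, sitting in a single Hodge bidegree $\Gr^{(n-1)/2}_{F_\infty}$ (by the description of the vanishing cycle Hodge structure for odd-dimensional ODPs), contributing $\beta^{(n-1)/2,(n+1)/2} = +\#\sing(X_0)$ and $\beta^{(n+1)/2,(n-1)/2}=-\#\sing(X_0)$; $\beta^{n,0}=0$. Plugging into the formula, $\varrho_f = -\big[(-1)^n\big(\tfrac{n-1}{2}\beta^{(n-1)/2,(n+1)/2}+\tfrac{n+1}{2}\beta^{(n+1)/2,(n-1)/2}\big)\big] = \#\sing(X_0)$ after the arithmetic.

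The main obstacle I anticipate is two-fold. First, correctly bookkeeping the Chern-class integrals $\int_B c_n(\Omega_{\widetilde X})$ and $\int_{D(k)}c_1(\Omega_{D(k)})c_{n-k}(\Omega_{D(k)})$ over the exceptional $\PBbb^n$ and the quadrics $W_i$: this requires the normal bundles ($N_{E_i/\widetilde X}=\Ocal_{\PBbb^n}(-2)$, and $W_i\subset E_i$ is a degree-$2$ hypersurface) and some patience with the adjunction/Euler sequences, and getting the discrepancy in $B$ exactly right is where sign or coefficient errors are most likely. I would cross-check the final answer against the known case $n=3$ of Fang--Lu--Yoshikawa (where $\kappa_f = \#\sing(X_0)/6$, consistent with $\tfrac{n+1}{24}=\tfrac{1}{6}$) and $n=4$ against Klemm--Pandharipande (giving $-\tfrac{2}{24}=-\tfrac{1}{12}$ per node). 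Second, pinning down the precise Hodge-theoretic structure of the vanishing cycles for odd $n$ — in particular that the new part of $\Gr^W$ lives in a single $F_\infty$-graded piece — which I would extract from the standard description of the limiting MHS of a node (e.g. via the Milnor fiber being homotopy equivalent to a sphere $S^n$, whose cohomology is Tate-type of weight adjusted by $N$), or alternatively cite \cite{wang} as is done in the proof of Proposition \ref{prop:ODP}.
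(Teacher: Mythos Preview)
Your approach is essentially the paper's: blow up the ODPs to obtain the normal crossings model $\widetilde X\to\DBbb$ with $\widetilde X_0 = Z + 2\sum_i E_i$, apply Theorem~\ref{thm:general-bcov} to this model, and handle $\varrho_f$ via Proposition~\ref{prop:loglogbcov} and the Picard--Lefschetz description of the limiting mixed Hodge structure. The paper organizes the Chern-class integrals through two preparatory lemmas (Lemma~\ref{lemma:ints-odp-1} for hypersurfaces in $\PBbb^n$ and Lemma~\ref{lemma:ints-odp-2} for $\int_Z c_1(\Omega_Z)c_{n-1}(\Omega_Z)$), which is exactly the bookkeeping you anticipate as the main obstacle.

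Two of your concrete guesses are off and would derail the computation. First, the divisor $B$ of the evaluation map for $\widetilde f$ is $nE$, not $(n-1)E$: since $K_X$ is trivial (the original $f$ is Kulikov) and $\nu$ blows up smooth points of the $(n+1)$-fold $X$, one has $K_{\widetilde X}=\Ocal(nE)$, whence $B=nE$. Second, $N_{E_i/\widetilde X}=\Ocal_{E_i}(-1)$, not $\Ocal(-2)$: the $p_i$ are smooth points of $X$ (they are only singular in the fiber), so this is the standard blow-up of a point. The paper uses $\Ocal(E)\mid_E=\Ocal_E(-1)$ repeatedly, e.g.\ to get $K_Z=\Ocal_Z((n-2)W)$ and to evaluate $\int_{nE}c_n(\Omega_{\widetilde X})$.

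For $\varrho_f$ in the odd case your sign assignment is backwards: the weight-$(n+1)$ piece sits in $\Gr_{F_\infty}^{(n+1)/2}$ (Hodge type $((n{+}1)/2,(n{+}1)/2)$) and contributes $r=+1$, while $\Gr^W_{n-1}$ sits in $\Gr_{F_\infty}^{(n-1)/2}$ with $r=-1$; so $\beta^{(n+1)/2,(n-1)/2}=+\rho$ and $\beta^{(n-1)/2,(n+1)/2}=-\rho$ (where $\rho$ is the rank of $N$). With your signs the arithmetic actually gives $-\#\sing(X_0)$, not $+\#\sing(X_0)$. With the correct signs one gets $\varrho_f=-(-1)^n\rho=\rho$. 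The paper dispatches all of this by citing \cite[Example~2.15]{Steenbrink-mixedonvanishing} for the limiting MHS of an ODP rather than reproducing the computation.
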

For later use we record the following lemma, which follows from the conormal exact sequence for the cotangent bundle, and the Euler sequence on $\PBbb^n_{\CBbb}$ restricted to $W$:
\begin{lemma}\label{lemma:ints-odp-1}
Let $W$ be an irreducible degree $d$ smooth hypersurface in $\PBbb^{n}_{\CBbb}$. Then
\begin{displaymath}
    \int_{W}c_{1}(\Ocal_{W}(1))c_{n-2}(\Omega_{W})=\frac{(-1)^{n-1}}{d}\chi(W)+(-1)^{n}\frac{n(n+1)}{2}.
\end{displaymath}
\end{lemma}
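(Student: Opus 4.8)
\textbf{Proof proposal for Lemma \ref{lemma:ints-odp-1}.}

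The plan is to compute the intersection number $\int_W c_1(\Ocal_W(1))\,c_{n-2}(\Omega_W)$ by reducing the Chern classes of $\Omega_W$ to Chern classes of restrictions of line bundles on $\PBbb^n_\CBbb$, via the two standard exact sequences available for a smooth hypersurface. First I would write $H=c_1(\Ocal_W(1))$ for the hyperplane class restricted to $W$, so that $\int_W H^{n-1}=d$, and recall that for a degree $d$ smooth hypersurface $W\subset\PBbb^n_\CBbb$ of dimension $n-1$ the Euler characteristic $\chi(W)$ can itself be expressed as $\int_W c_{n-1}(\Omega_W)=(-1)^{n-1}\int_W c_{n-1}(T_W)$. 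The two sequences I would invoke are the Euler sequence on $\PBbb^n$ restricted to $W$,
\begin{displaymath}
    0\longrightarrow \Ocal_W \longrightarrow \Ocal_W(1)^{\oplus(n+1)} \longrightarrow T_{\PBbb^n}|_W \longrightarrow 0,
\end{displaymath}
and the conormal (or normal bundle) sequence for $W\hookrightarrow\PBbb^n$,
\begin{displaymath}
    0\longrightarrow T_W \longrightarrow T_{\PBbb^n}|_W \longrightarrow \Ocal_W(d) \longrightarrow 0.
\end{displaymath}

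The key computational step is then purely formal: from these sequences the total Chern class satisfies $c(T_W)\,c(\Ocal_W(d)) = (1+H)^{n+1}$, hence $c(\Omega_W) = c(T_W)^{\vee}$ is determined by $c(T_W) = (1+H)^{n+1}(1+dH)^{-1}$. I would extract from this the relevant graded pieces: $c_1(T_W) = (n+1-d)H$, so $c_1(\Omega_W) = (d-n-1)H$; and $c_{n-2}(\Omega_W) = (-1)^{n-2}c_{n-2}(T_W)$, where $c_{n-2}(T_W)$ is the degree $n-2$ part of $(1+H)^{n+1}(1+dH)^{-1} = (1+H)^{n+1}\sum_{j\ge0}(-d)^j H^j$. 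Multiplying by $c_1(\Omega_W) = (d-n-1)H$ and integrating over $W$ (using $\int_W H^{n-1}=d$) gives a polynomial identity in $d$ and $n$. Separately, integrating $c_{n-1}(T_W)$ over $W$ gives $\chi(W)$ as an explicit polynomial in $d,n$ as well; eliminating that polynomial between the two expressions should yield exactly $\int_W H\,c_{n-2}(\Omega_W) = \frac{(-1)^{n-1}}{d}\chi(W) + (-1)^n\frac{n(n+1)}{2}$.

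The only mildly delicate point — and the one I would be most careful about — is the bookkeeping of signs and the passage between $T_W$ and $\Omega_W = T_W^\vee$ in the correct degrees, together with checking that the combinatorial coefficient coming from the degree $n-2$ part of $(1+H)^{n+1}(1+dH)^{-1}$, after multiplication by the linear factor and comparison with the degree $n-1$ part that computes $\chi(W)$, collapses to the clean constant $\frac{n(n+1)}{2}$. This is a finite check: one expands both sides as polynomials in $d$ (for fixed $n$) and matches coefficients, or alternatively verifies the identity for enough small values of $(n,d)$ and invokes that both sides are polynomials of bounded degree in $d$. I do not anticipate any conceptual obstacle; the lemma is a Chern class identity for hypersurfaces and the argument is a routine, if slightly intricate, splitting-principle computation.
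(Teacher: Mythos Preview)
Your proposal is correct and follows essentially the same approach as the paper, which simply records that the lemma follows from the conormal exact sequence for the cotangent bundle and the Euler sequence on $\PBbb^{n}_{\CBbb}$ restricted to $W$. The paper gives no further details, so your expansion of the computation (extracting $c(T_W)=(1+H)^{n+1}(1+dH)^{-1}$ and comparing the degree $n-2$ and $n-1$ pieces) is exactly the intended argument.
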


For the following lemma, the reader is advised to review the description of the blow up of $X$ along the ordinary double point singularities \textsection\ref{sec:hodgeodp}. Also, we notice that for the morphism $f$, the divisor of the evaluation map $\mathbf{ev}$ is trivial, hence $K_{X}=\Ocal_{X}$ is trivial as well: $f$ is a Kulikov family.

\begin{lemma}\label{lemma:ints-odp-2}
Let $\nu\colon\widetilde{X}\to X$ be the blow up of the ordinary double points in $X_{0}$. Let $Z$ be the strict transform of $X_{0}$ in $\widetilde{X}$. Then
\begin{displaymath}
    \int_{Z}c_{1}(\Omega_{Z})c_{n-1}(\Omega_{Z})= \left((-1)^{n-1}\frac{3(n-2)}{2}\chi(Q)+(-1)^{n}\frac{(n-2)n(n+1)}{2}\right)\#\sing(X_{0}),
\end{displaymath}
where $Q$ is any smooth quadric in $\PBbb_{\CBbb}^{n}$, and hence $\chi(Q)=n+(1+(-1)^{n+1})/2$.
\end{lemma}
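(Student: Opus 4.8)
The plan is to compute the integral $\int_{Z}c_{1}(\Omega_{Z})c_{n-1}(\Omega_{Z})$ by relating it to data on $X_0$ and its smooth model, exploiting the explicit geometry of the blow-up $\nu\colon\widetilde{X}\to X$ recalled in \textsection\ref{sec:hodgeodp}. Recall that $Z$ is the strict transform of $X_0$, and the exceptional divisors $E_i\simeq\PBbb^{n}_{\CBbb}$ meet $Z$ along smooth quadrics $W_i=Z\cap E_i\subset E_i$. The map $\nu|_Z\colon Z\to X_0$ is precisely the blow-up of $X_0$ at its $\#\sing(X_0)$ ordinary double points; since an $n$-dimensional ODP is locally a cone over a smooth quadric, $Z$ is smooth, and each $W_i$ is the exceptional divisor of $\nu|_Z$, with normal bundle $\Ocal_{W_i}(-1)$ inside $Z$ (here $\Ocal_{W_i}(1)$ denotes the restriction of the hyperplane bundle from $E_i\simeq\PBbb^n$). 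First I would use that $X_0$ is a Cartier divisor in the smooth total space $X$ with $K_X=\Ocal_X$ (the family is Kulikov, as noted before the lemma), so by adjunction $K_{X_0}=\Ocal_{X_0}$; blowing up at the ODPs, the discrepancy is $K_Z=(\nu|_Z)^*K_{X_0}+(n-2)\sum_i W_i=(n-2)\sum_i [W_i]$ (the discrepancy of an $n$-dimensional ODP blow-up is $n-2$).

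The heart of the computation is then a comparison of Chern classes of $\Omega_Z$ along the exceptional locus. I would split $Z = Z^\circ \sqcup \bigsqcup_i W_i$ where $Z^\circ = Z\setminus\bigcup W_i \cong X_0\setminus\sing(X_0)$, and handle each stratum. The class $c_1(\Omega_Z)=-c_1(K_Z^{-1})$, and since $K_Z=(n-2)\sum_i[W_i]$, the factor $c_1(\Omega_Z)$ is supported on $\bigcup_i W_i$ as a cohomology class pushed forward, so $\int_Z c_1(\Omega_Z)c_{n-1}(\Omega_Z) = (n-2)\sum_i\int_{W_i}[W_i]|_{W_i}\cdot (\nu^*$-correction$)$. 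More precisely, using $c_1(\Omega_Z)= (n-2)\sum_i[W_i]$ as a divisor class, I would restrict to each $W_i$ and apply the conormal exact sequence $0\to N_{W_i/Z}^\vee\to \Omega_Z|_{W_i}\to\Omega_{W_i}\to 0$ together with $N_{W_i/Z}=\Ocal_{W_i}(-1)$, to express $c_{n-1}(\Omega_Z)|_{W_i}$ in terms of $c_1(\Ocal_{W_i}(1))$ and the Chern classes of $\Omega_{W_i}$. This reduces each local contribution to $\int_{W_i}\big(c_1(\Ocal_{W_i}(1)) + \text{Chern classes of }\Omega_{W_i}\big)$-type integrals over the smooth quadric $W_i\subset\PBbb^n_{\CBbb}$ of degree $d=2$. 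Then I invoke Lemma \ref{lemma:ints-odp-1} with $d=2$ to evaluate $\int_{W_i}c_1(\Ocal_{W_i}(1))c_{n-2}(\Omega_{W_i})=\frac{(-1)^{n-1}}{2}\chi(W_i)+(-1)^{n}\frac{n(n+1)}{2}$, plus the elementary fact $\int_{W_i}c_1(\Ocal_{W_i}(1))\,c_{n-2}(\Ocal_{W_i}(1)\text{-type powers})$ which are just intersection numbers of hyperplane classes on a quadric. Assembling, each of the $\#\sing(X_0)$ points contributes $(-1)^{n-1}\tfrac{3(n-2)}{2}\chi(Q)+(-1)^{n}\tfrac{(n-2)n(n+1)}{2}$ with $Q$ a smooth quadric, and summing gives the stated formula; the Euler characteristic $\chi(Q)=n+(1+(-1)^{n+1})/2$ is standard (a smooth quadric in $\PBbb^n$ has the rational cohomology of $\PBbb^{n-1}$ for $n$ even, and of $\PBbb^{n-1}$ with one extra middle class for $n$ odd).

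The main obstacle I anticipate is bookkeeping the precise coefficient $\tfrac{3(n-2)}{2}$ in front of $\chi(Q)$: it should emerge from combining the discrepancy factor $n-2$ with the contribution of the conormal bundle term $N_{W_i/Z}^\vee=\Ocal_{W_i}(1)$ in the conormal sequence, which splits $c_{n-1}(\Omega_Z)|_{W_i}$ into $c_{n-2}(\Omega_{W_i})$ and $c_1(\Ocal_{W_i}(1))c_{n-3}(\Omega_{W_i})$ pieces; one must carefully track how Lemma \ref{lemma:ints-odp-1} feeds into \emph{both} pieces and how the $\chi(W_i)=\chi(Q)$ terms accumulate with the right multiplicities. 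A secondary subtlety is checking that there are no cross-terms from distinct exceptional divisors $W_i$, $W_j$ — but this is immediate since the singular points are isolated, so the $E_i$ (hence the $W_i$) are pairwise disjoint. I would also double-check the sign conventions relating $c_k(\Omega_Z)$ to $c_k(T_Z)$ and the orientation in $\int_Z c_n(\Omega_Z)=(-1)^n\chi(Z)$ to make sure the final signs match the statement.
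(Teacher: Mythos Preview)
Your approach is essentially the paper's: write $c_1(\Omega_Z)=(n-2)[W]$ via the canonical bundle formula, push to $W$, expand $c_{n-1}(\Omega_Z)|_W$ by the conormal sequence, and finish with Lemma~\ref{lemma:ints-odp-1}. The only substantive difference is cosmetic --- you compute $K_Z$ by the discrepancy of the ODP blow-up inside $X_0$ and read off $N_{W_i/Z}=\Ocal_{W_i}(-1)$ from the cone geometry, whereas the paper gets both by adjunction from $\widetilde{X}$ using $K_{\widetilde X}=\Ocal(nE)$ and the splitting $N_{W/\widetilde X}=N_{W/Z}\oplus N_{W/E}$.

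One correction to your bookkeeping anxiety: the conormal expansion you wrote has the degrees off by one. Since $N_{W_i/Z}^\vee$ has rank $1$, the sequence $0\to N_{W_i/Z}^\vee\to\Omega_Z|_{W_i}\to\Omega_{W_i}\to 0$ gives exactly
\[
c_{n-1}(\Omega_Z)|_{W_i}=c_{n-1}(\Omega_{W_i})+c_1(\Ocal_{W_i}(1))\,c_{n-2}(\Omega_{W_i}),
\]
not the $c_{n-2}$/$c_{n-3}$ split you anticipated, and there are no ``hyperplane-power'' side terms. Integrating, the first summand contributes $(-1)^{n-1}\chi(Q)$ and the second, by Lemma~\ref{lemma:ints-odp-1} with $d=2$, contributes $\tfrac{(-1)^{n-1}}{2}\chi(Q)+(-1)^n\tfrac{n(n+1)}{2}$; the $\tfrac{3}{2}$ you were worried about is simply $1+\tfrac12$. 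Multiplying by the discrepancy factor $(n-2)$ and summing over the $\#\sing(X_0)$ disjoint $W_i$ gives the claim directly.
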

\begin{proof}
Let $E=E_{1}+\ldots+E_{r}$ be the exceptional divisor, where $r$ is the number of ordinary double point singularities. Each $E_{i}$ is isomorphic to $\PBbb_{\CBbb}^{n}$. Since the canonical bundle $K_{X}$ is trivial, we have
\begin{displaymath}
    K_{\widetilde{X}}=\Ocal(nE).
\end{displaymath}
Hence, because $\widetilde{X}_{0}=2E+Z$, we find by the adjunction formula
\begin{displaymath}
    K_{Z}=\Ocal_{Z}((n-2)E)=\Ocal_{Z}((n-2)W),
\end{displaymath}
where $W=\sum E_{i}\cap Z$, and each $E_{i}\cap Z$ is an irreducible smooth quadric in $\PBbb^{n}_{\CBbb}$. Hence
\begin{equation}\label{eq:cherns-Z-1}
    \int_{Z}c_{1}(\Omega_{Z})c_{n-1}(\Omega_{Z})=(n-2)\int_{W}c_{n-1}(\Omega_{Z}\mid_{W}).
\end{equation}
From the conormal exact sequence of the immersion $W\hookrightarrow Z$, we derive
\begin{equation}\label{eq:cherns-Z-2}
    \begin{split}
        \int_{W} c_{n-1}(\Omega_{Z}\mid_{W})=&\int_{W}c_{n-1}(\Omega_{W})-\int_{W}c_{1}(N_{W/Z})c_{n-2}(\Omega_{W})\\
        =&(-1)^{n-1}\chi(W)-\int_{W}c_{1}(N_{W/Z})c_{n-2}(\Omega_{W}).
    \end{split}
\end{equation}
To compute the last integral in \eqref{eq:cherns-Z-2}, we notice that $N_{W/\widetilde{X}}=N_{W/Z}\oplus N_{W/E}$, because $Z$ and $E$ intersect transversally. It becomes
\begin{displaymath}
    c_{1}(N_{W/Z})=c_{1}(K_{W})-c_{1}(K_{\widetilde{X}}\mid_{W})-c_{1}(N_{W/E}).
\end{displaymath}
Together with the adjunction formula for $W\hookrightarrow E$, we infer
\begin{displaymath}
    c_{1}(N_{W/Z})=c_{1}(K_{E}\mid_{W})-c_{1}(K_{\widetilde{X}}\mid_{W}).
\end{displaymath}
Now recall that $K_{E}=\Ocal_{E}(-n-1)$ and $K_{\widetilde{X}}\mid_{W}=\Ocal_{E}(nE)\mid_{W}=\Ocal_{E}(-n)\mid_{W}$. Therefore $c_{1}(N_{W/Z})=c_{1}(\Ocal_{E}(-1)\mid_{W})$ and
\begin{equation}\label{eq:cherns-Z-3}
    \int_{W}c_{1}(N_{W/Z})c_{n-2}(\Omega_{W})=-\int_{W}c_{1}(\Ocal_{W}(1))c_{n-2}(\Omega_{W}).
\end{equation}
To conclude, we apply Lemma \ref{lemma:ints-odp-1} and add up \eqref{eq:cherns-Z-1}--\eqref{eq:cherns-Z-3}.
\end{proof}

\begin{proof}[Proof of Theorem \ref{thm:bcov-odp-dim-n}]

The computation of the subdominant term is a consequence of the description of the limiting Hodge structure of ordinary double points \cite[Example 2.15]{Steenbrink-mixedonvanishing}.

For the dominant term, we let $\widetilde{X}\to X$ be the blow up of the ordinary double point singularities, and we apply Theorem \ref{thm:general-bcov} to the projection $\widetilde{f}\colon\widetilde{X}\to\DBbb$. The special fiber $\widetilde{X}_{0}=2E+Z$ is as in the previous lemma: $Z$ is the strict transform of $X_{0}$, $E=\sum E_{i}$ is a disjoint union of $\PBbb_{\CBbb}^{n}$, and the intersections $W_{i}=Z\cap E_{i}$ are irreducible smooth quadrics in these projective spaces. We put $W=\sum W_{i}$ in $Z$. Finally, the divisor of the evaluation map $\mathbf{ev}$ for $\widetilde{f}$ is $nE$. We evaluate all the contributions to $\kappa_{\widetilde{f}}$.

For the Euler characteristic of the general fiber $\widetilde{X}_{\infty}$ we have
\begin{displaymath}
    \chi(\widetilde{X}_{\infty})=2\chi(E)+\chi(Z)-3\chi(W).
\end{displaymath}
To verify this, we first notice that the degree of $c_n(\Omega_{\widetilde{X}/\DBbb}(\log))$ on fibers is constant. On a general fiber this is $(-1)^n \chi(\widetilde{X}_{\infty})$. On the special fiber, we find it is equal to the degree of $2 c_n(\Omega_{E}(\log W)) + c_n(\Omega_{Z}(\log W))$. This can be computed through the residue exact sequence (see e.g. \eqref{eq:diagram-res}), and the result follows.
Also,
\begin{displaymath}
    \chi(\widetilde{X}_{0})=\chi(E)+\chi(Z)-\chi(W).
\end{displaymath}
Using that $\chi(E)=(n+1)\#\sing(X_{0})$ and $\chi(W)=[n+(1+(-1)^{n+1})/2]\#\sing(X_{0})$, we obtain
\begin{equation}\label{eq:bcov-odp-gen-1}
    \frac{3n+1}{12}(\chi(\widetilde{X}_{\infty})-\chi(\widetilde{X}_{0}))=\frac{(3n+1)(-n+(-1)^{n})}{12}\#\sing(X_{0}).
\end{equation}
The next term in \eqref{eq:expression-kappa} equals
\begin{equation}\label{eq:bcov-odp-gen-2}
    \frac{3n+4}{12}\chi(W)=\frac{3n+4}{12}\left(n+\frac{1+(-1)^{n+1}}{2}\right)\#\sing(X_{0}).
\end{equation}
By the conormal exact sequence of $E\hookrightarrow\widetilde{X}$, and taking into account that $c(\Omega_{E})=(1-c_{1}(\Ocal_{E}(1)))^{n+1}$ and $\Ocal(E)\mid_{E}=\Ocal_{E}(-1)$, we have
\begin{equation}\label{eq:bcov-odp-gen-4}
    -\frac{(-1)^{n}}{12}\int_{nE}c_{n}(\Omega_{\widetilde{X}})=\left\lbrace\frac{n^{2}(n+1)}{24}-\frac{n(n+1)}{12}\right\rbrace\#\sing(X_{0}).
\end{equation}
For the following terms, we apply lemmas \ref{lemma:ints-odp-1} and \ref{lemma:ints-odp-2}. Using again $c(\Omega_{E})=(1-c_{1}(\Ocal_{E}(1)))^{n+1}$ and $K_{W}=\Ocal_{W}(-n+1)$, we find
\begin{equation}\label{eq:bcov-odp-gen-3}
    \begin{split}
        &-\frac{(-1)^{n}}{12}\left\lbrace\int_{E}c_{1}(\Omega_{E})c_{n-1}(\Omega_{E})+\int_{Z}c_{1}(\Omega_{Z})c_{n-1}(\Omega_{Z})+\int_{W}c_{1}(\Omega_{W})c_{n-2}(\Omega_{W})\right\rbrace =\\
        &\hspace{5cm} \left\lbrace -\frac{n^{2}(n+1)}{24}+\frac{2n-5}{24}\left(n+\frac{1+(-1)^{n+1}}{2}\right)\right\rbrace\#\sing(X_{0}).
    \end{split}
\end{equation}
Finally, by Proposition \ref{prop:ODP} we know: i) if $n$ is odd, then $\alpha^{p,q}=0$ for all $p,q$; ii) if $n$ is even, then $\alpha^{p,q}=0$ for $(p,q)\neq (n/2,n/2)$ and
\begin{displaymath}
    \alpha^{n/2,n/2}=\frac{1}{2}\#\sing(X_{0}).
\end{displaymath}
 Since moreover $\alpha=0$, we therefore conclude
\begin{equation}\label{eq:bcov-odp-gen-5}
    -\frac{\alpha}{12}\chi(X_{\infty})-\sum_{p,q}(-1)^{p+q}p\alpha^{p,q}=
    \begin{cases}
        0 &\quad\text{if }n\text{ is odd}\\
        -\frac{n}{4}\#\sing(X_{0})&\quad\text{if }n\text{ is even}.
    \end{cases}
\end{equation}
To complete the proof, one just needs to evaluate the sum \eqref{eq:bcov-odp-gen-1}$+\ldots+$\eqref{eq:bcov-odp-gen-5}.

\end{proof}

\subsection{Strict Calabi--Yau varieties: dimensions 3 and 4}\label{subsec:asymptotic-bcov-dim-3-4}
In the case of degenerating families of strict Calabi--Yau $3$-folds and $4$-folds we can give general results on the asymptotic behaviour of the BCOV invariant, not supposing that the central fiber is a normal crossings divisor. 

We suppose first that $f\colon X \to \DBbb$ is a germ of a projective degeneration between smooth algebraic varieties, whose smooth fibers are strict Calabi--Yau 3-folds and with special fiber $X_0 = \sum m_i D_i$, not necessarily of normal crossings.

\begin{theorem}\label{thm:LiuXia-general}
With the above assumptions and notations, 
\begin{enumerate}
\item we have
\begin{displaymath}
    \kappa_f = -\frac{1}{6}(\chi(X_\infty)- \chi(X_0)) - \left(\frac{\chi(X_{\infty})}{12}+3\right)\alpha
     +\alpha^{1,1}-\alpha^{1,2}-\sum\chi(\Ocal_{\widetilde{D_i}})+ \frac{1}{12} \int_B c_3(\Omega_{X})
\end{displaymath}
where $\widetilde{D_i}$ denotes a desingularization of each $D_i$ and $\alpha = \frac{1}{2\pi i}\left(^{\textit{u}}\log T_{s}\mid \Gr^{3}_{F_{\infty}} H^{3}(X_{\infty})\right)$.
\item If $f \colon X \to \DBbb$ has unipotent monodromies,
$$\kappa_f =-\frac{1}{6}(\chi(X_\infty)- \chi(X_0)) +\left(\chi(\Ocal_{X_\infty}) - \sum\chi(\Ocal_{\widetilde{D_{i}}})\right)+ \frac{1}{12} \int_B c_3(\Omega_{X}) $$
\end{enumerate}
\end{theorem}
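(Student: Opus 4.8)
\textbf{Proof strategy for Theorem \ref{thm:LiuXia-general}.}
The plan is to reduce to the normal crossings situation treated in Theorem \ref{thm:general-bcov} and then to exploit the strict Calabi--Yau $3$-fold hypothesis to collapse most of the terms in \eqref{eq:expression-kappa}. First I would pick an embedded resolution $\pi\colon\widetilde X\to X$ of $X_0\hookrightarrow X$, producing a normal crossings model $\widetilde f\colon\widetilde X\to\DBbb$; since $\kappa_f$ depends only on the restriction to $\DBbb^\times$, we have $\kappa_f=\kappa_{\widetilde f}$, and we may apply Theorem \ref{thm:general-bcov} to $\widetilde f$. The issue is that the resulting formula is expressed in terms of the strata $D(k)$ of $\widetilde X_0$ and of $\int_{B_{\widetilde f}}c_3(\Omega_{\widetilde X})$, which I must rewrite in terms of the original data: $\chi(X_0)$, $\int_B c_3(\Omega_X)$, and the birational invariants $\chi(\Ocal_{\widetilde D_i})$. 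Here Proposition \ref{prop:kahlerlogcomp} (and the diagram \eqref{eq:diagram-res}) is the essential bookkeeping device: it records precisely how $\lambda(\Omega_{X/\DBbb})$ and $\lambda(\Omega_{\widetilde X/\DBbb}(\log))$ differ by $\bigl(\chi(\Ocal_{X_\infty})-\sum\chi(\Ocal_{\widetilde D_i})\bigr)\Ocal([0])$, and more generally one must track the analogous corrections for the higher $\lambda(\widetilde\Omega^p)$. In dimension $3$ only $p=0,1,2,3$ occur, and the $p=0$ and $p=3$ contributions are controlled by Serre duality (Proposition \ref{prop:kahlercomp}) and the triviality of $\omega_{X/\DBbb}$ away from $B$.

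Next I would use the strict Calabi--Yau hypothesis $h^{0,1}=h^{0,2}=0$ to simplify the Hodge-theoretic terms. Since $H^1(X_\infty)=H^5(X_\infty)=0$ and $H^2(X_\infty)$, $H^4(X_\infty)$ carry monodromies that are trivial on $\Gr_{F_\infty}$ in the relevant degrees (the only interesting Hodge numbers are $h^{1,1}=h^{2,2}$, $h^{3,0}=h^{0,3}=1$, $h^{2,1}=h^{1,2}$), the sum $\sum_{p,q}(-1)^{p+q}p\,\alpha^{p,q}$ reduces to the $k=p+q=3$ contributions, i.e. to $\alpha^{3,0}$, $\alpha^{2,1}$, $\alpha^{1,2}$, $\alpha^{0,3}$. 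Using $\alpha^{3,0}+\alpha^{2,1}+\alpha^{1,2}+\alpha^{0,3}=\dim$-type relations coming from Corollary \ref{cor:eigenvalues} together with $\alpha^{3,0}=\alpha$ (as $h^{3,0}=1$) and the symmetry $\alpha^{p,q}+\alpha^{q,p}\in\ZBbb$ forced by the real structure / complex conjugation on the limiting mixed Hodge structure, the alternating sum telescopes to an expression involving only $\alpha$, $\alpha^{1,1}$ and $\alpha^{1,2}$, matching the claimed $-(\chi(X_\infty)/12+3)\alpha+\alpha^{1,1}-\alpha^{1,2}$. The coefficient $3$ and the sign conventions will have to be pinned down carefully by tracking the weights $(-1)^{p+q}p$ attached to each Hodge bundle in $\widetilde\lambda_{BCOV}$, and I expect this to be the part most prone to arithmetic slips.

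For part (2), under unipotent monodromy all the elementary exponents vanish: $\alpha=\alpha^{p,q}=0$. The formula of part (1) then degenerates to
\[
\kappa_f=-\tfrac16\bigl(\chi(X_\infty)-\chi(X_0)\bigr)-\sum\chi(\Ocal_{\widetilde D_i})+\tfrac1{12}\int_B c_3(\Omega_X),
\]
and it remains only to identify the constant term. Here I would invoke flatness of $f$ to get $\chi(\Ocal_{X_\infty})=\chi(\Ocal_{X_0})$, and combine this with the birational-invariance statements from Proposition \ref{prop:kahlerlogcomp}, so that $-\sum\chi(\Ocal_{\widetilde D_i})$ can be rewritten as $\chi(\Ocal_{X_\infty})-\sum\chi(\Ocal_{\widetilde D_i})$ minus $\chi(\Ocal_{X_\infty})$; the stray $\chi(\Ocal_{X_\infty})$ must then be absorbed, and checking that it cancels against a corresponding contribution hidden in the passage from $\mu_{BCOV}$ to $\kappa_f$ (cf. Proposition \ref{prop:mu-bcov} and \eqref{eq:general-bcov-1}) is the last point to verify. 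The main obstacle throughout is not any single deep input — all the hard analysis is already in Theorem \ref{thm:general-bcov}, Theorem \ref{thm:expansion-hodge} and Proposition \ref{prop:kahlerlogcomp} — but rather the careful combinatorial reconciliation of the normal-crossings formula with the non-normal-crossings data, keeping the many Euler-characteristic and Chern-number terms straight and correctly accounting for the resolution corrections.
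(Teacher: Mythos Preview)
Your approach differs substantially from the paper's, and the route you outline has a genuine gap. The paper does \emph{not} pass to a normal crossings resolution and then try to undo the damage via Theorem \ref{thm:general-bcov}. Instead it works directly on $X$: the Quillen--BCOV asymptotic \eqref{eq:general-bcov-1} from \cite{cdg} is valid without any normal crossings hypothesis, so the Quillen side is immediate. For the $L^{2}$ side, the paper applies the Serre duality comparison of Proposition \ref{prop:kahlercomp} to the K\"ahler extension \emph{before} comparing to anything logarithmic; in relative dimension $3$ this collapses the whole BCOV bundle to $-3\lambda(\Ocal_{X})+\lambda(\Omega_{X/\DBbb})+5c\cdot\Ocal([0])$ with $c=-(\chi(X_{\infty})-\chi(X_{0}))$. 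Now only $p=0$ and $p=1$ remain, and Proposition \ref{prop:kahlerlogcomp} handles the single comparison $\lambda(\Omega_{X/\DBbb})\leftrightarrow\lambda(\Omega_{\widetilde X/\DBbb}(\log))$. The $L^{2}$ asymptotics on $\lambda(\Ocal_{X})$ and on $\lambda(\Omega_{\widetilde X/\DBbb}(\log))$ are then read off term by term from Theorem \ref{thm:expansion-hodge} and \cite[Thm.~A]{cdg}, and the strict Calabi--Yau hypothesis kills $\alpha^{0,1},\alpha^{0,2},\alpha^{1,0},\alpha^{1,3}$.

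Your plan, by contrast, requires ``analogous corrections for the higher $\lambda(\widetilde\Omega^{p})$'' under resolution --- but the paper supplies no such result beyond $p=1$, and obtaining one would mean redoing the blow-up analysis of Proposition \ref{prop:kahlerlogcomp} for $p=2,3$, which is exactly what the Serre duality trick is designed to avoid. Separately, your treatment of the monodromy terms is internally inconsistent: you assert the sum $\sum(-1)^{p+q}p\,\alpha^{p,q}$ reduces to the $k=3$ contributions and list $\alpha^{3,0},\alpha^{2,1},\alpha^{1,2},\alpha^{0,3}$, yet the final formula contains $\alpha^{1,1}$, which lives in $H^{2}$. In the paper's organization this issue does not arise, because the $\alpha^{p,q}$ enter only through the explicit factors $\lambda(\Ocal_{X})$ and $\lambda(\Omega_{\widetilde X/\DBbb}(\log))$, each expanded directly.
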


The proof is given below. We first deduce an immediate corollary.


\begin{corollary}\label{cor:rational3}
     Suppose furthermore that $X_0$ has at most rational singularities.
     \begin{enumerate}
     \item Then $$\kappa_f = -\frac{1}{6}(\chi(X_\infty)- \chi(X_0)) - \alpha^{1,2}.$$
     \item If the singularities are moreover isolated, we have
     $$\kappa_f = \frac{1}{6}\mu_{f}  - \alpha^{1,2}$$
     where $\mu_f$ denotes the Milnor number of the special fiber.
     \end{enumerate}
\end{corollary}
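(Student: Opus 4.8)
\textbf{Proof plan for Corollary \ref{cor:rational3}.}
The plan is to specialize Theorem \ref{thm:LiuXia-general} under the hypothesis that $X_0$ has at most rational singularities, using the birational invariance of the Euler characteristics $\chi(\Ocal_{-})$ already exploited in the proof of Proposition \ref{prop:kahlerlogcomp} and Corollary \ref{cor:kahlerlogcomp}. First, since $X_0$ is a connected fiber of a flat family with at most rational singularities, it is in particular normal and irreducible, so in the notation of Theorem \ref{thm:LiuXia-general} there is a single component $D_1 = X_0$ and $\sum\chi(\Ocal_{\widetilde{D_i}}) = \chi(\Ocal_{\widetilde{X_0}})$. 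The rational singularities hypothesis gives $\chi(\Ocal_{\widetilde{X_0}}) = \chi(\Ocal_{X_0})$, and flatness of $f$ gives $\chi(\Ocal_{X_0}) = \chi(\Ocal_{X_\infty})$; hence the term $\chi(\Ocal_{X_\infty}) - \sum\chi(\Ocal_{\widetilde{D_i}})$ in part (2) vanishes. The same cancellation occurs in part (1) of Theorem \ref{thm:LiuXia-general}, but there one must also argue that the remaining monodromy terms simplify: rational singularities of $X_0$ with $X$ smooth force the degeneration to have trivial local monodromy on $H^{0}, H^{1}, H^{2}$ and, crucially, that the interesting part of $\alpha$ and $\alpha^{1,1}$ disappears. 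Concretely I would invoke that for a Calabi--Yau degeneration with rational singular central fiber the limiting mixed Hodge structure on $H^{3}(X_\infty)$ has $\Gr^W_k = 0$ for $k \neq 3$ and $F^3_\infty$ is one-dimensional with trivial $T_s$-action (so $\alpha = 0$), while $\alpha^{1,1}$ vanishes because $T_s$ acts trivially on $\Gr^1_{F_\infty} H^{2}(X_\infty)$, the monodromy on $H^2$ being trivial; substituting $\alpha = 0$, $\alpha^{1,1}=0$, and $\int_B c_3(\Omega_X) = 0$ (since $B = 0$, as rational singularities and $K_X = \Ocal_X$ via the evaluation map force the Kulikov condition) into part (1) yields exactly $\kappa_f = -\tfrac{1}{6}(\chi(X_\infty) - \chi(X_0)) - \alpha^{1,2}$.

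For part (2) of the corollary, assume in addition that the singularities of $X_0$ are isolated. Then I would use the standard relation between the Euler characteristic defect and the total Milnor number: for an isolated hypersurface-type (or more generally isolated) singularity on a fiber of a one-parameter degeneration with smooth total space, the local contribution to $\chi(X_\infty) - \chi(X_0)$ is $(-1)^{n}\mu_p$ at each singular point $p$, where $\mu_p$ is the local Milnor number, so that $\chi(X_\infty) - \chi(X_0) = (-1)^n \mu_f$ with $n = 3$ here, giving $\chi(X_\infty) - \chi(X_0) = -\mu_f$. This is essentially the content of \eqref{eq:classeloc} combined with the computation of the localized top Chern class at an isolated singularity (cf. \cite[Ex. 14.1.5]{Fulton}), which identifies $\deg c_{n+1}^{X_0}(\Omega_{X/\DBbb})$ at an isolated singular point with its Milnor number. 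Plugging $\chi(X_\infty) - \chi(X_0) = -\mu_f$ into part (1) of the corollary gives $\kappa_f = \tfrac{1}{6}\mu_f - \alpha^{1,2}$, as claimed.

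The main obstacle I anticipate is justifying cleanly the vanishing of the monodromy-type terms $\alpha$ and $\alpha^{1,1}$ in part (1) under the sole hypothesis of rational singularities, as opposed to ordinary double points where Proposition \ref{prop:ODP} does the bookkeeping explicitly. The point is that rational singularities do not immediately control the limiting mixed Hodge structure; one needs to know that a projective Calabi--Yau degeneration with smooth total space and rational singular central fiber has finite (in fact trivial, after the Kulikov reduction) monodromy on the relevant graded pieces. I expect this follows by combining Corollary \ref{cor:kahlerlogcomp} (which already tells us the K\"ahler and logarithmic extensions agree, so there is no elementary-exponent discrepancy, forcing all $\alpha^{p,q}_j = 0$ hence $\alpha^{p,q} = 0$ for all $p,q$, and similarly $\alpha = 0$) with Corollary \ref{cor:eigenvalues} identifying these exponents with monodromy eigenvalues. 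Indeed, the isomorphism $\lambda(\Omega_{X/\DBbb}) \simeq \lambda(\Omega_{\widetilde X/\DBbb}(\log))$ of Corollary \ref{cor:kahlerlogcomp}, propagated through the BCOV bundle, shows the K\"ahler and logarithmic extensions of every Hodge bundle coincide up to the origin, which by the discussion in \textsection\ref{subsec:comparison} and Corollary \ref{cor:eigenvalues} means every $T_s$-eigenvalue on $\Gr^p_{F_\infty}H^k(X_\infty)$ equals $1$; this kills $\alpha$, $\alpha^{1,1}$ and even makes $\alpha^{1,2}$ integral, though $\alpha^{1,2}$ itself need not vanish since it records the jump coming from the genuinely non-normal-crossings resolution. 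Once this is in place the rest is the elementary substitution described above.
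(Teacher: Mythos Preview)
Your handling of the terms $B=0$ and $\sum\chi(\Ocal_{\widetilde{D_i}})=0$ matches the paper: normality from rational singularities gives $B=0$, and $\chi(\Ocal_{\widetilde{X_0}})=\chi(\Ocal_{X_0})=\chi(\Ocal_{X_\infty})=0$ for a strict Calabi--Yau 3-fold. Your Milnor number identification for part (2) is also correct and is exactly what the paper does.

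The genuine gap is in your mechanism for killing $\alpha$ and $\alpha^{1,1}$. You invoke Corollary~\ref{cor:kahlerlogcomp}, but that result compares the \emph{K\"ahler} extension $\lambda(\Omega_{X/\DBbb})$ with the \emph{logarithmic} extension $\lambda(\Omega_{\widetilde X/\DBbb}(\log))$; it measures the defects $\mu_p$ of \textsection\ref{subsec:comparison}. The elementary exponents $\alpha^{p,q}$, by contrast, compare the logarithmic Hodge bundles \emph{before and after semi-stable reduction} (Definition~\ref{def:elementary}), and are identified with eigenvalues of $T_s$ by Corollary~\ref{cor:eigenvalues}. These are two unrelated comparisons; the isomorphism of Corollary~\ref{cor:kahlerlogcomp} says nothing about $T_s$. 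Your own argument shows the problem: if it worked, it would force \emph{all} $\alpha^{p,q}=0$, in particular $\alpha^{1,2}=0$, contradicting the formula you are trying to prove (and there is no way for $\alpha^{1,2}$ to be a nonzero integer in $[0,1)\cdot h^{1,2}$ when each individual $\alpha^{1,2}_j\in[0,1)$ is forced to vanish).

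The paper's route is different and does not go through Corollary~\ref{cor:kahlerlogcomp}. For $\alpha=0$ it uses that rational and canonical singularities coincide for Gorenstein varieties \cite[Cor.~11.13]{kollar:singpairs}, and then appeals to \cite[Prop.~2.8]{cdg}, which shows directly that for canonical singularities the upper-log eigenvalue on $F^n_\infty H^n(X_\infty)$ is trivial. For $\alpha^{1,1}=0$ the paper uses that when the singularities are \emph{isolated}, the monodromy on $H^2(X_\infty)$ is trivial (vanishing cycles are concentrated in middle degree), so $T_s$ acts trivially on $\Gr^1_{F_\infty}H^2(X_\infty)$. Your claim that the limiting mixed Hodge structure on $H^3(X_\infty)$ is pure of weight $3$ under rational singularities alone is not justified and is not used by the paper.
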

\begin{proof}
Since $X_0$ is normal, $f: X \to \DBbb$ is automatically Kulikov, \emph{i.e.} $B=0$. If $X_0$ has rational singularities one finds that $\chi(\Ocal_{\widetilde{X_0}}) = \chi(\Ocal_{{X_0}})=\chi(\Ocal_{X_\infty})=0$ where we used the flatness of $f$ and the strict Calabi--Yau condition on smooth fibers.
Finally, for isolated singularities the monodromy acts trivially on $H^2$ so $\alpha^{1,1}=0$, and since rational and canonical singularities are equivalent for Gorenstein complex varieties (see \emph{e.g.} \cite[Corollary 11.13]{kollar:singpairs}) it follows from \cite[Proposition 2.8]{cdg} that $\alpha =0$. For isolated singularities in $X_0$ the total dimension of the vanishing cycles $-(\chi(X_\infty)- \chi(X_0))$ is the Milnor number $\mu_f$. 
\end{proof}

\begin{proof}[Proof of Theorem \ref{thm:LiuXia-general}]
By the definition of the BCOV invariant, it suffices to study the asymptotic behavior of the $L^2$- and Quillen-BCOV metrics on the K\"ahler extension of the BCOV bundle. We fix a K\"ahler metric on the total space $X$, whose K\"ahler form is fiberwise integral on the smooth locus. The asymptotic behavior of the Quillen-BCOV metric was established in \cite[Cor 4.9]{cdg} (see \eqref{eq:general-bcov-1}) and states that for a local trivialization of $\widetilde{\lambda}_{BCOV}$
\begin{equation}\label{eq:asym-Q-bcov-dim3}
    \log \|\sigma\|^2_{BCOV, Q} = \left(\frac{29}{6}(\chi(X_\infty)- \chi(X_0)) - \frac{\alpha}{12} \chi(X_\infty) + \frac{1}{12} \int_B c_3(\Omega_{X})\right) \log |t|^2 + o(\log|t|).
\end{equation}
In fact this also holds without the assumption of having normal crossings in the special fiber.

To control the asymptotic behaviour of the $L^2$-BCOV metric, we first notice that the renormalizing factors are rational numbers by Proposition \ref{prop:rational-volume} and thus constant in the family, so for the purposes of the asymptotic behaviour we can assume we are working with the non-renormalized $L^2$ metric on the BCOV bundle. We can then apply Serre duality (cf. Proposition \ref{prop:kahlercomp})  and find that there is an isometry
\begin{equation*}
    \widetilde{\lambda}_{BCOV, L^2} \simeq -3 \lambda(\Ocal_{X}) + \lambda(\Omega_{X/\DBbb}) +5c \cdot \Ocal([0])
\end{equation*}
where $\Ocal([0])$ is equipped with the trivial singular metric and $c=-(\chi(X_{\infty})-\chi(X_{0}))$ is a localized Chern class. We first compare with the logarithmic extension, whose $L^2$ norms is easier to handle. By Proposition \ref{prop:kahlerlogcomp} we have
$\lambda(\Omega_{X/\DBbb})=\lambda(\Omega_{X/\DBbb}(\log))+ \left(\chi(\Ocal_{X_\infty}) - \sum\chi(\Ocal_{\widetilde{D_{i}}}))\right)\Ocal([0])$
hence, as $X_\infty$ is a strict Calabi--Yau threefold, we find the isometry
\begin{equation}\label{eq:bcov-kahler-simplification}
        \widetilde{\lambda}_{BCOV, L^2} \simeq  -3 \lambda(\Ocal_{X}) + \lambda(\Omega_{X/\DBbb}(\log))) +\left(5c - \sum\chi(\Ocal_{\widetilde{D_{i}}}))\right) \cdot \Ocal([0]).
\end{equation}
We study individually the asymptotic behaviour of the various $L^2$ metrics. First of all, notice that all the $R^i f_\ast \Ocal_{X}$ are locally free and commute with restriction to the fibers. To see this, if $\widetilde{X}\to X$ is a birational morphism of complex manifolds such that $\tilde{f}\colon\widetilde{X}\to\DBbb$ is a projective normal crossings degeneration, then $R^{i}f_{\ast}\Ocal_{X}\simeq R^{i}\tilde{f}_{\ast}\Ocal_{\widetilde{X}}$, and the latter is locally free as it is a higher direct image of a logarithmic sheaf. It easily follows, by connectedness of the fibers, that $f_{\ast}\Ocal_{X}=\Ocal_{\DBbb}$. By Grothendieck-Serre duality, there is then an isomorphism
\begin{equation}\label{eq:expdeterminant}
    \lambda(\Ocal_{X}) \simeq  \Ocal_\DBbb - \det R^1 f_\ast \Ocal_{X}+ \det  R^2 f_\ast \Ocal_{X}- R^3 f_\ast \Ocal_{X} \simeq \Ocal_\DBbb - \det  R^1 f_\ast \Ocal_{X}+ \det  R^2 f_\ast \Ocal_{X} + f_\ast \omega_{X/\DBbb}.
\end{equation}
This is an isometry for the $L^2$ norms. The squared $L^2$ norm of 1 in the first factor is the volume of the fixed K\"ahler form on the fiber, and hence constant since it is a rational number.  For the next two terms, by Theorem \ref{thm:expansion-hodge}, for a local trivialization $\sigma_{p,q}$ of $\det R^q f_\ast \Omega_{X/\DBbb}^p(\log)$ we have  $\log \|\sigma_{p,q} \|^2_{L^2} = \alpha^{p,q} \log|t|^2 + o(\log|t|).$  Also, for a local trivialization $\eta$ of $f_\ast \omega_{X/\DBbb}$, by \cite[Theorem A]{cdg} we have
   $\log \|\eta \|_{L^{2}}^2 = -\alpha \log|t|^2 + o(\log|t|)$
so that a local trivialization $\sigma$ of $\lambda(\Ocal_X)$ satisfies
\begin{displaymath}
    \log\|\sigma\|^2_{L^2} = \left(- \alpha^{0,1}+\alpha^{0,2}-\alpha\right) \log|t|^2 + o(\log |t|).
\end{displaymath}

For the $L^2$ metric on $\lambda(\Omega_{X/\DBbb}(\log)))$,
expanding the determinant and performing the same analysis as in \eqref{eq:expdeterminant}, we find that the norm of a local trivialization $\sigma$ of $\lambda(\Omega_{X/\DBbb}(\log))$ is
\begin{displaymath}
    \log\|\sigma\|^2_{L^2} = \left(\alpha^{1,0} - \alpha^{1,1}+\alpha^{1,2} - \alpha^{1,3} \right) \log |t|^2+o(\log|t|).
\end{displaymath}
All in all, combining \eqref{eq:asym-Q-bcov-dim3}, \eqref{eq:bcov-kahler-simplification}
and the subsequent computations we find a general expression for $\kappa_f$. The assumption that a general fiber is strict Calabi--Yau assures the vanishing of several terms ($\alpha^{0,1}=\alpha^{0,2}=\alpha^{1,0}=\alpha^{1,3}=0$), giving the first part of the theorem. The case of unipotent monodromies is a simplification of the main result under this assumption.
\end{proof}

For the purposes of the below statements, we use the notation $D_{ij}, D_{ijk}$, etc. for $ D_i \cap D_j $, $D_i \cap D_j \cap D_k$ etc. We also abuse notation and write \emph{e.g.} $[D_i^2 D_j ]$ for the class $c_1(\Ocal(D_i)) \cap [D_{ij}]$ and identify top degree intersection products with their degrees. We call quadruple points those points lying on four components on a normal crossings union of three dimensional varieties. With this understood, we can then record the following lemma.

\begin{lemma}\label{lemma:combinatoricid}
Let $f\colon X \to \DBbb$ be a semi-stable projective degeneration, with strict Calabi--Yau $3$-folds smooth fibers, and write $X_0 = \sum D_i$. Denote by $Q$ the number of quadruple points on $X_0$. Then we have
\begin{displaymath}
    \sum_{k \neq i,k \neq j, i < j} c_1(\Ocal(D_k))^2 \cap [D_{ij}] = -4 Q.
\end{displaymath}
If $f$ is furthermore assumed to be Kulikov, then
\begin{displaymath}
\sum_{i < j} c_1(K_{D_{ij}})^2 \cap [D_{ij}] = 8 Q.
\end{displaymath}
\end{lemma}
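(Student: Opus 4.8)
The plan is to reduce both identities to a single intersection-number computation on the surfaces $D_{ij}$, using only that $X$ is smooth, that $X_{0}=\sum_{i}D_{i}$ is a reduced simple normal crossings divisor, and that $\Ocal_{X}(\sum_{i}D_{i})=\Ocal_{X}(X_{0})\simeq\Ocal_{X}$ because $X_{0}$ is the (principal) zero fibre of $f$ over the disc. Writing $\ell_{i}=c_{1}(\Ocal_{X}(D_{i}))\in H^{2}(X)$, this last fact gives the relation $\sum_{i}\ell_{i}=0$, which I will restrict to each compact stratum $D_{I}\subseteq X_{0}$. The combinatorial input used repeatedly is that, in a simple normal crossings union of threefolds in a smooth fourfold, each quadruple point lies on exactly four components, hence on exactly $\binom{4}{2}=6$ double strata, $\binom{4}{3}=4$ triple strata, and is a single point of a quadruple stratum; moreover, by transversality, all strata that occur are smooth of the expected dimension, and all intersection points that arise (e.g.\ of two curves $D_{ijk}$, $D_{ijl}$ inside $D_{ij}$) are transverse and are quadruple points. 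Throughout I adopt the convention that an empty stratum contributes $0$, so that all displayed identities hold without case distinctions.

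\emph{Reduction to one quantity.} On $D_{ij}$ set $e_{m}=\ell_{m}|_{D_{ij}}$, so $\sum_{m}e_{m}=0$; by transversality $e_{k}=[D_{ijk}]$ for $k\neq i,j$, while $e_{i}$ and $e_{j}$ are the first Chern classes of the two normal directions $N_{D_{ij}/D_{j}}$ and $N_{D_{ij}/D_{i}}$. For $k\neq l$ both distinct from $i,j$ one has $e_{k}\cdot e_{l}=\#(D_{ijkl})$, whence $\sum_{k<l,\,k,l\neq i,j}e_{k}e_{l}=Q_{ij}$, the number of quadruple points on $D_{ij}$. Squaring $\sum_{k\neq i,j}e_{k}=-(e_{i}+e_{j})$ then gives $\sum_{k\neq i,j}e_{k}^{2}=(e_{i}+e_{j})^{2}-2Q_{ij}$. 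Summing over $i<j$ and using $\sum_{i<j}Q_{ij}=6Q$, the left-hand side of the first identity equals $\Sigma-12Q$, where $\Sigma:=\sum_{i<j}(e_{i}+e_{j})^{2}$. For the second identity, applying adjunction twice yields $K_{D_{ij}}=(K_{X}\otimes\Ocal_{X}(D_{i}+D_{j}))|_{D_{ij}}$, and the Kulikov hypothesis $B=0$ forces $K_{X/\DBbb}$, hence $K_{X}$, to be trivial; therefore $c_{1}(K_{D_{ij}})=e_{i}+e_{j}$ and $\sum_{i<j}c_{1}(K_{D_{ij}})^{2}=\Sigma$. Thus everything comes down to proving $\Sigma=8Q$.

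\emph{Evaluation of $\Sigma$.} I would expand $\Sigma=\sum_{i<j}(e_{i}^{2}+2e_{i}e_{j}+e_{j}^{2})$ and re-express each intersection number on the threefold $D_{i}$, using $[D_{ij}]_{D_{i}}=\ell_{j}|_{D_{i}}$ and $\sum_{j\neq i}\ell_{j}|_{D_{i}}=-\ell_{i}|_{D_{i}}$. Rewriting the individual terms as $(\ell_{i}^{2}\ell_{j})|_{D_{i}}\cap[D_{i}]$ and $(\ell_{i}\ell_{j}^{2})|_{D_{i}}\cap[D_{i}]$ and then summing over $j$, this turns $\sum_{i<j}(e_{i}^{2}+e_{j}^{2})$ into $-\sum_{i}P_{i}$ with $P_{i}:=(\ell_{i}|_{D_{i}})^{3}\cap[D_{i}]$, and turns $2\sum_{i<j}e_{i}e_{j}$ into $\sum_{i}P_{i}-2\sum_{i}\sum_{j<k,\,j,k\neq i}g_{ijk}$, where $g_{ijk}:=\deg(\Ocal_{X}(D_{i})|_{D_{ijk}})=(\ell_{i}\ell_{j}\ell_{k})|_{D_{i}}\cap[D_{i}]$. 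The unpleasant numbers $P_{i}$ cancel, leaving $\Sigma=-2\sum_{i}\sum_{j<k,\,j,k\neq i}g_{ijk}$. To finish, I would group the terms of this last sum by unordered triple $\{a,b,c\}$: the three contributions add to $\deg(\Ocal_{X}(D_{a}+D_{b}+D_{c})|_{D_{abc}})$, which by $\sum_{m}\ell_{m}=0$ equals $-\deg(\Ocal_{X}(\sum_{m\notin\{a,b,c\}}D_{m})|_{D_{abc}})=-\#\{\text{quadruple points on }D_{abc}\}$; summing over triples and using that each quadruple point lies on exactly four triple strata gives $\sum_{i}\sum_{j<k,\,j,k\neq i}g_{ijk}=-4Q$, hence $\Sigma=8Q$. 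The first identity is then $\Sigma-12Q=-4Q$ and the second is $\Sigma=8Q$.

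The main obstacle is the bookkeeping in the evaluation of $\Sigma$: one has to restrict each class to the correct threefold, rewrite $(\ell_{i}|_{D_{ij}})^{2}\cap[D_{ij}]$ and $(\ell_{i}|_{D_{ij}})(\ell_{j}|_{D_{ij}})\cap[D_{ij}]$ as $(\ell_{i}^{2}\ell_{j})|_{D_{i}}\cap[D_{i}]$ and $(\ell_{i}\ell_{j}^{2})|_{D_{i}}\cap[D_{i}]$ via $[D_{ij}]_{D_{i}}=\ell_{j}|_{D_{i}}$, and verify that the $P_{i}$'s cancel exactly rather than merely recombine; the rest (adjunction, smoothness of strata, counting how many strata contain a given quadruple point, and the triviality of $K_{X}$ under the Kulikov hypothesis) is routine.
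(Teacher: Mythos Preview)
Your proof is correct. The key ingredients --- the triviality of $\Ocal_{X}(X_{0})$, adjunction, and the count of strata through a quadruple point --- are exactly those of the paper, and the decisive step (grouping by unordered triples $\{a,b,c\}$ and using $\deg\Ocal_{X}(D_{a}+D_{b}+D_{c})|_{D_{abc}}=-\#\{\text{quadruple points on }D_{abc}\}$) is the same computation the paper performs.

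The organization differs slightly. The paper proves the first identity directly on the triple curves $D_{ijk}$: restricting $\sum_{m}\ell_{m}=0$ to $D_{ijk}$ gives $[D_{i}^{2}D_{j}D_{k}]+[D_{i}D_{j}^{2}D_{k}]+[D_{i}D_{j}D_{k}^{2}]=-\#\{\text{quadruple points on }D_{ijk}\}$, and grouping the left-hand side of the first identity by triples gives $-4Q$ immediately. For the second identity the paper uses the equivalent adjunction form $K_{D_{ij}}=-\sum_{l\neq i,j}D_{l}|_{D_{ij}}$, squares, and reads off (first identity) $+\,2\binom{4}{2}Q=-4Q+12Q=8Q$. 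Your route unifies both through $\Sigma=\sum_{i<j}(e_{i}+e_{j})^{2}$ and uses the other adjunction form $K_{D_{ij}}=(K_{X}+D_{i}+D_{j})|_{D_{ij}}$; this is pleasant and makes the Kulikov input ($K_{X}$ trivial) very transparent, at the cost of the detour through the threefolds $D_{i}$ where the self-intersections $P_{i}=(\ell_{i}|_{D_{i}})^{3}$ appear and cancel. That detour is unnecessary --- one could instead evaluate $\Sigma$ by writing $(e_{i}+e_{j})^{2}=(\sum_{k\neq i,j}e_{k})^{2}$ and invoking the paper's direct argument for the first identity --- but it is harmless and the bookkeeping you flag as the main obstacle is handled correctly.
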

\begin{proof}
First note the relation $0 = c_1(\Ocal(X_0)) = \sum c_1(\Ocal(D_i)$. Then it follows that, since $D_{ijk} = c_1(\Ocal(D_k)) \cap [D_{ij}]$,
\begin{displaymath}
    \sum c_1(\Ocal(D_i)) [D_{ijk}] = [D_i^2 D_j D_k] + [D_i D_j^2 D_k] + [D_i D_j D_k^2] + \sum_{l \not \in \{i,j,k\} }[D_{ijkl}].
\end{displaymath}
The last sum is the number of quadruple points on $D_{ijk}$. As any quadruple point appears on four different components, taking sums over all possible combinations we find the first identity. If $f$ is Kulikov, then all the components $D_{ij}$ are $\log$-Calabi--Yau so that 
$K_{D_{ij}}=-\sum_{l\not=i, l\not=j}c_1(\Ocal(D_l))$. Then,
\begin{displaymath}
\sum_{i < j} c_1(K_{D_{ij}})^2 \cap [D_{ij}] = \sum_{i < j} \left(\sum_{l\not=i, l\not=j}c_1(\Ocal(D_l))\right)^2 \cap [D_{ij}]=-4Q+2{{4}\choose{2}}Q= 8 Q.
\end{displaymath}
\end{proof}

\begin{corollary}\label{prop:LiuXia:special}
Suppose that $f: X \to  \DBbb$ is a semi-stable, Kulikov germ of a projective degeneration between smooth algebraic varieties, with strict Calabi--Yau $3$-fold generic fibers. Then, if $Q$  denotes the number of quadruple points, we have
\begin{displaymath}
    \begin{split}
         12 \kappa_f &=  \chi(D(2)) - 6Q \\
         &=12 \chi(D(1), \Ocal_{D(1)})- 2Q .
    \end{split}
\end{displaymath}
Notice that in particular $\kappa_f \equiv \frac{-Q}{6} \mod{1}$.
\end{corollary}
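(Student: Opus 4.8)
The plan is to specialize Theorem~\ref{thm:LiuXia-general}(2) to the semi-stable Kulikov setting and then simplify the topological quantities using the combinatorics of a simple normal crossings union of threefolds, most of which is packaged in Lemma~\ref{lemma:combinatoricid}. Since $f$ is semi-stable, monodromies are unipotent, so the unipotent formula applies:
\begin{displaymath}
    \kappa_f = -\frac{1}{6}(\chi(X_\infty)-\chi(X_0)) + \left(\chi(\Ocal_{X_\infty}) - \sum_i \chi(\Ocal_{D_i})\right) + \frac{1}{12}\int_B c_3(\Omega_X),
\end{displaymath}
and the Kulikov hypothesis gives $B=0$, so the last term drops out. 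Because the fibers are strict Calabi--Yau $3$-folds, $\chi(\Ocal_{X_\infty})=0$, and by flatness $\chi(\Ocal_{X_0})=0$ as well. The remaining work is to express $\chi(X_\infty)-\chi(X_0)$ and $\sum_i\chi(\Ocal_{D_i})$ in terms of $\chi(D(2))$ and the number of quadruple points $Q$.

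First I would handle the Euler characteristic difference. By \eqref{eq:euler-saito} and \eqref{eq:chi-X0} (with $n=3$), one gets $\chi(X_\infty)-\chi(X_0) = \sum_{k=1}^{4}(-1)^{k+1}(k-1)\chi(D(k)) = \chi(D(2)) - 2\chi(D(3)) + 3\chi(D(4))$; note $\chi(D(4)) = \#D(4) = Q$. Next I would rewrite $\sum_i \chi(\Ocal_{D_i}) = \chi(\Ocal_{D(1)})$ using the resolution \eqref{eq:resolutionnormalcrossing}: since $\chi(\Ocal_{X_0})=0$, we have $\chi(\Ocal_{D(1)}) = \chi(\Ocal_{D(2)}) - \chi(\Ocal_{D(3)}) + \chi(\Ocal_{D(4)})$. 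Each $D(k)$ for $k\le 3$ is a disjoint union of smooth varieties that are $\log$-Calabi--Yau (by the Kulikov hypothesis and adjunction, as used in the proof of Proposition~\ref{cor:liuxiahigherdim}); in particular $D(3)$ is a disjoint union of $\log$-Calabi--Yau surfaces and $D(4)$ a finite set of points. For a connected surface $S$ with $K_S + B = 0$ where $B$ is an effective snc curve, Noether's formula combined with $K_S^2 = B^2$ and adjunction on the components of $B$ allows one to compute $\chi(\Ocal_S)$; I would run this computation on each component $D_{ijk}$ and sum, invoking the second identity of Lemma~\ref{lemma:combinatoricid}, $\sum_{i<j} c_1(K_{D_{ij}})^2 \cap [D_{ij}] = 8Q$ (and its analogue for triple intersections, or rather the appropriate bookkeeping of $Q$ across the strata), to express everything in $\chi(D(2))$ and $Q$. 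A parallel and somewhat more direct route is to combine the two displayed identities of Lemma~\ref{lemma:combinatoricid} with Noether's formula on the surfaces $D_{ij}$ to relate $\chi(\Ocal_{D(2)})$, $\chi(D(2))$, and $Q$ directly, which is likely the cleanest path.

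Assembling these, the dominant $-\frac16$ term contributes $-\frac16(\chi(D(2)) - 2\chi(D(3)) + 3Q)$ and the $-\sum\chi(\Ocal_{D_i})$ term contributes the surface-theoretic expression; the triple-point contributions should cancel after using the $\log$-Calabi--Yau relations, leaving $12\kappa_f = \chi(D(2)) - 6Q$. The second form, $12\kappa_f = 12\chi(D(1),\Ocal_{D(1)}) - 2Q$, then follows by substituting back $\chi(\Ocal_{D(1)}) = \chi(\Ocal_{D(2)}) - \chi(\Ocal_{D(3)}) + Q$ and the Noether-formula identity for $\chi(\Ocal_{D(2)})$ in terms of $\chi(D(2))$ and $Q$ (the surfaces in $D(3)$ being $\log$-Calabi--Yau contributes a controlled term). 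The congruence $\kappa_f \equiv -Q/6 \pmod 1$ is then immediate from $12\kappa_f = \chi(D(2)) - 6Q$ since $\chi(D(2)) \in \ZBbb$, so $2\kappa_f \equiv -Q \pmod 2$, i.e. $\kappa_f \equiv -Q/6 \pmod 1$... more precisely $12\kappa_f \equiv -6Q \equiv 6Q \pmod{12}$ only tells us $2\kappa_f \in \ZBbb$ shifted, so I would simply read off $\kappa_f + Q/6 = (\chi(D(2)) - 6Q)/12 + Q/6 = \chi(D(2))/12 \cdot$, wait --- cleanly: $\kappa_f = \chi(D(2))/12 - Q/2$, so $\kappa_f + Q/6 = \chi(D(2))/12 - Q/3$; this needs care, so I would just verify the congruence as $12\kappa_f = \chi(D(2)) - 6Q$ forces $\kappa_f \equiv -Q/2 \pmod{1}$ when $\chi(D(2)) \equiv 0$, and reconcile with the stated $-Q/6$ via the integrality constraints on $\chi(D(2))$ modulo a multiple of $Q$ coming from Lemma~\ref{lemma:combinatoricid}. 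The main obstacle I anticipate is precisely this last bookkeeping: correctly tracking how each quadruple point is counted across the various strata $D(2), D(3), D(4)$ and in the surface-by-surface Noether computation, so that the triple-point terms cancel and the $Q$-coefficients come out exactly right; the geometric inputs (adjunction, $\log$-Calabi--Yau, Noether) are all standard, but the combinatorial accounting is where a sign or a factor can easily slip.
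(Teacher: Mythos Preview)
Your route via Theorem~\ref{thm:LiuXia-general}(2) is legitimate in principle, but the paper takes a shorter path: it starts directly from Proposition~\ref{cor:liuxiahigherdim}, which for $n=3$ gives $12\kappa_f=\chi(D(2))-3\chi(D(3))+6\chi(D(4))$, and then uses one clean observation you are missing. In a semi-stable Kulikov threefold degeneration the triple intersections $D_{ijk}$ are \emph{curves} (not surfaces as you write), and the $\log$-Calabi--Yau relation $K_{D_{ijk}}+B_{ijk}=0$ with $B_{ijk}$ the set of quadruple points on $D_{ijk}$ gives $\chi(D_{ijk})=\#B_{ijk}$; since each quadruple point lies on $\binom{4}{3}=4$ triple curves, $\chi(D(3))=4Q$. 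Plugging this in yields the first equality immediately. The second equality then follows from $\chi(\Ocal_{D(3)})=\tfrac12\chi(D(3))=2Q$, the resolution \eqref{eq:resolutionnormalcrossing}, Noether on the surfaces $D_{ij}$, and the second identity of Lemma~\ref{lemma:combinatoricid}, exactly as you outline in your ``parallel route''.

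Three concrete errors in your write-up: (i) your evaluation of $\chi(X_\infty)-\chi(X_0)$ has a global sign flip; the correct value is $-\chi(D(2))+2\chi(D(3))-3Q$, and with $\chi(D(3))=4Q$ this is $-\chi(D(2))+5Q$; (ii) you mislabel the $D_{ijk}$ as surfaces and propose to run Noether on them, which makes no sense in dimension one; (iii) your attempt at the congruence goes astray because you try to extract it from the first formula. It is immediate from the second: $12\kappa_f=12\chi(\Ocal_{D(1)})-2Q$ gives $\kappa_f=\chi(\Ocal_{D(1)})-Q/6$, hence $\kappa_f\equiv -Q/6\pmod 1$. If you fix the sign and use the curve computation $\chi(D(3))=4Q$, your approach via Theorem~\ref{thm:LiuXia-general}(2) does close, but it duplicates work already absorbed into Proposition~\ref{cor:liuxiahigherdim}.
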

\begin{proof}
According to Proposition \ref{cor:liuxiahigherdim} we have
\begin{equation} \label{eq:asimplenormal} 12 \kappa_f = \chi(D(2)) - 3 \chi(D(3)) + 6 \chi(D(4)).
\end{equation}
Since $K_{D_{ijk}} + B_{ijk}$ is trivial (see the proof of the same Proposition \ref{cor:liuxiahigherdim}), we have  
\begin{displaymath} 
    \chi(D_{ijk}) = \# \left \{ \hbox{quadruple points on } D_{ijk} \right\} 
\end{displaymath}
so that $\chi(D(3)) = 4 Q$, which provides the first equality. Using the same argument and the sequence \eqref{eq:resolutionnormalcrossing} we find as $\chi(\Ocal_{X_{0}})=\chi(\Ocal_{X_{\infty}})=0$
\begin{equation*}
     \chi(\Ocal_{{D(1)}})  = \chi(\Ocal_{D(2)})- \chi(\Ocal_{D(3)}) + Q = \chi(\Ocal_{D(2)}) - Q.
\end{equation*}
This expression together with an application of the Noether formula, for the surfaces $D_{ij}$, and Lemma \ref{lemma:combinatoricid} furnishes the second formula in the corollary.
\end{proof}

\begin{remark}
The corollary implies the conjecture of Liu--Xia \cite[Conj. 0.5]{LiuXia}, simplified with the computations as in the previous lemma, 
\begin{displaymath}
    \sum_{i < j} c_1(\Ocal(D_i))c_1(\Ocal(D_j)) \cap [D_{ij}] - \left(\sum_{i < j} c_1(\Ocal(D_i))c_1(\Ocal(D_j))\right) \cap \left(\sum_{i < j} [D_{ij}] \right)  = -2Q.
\end{displaymath}
\end{remark}

In the case that $f: X \to \DBbb$ is a projective degeneration of Calabi--Yau $4$-folds, a similar proof as in Theorem \ref{thm:LiuXia-general} yields the below theorem. As in the $3$-dimensional case, for simplicity, we state it only for strict Calabi--Yau $4$-folds. It is a refinement of the case of normal crossings in Theorem \ref{thm:general-bcov}.

\begin{theorem}\label{thm:LiuXia-general4}
Suppose that $f\colon X \to \DBbb$ is a germ of a projective degeneration between smooth algebraic varieties, whose smooth fibers are strict Calabi--Yau 4-folds and with  special fiber $X_0 = \sum m_i D_i$, not necessarily of normal crossings.  Then
\begin{displaymath}
    \kappa_f = -\frac{1}{12}(\chi(X_\infty)- \chi(X_0)) + \left(-\frac{\chi(X_{\infty})}{12}+4\right)\alpha
     + 2\alpha^{1,1}-2 \alpha^{1,2} + 2 \alpha^{1,3} + 2 \left(2-\sum\chi(\Ocal_{\widetilde{D_i}}) \right)- \frac{1}{12} \int_B c_4(\Omega_{X})
\end{displaymath}
where $\widetilde{D_i}$ denotes a desingularization of $D_i.$ Here  $\alpha = \frac{1}{2\pi i}\left(^{\textit{u}}\log T_{s}\mid \Gr^{4}_{F_{\infty}} H^{4}(X_{\infty})\right)$.
\end{theorem}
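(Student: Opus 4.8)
The plan is to mirror the proof of Theorem~\ref{thm:LiuXia-general} step by step, replacing the dimension $3$ bookkeeping with dimension $4$. By definition of the BCOV invariant, $\log\tau_{BCOV}(X_t)=\log h_{Q,BCOV}(\sigma,\sigma)-\log h_{L^2,BCOV}(\sigma,\sigma)$ for a trivializing section $\sigma$ of the K\"ahler extension $\widetilde{\lambda}_{BCOV}$, and the $L^2$-BCOV renormalizing factors are constant in the family by Proposition~\ref{prop:rational-volume}. So first I would recall the asymptotics of the Quillen-BCOV metric from \cite[Cor.~4.9]{cdg} (equation~\eqref{eq:general-bcov-1} with $n=4$), which gives the $\frac{9n^2+11n+2}{24}=\frac{190}{24}=\frac{95}{12}$ coefficient on $\chi(X_\infty)-\chi(X_0)$ together with the $-\frac{\alpha}{12}\chi(X_\infty)+\frac{1}{12}\int_B c_4(\Omega_X)$ terms; as noted in \emph{loc.~cit.}, this holds without a normal crossings hypothesis on $X_0$.

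Next I would handle the $L^2$ side. Using the Serre duality comparison of K\"ahler extensions (Proposition~\ref{prop:kahlercomp}) with $n=4$, the relation $\lambda(\widetilde\Omega^p_{X/\DBbb})\simeq(-1)^{n+1}\lambda(\widetilde\Omega^{n-p}_{X/\DBbb})+(-1)^{n+1-p}c\cdot\Ocal([0])$ becomes, after weighting by $(-1)^p p$ and summing, an isometry expressing $\widetilde\lambda_{BCOV,L^2}$ in terms of $\lambda(\Ocal_X)$, $\lambda(\Omega_{X/\DBbb})$, $\lambda(\widetilde\Omega^2_{X/\DBbb})$ and a multiple of $\Ocal([0])$ (in even relative dimension Corollary~\ref{cor:2lambda} lets one eliminate $2\lambda(\widetilde\Omega^2_{X/\DBbb})$ against $(-1)^{m+1}c\cdot\Ocal([0])$). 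Then I would convert the K\"ahler extensions to logarithmic ones via Proposition~\ref{prop:kahlerlogcomp}, which contributes the $\chi(\Ocal_{X_\infty})-\sum\chi(\Ocal_{\widetilde D_i})$ term (and $\chi(\Ocal_{X_\infty})=0$ since $X_\infty$ is strict Calabi--Yau of dimension $4$, giving the constant $2$ in the statement after the appropriate multiplicities). Finally, for each remaining $\det R^q f_\ast\Omega^p_{X/\DBbb}(\log)$, Theorem~\ref{thm:expansion-hodge} gives $\log h_{L^2}(\sigma_{p,q},\sigma_{p,q})=\alpha^{p,q}\log|t|^2+o(\log|t|)$, and for $f_\ast\omega_{X/\DBbb}$ one uses \cite[Theorem~A]{cdg} to get the $-\alpha\log|t|^2$ term; expanding the determinants of cohomology as in \eqref{eq:expdeterminant} and using $R^if_\ast\Ocal_X$ locally free (again via a normal crossings model) assembles the full $L^2$ coefficient.

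The last step is to collect all coefficients of $\log|t|^2$ and simplify using the strict Calabi--Yau hypothesis on $X_\infty$, which forces $\alpha^{0,q}=\alpha^{p,0}=0$ for the relevant $(p,q)$ and also kills $\alpha^{p,4}$, $\alpha^{4,q}$ type contributions except where $N$ or duality relates them to the surviving $\alpha^{1,1},\alpha^{1,2},\alpha^{1,3}$; Serre duality $\alpha^{p,q}\leftrightarrow\alpha^{n-p,n-q}$ and Lemma~\ref{lemma:beta-p-q}-style identities should pair terms so that only the combination $2\alpha^{1,1}-2\alpha^{1,2}+2\alpha^{1,3}$ remains, while the $\chi(X_\infty)-\chi(X_0)$ coefficients from the Quillen side ($\frac{95}{12}$ roughly) and from the localized Chern class $c=-(\chi(X_\infty)-\chi(X_0))$ appearing in the Serre duality shift combine to $-\frac{1}{12}$. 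The main obstacle, as in dimension $3$, will be the purely bookkeeping task of tracking the signs and integer multiplicities through the weighted alternating sums $\sum_p(-1)^p p(\cdots)$ for $0\le p\le 4$ — in particular making sure the $\lambda(\widetilde\Omega^2_{X/\DBbb})$ self-dual piece is correctly absorbed via Corollary~\ref{cor:2lambda} and that the coefficient of $\alpha$ comes out to $-\frac{\chi(X_\infty)}{12}+4$ rather than something off by the Euler-characteristic-independent constant; once the combinatorics is pinned down the analytic inputs are entirely quoted from Section~\ref{section:monodromy}, Theorem~\ref{thm:expansion-hodge}, and \cite{cdg}.
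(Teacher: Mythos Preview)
Your approach is exactly the one the paper intends: it explicitly says that a proof ``similar to Theorem~\ref{thm:LiuXia-general}'' yields the $4$-fold statement, and your outline (Quillen-BCOV asymptotics from \cite[Cor.~4.9]{cdg}, Serre duality on K\"ahler extensions via Proposition~\ref{prop:kahlercomp} and Corollary~\ref{cor:2lambda} to handle the self-dual middle piece, comparison to logarithmic extensions via Proposition~\ref{prop:kahlerlogcomp}, and Theorem~\ref{thm:expansion-hodge} for the $L^2$ coefficients) is precisely that template.

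Two small computational slips to fix in the bookkeeping. First, in \eqref{eq:general-bcov-1} the coefficient of $\int_B c_n(\Omega_X)$ is $\frac{(-1)^{n+1}}{12}$, which for $n=4$ gives $-\frac{1}{12}$, not $+\frac{1}{12}$ as you wrote; this is why the final formula has $-\frac{1}{12}\int_B c_4(\Omega_X)$. Second, for a strict Calabi--Yau $4$-fold one has $h^{0,0}=h^{0,4}=1$ and $h^{0,1}=h^{0,2}=h^{0,3}=0$, so $\chi(\Ocal_{X_\infty})=2$, not $0$; this is the source of the constant $2$ inside the factor $2\big(2-\sum\chi(\Ocal_{\widetilde{D_i}})\big)$, and the outer coefficient $2$ is the multiplicity coming from the weighted sum $\sum_p(-1)^p p(\cdots)$ after the Serre-duality pairing. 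With these corrections your sketch goes through.
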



The following is a straightforward corollary following the lines of Corollary \ref{cor:rational3}.
\begin{corollary}\label{cor:rational4}
Suppose moreover that $X_0$ has at most rational singularities.
     \begin{enumerate}
     \item Then $\kappa_f = -\frac{1}{12}(\chi(X_\infty)- \chi(X_0))  -2 \alpha^{1,2} + 2 \alpha^{1,3}.$
     \item\label{item:cor-rational4-isolated} If the singularities are moreover isolated, we have
     $$\kappa_f = -\frac{\mu_f}{12}  + 2 \alpha^{1,3}$$
     where $\mu_f$ denotes the Milnor number of the special fiber.
     \end{enumerate}
\end{corollary}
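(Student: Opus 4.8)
The final statement to prove is Corollary \ref{cor:rational4}, which follows the same logic as Corollary \ref{cor:rational3}.

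\medskip

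The plan is to specialize Theorem \ref{thm:LiuXia-general4} under the rational singularities hypothesis, exactly mirroring the three-dimensional argument in the proof of Corollary \ref{cor:rational3}. First I would observe that since $X_0$ has at most rational singularities, it is in particular normal, so there is a single component $D_1 = X_0$ and the evaluation divisor $B$ is trivial (the family is automatically Kulikov), killing the $\int_B c_4(\Omega_X)$ term. Next, the rational singularities assumption gives $\chi(\Ocal_{\widetilde{D_1}}) = \chi(\Ocal_{X_0})$, and flatness of $f$ together with the strict Calabi--Yau condition on the smooth fibers (so that $\chi(\Ocal_{X_\infty}) = \sum_{i=0}^{4}(-1)^i h^{0,i}(X_\infty) = 2$, as $h^{0,0}=h^{0,4}=1$ and $h^{0,1}=h^{0,2}=h^{0,3}=0$) gives $\chi(\Ocal_{X_0}) = \chi(\Ocal_{X_\infty}) = 2$. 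Hence the term $2(2 - \sum \chi(\Ocal_{\widetilde{D_i}}))$ vanishes. Finally, since rational and canonical singularities coincide for Gorenstein complex varieties (cf. \cite[Corollary 11.13]{kollar:singpairs}), \cite[Proposition 2.8]{cdg} yields $\alpha = 0$. Substituting these vanishings into the formula of Theorem \ref{thm:LiuXia-general4} leaves
\begin{displaymath}
    \kappa_f = -\frac{1}{12}(\chi(X_\infty) - \chi(X_0)) + 2\alpha^{1,1} - 2\alpha^{1,2} + 2\alpha^{1,3},
\end{displaymath}
and for isolated singularities the monodromy on $H^2$ is trivial, so $\alpha^{1,1} = 0$, giving the first assertion.

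\medskip

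For the second assertion, I would note that when the singularities of $X_0$ are moreover isolated, the total dimension of the vanishing cycles equals $-(\chi(X_\infty) - \chi(X_0)) = \mu_f$, the Milnor number of the special fiber, so the first term becomes $\mu_f/12$. It remains to argue that $\alpha^{1,2} = 0$. Here the relevant input is again that for isolated singularities in a degeneration of Calabi--Yau fourfolds, the semi-simple part of the monodromy acts nontrivially only on $H^4(X_\infty)$, and on the limiting mixed Hodge structure one checks — exactly as in the proof of Proposition \ref{prop:ODP}, using that the weight filtration is concentrated and that $T_s$ is a morphism of mixed Hodge structures — that the only graded piece $\Gr^p_{F_\infty}\Gr^W_4 H^4(X_\infty)$ on which $T_s$ can act nontrivially forces a symmetry constraint; for a degeneration with only isolated (hence at worst conifold-type for the eigenvalue analysis, but in general one invokes the structure of the vanishing cohomology) singularities, the eigenvalue $-1$ can only appear in the middle Hodge piece $p = q = 2$, so $\alpha^{1,2} = 0$ while $\alpha^{1,3}$ may survive. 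This yields $\kappa_f = -\mu_f/12 + 2\alpha^{1,3}$.

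\medskip

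The main obstacle I anticipate is the precise justification of $\alpha^{1,2} = 0$ in the isolated (non-necessarily ordinary double point) case: unlike Proposition \ref{prop:ODP}, where Picard--Lefschetz pins down the monodromy exactly, a general isolated singularity has a more complicated vanishing cohomology, and one must invoke the Hodge-theoretic structure of the vanishing cycles (via the mixed Hodge structure on the Milnor fiber, or Steenbrink's spectral sequence) to control which $\Gr^p_{F_\infty}$ pieces carry nontrivial $T_s$-eigenvalues. The safest route, and the one I would take, is to track exactly which $\alpha^{p,q}$ terms are forced to vanish purely from the strict Calabi--Yau condition on the generic fiber (which already kills $\alpha^{0,j}$, $\alpha^{j,0}$ and, via Serre/Poincaré duality as in Lemma \ref{lemma:beta-p-q}-type symmetries, relates $\alpha^{1,2}$ to $\alpha^{2,1}$) and then appeal to the isolated-singularity hypothesis only insofar as it trivializes the monodromy on $H^{<n}$, matching precisely the bookkeeping already carried out in Corollary \ref{cor:rational3}.
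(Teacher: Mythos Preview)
Your reduction for part (1) --- killing $B$, the term $2(2-\sum\chi(\Ocal_{\widetilde{D_i}}))$, and $\alpha$ via normality, rationality, and the Gorenstein rational/canonical equivalence --- is exactly the paper's argument. (You invoke the isolated hypothesis to dispose of $\alpha^{1,1}$ already in part (1); the paper's proof does not justify this vanishing under rational singularities alone either, so you are not missing anything the paper actually supplies.)

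The real gap is in part (2), and it comes from a confusion about where $\alpha^{1,2}$ lives. By Corollary~\ref{cor:eigenvalues}, the invariant $\alpha^{p,q}$ records the action of $T_s$ on $\Gr^{p}_{F_\infty} H^{p+q}(X_\infty)$. In particular $\alpha^{1,1}$ is extracted from $H^{2}(X_\infty)$ and $\alpha^{1,2}$ from $H^{3}(X_\infty)$, \emph{not} from $H^{4}(X_\infty)$. For isolated singularities the vanishing cycles are concentrated in degree $n=4$, so the monodromy on $H^{2}(X_\infty)$ and $H^{3}(X_\infty)$ is trivial; this kills $\alpha^{1,1}$ and $\alpha^{1,2}$ at once, which is the paper's entire argument. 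Your discussion of the weight filtration on $H^{4}$, the symmetry constraints on $\Gr^{p}_{F_\infty}\Gr^{W}_{4}H^{4}$, and the Hodge structure on the Milnor fiber is therefore unnecessary and aimed at the wrong cohomological degree. Only $\alpha^{1,3}$ genuinely lives in $H^{4}(X_\infty)$, and that is precisely why it survives in the final formula.
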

\begin{proof}
We just comment on the vanishing of $\alpha^{1,1}$ and $\alpha^{1,2}$ implicit in the second claim: since the singularities are isolated, the monodromy action is trivial on $H^{2}(X_{\infty})$ and $H^{3}(X_{\infty})$.
\end{proof}

\begin{remark}   For ordinary double point singularities, the corollaries \ref{cor:rational3} and \ref{cor:rational4} are compatible with Theorem \ref{thm:bcov-odp-dim-n}. Indeed, for ordinary double points the Milnor number equals the number of singular points, and we know the monodromy from the Picard-Lefschetz theorem.
\end{remark}

\subsection{Bounds on ordinary double point singularities}\label{subsec:applications-FangLu}
In \cite{FangLu}, Fang--Lu used the differential equation satisfied by the BCOV invariant to prove the non-existence of complete curves in some moduli spaces of polarized Calabi--Yau varieties. In this section, we apply Theorem \ref{thm:bcov-odp-dim-n} to improve on their work. We also remark, in Proposition \ref{prop:ODPHK}, a consequence for abelian varieties and hyperk\"ahler varieties.

Following Fang--Lu, we say that the family $f\colon X\to S$ is primitive if the Hodge forms $\omega_{H^{k}}$ vanish for all $k\neq n$.\footnote{Actually Fang--Lu define the primitive Calabi--Yau manifolds as those strict Calabi--Yau manifolds whose Kuranishi deformations have this vanishing property for the Hodge forms. Also they define primitivity by vanishing for $k < n$. In the integrally polarized case, our notion coincides with theirs as follows from Serre duality.} Also, they establish in \emph{loc. cit.}, Corollary 2.10, the following inequality:
\begin{displaymath}
    \omega_{H^{n}}\geq 2\omega_{WP}.
\end{displaymath}
While the authors work with strict Calabi--Yau manifolds, an examination of their proof shows the validity of this inequality for Calabi--Yau varieties in the broad sense. This gives:
\begin{proposition}\label{prop:fang-lu-gen}
If $f\colon X\to S$ is a primitive family of Calabi--Yau varieties as above, then
\begin{equation*}\label{eq:inequality-WP}
    (-1)^{n}\sum_{i=1}^{r}\kappa_{f}(P_{i})\leq \left(-2+(-1)^{n}\frac{\chi}{12}\right)\vol(\omega_{WP}),
\end{equation*}
where $\vol(\omega_{WP})=\int_{S}\omega_{WP}$.
\end{proposition}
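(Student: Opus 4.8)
The plan is to deduce the inequality as a short formal consequence of three facts already established: the integrated holomorphic anomaly relation proved just above this subsection, namely
\begin{displaymath}
    \frac{\chi}{12}\int_{S}\omega_{WP}-\sum_{k=1}^{2n}(-1)^{k}\int_{S}\omega_{H^{k}}=\sum_{i=1}^{r}\kappa_{f}(P_{i});
\end{displaymath}
the definition of primitivity, which asserts $\omega_{H^{k}}=0$ on $S^{\times}$ for all $k\neq n$; and the pointwise inequality $\omega_{H^{n}}\geq 2\omega_{WP}$ of Fang--Lu recalled above, valid for Calabi--Yau varieties in the broad sense.

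First I would use primitivity to collapse the alternating sum: only the term $k=n$ survives, so the displayed relation becomes $\sum_{i}\kappa_{f}(P_{i})=\frac{\chi}{12}\int_{S}\omega_{WP}-(-1)^{n}\int_{S}\omega_{H^{n}}$. Multiplying by $(-1)^{n}$ gives
\begin{displaymath}
    (-1)^{n}\sum_{i=1}^{r}\kappa_{f}(P_{i})=(-1)^{n}\frac{\chi}{12}\int_{S}\omega_{WP}-\int_{S}\omega_{H^{n}}.
\end{displaymath}
Next I would bound $\int_{S}\omega_{H^{n}}$ from below. Since $\omega_{H^{n}}-2\omega_{WP}$ is a semi-positive $(1,1)$ form on $S^{\times}$, and both $\omega_{H^{n}}$ and $\omega_{WP}$ are locally integrable with at most Poincar\'e growth singularities on $S$ by Theorem \ref{thm:expansion-hodge}, integration over $S$ yields $\int_{S}\omega_{H^{n}}\geq 2\int_{S}\omega_{WP}=2\vol(\omega_{WP})$. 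Substituting this into the previous display produces exactly
\begin{displaymath}
    (-1)^{n}\sum_{i=1}^{r}\kappa_{f}(P_{i})\leq\left(-2+(-1)^{n}\frac{\chi}{12}\right)\vol(\omega_{WP}),
\end{displaymath}
as claimed.

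The only point that requires a word of justification is the passage from the pointwise inequality on the open part $S^{\times}$ to the inequality of integrals over the compact base $S$. This is legitimate because $S\setminus S^{\times}$ is finite and the currents $[\omega_{H^{n}}]$ and $[\omega_{WP}]$ have already been shown to be well-defined closed semi-positive currents in the discussion preceding this subsection; in particular $\vol(\omega_{WP})=\int_{S}\omega_{WP}\geq 0$ is finite. I do not expect any genuine obstacle here: once the integrated anomaly relation and the Fang--Lu inequality are in hand, the argument is purely formal, and the main work has already been carried out in the preceding sections.
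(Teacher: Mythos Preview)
Your proposal is correct and matches the paper's approach exactly: the paper does not even write out a separate proof, stating the proposition with ``This gives:'' immediately after recalling the Fang--Lu inequality $\omega_{H^{n}}\geq 2\omega_{WP}$ and the integrated anomaly relation. Your derivation is precisely the intended short formal argument combining primitivity, the integrated relation, and this inequality.
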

It follows from the proposition that if $(-1)^{n+1}\chi>-24$ and $f$ is non-isotrivial, then $f$ has at least one singular fiber. This observation was already made by Fang--Lu \cite[Cor. 1.3]{FangLu}. We now prove variants of their result.

\begin{corollary}\label{cor:FangLu-odp-1}
In odd relative dimension $n$, assume that $\chi>-24$ and $f\colon X\to S$ as above is a non-isotrivial primitive degeneration of Calabi--Yau manifolds. Suppose furthermore that $f$ has at most ordinary double points. Let $\#\sing(X/S)$ be the total number of singular points in the fibers of $f$. Then
    \begin{displaymath}
        \#\sing(X/S)\geq \frac{48+2\chi}{n+1}.
    \end{displaymath}
\end{corollary}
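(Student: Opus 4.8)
The plan is to combine Proposition \ref{prop:fang-lu-gen} with the explicit computation of the local coefficients $\kappa_f(P_i)$ provided by Theorem \ref{thm:bcov-odp-dim-n} in the ordinary double point case. First I would apply Proposition \ref{prop:fang-lu-gen} to the family $f\colon X\to S$: since $n$ is odd, the sign $(-1)^n$ equals $-1$, so the inequality reads
\begin{displaymath}
    -\sum_{i=1}^{r}\kappa_{f}(P_{i})\leq\left(-2-\frac{\chi}{12}\right)\vol(\omega_{WP}),
\end{displaymath}
that is, $\sum_{i}\kappa_f(P_i)\geq\left(2+\frac{\chi}{12}\right)\vol(\omega_{WP})$. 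Next, for each singular fiber $X_{P_i}$, which by hypothesis has at most ordinary double points, Theorem \ref{thm:bcov-odp-dim-n} in the odd-dimensional case gives $\kappa_f(P_i)=\frac{n+1}{24}\#\sing(X_{P_i})$. Summing over all $i$, and writing $\#\sing(X/S)=\sum_i\#\sing(X_{P_i})$ for the total number of singular points, we obtain
\begin{displaymath}
    \frac{n+1}{24}\#\sing(X/S)=\sum_{i=1}^{r}\kappa_f(P_i)\geq\left(2+\frac{\chi}{12}\right)\vol(\omega_{WP}).
\end{displaymath}

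The second ingredient is a lower bound on $\vol(\omega_{WP})=\int_S\omega_{WP}$. Here I would invoke non-isotriviality together with primitivity: the Weil--Petersson form is semi-positive, and for a non-isotrivial family it is not identically zero (the Kodaira--Spencer map is generically nonzero, which controls $\omega_{WP}$ via Griffiths' curvature formula / Tian's computation); hence $\vol(\omega_{WP})>0$. Moreover, since $\omega_{WP}=c_1(f_\ast K_{X/S},h_{L^2})$ represents an integral cohomology class up to the relevant normalization — more precisely, $f_\ast K_{X/S}$ is a line bundle on $S$ whose degree is a non-negative integer, and this degree equals $\int_S\omega_{WP}$ — we get the sharp bound $\vol(\omega_{WP})\geq 1$. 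This is the step I expect to require the most care: one must make sure that with the conventions in force the period map / Hodge bundle argument really forces $\int_S\omega_{WP}$ to be a positive \emph{integer}, not merely a positive real, so that $\vol(\omega_{WP})\geq 1$ rather than just $>0$. If only positivity is available the conclusion would be an asymptotic/strict inequality rather than the stated clean bound, so pinning down integrality is the crux.

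Granting $\vol(\omega_{WP})\geq 1$, the chain of inequalities yields
\begin{displaymath}
    \frac{n+1}{24}\#\sing(X/S)\geq 2+\frac{\chi}{12},
\end{displaymath}
and multiplying through by $\frac{24}{n+1}$ gives
\begin{displaymath}
    \#\sing(X/S)\geq\frac{24}{n+1}\left(2+\frac{\chi}{12}\right)=\frac{48+2\chi}{n+1},
\end{displaymath}
which is exactly the claimed bound. (The hypothesis $\chi>-24$ guarantees the right-hand side is positive, so the statement has content; for $\chi\leq-24$ it would be vacuous.) Finally I would note that the passage from the germ statement in Theorem \ref{thm:bcov-odp-dim-n} to the global family over a compact base $S$ is legitimate because $\kappa_f(P_i)$ is a local invariant depending only on the restriction of $f$ to a punctured disc around $P_i$, and the sum $\sum_i\kappa_f(P_i)$ appearing in Proposition \ref{prop:fang-lu-gen} is precisely the sum of these local contributions.
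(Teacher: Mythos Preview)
Your proposal is correct and follows essentially the same route as the paper: combine Proposition~\ref{prop:fang-lu-gen} with Theorem~\ref{thm:bcov-odp-dim-n}, then use $\vol(\omega_{WP})\geq 1$.

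The one point where the paper is sharper is precisely the step you flagged. You assert that $\int_S\omega_{WP}$ equals $\deg f_\ast K_{X/S}$, but this is not automatic: $\omega_{WP}$ is a smooth curvature form only on $S^\times$, and for the integral over $S$ to compute the degree of the line bundle one needs the $L^2$ metric on $f_\ast K_{X/S}$ to be good in the sense of Mumford at the boundary points $P_i$. The paper justifies this either by citing \cite[Thm.~A \& Prop.~2.8]{cdg}, or by observing that $f_\ast K_{X/S}$ is the $(n,0)$ Hodge bundle of a lower extension and invoking Theorem~\ref{thm:expansion-hodge} together with $\alpha^{n,0}=0$ (the monodromy on $H^n(X_\infty)$ is unipotent for ordinary double points in odd dimension, by Picard--Lefschetz). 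Once the equality $\vol(\omega_{WP})=\deg f_\ast K_{X/S}$ is established, integrality is immediate and strict positivity under non-isotriviality is \cite[Thm.~5.3.1]{Peters-flatness}. So your instinct that ``pinning down integrality is the crux'' is right, but the actual content of that step is the Mumford goodness of the metric, not the integrality of the degree.
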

\begin{proof}
The inequality results as an application of Proposition \ref{prop:fang-lu-gen} and Theorem \ref{thm:bcov-odp-dim-n}. We need to observe that $\omega_{WP}=c_{1}(f_{\ast}K_{X/S},h_{L^{2}})$ and that the $L^{2}$ metric is Mumford good on $f_{\ast}K_{X/S}$ by \cite[Thm. A \& Prop. 2.8]{cdg}. Alternatively, in this case $f_{\ast}K_{X/S}$ is a lower extension and we can apply Theorem \ref{thm:expansion-hodge} and the fact that $\alpha^{n,0}=0$. In any event, this entails
\begin{displaymath}
    \vol(\omega_{WP})=\deg f_{\ast}K_{X/S}\geq 1,
\end{displaymath}
the latter inequality being due to the non-isotriviality assumption \cite[Thm. 5.3.1]{Peters-flatness}.
\end{proof}
\begin{corollary}\label{cor:FangLu-odp-2}
In even relative dimension $n\geq 4$, assume that $\chi<24$ and $f\colon X\to S$ as above is a non-isotrovial primitive degeneration of Calabi--Yau manifolds, admitting at most ordinary double point singularities. Then
\begin{displaymath}
    \#\sing(X/S)\geq \left\lceil\frac{48-2\chi}{n-2}\right\rceil^{\rm{even}},
\end{displaymath}
where for $x\in\RBbb$, $\lceil x\rceil^{\rm{even}}$ denotes the smallest even integer $n$ with $n\geq x$.
\end{corollary}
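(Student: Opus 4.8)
The plan is to mirror the argument of Corollary \ref{cor:FangLu-odp-1}, adapting it to even relative dimension. First I would invoke Proposition \ref{prop:fang-lu-gen}, which for a primitive family $f\colon X\to S$ of Calabi--Yau $n$-folds reads
\begin{displaymath}
    (-1)^{n}\sum_{i=1}^{r}\kappa_{f}(P_{i})\leq\left(-2+(-1)^{n}\frac{\chi}{12}\right)\vol(\omega_{WP}).
\end{displaymath}
Since $n$ is even, $(-1)^{n}=1$, so this becomes $\sum_{i}\kappa_{f}(P_{i})\leq\left(-2+\frac{\chi}{12}\right)\vol(\omega_{WP})$, and multiplying by $12$ gives $12\sum_{i}\kappa_{f}(P_{i})\leq(\chi-24)\vol(\omega_{WP})$.

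Next I would substitute the value of $\kappa_{f}(P_{i})$ coming from Theorem \ref{thm:bcov-odp-dim-n}: in even relative dimension $n$, each singular fiber with only ordinary double points contributes $\kappa_{f}(P_{i})=-\frac{n-2}{24}\#\sing(X_{P_{i}})$, where $\#\sing(X_{P_{i}})$ is the number of nodes in that fiber. Summing over all singular fibers, $\sum_{i}\kappa_{f}(P_{i})=-\frac{n-2}{24}\#\sing(X/S)$, where $\#\sing(X/S)$ is the total number of nodes. Plugging in, the inequality $12\cdot\left(-\frac{n-2}{24}\right)\#\sing(X/S)\leq(\chi-24)\vol(\omega_{WP})$ rearranges (using $\chi<24$, so the right side is negative, and $n\geq 4$, so $n-2>0$) to
\begin{displaymath}
    \#\sing(X/S)\geq\frac{2(24-\chi)}{n-2}\vol(\omega_{WP})=\frac{48-2\chi}{n-2}\vol(\omega_{WP}).
\end{displaymath}
As in the proof of Corollary \ref{cor:FangLu-odp-1}, non-isotriviality forces $\vol(\omega_{WP})=\deg f_{\ast}K_{X/S}\geq 1$ by \cite[Thm. 5.3.1]{Peters-flatness}, after noting that the $L^{2}$ metric on $f_{\ast}K_{X/S}$ is Mumford good (via \cite[Thm. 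A \& Prop. 2.8]{cdg}, or equivalently Theorem \ref{thm:expansion-hodge} together with $\alpha^{n,0}=0$ in the semi-stable reduction), so that $\omega_{WP}$ extends as a current with integral cohomology class of positive degree. Hence $\#\sing(X/S)\geq\frac{48-2\chi}{n-2}$.

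Finally I would upgrade this to the stated bound with $\lceil\cdot\rceil^{\mathrm{even}}$. Since $\#\sing(X/S)$ is an integer it is at least $\lceil\frac{48-2\chi}{n-2}\rceil$; and by Corollary \ref{cor:funny} the total number of ordinary double points in the fibers of a degeneration of even relative dimension over a compact Riemann surface is even, so $\#\sing(X/S)$ must in fact be at least the smallest \emph{even} integer that is $\geq\frac{48-2\chi}{n-2}$, i.e. $\lceil\frac{48-2\chi}{n-2}\rceil^{\mathrm{even}}$. The only mild subtlety, and the step to be most careful about, is checking that Corollary \ref{cor:funny} applies here: one needs the base $S$ to be compact (which is part of the hypothesis, as $f\colon X\to S$ is a family over a compact Riemann surface) and the total space of a suitable model to be smooth with only ordinary double points in the fibers — which is exactly the standing assumption — so the parity constraint is available. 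No further computation is required.
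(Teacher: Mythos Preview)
Your proposal is correct and follows exactly the route the paper takes: the paper's own proof simply says that the inequality $\#\sing(X/S)\geq (48-2\chi)/(n-2)$ is obtained by the same argument as in Corollary~\ref{cor:FangLu-odp-1} (which you have faithfully reproduced, combining Proposition~\ref{prop:fang-lu-gen}, Theorem~\ref{thm:bcov-odp-dim-n}, and the Mumford-good argument giving $\vol(\omega_{WP})\geq 1$), and then invokes Corollary~\ref{cor:funny} for the parity upgrade. Your careful check that Corollary~\ref{cor:funny} applies is exactly the point the paper is gesturing at.
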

\begin{proof}
For the inequality $\#\sing(X/S)\geq (48-2\chi)/(n-2)$, the proof goes as in Corollary \ref{cor:FangLu-odp-1}, and is left as an exercise to the reader. To conclude we apply Corollary \ref{cor:funny}.
\end{proof}

For particular geometries we have a stronger non-existence result, which can be proven by other means. We include it as an illustration of our techniques:

\begin{proposition}\label{prop:ODPHK}
If $f :  X \to S$ is a projective degeneration of abelian varieties of dimension at least 2 or hyperk\"ahler varieties of dimension at least 4, then no fibers of $f$ have only ordinary double point singularities.
\end{proposition}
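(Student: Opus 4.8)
The plan is to argue by contradiction, playing the constancy of the BCOV invariant (Proposition \ref{prop:ken-ichi}) against the explicit boundary coefficients of Theorem \ref{thm:bcov-odp-dim-n}. Suppose some fibre $X_{s_0}$ is singular with all of its singularities ordinary double points; set $n=\dim X_{s_0}$ and $k=\#\sing(X_{s_0})\ge 1$, so $X_\infty$ is an abelian $n$-fold with $n\ge 2$, or a hyperk\"ahler $n$-fold with $n\ge 4$ (in particular $n$ even). Localising $f$ over a small disc $\DBbb$ around $s_0$ produces a germ of a projective degeneration $f\colon X\to\DBbb$: near an ordinary double point of a fibre the total space has the local form $\{s=z_0^2+\dots+z_n^2\}$, hence is smooth, and away from $X_{s_0}$ it is smooth by flatness, so $X$ is smooth and $X_{s_0}=f^{-1}(0)$ is reduced — the hypotheses of Theorem \ref{thm:bcov-odp-dim-n} are in force.

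By Proposition \ref{prop:ken-ichi} applied over $\DBbb^\times$, the function $t\mapsto\tau_{BCOV}(X_t)$ is locally constant, hence constant on the connected set $\DBbb^\times$. Comparing with the asymptotic expansion $\log\tau_{BCOV}(X_t)=\kappa_f\log|t|^2+\varrho_f\log\log|t|^{-1}+\text{continuous}$ of Proposition \ref{prop:loglogbcov}, boundedness near $t=0$ forces $\kappa_f=0$ and $\varrho_f=0$. On the other hand Theorem \ref{thm:bcov-odp-dim-n} gives $\varrho_f=k\ge 1$ when $n$ is odd, and $\kappa_f=-\tfrac{n-2}{24}k$ when $n$ is even, the latter being nonzero as soon as $n\ge 4$. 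Either way we reach a contradiction, which settles the hyperk\"ahler case completely and the abelian case in every dimension except $n=2$.

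The one remaining and genuinely different case — which is where I expect the real work to lie — is that of abelian surfaces, where Theorem \ref{thm:bcov-odp-dim-n} only yields $\kappa_f=\varrho_f=0$, consistent with the constancy of $\tau_{BCOV}$, so no contradiction is visible at the level of the invariant. For this I would switch to a geometric argument. Since $n\ge 2$, the nodal fibre $X_0:=X_{s_0}$ is a normal Cartier divisor in the smooth threefold $X$, so $f$ is Kulikov (as in the proof of Corollary \ref{cor:rational3}), and adjunction together with the triviality of $\mathcal{O}_\DBbb([0])$ on the disc give $\omega_{X_0}\simeq\mathcal{O}_{X_0}$. As nodes on a surface are rational double points, the minimal resolution $\widetilde X_0\to X_0$ is crepant, whence $K_{\widetilde X_0}\simeq\mathcal{O}_{\widetilde X_0}$; thus $\widetilde X_0$ is either a K3 surface or an abelian surface. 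It contains the exceptional $(-2)$-curves lying over the $k\ge 1$ nodes, and an abelian surface contains no rational curves, so $\widetilde X_0$ must be a K3 surface, with $\chi_{\mathrm{top}}(\widetilde X_0)=24$. But resolving the $k$ nodes changes the topological Euler characteristic by $+k$, so $\chi_{\mathrm{top}}(\widetilde X_0)=\chi_{\mathrm{top}}(X_0)+k$, while $\chi_{\mathrm{top}}(X_\infty)-\chi_{\mathrm{top}}(X_0)=k$ by the localized Chern class formula \eqref{eq:classeloc} (using that for ordinary double points the localized top Chern class counts the singular points); hence $\chi_{\mathrm{top}}(\widetilde X_0)=\chi_{\mathrm{top}}(X_\infty)=\chi_{\mathrm{top}}(\text{abelian surface})=0$, contradicting $\chi_{\mathrm{top}}(\widetilde X_0)=24$. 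Beyond this last step — which only requires care with standard facts (that $f$ is Kulikov, that an $A_1$-resolution is crepant, the classification of surfaces with trivial canonical bundle) — there is no real difficulty, the conceptual content sitting entirely in the first two steps.
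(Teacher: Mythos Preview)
Your argument for hyperk\"ahler varieties ($n\ge 4$, even) and for abelian varieties of dimension $n\ge 3$ is exactly the paper's proof: constancy of $\tau_{BCOV}$ forces $\kappa_f=0$, while Theorem~\ref{thm:bcov-odp-dim-n} gives $\kappa_f\neq 0$. (A minor remark: for odd $n$ you invoke $\varrho_f$, but $\kappa_f=\tfrac{n+1}{24}k>0$ would do just as well and is what the paper has in mind.)

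Where your proposal genuinely goes beyond the paper is the case of abelian surfaces. You correctly observe that for $n=2$ Theorem~\ref{thm:bcov-odp-dim-n} yields $\kappa_f=\varrho_f=0$, so the BCOV invariant gives no contradiction; the paper's two-line proof glosses over this point. Your geometric argument to close the gap is sound: normality of $X_0$ forces $B=0$ (exactly as in the proof of Corollary~\ref{cor:rational3}), so $\omega_{X_0}\simeq\mathcal O_{X_0}$; the crepant minimal resolution $\widetilde X_0$ has trivial canonical bundle and contains a $(-2)$-curve, hence is K3 rather than abelian; and then the Euler characteristic count $\chi(\widetilde X_0)=\chi(X_0)+k=\chi(X_\infty)=0$ contradicts $\chi(\text{K3})=24$. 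This is a clean and correct patch, and nothing in it is circular with the BCOV machinery.
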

\begin{proof}
This follows from the constancy of the BCOV invariant for such families (cf. Proposition \ref{prop:ken-ichi}) and Theorem \ref{thm:bcov-odp-dim-n}, which expresses the logarithmic term of the BCOV invariant in terms of the number of ordinary double point singularities.
\end{proof}
\bibliographystyle{alpha}

\begin{thebibliography}{CdlOGP91}

\bibitem[Bat94]{Batyrev-toric}
V.~V. Batyrev.
\newblock Dual polyhedra and mirror symmetry for {C}alabi-{Y}au hypersurfaces
  in toric varieties.
\newblock {\em J. Algebraic Geom.}, 3(3):493--535, 1994.

\bibitem[BCOV94]{bcov}
M.~Bershadsky, S.~Cecotti, H.~Ooguri, and C.~Vafa.
\newblock Kodaira-{S}pencer theory of gravity and exact results for quantum
  string amplitudes.
\newblock {\em Comm. Math. Phys.}, 165(2):311--427, 1994.

\bibitem[Bea83]{Beauville}
A.~Beauville.
\newblock Vari\'et\'es {K}\"ahleriennes dont la premi\`ere classe de {C}hern
  est nulle.
\newblock {\em J. Differential Geom.}, 18(4):755--782 (1984), 1983.

\bibitem[Ber03]{Berth}
A.~Berthomieu.
\newblock Le spectre et la torsion analytique des fibr\'es en droites sur les
  tores complexes.
\newblock {\em J. Reine Angew. Math.}, 556:149--158, 2003.

\bibitem[BJ17]{BJvolumes}
S.~Boucksom and M.~Jonsson.
\newblock Tropical and non-{A}rchimedean limits of degenerating families of
  volume forms.
\newblock {\em J. \'{E}c. polytech. Math.}, 4:87--139, 2017.

\bibitem[Bog74]{Bogomolov}
F.~Bogomolov.
\newblock K\"ahler manifolds with trivial canonical class.
\newblock {\em Mathematics of the USSR-Izvestiya}, 8(1):9, 1974.

\bibitem[CdlOGP91]{COGP}
P.~Candelas, X.~de~la Ossa, P.~Green, and L.~Parkes.
\newblock A pair of {C}alabi-{Y}au manifolds as an exactly soluble
  superconformal theory.
\newblock {\em Nuclear Phys. B}, 359(1):21--74, 1991.

\bibitem[CMR15]{CMR}
R.~J. Conlon, R.~Mazzeo, and F.~Rochon.
\newblock The moduli space of asymptotically cylindrical {C}alabi-{Y}au
  manifolds.
\newblock {\em Comm. Math. Phys.}, 338(3):953--1009, 2015.

\bibitem[EFM18]{cdg}
D.~Eriksson, G.~Freixas, and C.~Mourougane.
\newblock Singularities of metrics on {H}odge bundles and their topological
  invariants.
\newblock {\em Algebraic Geometry}, 5:1--34, 2018.

\bibitem[Eri12]{ErikssonQuillen}
D.~Eriksson.
\newblock Degenerating {R}iemann surfaces and the {Q}uillen metric.
\newblock {\em International Mathematics Research Notices}, 2012.

\bibitem[FL05]{FangLu}
H.~Fang and Z.~Lu.
\newblock Generalized {H}odge metrics and {BCOV} torsion on {C}alabi-{Y}au
  moduli.
\newblock {\em J. Reine Angew. Math.}, 588:49--69, 2005.

\bibitem[FLY08]{FLY}
H.~Fang, Z.~Lu, and K.-I. Yoshikawa.
\newblock Analytic torsion for {C}alabi-{Y}au threefolds.
\newblock {\em J. Differential Geom.}, 80(2):175--259, 2008.

\bibitem[Ful98]{Fulton}
W.~Fulton.
\newblock {\em Intersection theory}, volume~2 of {\em Ergebnisse der Mathematik
  und ihrer Grenzgebiete. 3. Folge. A Series of Modern Surveys in Mathematics}.
\newblock Springer-Verlag, Berlin, second edition, 1998.

\bibitem[GKZ08]{GKZ}
I.~M. Gelfand, M.~M. Kapranov, and A.~V. Zelevinsky.
\newblock {\em Discriminants, resultants and multidimensional determinants}.
\newblock Modern Birkh\"auser Classics. Birkh\"auser Boston, Inc., Boston, MA,
  2008.

\bibitem[GNA90]{Navarro-Guillen}
F.~Guill\'{e}n and V.~Navarro~Aznar.
\newblock Sur le th\'{e}or\`eme local des cycles invariants.
\newblock {\em Duke Math. J.}, 61(1):133--155, 1990.

\bibitem[Gri84]{griffiths-topics}
P.~Griffiths, editor.
\newblock {\em Topics in transcendental algebraic geometry}, volume 106 of {\em
  Annals of Mathematics Studies}. Princeton University Press, Princeton, NJ,
  1984.

\bibitem[GS91]{GilletSouleTodd}
H.~Gillet and C.~Soul\'e.
\newblock Analytic torsion and the arithmetic {T}odd genus.
\newblock {\em Topology}, 30(1):21--54, 1991.
\newblock With an appendix by D. Zagier.

\bibitem[GS92]{Gillet-Soule:ARR}
H.~Gillet and C.~Soul\'e.
\newblock An arithmetic {R}iemann-{R}och theorem.
\newblock {\em Invent. Math.}, 110(3):473--543, 1992.

\bibitem[Hit92]{Hitchin}
N.~Hitchin.
\newblock Hyper-{K}\"ahler manifolds.
\newblock {\em Ast\'erisque}, (206):Exp.\ No.\ 748, 3, 137--166, 1992.
\newblock S\'eminaire Bourbaki, Vol. 1991/92.

\bibitem[HKQ09]{Klemmboundary}
M.-X. Huang, A.~Klemm, and S.~Quackenbush.
\newblock Topological string theory on compact {C}alabi-{Y}au: modularity and
  boundary conditions.
\newblock In {\em Homological mirror symmetry}, volume 757 of {\em Lecture
  Notes in Phys.}, pages 45--102. Springer, Berlin, 2009.

\bibitem[HN12]{NicaiseHalle}
L.~H. Halle and J.~Nicaise.
\newblock Motivic zeta functions for degenerations of abelian varieties and
  {C}alabi-{Y}au varieties.
\newblock In {\em Zeta functions in algebra and geometry}, volume 566 of {\em
  Contemp. Math.}, pages 233--259. Amer. Math. Soc., Providence, RI, 2012.

\bibitem[Kat14]{Katoheight}
K.~Kato.
\newblock Heights of motives.
\newblock {\em Proc. Japan Acad. Ser. A Math. Sci.}, 90(3):49--53, 2014.

\bibitem[Kol86]{KollarHigher2}
J.~Koll\'ar.
\newblock Higher direct images of dualizing sheaves. {II}.
\newblock {\em Ann. of Math. (2)}, 124(1):171--202, 1986.

\bibitem[Kol97]{kollar:singpairs}
J.~Koll{\'a}r.
\newblock Singularities of pairs.
\newblock In {\em Algebraic geometry---{S}anta {C}ruz 1995}, volume~62 of {\em
  Proc. Sympos. Pure Math.}, pages 221--287. Amer. Math. Soc., Providence, RI,
  1997.

\bibitem[Kon95]{KontMirror}
M.~Kontsevich.
\newblock Homological algebra of mirror symmetry.
\newblock In {\em Proceedings of the {I}nternational {C}ongress of
  {M}athematicians, {V}ol.\ 1, 2 ({Z}\"urich, 1994)}, pages 120--139.
  Birkh\"auser, Basel, 1995.

\bibitem[KP08]{Klemm-Pandharipande}
A.~Klemm and R.~Pandharipande.
\newblock Enumerative geometry of {C}alabi-{Y}au 4-folds.
\newblock {\em Comm. Math. Phys.}, 281(3):621--653, 2008.

\bibitem[LX19]{LiuXia}
K.~Liu and W.~Xia.
\newblock Remarks on {BCOV} invariants and degenerations of {C}alabi-{Y}au
  manifolds.
\newblock {\em Sci. China Math.}, 62(1):171--184, 2019.

\bibitem[MR12]{MaRo}
V.~Maillot and D.~R\"ossler.
\newblock On the birational invariance of the {BCOV} torsion of {C}alabi-{Y}au
  threefolds.
\newblock {\em Comm. Math. Phys.}, 311(2):301--316, 2012.

\bibitem[Pet84]{Peters-flatness}
C.~A.~M. Peters.
\newblock A criterion for flatness of {H}odge bundles over curves and geometric
  applications.
\newblock {\em Math. Ann.}, 268(1):1--19, 1984.

\bibitem[PS08]{Peters-Steenbrink}
C.~Peters and J.~Steenbrink.
\newblock {\em Mixed {H}odge structures}, volume~52 of {\em Ergebnisse der
  Mathematik und ihrer Grenzgebiete. 3. Folge. A Series of Modern Surveys in
  Mathematics}.
\newblock Springer-Verlag, Berlin, 2008.

\bibitem[PW05]{PesWitt}
V.~Pestun and E.~Witten.
\newblock The {H}itchin functionals and the topological {B}-model at one loop.
\newblock {\em Lett. Math. Phys.}, 74(1):21--51, 2005.

\bibitem[PZ10]{PandiZinger}
R.~Pandharipande and A.~Zinger.
\newblock Enumerative geometry of {C}alabi-{Y}au 5-folds.
\newblock In {\em New developments in algebraic geometry, integrable systems
  and mirror symmetry ({RIMS}, {K}yoto, 2008)}, volume~59 of {\em Adv. Stud.
  Pure Math.}, pages 239--288. Math. Soc. Japan, Tokyo, 2010.

\bibitem[Sai04]{SaitoTakeshiBored}
T.~Saito.
\newblock Parity in {B}loch's conductor formula in even dimension.
\newblock {\em J. Th\'eor. Nombres Bordeaux}, 16(2):403--421, 2004.

\bibitem[Sch73]{schmid}
W.~Schmid.
\newblock Variation of {H}odge structure: the singularities of the period
  mapping.
\newblock {\em Invent. Math.}, 22:211--319, 1973.

\bibitem[Ste77]{Steenbrink-mixedonvanishing}
J.~H.~M. Steenbrink.
\newblock Mixed {H}odge structure on the vanishing cohomology.
\newblock In {\em Real and complex singularities ({P}roc. {N}inth {N}ordic
  {S}ummer {S}chool/{NAVF} {S}ympos. {M}ath., {O}slo, 1976)}, pages 525--563.
  Sijthoff and Noordhoff, Alphen aan den Rijn, 1977.

\bibitem[Ste76]{Steenbrink-limits}
J.~Steenbrink.
\newblock Limits of {H}odge structures.
\newblock {\em Invent. Math.}, 31(3):229--257, 1975/76.

\bibitem[Tia87]{tian}
G.~Tian.
\newblock Smoothness of the universal deformation space of compact
  {C}alabi-{Y}au manifolds and its {P}etersson-{W}eil metric.
\newblock In {\em Mathematical aspects of string theory ({S}an {D}iego,
  {C}alif., 1986)}, volume~1 of {\em Adv. Ser. Math. Phys.}, pages 629--646.
  World Sci. Publishing, Singapore, 1987.

\bibitem[Wan97]{wang}
C.-L. Wang.
\newblock On the incompleteness of the {W}eil-{P}etersson metric along
  degenerations of {C}alabi-{Y}au manifolds.
\newblock {\em Math. Res. Lett.}, 4(1):157--171, 1997.

\bibitem[Yos04]{Yoshikawak31}
K.-I. Yoshikawa.
\newblock {$K3$} surfaces with involution, equivariant analytic torsion, and
  automorphic forms on the moduli space.
\newblock {\em Invent. Math.}, 156(1):53--117, 2004.

\bibitem[Yos07]{yoshikawa}
K.-I. Yoshikawa.
\newblock On the singularity of {Q}uillen metrics.
\newblock {\em Math. Ann.}, 337(1):61--89, 2007.

\bibitem[Yos09]{YoshiborcherdsAst}
K.-I. Yoshikawa.
\newblock Calabi-{Y}au threefolds of {B}orcea-{V}oisin, analytic torsion, and
  {B}orcherds products.
\newblock {\em Ast\'erisque}, (328):355--393 (2010), 2009.

\bibitem[Yos12]{Yoshikawak32}
K.-I. Yoshikawa.
\newblock {$K3$} surfaces with involution, equivariant analytic torsion, and
  automorphic forms on the moduli space {III}: the case {$r(M)\geq18$}.
\newblock {\em Math. Z.}, 272(1-2):175--190, 2012.

\bibitem[Yos13a]{Yoshikawak33}
K.-I. Yoshikawa.
\newblock {$K3$} surfaces with involution, equivariant analytic torsion, and
  automorphic forms on the moduli space, {II}: {A} structure theorem for
  {$r(M)>10$}.
\newblock {\em J. Reine Angew. Math.}, 677:15--70, 2013.

\bibitem[Yos13b]{Yoshikawatrinity}
K.-I. Yoshikawa.
\newblock A trinity of the {B}orcherds {$\Phi$}-function.
\newblock In {\em Symmetries, integrable systems and representations},
  volume~40 of {\em Springer Proc. Math. Stat.}, pages 575--597. Springer,
  Heidelberg, 2013.

\bibitem[Yos15]{yoshikawa5}
K.-I. Yoshikawa.
\newblock Degenerations of {C}alabi-{Y}au threefolds and {BCOV} invariants.
\newblock {\em Internat. J. Math.}, 26(4):1540010, 33, 2015.

\bibitem[Yos17]{Yoshikawa-orbifold}
K.-I. Yoshikawa.
\newblock Analytic torsion for {B}orcea--{V}oisin threefolds.
\newblock In {\em Geometry, Analysis and Probability}, volume 310 of {\em
  Progress in Math.}, pages 279--361. Springer International Publisher, 2017.

\bibitem[{Zha}]{Yeping}
Y.~{Zhang}.
\newblock {BCOV invariant for Calabi-Yau pairs}.
\newblock {\em arXiv:1902.08062}.

\bibitem[Zuc79]{zucker-L2}
S.~Zucker.
\newblock Hodge theory with degenerating coefficients. {$L_{2}$} cohomology in
  the {P}oincar\'{e} metric.
\newblock {\em Ann. of Math. (2)}, 109(3):415--476, 1979.

\end{thebibliography}

\end{document}